\numberwithin{equation}{section}       % Number formulas within sections
\theoremstyle{plain}
\newtheorem{theorem}{Theorem}[section]
\newtheorem{prop}{Proposition}[section]
\newtheorem{coro}[prop]{Corollary}
\newtheorem{lemma}[prop]{Lemma}
\newtheorem*{mainthm}{Main Theorem}
\newtheorem*{Ledthm}{Ledrappier's Theorem}
\theoremstyle{definition}
\theoremstyle{remark}
\newtheoremstyle{citing}% name
  {3pt}%      Space above, empty = `usual value'
  {3pt}%      Space below
  {\itshape}% Body font
  {}%         Indent amount (empty = no indent, \parindent = para indent)
  {\bfseries}% Thm head font
  {.}%        Punctuation after thm head
  {.5em}%     Space after thm head: " " = normal interword space;
\theoremstyle{citing}
\DeclareMathAlphabet{\mathpzc}{OT1}{pzc}{m}{it} % Zapf Chancery math alphabet
\newcommand{\R}{\mathbb{R}}
\newcommand{\Z}{\mathbb{Z}}
\newcommand{\teta}{\widetilde{\teta}}
\newcommand{\eps}{\varepsilon}
\newcommand{\dist}{d}
\renewcommand{\=}{ : = }
\begin{document}
\title[]{Hausdorff dimension of the graphs of the classical Weierstrass functions}
\author{Weixiao Shen}
\address{Shanghai Center for Mathematical Sciences, Fudan University, 220 Handan Road, Shanghai, China 200433}
\email{wxshen@fudan.edu.cn}
%\address{Department of Mathematics, National University of Singapore, 10 Lower Kent Ridge Road, Singaore 119076}
\subjclass[2010]{Primary: 37C40, Secondary: 37D20, 28A80, 37C45}
\date{\today}

\begin{abstract}
We show that the graph of the classical Weierstrass function $\sum_{n=0}^\infty \lambda^n \cos (2\pi b^n x)$ has Hausdorff dimension $2+\log\lambda/\log b$, for every integer $b\ge 2$ and every $\lambda\in (1/b,1)$. Replacing $\cos(2\pi x)$ by a general non-constant $C^2$ periodic function, we obtain the same result under a further assumption that $\lambda b$ is close to $1$.
%by proving absolute continuity of the SRB measures of the associated solenoidal attractors.
\end{abstract}
\maketitle
\section{Introduction}
In this paper, we study the Hausdorff dimension of the graph of the following Weierstrass function
$$W_{\lambda,b}(x)=\sum_{n=0}^\infty \lambda^n \cos ( 2\pi b^n x), x\in\R$$
where $0<\lambda<1<b$ and $b\lambda>1$. These functions, studied by Weierstrass and Hardy ~\cite{Hardy}, are probably the most well-known examples of continuous but nowhere differentiable functions. Study of the graph of these and related functions from a geometric point of view as fractal sets have attracted much attention since Besicovitch and Ursell~\cite{BU}. A long standing conjecture asserts that the Hausdorff dimension of the graph of $W_{\lambda, b}$ is equal to $$D=2+\frac{\log\lambda}{\log b},$$ see for example~\cite{Mandel}. Although the box dimension and packing dimension have been shown to be equal to $D$ for a large class of functions including all the functions $W_{\lambda,b}$ (see~\cite{HL, KMY, R}), the conjecture about Hausdorff dimension remains open even in the case when $b$ is an integer.

\begin{mainthm}
For any integer $b\ge 2$ and any $\lambda\in (b^{-1},1)$, the Hausdorff dimension of the graph of the Weierstrass function $W_{\lambda,b}$ is equal to $D.$
\end{mainthm}

More generally, we consider the following function:
$$f_{\lambda, b}^\phi(x)=\sum_{n=0}^\infty \lambda^n \phi (b^n x),$$
where $\phi$ is a $\Z$ -periodic function and $\lambda, b$ are as above. So $W_{\lambda,b}$ corresponds to the case $\phi(x)=\cos (2\pi x)$. Our method also shows the following:

\begin{theorem} \label{thm:gammalarge}
For any $\Z$-periodic, non-constant $C^2$ function $\phi:\R\to \R$ and any integer $b\ge 2$ there exists $K_0=K_0(\phi, b)>1$ such that if $1<\lambda b< K_0$,  then the graph of $f_{\lambda, b}^\phi$ has Hausdorff dimension $D$.
\end{theorem}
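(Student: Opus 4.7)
The matching upper bound on the Hausdorff dimension of the graph of $f_{\lambda,b}^\phi$, namely $\le D$, is the known value of the box dimension, so it suffices to prove the lower bound. The approach is dynamical. Consider the uniformly expanding skew-product map
\[ T:\T\times\R\to\T\times\R,\qquad T(x,y) = \bigl(bx\bmod 1,\,(y-\phi(x))/\lambda\bigr), \]
whose restriction to the graph of $f = f_{\lambda,b}^\phi$ is conjugate to $x\mapsto bx$ on $\T$. Equivalently, the graph is the attractor of the $b$ contracting inverse branches $T_j^{-1}(x,y) = ((x+j)/b,\,\lambda y+\phi((x+j)/b))$, $j = 0,\dots,b-1$. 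Passing to the natural extension, one obtains a uniformly hyperbolic \emph{solenoidal attractor}: a solenoid fibered over $\T$ (the unstable present coordinate), with contracting fibers parametrized by IFS addresses in $\{0,\dots,b-1\}^\N$ (the stable past). The natural SRB measure on the solenoid is the lift of Lebesgue on $\T$ via the coding, equivalently the pushforward of the uniform Bernoulli measure.

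To turn this dynamical data into dimension information, exploit the self-similarity
\[ f(x) = F_n(x) + \lambda^n f(b^n x),\qquad F_n(x) = \sum_{k=0}^{n-1}\lambda^k\phi(b^k x). \]
On an interval $I_n(x_0) = [x_0,x_0+b^{-n}]$ the graph of $f$ is a $C^2$ trend $F_n$ (whose slope is $\asymp(\lambda b)^n$) plus a $\lambda^n$-rescaled copy of the graph of $f$, shifted by $c_n := b^n x_0\bmod 1$. Rescaling by $b^n$ in $x$ and by $\lambda^n$ in $y$, the rescaled measure converges along a typical orbit of $x_0$ to a self-similar slice law of the form $u\mapsto A(x_0)\,u + f(u+c)$, where $A(x_0) = \lim_n F_n'(x_0)/(\lambda b)^n$ and $u\in[0,1]$ is Lebesgue. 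By a Ledrappier--Young-type dimension formula for such solenoidal systems, the Hausdorff dimension of the graph equals $D$ provided these slice laws, projected onto a line transverse to the graph direction, are absolutely continuous at a.e.\ base point.

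The technical heart, and the main obstacle, is verifying this absolute continuity under the hypothesis $1<\lambda b<K_0$. I would proceed by an $L^2$-transversality argument in the spirit of Peres--Solomyak for Bernoulli convolutions: view $A(x_0)$ as a random parameter whose distribution is induced by the orbit of $x_0$ under $\times b$, bound the $L^2$-norm of the density of the slice law by an integral of a quadratic quantity over the parameter range, and verify this integral via a level-by-level differencing estimate which uses the non-degeneracy of $\phi'$. The hypothesis $\lambda b<K_0(\phi,b)$ enters here: for $\lambda b$ close to $1$, a central-limit analysis for the doubling map yields that $A(x_0)$ has a non-degenerate distribution of spread $\asymp(\lambda b-1)^{-1/2}$, and for $\lambda b<K_0$ this spread covers a sufficiently large range of slopes to push the $L^2$-bound through uniformly. (For the full range $\lambda\in(1/b,1)$ one needs the finer arithmetic-transversality analysis that is the subject of the paper's Main Theorem; restricting to $\lambda b$ near $1$ converts the problem into a perturbative one.) Once absolute continuity of the slice laws is established, ergodicity of $\times b$ propagates the estimate to almost every base point, and the Ledrappier--Young reduction yields that the Hausdorff dimension of the graph of $f_{\lambda,b}^\phi$ equals $D$.
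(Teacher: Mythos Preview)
Your overall framework—Ledrappier's reduction to absolute continuity of the fiber measures $m_x$, followed by an $L^2$/transversality argument—is correct and matches the paper. But your proposed mechanism for why $\lambda b$ close to $1$ (equivalently $\gamma=1/(\lambda b)$ close to $1$) helps is not a proof and does not match what actually works. Large spread of a slope distribution does not imply absolute continuity (a discrete measure can have arbitrary variance), and you conflate the forward-orbit quantity $A(x_0)$, randomized over $x_0$, with the fiber measures $m_x$, which are laws of $S(x,\textbf{i})$ over the \emph{backward} coding $\textbf{i}$ at a \emph{fixed} $x$. The Peres--Solomyak paradigm yields a.e.-parameter conclusions, not pointwise ones, so it does not apply directly here.

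The paper's argument is instead combinatorial. It introduces a weighted version $\sigma(q)$ of Tsujii's tangency count and shows that $\sigma(q)<(\gamma b)^q$ suffices for absolute continuity (Theorem~\ref{thm:smallw2ac}). As $\gamma\to 1$ the threshold tends to $b^q$, so one only needs $\sigma(q)<b^q$ strictly; by Lemma~\ref{lem:1miss} this holds once, at every $x$, \emph{some} pair of length-$q$ words is transversal. That pointwise transversality is obtained (Lemma~\ref{lem:check1miss}) by a limiting argument: as $\gamma\to 1$, the derivative $S'(x,\textbf{u})$ converges to a series $G(x,\textbf{u})$ in powers of $b^{-1}$, and $G(\cdot,\textbf{0})$ fails to be $\Z$-periodic \emph{precisely} because $\psi=\phi'$ is not cohomologous to zero. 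You omit this cohomological condition entirely, and it is essential: when $\psi$ \emph{is} cohomologous to zero the argument above collapses, and the paper must conjugate (via results of \cite{BKRU}) to a new system with potential $\psi_1$ and iterate, using that a non-constant $C^1$ function is never infinitely cohomologous to zero.
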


Recently, Bara\'nsky, B\'ar\'any and Romanowska~\cite{BBR}, based on results of Ledrappier~\cite{L} and Tsujii~\cite{Tsu}, proved that for each integer $b\ge 2$, there is a number $\lambda_b\in (0,1)$ such that the Hausdorff dimension of the graph of $W_{\lambda, b}$ is equal to $D$ provided that $\lambda_b<\lambda<1$. Furthermore, given an integer $b\ge 2$, they proved that the graph of $f_{\lambda, b}^\phi$ has Hausdorff dimension $D$ for generic $(\lambda, \phi)$. We refer to~\cite{BBR} for other progress on this and related problems.
In order to prove our theorems, we have to introduce and verify a modified version of a transversality condition in ~\cite{Tsu} for all the cases.
%A new ingredient in the proof of our theorems is a modification of the argument in ~\cite{Tsu}.
The proof of Theorem~\ref{thm:gammalarge} also uses some results of~\cite{BKRU}.

The assumption that $b$ is an integer enables us to approach the problem using dynamical systems theory. Indeed, in this case, the graph of $f_{\lambda, b}^\phi$ can be interpreted as an invariant repeller for the expanding dynamical system $\Phi: \R/\Z\times \R\to \R/\Z\times \R$,
$$\Phi(x,y)=\left( bx\mod 1, \frac{y-\widehat{\phi} (x)}{\lambda}\right),$$
%and thus can be studied by the method of ergodic theory of smooth dynamical systems,
where $\widehat{\phi}:\R/\Z\to \R$ denote the map naturally induced by $\phi$. By method of ergodic theory of smooth dynamical systems,  Ledrappier~\cite{L}
reduced the problem on Hausdorff dimension of the graph of $f_{\lambda, b}^\phi$ to the study of local dimension of the measures $m_x$ defined below.

Let $\mathcal{A}=\{0,1,\ldots, b-1\}$, and consider the Bernoulli measure $\mathbb{P}$ on $\mathcal{A}^{\Z^+}$ with uniform probabilities $\{1/b, 1/b, \ldots, 1/b\}^{\Z^+}$. For $x\in\R$ and $\textbf{u}=\{u_n\}_{n=1}^\infty\in\mathcal{A}^{\Z^+}$, define \begin{equation}\label{eqn:Sxi}
S(x,\textbf{u})=\sum_{n=1}^\infty \gamma^{n-1} \psi\left(\frac{x}{b^n}+\frac{u_1}{b^n}+\cdots+\frac{u_n}{b}\right),
\end{equation}
where
\begin{equation}\label{eqn:gamma}
\gamma=\frac{1}{\lambda b}\text{ and }
\psi (x)=\phi'(x).
\end{equation}
These functions are, up to some multiplicative constant, the slope of the strong unstable manifolds of the expanding endomorphism $\Phi$.
For each $x\in \R$, let $m_x$ denote the Borel probability measure in $\R$ obtained as pushforward of the measure $\mathbb{P}$ by the function $\textbf{u}\mapsto S(x,\textbf{u})$.

We say that a Borel measure $\mu$ in a metric space $X$ has local dimension $d$ at a point $x\in X$, if $$\lim_{r\to 0} \frac{\log \mu(B_r(x))}{\log r} =d,$$
where $B_r(x)$ denotes the ball of radius $r$ centered at $x$. If the local dimension of $\mu$ exists and is equal to $d$ at $\mu$-a.e. $x$, then we say that $\mu$ has local dimension $d$ and write $\dim(\mu)=d$. It is well-known that if $\mu$ has local dimension $d$, then any Borel set of positive measure has Haudorff dimension at least $d$.

\begin{Ledthm}%[Ledrappier \cite{L}]\label{thm:Led}
Let $\phi:\R \to \R$ be a continuous, piecewise $C^{1+\alpha}$ and $\Z$-periodic function.
Assume that $\dim(m_x)=1$ holds for Lebesgue a.e. $x\in (0,1)$.
Then the Hausdorff dimension of the graph of $f_{\lambda, b}^\phi$ is equal to $D$.
\end{Ledthm}

To prove this theorem, Ledrappier studied the local dimension of the measure
%See~\cite{BBR}, Section 2 for an explanation of this result. Indeed, it is easy to show that the upper box dimension of the graph of $f_{\lambda, b}^\phi$ is %at most $D$, see for example Section 8.2 of~\cite{F}. so the difficult problem is to show that Hausdorff dimension is at least $D$.
%Let
$\mu=\mu_{\lambda,b}^\phi$ obtained as the lift of the Lebesgue measure on $[0,1]$ to the graph of $f_{\lambda, b}^{\phi}$.
%Then $\mu$ can be viewed as an invariant measure of $\Phi$.
Combining results of Ledreppier and Young~\cite{LY} with a variation of Marstrand's projection theorem, Ledrappier proved that $\dim(\mu)=D$, provided that $\dim(m_x)=1$ holds for Lebesgue almost every $x$. This proves that the Hausdorff dimension of the graph of $f_{\lambda,b}^\phi$ is at least $D$. As it is easy to see that the box dimension is at most $D$, the theorem follows. For the convenience of the readers not familiar with~\cite{LY}, we include a self-contained elementary proof of Ledrappier's Theorem in the appendix (assuming $\phi'$ has no discontinuity for simplicity). The proof is of course motivated by the original proof in~\cite{L}, but we also borrowed ideas in~\cite{K} where Keller gives an alternative proof of a weak version of Ledrappier's theorem.  Keller's version is indeed enough for our purpose, although he used notation quite different from us.

Clearly, if $m_x$ is absolutely continuous with respect to the Lebesgue measure on $\R$, then $\dim (m_x)=1$.
The case when $\phi(x)=\dist (x, \Z)$ and $b=2$ is a famous problem in harmonic analysis and was studied first in \cite{E}. In this case,
the absolute continuity of $m_x$ was established in ~\cite{S} for almost every $\gamma\in (1/2,1)$. See also~\cite{PS}.
In general, $m_x$'s are the conditional measures along vertical fibers of the unique SRB measure $\vartheta=\vartheta_{b,\gamma}^{\psi}$ of the skew product map $T:\R/\Z\times \R \to \R/\Z\times \R$,
\begin{equation}\label{eqn:mapT}
T(x,y)=\left(bx \mod 1, \gamma y+\widehat{\psi}(x)\right),
\end{equation}
where $\psi(x)$ and $\gamma$ are as above.
The map $T$ is an Anosov endomorphism and uniformly contracting along vertical fibers. The graph of the functions $x\mapsto S(x,\textbf{u})$ are the unstable manifolds. In~\cite{Tsu}, Tsujii posed some condition on the transversality of these unstable manifolds and showed that this condition implies absolute continuity of $m_x$ for almost every $x$ (and the absolutely continuity of the SRB measure $\vartheta$). Furthermore, for given $b$, he verified his condition for generic $(\gamma,\psi)$.

However, for given $\psi$ it is not easy to verify Tsujii's condition, if possible at all. In fact, it was a major step in the recent work~\cite{BBR} to verify that Tsujii's condition holds for $\psi(x)=-2\pi \sin (2\pi x)$ when $\lambda\in (\lambda_b,1)$. We shall show in Section 3 that Tsujii's condition is indeed satisfied when $b\ge 6$ for this particular $\psi$ and all $\lambda\in (1/b,1)$ (or equivalently, all $\gamma \in (1/b,1)$). To deal with the case $2\le b\le 5$, we shall pose a modified version of Tsujii's condition. We shall show that the new (weaker) condition is still enough to guarantee absolute continuity of $m_x$ for Lebesgue a.e. $x$. Then we verify this new condition and conclude the proof of the Main Theorem by Ledrappier's Theorem.

\begin{theorem}\label{thm:cossole}
Let $b\ge 2$ be an integer, let $\gamma\in (1/b,1)$ and let $\psi=-2\pi \sin (2\pi x)$.
%For any positive integer $b\ge 2$ and any real number $\gamma\in (1/b,1)$, the following the measures
Then the SRB measure $\vartheta$ for the map $T$ is absolutely continuous with respect to the Lebesgue measure on $\R/\Z\times \R$ and with a square integrable density. In particular, for Lebesgue a.e. $x\in\R$, the measure $m_x$ defined above is absolutely continuous with respect to Lebesgue measure and with a square integrable density.
%for Lebesgue almost every $x\in\R$ and the density SRB measure of the map $T:\T\times \R\to \T\times \R$ is absolutely continuous with respect to the Lebesgue %measure on $\R/\Z\times \R$ and the density function is in $L^2$.
\end{theorem}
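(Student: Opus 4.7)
The plan is to adapt Tsujii's transversality method from~\cite{Tsu}. Recall that the unstable manifolds of $T$ are precisely the graphs $\Gamma_{\mathbf{u}} = \{(x, S(x, \mathbf{u})) : x \in \R/\Z\}$, indexed by $\mathbf{u} \in \mathcal{A}^{\Z^+}$; Tsujii's observation is that if these graphs are, in a quantitative sense, pairwise transverse---so that not too many of them cluster near one another at small scales---then the SRB measure $\vartheta$ admits an $L^2$ density with respect to Lebesgue. Concretely, one realizes $\vartheta$ as a weak-$*$ limit of push-forwards $T_*^n \rho_0$ of a smooth reference density $\rho_0$, and bounds $\|T_*^n \rho_0\|_{L^2}^2$ by the expected number of essentially coincident unstable graphs meeting a fixed ball of radius $\sim \gamma^n$; uniformly in $n$, this yields the $L^2$ density of the limit.

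The first step is to formalize, following~\cite{Tsu}, a partition of the depth-$n$ symbolic space $\mathcal{A}^n$ into equivalence classes on which the graphs $\Gamma_{\mathbf{u}}$ agree up to error $\gamma^n$ over a chosen base interval; a bound of the form $e(n,\varepsilon) \leq C b^{(1-\eta)n}$ on the typical multiplicity of these classes, for some $\eta > 0$, translates via a direct second-moment computation into the desired $L^2$ estimate, and hence into absolute continuity with square integrable density. For $b \geq 6$ I expect Tsujii's original condition to be verifiable by a short direct argument in the case $\psi(x) = -2\pi\sin(2\pi x)$: the derivatives $\partial_x S(x,\mathbf{u})$ are themselves trigonometric sums of the same form, and a pigeonhole on their values, combined with the uniform lower bound $|\sin'|$ away from the finite critical set, should yield the needed multiplicity estimate uniformly in $\gamma \in (1/b, 1)$.

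The main obstacle is the range $2 \leq b \leq 5$, where the combinatorial budget is too tight and Tsujii's original condition genuinely fails. The plan is to introduce a weaker variant of the transversality condition in which the bound $e(n,\varepsilon) \leq C b^{(1-\eta)n}$ needs to hold only outside a small-measure ``bad'' set of base points $x$, with the contribution of the bad set reabsorbed at a later scale using the expansion of $T$ in the $x$-direction by the factor $b$. I would then prove a telescoping lemma showing that this weaker condition still implies a uniform $L^2$ bound on $T_*^n \rho_0$, and finally verify the weaker condition explicitly for $\psi(x) = -2\pi\sin(2\pi x)$ by exploiting the Fourier structure: a quantitative non-concentration estimate for the trigonometric sums $\partial_x S(x,\mathbf{u})$, handled case-by-case for each $b \in \{2,3,4,5\}$, should cover the full range $\gamma \in (1/b, 1)$. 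Absolute continuity of $\vartheta$ with $L^2$ density then gives, by disintegration along vertical fibers, the claimed regularity of the conditionals $m_x$ for a.e.\ $x$.
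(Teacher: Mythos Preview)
Your proposal follows essentially the same strategy as the paper: for $b \geq 6$ verify Tsujii's original transversality condition directly, and for $2 \leq b \leq 5$ introduce a weakened variant whose verification is feasible yet still forces an $L^2$ density via a modified second-moment iteration, then carry out the verification case by case using the explicit trigonometric form of $S(x,\mathbf{u})$ and $S'(x,\mathbf{u})$.

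The one substantive difference is in how the weaker condition is formalized. You phrase it as excluding a \emph{small-measure} bad set of base points and reabsorbing its contribution at a later scale. The paper does not use a measure-theoretic smallness argument; instead it introduces a bounded weight function $\omega:[0,1)\to(0,\infty)$ and an admissible testing function $V(x,\mathbf{u},\mathbf{v})$, and replaces Tsujii's $e(q)$ by
\[
\sigma(q)=\inf_{V,\omega}\bigl\|\Sigma_{V,\omega}\bigr\|_\infty,\qquad
\Sigma_{V,\omega}(x)=\sup_{\mathbf{u}\in\mathcal{A}^q}\frac{\omega(x)}{\omega(x(\mathbf{u}))}\sum_{\mathbf{v}\in\mathcal{A}^q}V(x,\mathbf{u},\mathbf{v}),
\]
proving that $\sigma(q)<(\gamma b)^q$ already gives the $L^2$ density. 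In the actual verifications the ``bad'' region (where $e(q,x)>1$) is \emph{not} small---for $b=2$ it occupies more than half of $[0,1)$---so a bare small-measure exclusion would not close. What makes the paper's iteration close is the combinatorial observation that whenever a tangency $(\mathbf{u},\mathbf{v})\in E(q,x)$ occurs at a bad point $x$, the preimages $x(\mathbf{u})$, $x(\mathbf{v})$ land in a region carrying higher weight, and the ratio $\omega(x)/\omega(x(\mathbf{u}))$ exactly offsets the multiplicity; this yields the clean inequality $I_r^*\le \|\Sigma_{V,\omega}\|_\infty(\gamma b)^{-q}I_{\gamma^{-q}r}$ on the weighted energy $I_r=\int_0^1\omega(x)\|m_x\|_r^2\,dx$. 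Your ``reabsorb via expansion'' intuition is exactly this mechanism, but the weight/testing-function bookkeeping, not Lebesgue smallness, is what makes it rigorous.
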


In the next section, we modify Tsujii's transversality condition. In particular, we shall define a new number $\sigma (q)$ to replace the number $e(q)$ in Tsujii's work. We shall prove Theorem~\ref{thm:gammalarge} and state the plan of  the proof of Theorem~\ref{thm:cossole} in that section. Sections 2-5 are devoted to the proof of Theorem~\ref{thm:cossole}. In the appendix, Section 6, we provide a proof of Ledrappier's theorem.
%The rest of the paper is devoted to verification of this modified condition in the setting of Theorem~\ref{thm:cossole}.

\medskip
{\bf Acknowledgment.} I would like to thank D. Feng, W. Huang and J. Wu for drawing my attention to the recent work \cite{BBR}. I would also like to thank H. Ruan and Y. Wang for reading carefully a first version of the manuscript and pointing out a number of errors.

\section{Tsujii's transversality condition on fat solenoidal attractors}
In this section, we study a map $T$ of the form (\ref{eqn:mapT}), where $b\ge 2$ is an integer, $b^{-1}<\gamma<1$ and $\psi$ is a $\Z$-periodic $C^1$ function.
These maps were studied in~\cite{Tsu} from measure-theoretical point of view, and in~\cite{BKRU} from topological point of view. In~\cite{Tsu}, Section 2, it was shown that $T$ has a unique SRB measure $\vartheta$, for which Lebesgue almost every point $(x,y)$ in $\R/\Z\times \R$ is a generic point, i.e.
$$\frac{1}{n}\sum_{i=0}^{n-1}\delta_{T^i(x,y)}\to \vartheta \text{ as } n\to\infty,$$
in the weak star topology, where $\delta_\cdot$ denote the Dirac measure. The measure $\vartheta$ has an explicit expression through the measures $m_x$ defined in the introduction: identifying $\R/\Z$ with $[0,1)$ in the natural way, for each Borel set $B\subset \R/\Z\times \R$,
$$\vartheta(B)=\int_0^1 m_x(B_x) dx,$$
where $B_x=\{y\in\R: (x,y)\in B\}.$
We are interested in the absolute continuity of the SRB measure $\vartheta$, or equivalently, the absolute continuity of $m_x$ for Lebesgue almost every $x$.
In ~\cite{Tsu}, Tsujii posed some condition on the transversality of the graphs of the functions $S(x,\textbf{u})$ (which are understood as unstable manifolds of $T$) which guarantees the absolute continuity of $\vartheta$.

In this section, we introduce a modified version of Tsujii's condition and show that the weaker condition already implies absolute continuity of $\vartheta$.  We shall prove Theorem~\ref{thm:gammalarge} by verifying the modified condition.

\medskip
{\bf Notation.} For each $x\in\R$ and $(u_1u_2\cdots u_q)\in \mathcal{A}^q$, let $$x(\textbf{u})=\frac{x+ u_1 +u_2 b+\cdots + u_q b^{q-1}}{b^q}.$$
We use $S'(x,\textbf{u})$ to denote the derivative of $S(x,\textbf{u})$ regarded as a function of $x$.

\subsection{Transversality}
We say that two words $\textbf{i},\textbf{j}\in \mathcal{A}^{Z^+}$ are {\em $(\eps,\delta)$-transversal} at a point $x_0\in\R$  if one of the following holds:
$$|S(x_0, \textbf{i})-S(x_0,\textbf{j})|> \eps \text{ or }
\left|S'(x_0, \textbf{i})-S'(x_0,\textbf{j})\right|> \delta.$$
Otherwise, we say that $\textbf{i}$ and $\textbf{j}$ are {\em $(\eps,\delta)$-tangent at $x_0$.}
Let $E(q, x_0; \eps,\delta)$ denote the set of pairs $(\textbf{k},\textbf{l})\in\mathcal{A}^q\times \mathcal{A}^q$ for which there exist $\textbf{u}, \textbf{v}\in\mathcal{A}^{\Z^+}$ such that $\textbf{k}\textbf{u}$ and $\textbf{l}\textbf{v}$ are $(\eps,\delta)$-tangent at $x_0$.
%and %for any $J\subset \R$, let
Let $$E(q, x_0)=\bigcap_{\eps>0}\bigcap_{\delta>0} E(q, x_0; \eps,\delta)$$ and
$$e(q,x_0)=\max_{\textbf{k}\in \mathcal{A}^q} \#\{\textbf{l}\in\mathcal{A}^q: (\textbf{k},\textbf{l})\in E(q, x_0)\}.$$
%Note that $(\textbf{k},\textbf{l})\in E(q, x_0)$ if and only if there exist $\textbf{u}$ and $\textbf{v}$ in $\mathcal{A}^{\Z^+}$ such that %$S(x,\textbf{ku})-S(x, \textbf{lv})$ has a multiple zero at $x_0$.

For $J\subset \R$, define $$E(q, J;\eps,\delta)=\bigcup_{x_0\in J} E(q,x_0;\eps,\delta),$$
$$E(q,J)=\bigcap_{\eps>0}\bigcap_{\delta>0} E(q, J;\eps,\delta)$$ and
$$e(q, J)=\max_{\textbf{k}\in\mathcal{A}^q} \# \{\textbf{l}\in\mathcal{A}^q: (\textbf{k},\textbf{l})\in E(q,J)\}.$$ %in the obviously way.
Tsujii's notation $e(q)$ is defined as
$$e(q)=\lim_{p\to\infty} \max_{k=0}^{b^p-1} e\left(q, \left[\frac{k}{b^p},\frac{k+1}{b^p}\right]\right).$$
The following was proved in~\cite{Tsu}, see Proposition 8 in Section 4.
\begin{theorem}[Tsujii] \label{thm:Tsujii}
If there exists a positive integer $q$ such that $e(q)< (\gamma b)^q$, then the SRB measure $\vartheta$ is absolutely continuous with respect to the Lebesgue measure on $\R/\Z\times \R$ with square integrable density. In particular, for Lebesgue a.e. $x\in [0,1)$, $m_x$ is absolutely continuous with respect to the Lebesgue measure on $\R$ and with square integrable density.
\end{theorem}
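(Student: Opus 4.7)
The plan is to prove that the smoothed densities $h_{x,r}:=m_x*\chi_r$ are uniformly bounded in $L^2(\R/\Z\times\R)$ as $r\to 0$, where $\chi_r(y)=r^{-1}\chi(y/r)$ is a standard mollifier. Any weak-$*$ limit point is then an $L^2$ density for $\vartheta$, and Fubini applied to $d\vartheta(x,y)=dm_x(y)\,dx$ gives the stated absolute continuity of $m_x$ for a.e. $x$. The engine is the self-similar recursion
$$S(x,\mathbf{k}\mathbf{u})=P_q(x,\mathbf{k})+\gamma^q\, S(x(\mathbf{k}),\mathbf{u}),\qquad \mathbf{k}\in\mathcal{A}^q,\ \mathbf{u}\in\mathcal{A}^{\Z^+},$$
obtained by iterating $q$ times the one-step relation $S(x,i\mathbf{i})=\widehat{\psi}((x+i)/b)+\gamma\, S((x+i)/b,\mathbf{i})$, with an explicit partial sum $P_q$. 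Pushing forward $\mathbb{P}$ gives $m_x=b^{-q}\sum_{\mathbf{k}}(A_{x,\mathbf{k}})_* m_{x(\mathbf{k})}$ with the affine contraction $A_{x,\mathbf{k}}(y)=\gamma^q y+P_q(x,\mathbf{k})$; writing $r'=r/\gamma^q$, convolution with $\chi_r$ yields
$$h_{x,r}(y)=\frac{1}{(b\gamma)^q}\sum_{\mathbf{k}\in\mathcal{A}^q} h_{x(\mathbf{k}),r'}\!\left(\frac{y-P_q(x,\mathbf{k})}{\gamma^q}\right).$$

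Define $N(r):=\int_0^1\|h_{x,r}\|_{L^2(\R)}^2\,dx$. Squaring the identity, changing variables $y\mapsto(y-P_q(x,\mathbf{k}))/\gamma^q$, and integrating in $x$ produces
$$N(r)=\frac{1}{b^{2q}\gamma^q}\sum_{\mathbf{k},\mathbf{l}\in\mathcal{A}^q}\int_0^1\!\int_\R h_{x(\mathbf{k}),r'}(y')\,h_{x(\mathbf{l}),r'}(y'+\Delta_{\mathbf{k},\mathbf{l}}(x))\,dy'\,dx,$$
with $\Delta_{\mathbf{k},\mathbf{l}}(x)=(P_q(x,\mathbf{k})-P_q(x,\mathbf{l}))/\gamma^q$. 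Fix a partition $[0,1)=\bigsqcup_j J_j$ into intervals of length $b^{-p}$, with $p$ so large that $\max_j e(q,J_j)\le e(q)+o(1)$ as $p\to\infty$. On each $J_j$ classify pairs $(\mathbf{k},\mathbf{l})$ as \emph{tangent} if $(\mathbf{k},\mathbf{l})\in E(q,J_j)$ and \emph{transverse} otherwise. For tangent pairs, apply Cauchy--Schwarz to the $y'$-integral and then to the $x$-integral, and change variables $x\mapsto u=x(\mathbf{k})$ with Jacobian $b^{-q}$, giving per-pair bound $b^q\sqrt{\alpha_\mathbf{k}\alpha_\mathbf{l}}$, where $\alpha_\mathbf{k}:=\int_{I_\mathbf{k}}\|h_{u,r'}\|_2^2\,du$ with $I_\mathbf{k}\subset[0,1)$ the range of $x(\mathbf{k})$, so that $\sum_\mathbf{k}\alpha_\mathbf{k}=N(r')$. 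The $0$--$1$ tangency incidence matrix on $J_j$ is symmetric with row sums $\le e(q,J_j)$, hence operator norm $\le e(q,J_j)$; summing via $\sum M_{\mathbf{k},\mathbf{l}}\sqrt{\alpha_\mathbf{k}\alpha_\mathbf{l}}\le\|M\|_{op}\sum_\mathbf{k}\alpha_\mathbf{k}$ yields total tangent contribution $\le(e(q)+o(1))(b\gamma)^{-q}\,N(r/\gamma^q)$.

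For a transverse pair on $J_j$, continuity and compactness supply fixed $\eps,\delta>0$ such that for every $x\in J_j$ and every $(\mathbf{u},\mathbf{v})$ either $|S(x,\mathbf{k}\mathbf{u})-S(x,\mathbf{l}\mathbf{v})|>\eps$ or $|S'(x,\mathbf{k}\mathbf{u})-S'(x,\mathbf{l}\mathbf{v})|>\delta$. Using $(S(x,\mathbf{k}\mathbf{u})-S(x,\mathbf{l}\mathbf{v}))/\gamma^q=S(x(\mathbf{k}),\mathbf{u})-S(x(\mathbf{l}),\mathbf{v})+\Delta_{\mathbf{k},\mathbf{l}}(x)$, the inner $y'$-integral equals, up to the smoothing scale, the $\mathbb{P}\otimes\mathbb{P}$-mass of $(\mathbf{u},\mathbf{v})$ with $|S(x,\mathbf{k}\mathbf{u})-S(x,\mathbf{l}\mathbf{v})|\lesssim r$; on this event the first alternative fails (for $r<\eps/2$), and a coarea-in-$x$ argument using the derivative lower bound $\delta$ gives per-pair contribution $O(1)$, summing to a constant $C$ independent of $r$ over the $O(b^{2q})$ transverse pairs. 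Thus $N(r)\le c\,N(r/\gamma^q)+C$ with $c=(e(q)+o(1))(b\gamma)^{-q}<1$ by hypothesis. The crude estimate $N(s)\lesssim 1/s$ keeps $N$ finite at every scale, and iterating the inequality backwards in $r$ until reaching a scale of order $1$ yields $\sup_{r>0}N(r)<\infty$, proving the theorem. The main obstacle is the transverse estimate: upgrading pointwise $(\eps,\delta)$-transversality to a usable lower bound on $|\partial_x(S(x,\mathbf{k}\mathbf{u})-S(x,\mathbf{l}\mathbf{v}))|$ on the sublevel set demands a decomposition of $J_j$ according to the dominant alternative and careful use of $S'(x,\mathbf{k}\mathbf{u})=P_q'(x,\mathbf{k})+(\gamma/b)^q S'(x(\mathbf{k}),\mathbf{u})$; this symmetry-breaking between position and derivative is precisely where the modification $e(q)\rightsquigarrow\sigma(q)$ introduced later in the paper promises additional leverage.
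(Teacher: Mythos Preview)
Your approach is correct and is essentially the same as Tsujii's original argument, which the paper cites rather than reproves (the paper instead proves the generalization, Theorem~\ref{thm:smallw2ac}, by the same scheme). The quantity $N(r)=\int_0^1\|m_x*\chi_r\|_2^2\,dx$ is, up to a constant, the paper's $I_r=r^{-2}\int_0^1\|m_x\|_r^2\,dx$ with $\|\rho\|_r^2=\int_\R\rho(B(y,r))^2\,dy$, since $\rho(B(y,r))=2r\,(\rho*\chi_r)(y)$ for the box mollifier. Your tangent/transverse split and the recursion $N(r)\le c\,N(r/\gamma^q)+C$ match Lemmas~\ref{lem:tranest} and~\ref{lem:parallelest} exactly. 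For the tangent part, your operator-norm bound on the symmetric $0$--$1$ incidence matrix (row sums $\le e(q,J_j)$) is equivalent to the paper's pointwise inequality $2\theta_{kl}a_ka_l\le V_{kl}a_k^2+V_{lk}a_l^2$ specialized to $V\equiv 1$ on $E(q,x;\eps,\delta)$ and $\omega\equiv 1$; the only organizational difference is that the paper does this pointwise in $x$ and reserves the $b^{-p}$-partition for the transverse estimate alone. The transverse bound you sketch is Proposition~6 of Tsujii's paper, and your coarea heuristic is the right idea.

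One correction to your closing remark: the refinement $e(q)\rightsquigarrow\sigma(q)$ in the paper does \emph{not} touch the transverse estimate. The gain comes entirely from the tangent part, by replacing your symmetric $0$--$1$ incidence matrix with an asymmetric testing function $V(x,\mathbf{u},\mathbf{v})$ (subject only to $V_{\mathbf{u}\mathbf{v}}V_{\mathbf{v}\mathbf{u}}\ge 1$ on tangent pairs) and a position weight $\omega(x)$. This exploits the fact that the number of tangencies may depend on $x$ and $\mathbf{u}$, and that $x(\mathbf{u})$ may land in a region with fewer tangencies at the next iteration; it has nothing to do with the position/derivative dichotomy in the transverse case.
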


{\em Remark.} It is obvious that $e(q)\ge e(q, x_0)$ for all $x_0\in [0,1)$. Indeed, by Porposition~\ref{prop:compact} and Lemma~\ref{lem:addingmachine}, one can prove $e(q)=\max_{x\in [0,1)} e(q,x)=\max_{x\in\R} e(q,x)$, although we do not need this fact.

%After looking into the proof of the last theorem, we find it natural to introduce a new notion $w(q)$ to replace $e(q)$. The quantity $w(q)$ is significantly easier to check, as least for the case $b=2$ and $\psi=-2\pi \sin (2\pi x)$.
We are going to define $\sigma(q)$.
Let us say that a measurable function $\omega: [0,1)\to (0,\infty)$ is a {\em weight function} if $\|\omega\|_\infty<\infty$ and $\|1/\omega\|_\infty<\infty$. A {\em testing function of order $q$} is a measurable function $V: [0,1)\times \mathcal{A}^q \times \mathcal{A}^q\to [0,\infty)$.
A testing function of order $q$ is called {\em admissible } if there exist $\eps>0$ and $\delta>0$ such that the following hold:
For any $x\in [0,1)$, if $(\textbf{u},\textbf{v})\in E(q,x; \eps, \delta)$, then
$$V(x, \textbf{u},\textbf{v}) V(x, \textbf{v}, \textbf{u})\ge 1.$$
So in particular, we have $V(x,\textbf{u},\textbf{u})\ge 1$ for each $x\in [0,1)$ and each $\textbf{u}\in\mathcal{A}^q$.

Given a weight function $\omega$ and an admissible testing function $V$ of order $q$, define a new measurable function $\Sigma_{V,\omega}^q: [0,1)\to \R$ as follows: For each $x\in [0,1)$, let
$$\Sigma^q_{V,\omega}(x)=\sup\left\{ \frac{\omega(x)}{\omega(x(\textbf{u}))}  \sum_{\textbf{v}\in\mathcal{A}^q} V(x, \textbf{u}, \textbf{v}): \textbf{u}\in\mathcal{A}^q\right\}.$$
Define $$\sigma(q)=\inf \|\Sigma^q_{V,\omega}\|_\infty,$$
where the infimum is taken over all weight functions $\omega$ and admissible testing functions $V$ of order $q$.
In \S~\ref{subsec:improveTsujii}, we shall prove the following theorem:
\begin{theorem} \label{thm:smallw2ac}
If there exists an integer $q\ge 1$ such that $\sigma (q)< (\gamma b)^q$ then the SRB measure $\vartheta$ is absolutely continuous
with respect to the Lebesgue measure on $\R/\Z\times \R$ with square integrable density. In particular, for Lebesgue a.e. $x\in [0,1)$, $m_x$ is absolutely continuous with respect to the Lebesgue measure on $\R$ and with square integrable density.
\end{theorem}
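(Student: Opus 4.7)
The plan is to adapt Tsujii's proof of Theorem~\ref{thm:Tsujii} (\cite{Tsu}, Section~4), replacing the counting quantity $e(q)$ by the weighted quantity $\sigma(q)$ at the key step that controls tangent pairs of inverse branches. Concretely, I will establish a uniform $L^2$ bound on the iterates $h_n := \cL^{qn} h_0$ of a bounded reference density $h_0$ (say $h_0 = \mathbf{1}_{[0,1)\times K}$ for a large compact $K \subset \R$ containing the attractor) under the Perron--Frobenius operator $\cL$ of $T$. Together with weak-$\ast$ convergence $h_n\,dx\,dy \to \vartheta$, such a bound forces $\vartheta$ to be absolutely continuous with square-integrable density; the statement for $m_x$ then follows from the disintegration $\vartheta(B) = \int m_x(B_x)\,dx$ recalled earlier in this section.

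Fix $\omega$ and an admissible $V$ of order $q$ with $\|\Sigma_{V,\omega}\|_\infty < (\gamma b)^q$, and work with the equivalent norm $\|h\|_\omega^2 := \int \omega(x)^{-1} h(x,y)^2\, dx\, dy$. Expanding $\|\cL^q h\|_\omega^2$ produces a double sum over pairs $(\textbf{u},\textbf{v}) \in \cA^q \times \cA^q$ of inverse-branch words; I split it according to whether the pair is $(\eps,\delta)$-transverse or $(\eps,\delta)$-tangent at the relevant base point. The transverse contribution is controlled uniformly in $h$ by the geometric argument of \cite{Tsu}: two transversal unstable leaves over the same base either separate vertically by $\gtrsim \eps\gamma^q$ or cross with a slope gap $\gtrsim \delta$, forcing the pairing integral, after integration in $y$, to be $O(1)$ uniformly in $n$.

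For the tangent contribution, admissibility of $V$ gives $\sqrt{V(x,\textbf{u},\textbf{v})V(x,\textbf{v},\textbf{u})} \geq 1$, so AM--GM furnishes, for every positive parameter $r_{\textbf{u},\textbf{v}}$,
\[
h(T_\textbf{u}^{-q}(x,y))\,h(T_\textbf{v}^{-q}(x,y)) \;\leq\; \tfrac{1}{2}\bigl(r_{\textbf{u},\textbf{v}}\,V(x,\textbf{u},\textbf{v})\,h(T_\textbf{u}^{-q})^2 + r_{\textbf{u},\textbf{v}}^{-1}\,V(x,\textbf{v},\textbf{u})\,h(T_\textbf{v}^{-q})^2\bigr).
\]
Tuning $r_{\textbf{u},\textbf{v}}$ so as to encode the weight ratio $\omega(x)/\omega(x(\textbf{u}))$, symmetrizing in $(\textbf{u},\textbf{v})$, and performing the standard change of variables $x \mapsto x(\textbf{u})$ (Jacobian $b^q$) and $y \mapsto$ second coordinate of $T_\textbf{u}^{-q}(x,y)$ (Jacobian $\gamma^q$), the tangent contribution is bounded by $(\gamma b)^{-q}\|\Sigma_{V,\omega}\|_\infty\,\|h\|_\omega^2$: this is exactly the quantity extracted by the supremum defining $\Sigma_{V,\omega}$.

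Combining the two estimates yields the recursion $\|\cL^q h\|_\omega^2 \leq \theta\|h\|_\omega^2 + C$ with $\theta = (\gamma b)^{-q}\|\Sigma_{V,\omega}\|_\infty < 1$, iterating to a uniform bound on $\|h_n\|_\omega$ and hence on $\|h_n\|_{L^2}$. The main obstacle will be the correct tuning of the free AM--GM parameter together with a rigorous passage through piecewise constant approximations of $h$ on depth-$q$ cylinders, so that the tangent/transverse dichotomy can be invoked at a pointwise $x$-argument matching the point at which $h$ is sampled. This is essentially the same bookkeeping as in \cite{Tsu}, and the single new analytical input is the AM--GM step above, which converts the naive count of tangent branches into the weighted supremum $\Sigma_{V,\omega}$.
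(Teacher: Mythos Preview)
Your core algebraic insight is right and matches the paper exactly: for tangent pairs one uses the Cauchy--Schwarz/AM--GM inequality in the form $2ab \le V_{uv}\,a^{2} + V_{vu}\,b^{2}$, valid precisely because admissibility gives $V_{uv}V_{vu}\ge 1$. This is indeed the only new analytical input beyond Tsujii.

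However, the paper packages the argument differently. Rather than iterating the transfer operator on densities, it works directly with the (a priori possibly singular) conditional measures $m_x$ and the scale-$r$ energies
\[
I_r \;=\; \frac{1}{r^2}\int_0^1 \omega(x)\,\|m_x\|_r^2\,dx,
\]
splits $I_r = I_r^o + I_r^{*}$ into transverse and tangent parts, and derives the recursion $I_r \le C + \beta\, I_{\gamma^{-q} r}$ with $\beta = \|\Sigma_{V,\omega}\|_\infty/(\gamma b)^q < 1$, iterating over \emph{scales} rather than over iterates of $\cL$. The advantage is twofold: Tsujii's transverse bound (Proposition~6 in \cite{Tsu}) is stated for the pairing $(\cdot,\cdot)_r$ of measures and applies verbatim to give $I_r^o\le C$; and the contraction of $T$ in the fiber translates cleanly into $\|T^q m_{x(\textbf{u})}\|_r^2 = \gamma^q \|m_{x(\textbf{u})}\|_{\gamma^{-q}r}^2$, which is what produces the change of scale.

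Your transfer-operator route is plausible but has loose ends. First, the weight in the norm should be $\omega$, not $\omega^{-1}$: after the AM--GM step and the change of variables $x\mapsto x(\textbf{u})$, the factor that must be bounded is $\frac{\omega(x)}{\omega(x(\textbf{u}))}\sum_{\textbf{v}} V(x,\textbf{u},\textbf{v})$, which is what $\Sigma_{V,\omega}$ controls; with $\omega^{-1}$ you get the reciprocal ratio. Second, the extra parameter $r_{\textbf{u},\textbf{v}}$ is redundant---the inequality $2ab \le V_{uv}a^2 + V_{vu}b^2$ already encodes everything, and the weight $\omega$ enters through the norm, not through the AM--GM step. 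Third, and most substantively, Tsujii's transverse estimate is formulated for the scale-$r$ pairing of measures, not for $L^2$ pairings of iterated densities $h_n$; claiming that the transverse contribution is ``$O(1)$ uniformly in $n$'' for your $\|\cL^{qn}h_0\|_\omega^2$ requires a separate argument (the densities $h_n$ need not stay bounded in $L^\infty$), whereas in the $I_r$ framework this is exactly what Tsujii already proved.
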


The parameter $\sigma (q)$ takes into account the fact that the number $$\#\{\textbf{v}: (\textbf{u},\textbf{v})\in E(q,x;\eps,\delta)\}$$ may depend on $x$ and $\textbf{u}$ in a significant way. On the other hand, the parameter $e(q)$ is the supremum of such  numbers over all possible choices of $x$ and $\textbf{u}$.

\begin{lemma} $\sigma(q)\le e(q)$.
\end{lemma}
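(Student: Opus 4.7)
The plan is to witness $\sigma(q)\le e(q)$ by an explicit choice of weight function and admissible testing function. I take $\omega\equiv 1$ (trivially a valid weight) and, for parameters $\eps,\delta>0$ to be fixed below, set
\[
V(x,\textbf{u},\textbf{v}):=\mathbf{1}_{E(q,x;\eps,\delta)}(\textbf{u},\textbf{v}).
\]
The $(\eps,\delta)$-tangency condition is symmetric in its two arguments, so the relation $E(q,x;\eps,\delta)$ is symmetric and hence $V(x,\textbf{u},\textbf{v})=V(x,\textbf{v},\textbf{u})$. In particular $V(x,\textbf{u},\textbf{v})V(x,\textbf{v},\textbf{u})\ge 1$ whenever $(\textbf{u},\textbf{v})\in E(q,x;\eps,\delta)$, so $V$ is admissible for these parameters. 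Measurability of $V$ in $x$ is a short compactness check: the infimum defining $(\eps,\delta)$-tangency is realised on the compact product space $\mathcal{A}^{\Z^+}\times\mathcal{A}^{\Z^+}$ and depends continuously on $x$, so the corresponding level set is closed.

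With this $\omega$ and $V$,
\[
\Sigma_{V,\omega}(x)=\sup_{\textbf{u}\in\mathcal{A}^q}\#\{\textbf{v}\in\mathcal{A}^q:(\textbf{u},\textbf{v})\in E(q,x;\eps,\delta)\},
\]
and the task reduces to arranging $\eps,\delta$ so that this quantity is at most $e(q)$ for every $x\in[0,1)$. I first exploit an obvious monotonicity: if $J'\subset J$ then $E(q,J';\eps,\delta)\subset E(q,J;\eps,\delta)$ and so $E(q,J')\subset E(q,J)$, hence $M_p:=\max_{0\le k<b^p}e(q,[k/b^p,(k+1)/b^p])$ is a non-increasing sequence of non-negative integers. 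It therefore stabilises, and I fix $p_0$ with $M_{p_0}=e(q)$.

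Now I promote the pointwise limits $E(q,J_k)=\bigcap_{\eps,\delta>0}E(q,J_k;\eps,\delta)$ to a uniform choice. For each of the finitely many intervals $J_k:=[k/b^{p_0},(k+1)/b^{p_0})$, the finite set $E(q,J_k)\subset\mathcal{A}^q\times\mathcal{A}^q$ is the intersection of the decreasing family $\{E(q,J_k;\eps,\delta)\}_{\eps,\delta>0}$ inside a finite ambient set, so there exist $\eps_k,\delta_k>0$ with $E(q,J_k;\eps_k,\delta_k)=E(q,J_k)$. Setting $\eps:=\min_k\eps_k>0$ and $\delta:=\min_k\delta_k>0$ (minima over finitely many indices) gives $E(q,J_k;\eps,\delta)=E(q,J_k)$ for every $k$. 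For any $x_0\in[0,1)$, pick $k$ with $x_0\in J_k$; then
\[
E(q,x_0;\eps,\delta)\subset E(q,J_k;\eps,\delta)=E(q,J_k),
\]
so $\Sigma_{V,\omega}(x_0)\le e(q,J_k)\le M_{p_0}=e(q)$. This forces $\|\Sigma_{V,\omega}\|_\infty\le e(q)$, and $\sigma(q)\le e(q)$ follows from the definition. The only real subtlety is passing from the pointwise stabilisations $E(q,J_k;\eps_k,\delta_k)=E(q,J_k)$ to a single $(\eps,\delta)$ valid uniformly; this is exactly where the finiteness of the dyadic cover at level $p_0$ is used, and there is no substantive obstacle beyond this.
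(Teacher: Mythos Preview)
Your proof is correct and follows essentially the same approach as the paper: both take $\omega\equiv 1$ and $V(x,\textbf{u},\textbf{v})=\mathbf{1}_{E(q,x;\eps,\delta)}(\textbf{u},\textbf{v})$. The only difference is that the paper writes ``letting $\eps,\delta\to 0$, we obtain $\sigma(q)\le e(q)$'' without further justification, whereas you make this step explicit by observing that the integer-valued sequence $M_p$ stabilises and then selecting a single uniform $(\eps,\delta)$ via the finite $b$-adic cover at that level; this is a more careful write-up of the same idea rather than a different route.
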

\begin{proof} Fix $\eps, \delta>0$. Let $\omega=1$ be the constant weight function. For each $x\in [0,1)$, define
$$V(x,\textbf{u},\textbf{v})=\left\{
\begin{array}{ll}
1, & \mbox{ if } (\textbf{u}, \textbf{v})\in E(q, x; \eps,\delta);\\
&\\
0, &\mbox{ otherwise.}
\end{array}
\right.
$$
Then for any $x\in [0,1)$ and $\textbf{u}\in\mathcal{A}^q$, we have
$$\frac{\omega(x)}{\omega(x(\textbf{u}))}\sum_{\textbf{v}} V(x, \textbf{u}, \textbf{v})= \# \{\textbf{v}: (\textbf{u},\textbf{v})\in E(q, x;\eps,\delta)\}.$$
Thus
$$\sigma(q)\le \|\Sigma_{V,\omega}\|_\infty \le \sup_{x\in [0,1),\textbf{u}\in\mathcal{A}^q} \#\{\textbf{v}: \textbf{u},\textbf{v}\in E(q,x;\eps,\delta)\}.$$
Letting $\eps,\delta\to 0$, we obtain $\sigma(q)\le e(q)$.
\end{proof}

The following proposition collects a few facts about the quantifiers in the transversality conditions.
\begin{prop}\label{prop:compact}
For $\textbf{k},\textbf{l}\in\mathcal{A}^q$, the following hold:
\begin{enumerate}
\item For any $x_0\in\R$, $(\textbf{k},\textbf{l})\in E(q, x_0)$ if and only if there exist $\textbf{u}$ and $\textbf{v}$ in $\mathcal{A}^{\Z^+}$ such that $S(x,\textbf{ku})-S(x, \textbf{lv})$ has a multiple zero at $x_0$.
\item If $(\textbf{k},\textbf{l})\not\in E(q,x_0)$, then there is a neighborhood $U$ of $x_0$ and $\eps,\delta>0$, such that $(\textbf{k},\textbf{l})\not\in E(q,U;\eps,\delta)$.
\item For any compact $K\subset \R$, if $(\textbf{k},\textbf{l})\not\in E(q, K)$, then there exist $\eps,\delta>0$ such that $(\textbf{k},\textbf{l})\not\in E(q,K;\eps,\delta)$.
\item For any $\eps>\eps'>0,\delta>\delta'>0$ there exists $\eta>0$ such that if $|x-x_0|<\eta$, $(\textbf{k},\textbf{l})\not\in E(q,x_0;\eps,\delta)$ then
$(\textbf{k},\textbf{l})\not\in E(q,x;\eps',\delta')$.
\end{enumerate}
\end{prop}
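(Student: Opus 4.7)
The plan is to reduce the four claims to two elementary facts about $S$ and $S'$: joint continuity in $(x,\textbf{i})\in\R\times\mathcal{A}^{\Z^+}$ (product topology on $\mathcal{A}^{\Z^+}$), together with Lipschitz control of $S(\cdot,\textbf{i})$ in $x$ and uniform continuity of $S'(\cdot,\textbf{i})$ in $x$, in each case with bounds independent of $\textbf{i}$.

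Since $\psi\in C^1$ is $\Z$-periodic, both $\psi$ and $\psi'$ are bounded and uniformly continuous. Differentiating the series defining $S(x,\textbf{i})$ termwise yields
\[
S'(x,\textbf{i}) = \sum_{n\ge 1}\gamma^{n-1} b^{-n}\,\psi'\!\left(\tfrac{x}{b^n}+\tfrac{i_1}{b^n}+\cdots+\tfrac{i_n}{b}\right),
\]
and both series converge absolutely and uniformly in $(x,\textbf{i})$ because $\gamma<1$ and $\gamma/b<1$. Hence $S$ and $S'$ are jointly continuous on $\R\times\mathcal{A}^{\Z^+}$; $S(\cdot,\textbf{i})$ is Lipschitz in $x$ with constant independent of $\textbf{i}$; and $S'(\cdot,\textbf{i})$ admits a modulus of continuity in $x$ independent of $\textbf{i}$ (inherited from that of $\psi'$). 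This is the only place the $C^1$ hypothesis on $\psi$ enters.

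For (1), compactness of $\mathcal{A}^{\Z^+}$ in the product topology does the work: if $(\textbf{k},\textbf{l})\in E(q,x_0)$, for each $n\ge 1$ pick witnesses $\textbf{u}_n,\textbf{v}_n$ with the two differences at $x_0$ both bounded by $1/n$, and let $(\textbf{u},\textbf{v})$ be any subsequential limit; continuity of $S(x_0,\cdot)$ and $S'(x_0,\cdot)$ in $\textbf{i}$ then forces $h(x):=S(x,\textbf{k}\textbf{u})-S(x,\textbf{l}\textbf{v})$ to satisfy $h(x_0)=h'(x_0)=0$, i.e.\ to have a multiple zero at $x_0$. The converse is immediate since any such pair witnesses $(\textbf{k},\textbf{l})\in E(q,x_0;\eps,\delta)$ for all $\eps,\delta>0$.

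Claim (4) is then a triangle-inequality argument: given witnesses $\textbf{u},\textbf{v}$ for $(\textbf{k},\textbf{l})\in E(q,x;\eps',\delta')$, the estimates from the second paragraph yield that changing $x$ to $x_0$ increases the $S$- and $S'$-differences by at most $O(|x-x_0|)$ uniformly in $\textbf{u},\textbf{v}$; choosing $\eta$ small enough in terms of $\eps-\eps'$ and $\delta-\delta'$ places $(\textbf{k},\textbf{l})$ in $E(q,x_0;\eps,\delta)$, and contraposition gives (4). Part (2) is an immediate corollary: non-membership in $E(q,x_0)=\bigcap_{\eps,\delta>0}E(q,x_0;\eps,\delta)$ yields some $(\eps_0,\delta_0)$ with $(\textbf{k},\textbf{l})\notin E(q,x_0;\eps_0,\delta_0)$, and (4) applied with $(\eps',\delta')=(\eps_0/2,\delta_0/2)$ supplies the neighborhood. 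Finally, (3) is a finite-subcover argument on top of (2): for each $x_0\in K$ take the neighborhood $U_{x_0}$ and parameters $\eps_{x_0},\delta_{x_0}$ given by (2), extract a finite subcover $U_{x_1},\ldots,U_{x_N}$ of $K$, and let $\eps=\min_i\eps_{x_i}$, $\delta=\min_i\delta_{x_i}$; these parameters satisfy $(\textbf{k},\textbf{l})\notin E(q,x;\eps,\delta)$ for every $x\in K$, so $(\textbf{k},\textbf{l})\notin E(q,K;\eps,\delta)$. The whole argument is essentially formal, with no serious obstacle; the only care required is to use a modulus of continuity rather than a Lipschitz constant for $S'$, so that the $C^1$ assumption on $\psi$ genuinely suffices.
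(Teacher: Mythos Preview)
Your proof is correct and follows essentially the same approach as the paper: both rest on compactness of $\mathcal{A}^{\Z^+}$ together with the uniform-in-$\textbf{i}$ continuity of $S$ and $S'$ in $x$. The only cosmetic difference is that you derive (2) as a corollary of (4), whereas the paper proves (2) directly by a contradiction/diagonal argument and then deduces (3) from (2); your ordering is arguably tidier, and your explicit remark that $S'$ needs a modulus of continuity rather than a Lipschitz bound (since $\psi$ is only $C^1$) is exactly right.
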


\begin{proof}
Let us endow $\mathcal{A}^{\Z^+}$ with the usual product topology of the discrete topology on $\mathcal{A}$. Then $\mathcal{A}^{\Z^+}$ is compact. Moreover, if $\textbf{u}^n\to \textbf{u}$ in $\mathcal{A}^{\Z^+}$, then
$$S(x,\textbf{u}^n)\to S(x,\textbf{u})\text{ and } S'(x,\textbf{u}^n)\to S'(x,\textbf{u})$$
uniformly as $n\to\infty$.

(1) The ``if'' part is obvious. For the ``only if'' part, assume $(\textbf{k},\textbf{l})\in E(q,x_0)$. Then for any $n=1,2,\ldots$, $(\textbf{k},\textbf{l})\in E(q,x_0;1/n,1/n)$, and so there exist $\textbf{u}^n,\textbf{v}^n\in \mathcal{A}^{\Z^+}$ such that
$$|S(x_0,\textbf{ku}^n)-S(x_0,\textbf{lv}^n)|\le 1/n, \text{ and } |S'(x_0,\textbf{ku}^n)-S'(x_0,\textbf{lv}^n)|\le 1/n.$$
After passing to a subsequence, we may assume $\textbf{u}^n\to\textbf{u}$ and $\textbf{v}^n\to\textbf{v}$ in $\mathcal{A}^{\Z^+}$ as $n\to\infty$. Then
$$S(x_0,\textbf{ku})-S(x_0,\textbf{lv})=S'(x_0,\textbf{ku})-S'(x_0,\textbf{lv})=0.$$

(2) Arguing by contradiction, assume that the statement is false. Then there exists $\{x_n\}_{n=1}^\infty$ such that $x_n\to x_0$ and $(\textbf{k},\textbf{l})\in E(q,x_n;1/n,1/n)$. Thus there exist $\textbf{u}^n,\textbf{v}^n\in \mathcal{A}^{\Z^+}$ such that
$$|S(x_n,\textbf{ku}^n)-S(x_n,\textbf{lv}^n)|\le 1/n, \text{ and } |S'(x_n,\textbf{ku}^n)-S'(x_n,\textbf{lv}^n)|\le 1/n.$$
After passing to a subsequence we may assume $\textbf{u}^n\to\textbf{u}$, $\textbf{v}^n\to\textbf{v}$. It follows that $$S(x_0,\textbf{ku})-S(x_0,\textbf{lv})=S'(x_0,\textbf{ku})-S'(x_0,\textbf{lv})=0,$$
a contradiction.

(3) follows from (2).

(4) Since $\psi$ is $\Z$-periodic and $C^1$, for any $\xi>0$ there exists $\eta>0$ such that if $|x_1-x_2|<\eta$, then $|\psi(x_1)-\psi(x_2)|<\xi$ and $|\psi'(x_1)-\psi'(x_2)|<\xi$. Then for any $\textbf{u}\in \mathcal{A}^{\Z^+}$, we have
$$|S(x_1,\textbf{u})-S(x_2,\textbf{u})|\le \xi/(1-\gamma),$$
$$|S'(x_1,\textbf{u})-S'(x_2,\textbf{u})|\le \xi/(b-\gamma) .$$
The statement follows.
\end{proof}
We shall also use the following symmetry of the functions $S(x,\textbf{u})$.

\begin{lemma}\label{lem:addingmachine} For any $\textbf{u}\in\mathcal{A}^{\Z^+}$, $x\in \R$ and $q\in\Z^+$ ,  we have $e(q,x+1)=e(q,x)$ and $m_{x+1}=m_x$.
\end{lemma}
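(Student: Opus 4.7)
The plan is to exhibit a single measure-preserving bijection of the symbolic space $\mathcal{A}^{\Z^+}$ --- the $b$-adic adding machine --- which intertwines the translation $x\mapsto x+1$ with a symbolic action, and then transfer both equalities across this symmetry. Let $\tau:\mathcal{A}^{\Z^+}\to\mathcal{A}^{\Z^+}$ denote the ``add $1$ with carry'' map, i.e.\ the successor map on $\Z_b$ under the natural identification $\mathcal{A}^{\Z^+}\cong\Z_b$; it is a homeomorphism and preserves the Bernoulli measure $\mathbb{P}$ (which is Haar measure on $\Z_b$). The workhorse of the proof is the functional equation
$$S(x+1,\textbf{i})=S(x,\tau(\textbf{i})), \qquad S'(x+1,\textbf{i})=S'(x,\tau(\textbf{i})),$$
which I would verify termwise in the series (\ref{eqn:Sxi}): at level $n$ the argument of $\psi$ is $(x+i_1+i_2b+\cdots+i_nb^{n-1})/b^n$, and the defining property of $\tau$ is precisely that $i_1+i_2b+\cdots+i_nb^{n-1}$ advances by exactly $1$ modulo $b^n$. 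Hence the two $n$-th arguments differ by an integer at each level, and $\Z$-periodicity of $\psi=\phi'$ (inherited from $\phi$) gives termwise equality. Differentiating in $x$ yields the analogous identity for $S'$.

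The equality $m_{x+1}=m_x$ then drops out: since $\mathbb{P}$ is $\tau$-invariant, the pushforward of $\mathbb{P}$ under $\textbf{i}\mapsto S(x+1,\textbf{i})=S(x,\tau(\textbf{i}))$ agrees with the pushforward under $\textbf{i}\mapsto S(x,\textbf{i})$, which is $m_x$ by definition.

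For $e(q,x+1)=e(q,x)$, let $\tau_q:\mathcal{A}^q\to\mathcal{A}^q$ denote addition of $1$ modulo $b^q$. A short carry analysis shows that $\tau$ restricts to a bijection from the cylinder $[\textbf{k}]$ onto $[\tau_q(\textbf{k})]$ for every $\textbf{k}\in\mathcal{A}^q$; the only case in which the carry propagates past position $q$ is $\textbf{k}=(b-1,\ldots,b-1)$, where the image cylinder is $[(0,\ldots,0)]$ and $\tau$ acts on the suffix by the full adding machine, still bijectively. Combining the functional equation with the characterization in Proposition~\ref{prop:compact}(1), one obtains that $(\textbf{k},\textbf{l})\in E(q,x+1)$ iff $(\tau_q(\textbf{k}),\tau_q(\textbf{l}))\in E(q,x)$. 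Since $\tau_q$ is a bijection on $\mathcal{A}^q$, the row counts $\#\{\textbf{l}:(\textbf{k},\textbf{l})\in E(q,\cdot)\}$ are permuted as $\textbf{k}$ varies, so their maximum is preserved. The only mildly delicate point in the whole argument is the carry bookkeeping at the exceptional cylinder $[(b-1,\ldots,b-1)]$ in this last step, and even this is a finite combinatorial check rather than a real obstacle.
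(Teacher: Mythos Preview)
Your proof is correct and follows essentially the same approach as the paper: both introduce the $b$-adic adding machine (the paper calls it $\text{add}$, you call it $\tau$), verify the functional equation $S(x+1,\textbf{i})=S(x,\tau(\textbf{i}))$ by termwise comparison using $\Z$-periodicity of $\psi$, deduce $m_{x+1}=m_x$ from $\tau$-invariance of $\mathbb{P}$, and obtain $e(q,x+1)=e(q,x)$ from the induced bijection $\tau_q$ on $\mathcal{A}^q$. The only minor differences are cosmetic: the paper observes directly that the first $q$ digits of $\text{add}(\textbf{u})$ depend only on the first $q$ digits of $\textbf{u}$ (so no special case-check at the all-$(b{-}1)$ word is needed), and it works straight from the definition of $E(q,x;\eps,\delta)$ rather than invoking Proposition~\ref{prop:compact}(1).
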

\begin{proof} Indeed, for any $\textbf{u}\in\mathcal{A}^{\Z^+}$ and $x\in\R$, we have
$$S(x+1, \textbf{u})=S(x, \text{add} (\textbf{u})),$$
where $\text{add}:\mathcal{A}^{\Z^+}\to \mathcal{A}^{\Z^+}$ the adding machine which can be defined as follows: Given $\textbf{u}=\{u_n\}_{n=1}^\infty\in \mathcal{A}^{\Z^+}$, defining inductively $v_n, w_n\in\mathcal{A}$ with the following properties:
\begin{itemize}
\item $w_1=1$;
\item If $u_n+w_n<b$ then $v_n=u_n+w_n$ and $w_{n+1}=0$; otherwise, define $v_n=0$ and $w_n=1$,
\end{itemize}
then $\text{add}(\textbf{u})=\{v_n\}_{n=1}^\infty$.
This is a homeomorphism of $\mathcal{A}^{\Z^+}$ which preserves the Bernoulli measure $\mathbb{P}$. Thus $m_{x+1}=m_x$.

Since the first $q$ elements of $\text{add}(\textbf{u})$ depend only on the first $q$ element of $\textbf{u}$, $\text{add}$ induces a bijection from $\mathcal{A}^q$ onto itself, denoted also  by $\text{add}$.
By definition, $(\textbf{k},\textbf{l})\in E(q, x+1)$ if and only if $(\text{add}(\textbf{k}), \text{add}(\textbf{l}))\in E(q,x)$. Thus $e(q,x+1)=e(q,x)$.
\end{proof}

\subsection{Proof of Theorem~\ref{thm:smallw2ac}}\label{subsec:improveTsujii}
The proof of Theorem~\ref{thm:smallw2ac} is an easy modification of Tsujii's proof of Theorem~\ref{thm:Tsujii}.
Fix a weight function $\omega$ and an admissible testing function $V$ of order $q$ such that
$$\|\Sigma_{V,\omega}\|_\infty < (\gamma b)^q.$$
By definition, there exist $\eps, \delta>0$ such that for any $x\in [0,1)$, if $(\textbf{u},\textbf{v})\in E(q,x;\eps,\delta)$, then
$$V(x, \textbf{u},\textbf{v})V(x,\textbf{v},\textbf{u})\ge 1.$$
 For Borel measures $\rho$ and $\rho'$ on $\R$ any $r>0$, let
$$(\rho, \rho')_r=\int_{\R} \rho(B(y,r))\rho'(B(y,r)) dy,$$
and let $$\|\rho\|_r=\sqrt{(\rho, \rho)_r}.$$
For a Borel subset $J\subset \R$, define
$$I_r (J)=\frac{1}{r^2}\int_{J} \omega(x) \|m_x\|_r^2 dx$$ and write $I_r=I_r([0,1))$.

According to Lemma 4 of \cite{Tsu}, $\liminf_{r\to 0}\|\rho\|_r<\infty$ implies that $\rho$ is absolutely continuous with respect to the Lebesgue measure and the density function is square integrable. Consequently, if $\liminf I_r <\infty$, then the conclusion of the theorem holds.

With slight abuse of language, let $T^q(m_x)$ denote the pushforward of the measure $m_x$ under the map $y\mapsto \pi_2\circ T^q(x,y)$, where $\pi_2(x,y)=y$. Then
$$m_x=\frac{1}{b^q}\sum_{\textbf{i}\in\mathcal{A}^q} T^q(m_{x(\textbf{i})}).$$
Thus
$$\|m_x\|_r^2=b^{-2q}\sum_{\textbf{i},\textbf{j}} (T^q m_{x(\textbf{i})}, T^q m_{x(\textbf{j})})_r.$$
Let
$$I_r^o (J)=\frac{1}{b^{2q}r^2} \int_J \omega(x)\sum_{(\textbf{i},\textbf{j})\not\in E(q, x;\eps,\delta)} (T^q(m_{x(\textbf{i})}),T^q (m_{x(\textbf{j})}))_r dx$$
and
$$I_r^* (J)=\frac{1}{b^{2q}r^2} \int_J \omega(x)\sum_{(\textbf{i},\textbf{j})\in E(q, x;\eps,\delta)} (T^q(m_{x(\textbf{i})}),T^q (m_{x(\textbf{j})}))_r dx.$$
Then
$$I_r(J)=I_r^o(J) +I_r^* (J).$$
We shall also write $I_r^o=I_r^0([0,1))$ and $I_r^*=I_r^*([0,1))$.
%$$I_r^\omega(J)=\frac{1}{r^2}\sum_{\textbf{i},\textbf{j}\in\mathcal{A}^q} I_r^\omega (J;\textbf{i},\textbf{j}).$$

%The terms $I_r^\omega (J;\textbf{i},\textbf{j})$ will be estimated in two different ways depending on whether $\textbf{i}$ and $\textbf{j}$ are transversal or tangent, as defined as follows.

\begin{lemma}\label{lem:tranest}There exists $C>0$ such that $I_r^o\le C$ holds for all $r>0$.
\end{lemma}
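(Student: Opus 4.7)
The starting point is the elementary identity
\[
\int_\R \mathbf{1}_{|y-z|<r}\mathbf{1}_{|y-w|<r}\, dy = (2r-|z-w|)_+ \le 2r\cdot\mathbf{1}_{|z-w|<2r}.
\]
Since $T^q m_{x(\textbf{i})}$ is the pushforward of $\mathbb{P}$ under $\textbf{u}\mapsto S(x,\textbf{iu})$, setting $f_{\textbf{u},\textbf{v}}(x):= S(x,\textbf{iu})-S(x,\textbf{jv})$ (with $\textbf{i},\textbf{j}$ fixed) yields
\[
(T^q m_{x(\textbf{i})},\, T^q m_{x(\textbf{j})})_r \;\le\; 2r\iint \mathbf{1}_{|f_{\textbf{u},\textbf{v}}(x)|<2r}\, d\mathbb{P}(\textbf{u})\, d\mathbb{P}(\textbf{v}).
\]
Plugging this into the definition of $I_r^o$ and applying Fubini, the lemma reduces to a uniform length estimate
\[
\Leb\!\left(\{x\in J_{\textbf{i},\textbf{j}} : |f_{\textbf{u},\textbf{v}}(x)|<2r\}\right) \;\le\; Cr \qquad (0<r<\eps/2),
\]
where $J_{\textbf{i},\textbf{j}}:=\{x\in[0,1):(\textbf{i},\textbf{j})\notin E(q,x;\eps,\delta)\}$. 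Granting this bound, the $b^{2q}$ summands and the prefactor $1/(b^{2q}r)$ cancel, producing $I_r^o\le 2C\|\omega\|_\infty$.

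For the length estimate, observe that by the definition of $J_{\textbf{i},\textbf{j}}$, on this set the condition $|f_{\textbf{u},\textbf{v}}(x)|<\eps$ forces $|f_{\textbf{u},\textbf{v}}'(x)|>\delta$. Since $\psi\in C^1$, $f_{\textbf{u},\textbf{v}}$ is $C^1$ with $\|f'\|_\infty\le M:=2\|\psi'\|_\infty/(b-\gamma)$. Each maximal open interval of $\{x\in J_{\textbf{i},\textbf{j}}:|f|<2r\}$ carries $f$ strictly monotone (continuity of $f'$ together with $|f'|>\delta$ pins down the sign), so the interval has length at most $4r/\delta$. To control the number of such intervals, I will analyze the larger open set $\{x\in J_{\textbf{i},\textbf{j}}:|f|<\eps\}$: again $f$ is monotone on each connected component, and any component with both endpoints on $\{|f|=\eps\}$ contributes $2\eps$ to $\mathrm{TV}(f)\le M$, bounding their number by $M/(2\eps)$; on each such monotone component there is at most one subinterval of $\{|f|<2r\}$. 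Components having an endpoint on $\partial J_{\textbf{i},\textbf{j}}$ will be handled via Proposition~\ref{prop:compact}: using part~(4) with slightly weaker parameters $\eps'<\eps$, $\delta'<\delta$, one enlarges $J_{\textbf{i},\textbf{j}}$ to an open set on which the $(\eps',\delta')$-transversality holds, which by compactness of $[0,1]$ leaves only finitely many ``boundary'' components to absorb.

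The main obstacle is precisely this uniform counting of monotone components — in particular, ruling out an accumulation of boundary pieces against $\partial J_{\textbf{i},\textbf{j}}$. Apart from that, the argument is a straightforward adaptation of Tsujii's Proposition~8 in~\cite{Tsu} to the present setting: the weight $\omega$ enters only through $\|\omega\|_\infty<\infty$, and the pointwise-in-$x$ transversality condition contributes only the extra bookkeeping described above.
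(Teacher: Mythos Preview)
Your overall strategy is sound and is essentially an explicit unpacking of Tsujii's Proposition~6 in \cite{Tsu}, which the paper invokes as a black box. The opening inequality $(T^qm_{x(\textbf{i})},T^qm_{x(\textbf{j})})_r\le 2r\iint\mathbf{1}_{|f_{\textbf{u},\textbf{v}}|<2r}\,d\mathbb{P}^2$, the Fubini reduction, and the monotonicity step ($|f|<2r\Rightarrow|f'|>\delta$, so each monotone piece has length $\le 4r/\delta$) are all correct.

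The obstacle you flag is genuine, and your proposed fix does not quite close it. Enlarging $J_{\textbf{i},\textbf{j}}$ to $J':=\{x:(\textbf{i},\textbf{j})\notin E(q,x;\eps',\delta')\}$ does not by itself bound the number of components: $J'$ can itself be an infinite union of intervals, and ``compactness of $[0,1]$'' alone does not prevent an open subset from having infinitely many pieces. (With only $\psi\in C^1$, $f'$ is merely continuous, so you cannot control the number of crossings of $\{|f'|=\delta\}$ either.) What the paper does instead is a small but decisive twist of the same idea. Fix $\eps'<\eps$, $\delta'<\delta$, take the $\eta$ from Proposition~\ref{prop:compact}(4), and choose $p$ with $b^{-p}<\eta$. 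Then partition $[0,1)$ into the $b^p$ intervals $J_{p,k}=[kb^{-p},(k+1)b^{-p}]$. Whenever some $x\in J_{p,k}$ satisfies $(\textbf{i},\textbf{j})\notin E(q,x;\eps,\delta)$, the $(\eps',\delta')$-transversality holds on the \emph{whole} interval $J_{p,k}$. This replaces the variable, possibly fragmented set $J_{\textbf{i},\textbf{j}}$ by at most $b^p$ fixed intervals; now there are at most $2b^p$ boundary components in total, uniformly in $\textbf{u},\textbf{v},r$, and your total-variation count of interior components ($\le M/(2\eps')$) goes through verbatim. The paper then simply packages the resulting length estimate as the citation $\int_{J_{p,k}}(T^qm_{x(\textbf{i})},T^qm_{x(\textbf{j})})_r\,dx\le C'r^2$.

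One minor omission: you restrict to $0<r<\eps/2$. For $r\ge\eps/2$ note that $\|m_x\|_r^2=\int m_x(B(y,r))^2\,dy\le\int m_x(B(y,r))\,dy=2r$, so $I_r^o\le I_r\le 2\|\omega\|_\infty/r\le 4\|\omega\|_\infty/\eps$.
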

\begin{proof} Fix $\eps'\in (0,\eps)$ and $\delta'\in (0,\delta)$. By Proposition~\ref{prop:compact} (4), there exists a positive integer $p$, such that if $x\in J_{p,k}:=[k/b^{p}, (k+1)/b^p]$ and $(\textbf{i},\textbf{j})\not\in E(q, x;\eps,\delta)$, then $(\textbf{i},\textbf{j})\not\in E(q, J_{p,k};\eps',\delta')$. It follows that
$$I_r^o\le \frac{\|\omega\|_\infty}{b^{2q}r^2}\sum_{k=0}^{b^p-1}\int_{J_{k,p}} \sum_{(\textbf{i},\textbf{j})\not\in E(q,J_{p,k};\eps',\delta')} (T^q(m_{x(\textbf{i})}),T^q (m_{x(\textbf{j})}))_r dx.$$
In Proposition 6 in~\cite{Tsu}, it was proved that there exists $C'=C'(p,\eps',\delta')>0$ such that if $(\textbf{i},\textbf{j})\not\in E(q, J_{p,k};\eps',\delta')$, then
$$\int_{J_{p,k}} (T^q(m_{x(\textbf{i})}),T^q (m_{x(\textbf{j})}))_r dx\le C'r^2.$$
Thus $I_r^o\le C.$
\end{proof}
In order to estimate the terms $I_r^*(J)$, Tsujii observed

\begin{lemma}\label{lem:Tsujiiobservation}
For any $\textbf{i}\in\mathcal{A}^q$ and any $x\in\R$, we have
$$\|T^q(m_{x(\textbf{i})})\|_r^2=\gamma^q \|m_{x(\textbf{i})}\|_{\gamma^{-q}r}^2.$$
%\|m_{x(\textbf{j})}\|_{\gamma^{-q}r}.$$
\end{lemma}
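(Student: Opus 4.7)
The statement is a direct change-of-variables calculation once one unpacks what $T^q$ does on vertical fibers. The plan is as follows.

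First, I would note that the map $y\mapsto \pi_2\circ T^q(x,y)$ is affine in $y$. Indeed, from (\ref{eqn:mapT}) the second coordinate of $T(x,y)$ is $\gamma y+\widehat\psi(x)$, so by induction
\[
\pi_2\circ T^q(x,y)=\gamma^q y+c_q(x),
\]
where $c_q(x)=\sum_{j=0}^{q-1}\gamma^{q-1-j}\widehat\psi(b^j x\bmod 1)$ depends only on $x$. In particular, denoting $L(y):=\gamma^q y+c_q(x(\textbf{i}))$, we have $T^q(m_{x(\textbf{i})})=L_*m_{x(\textbf{i})}$.

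Next, since $L$ is affine with slope $\gamma^q$, its inverse is $L^{-1}(y)=\gamma^{-q}(y-c_q(x(\textbf{i})))$, so $L^{-1}(B(y,r))=B(L^{-1}(y),\gamma^{-q}r)$. Therefore
\[
(L_*m_{x(\textbf{i})})(B(y,r))=m_{x(\textbf{i})}\bigl(B(L^{-1}(y),\gamma^{-q}r)\bigr).
\]

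Finally, plugging this into the definition of $\|\cdot\|_r$ and making the substitution $z=L^{-1}(y)$ (whose Jacobian is $\gamma^{-q}$, so $dy=\gamma^q\,dz$) gives
\[
\|T^q(m_{x(\textbf{i})})\|_r^2=\int_{\R} m_{x(\textbf{i})}\bigl(B(L^{-1}(y),\gamma^{-q}r)\bigr)^2\,dy=\gamma^q\int_{\R} m_{x(\textbf{i})}(B(z,\gamma^{-q}r))^2\,dz=\gamma^q\|m_{x(\textbf{i})}\|_{\gamma^{-q}r}^2.
\]
There is no real obstacle here; the only thing worth checking carefully is that the affine-conjugacy argument goes through cleanly on vertical fibers, which is immediate from the skew-product form of $T$.
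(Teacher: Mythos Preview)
Your proof is correct and follows the same approach as the paper: the paper's proof is a one-line remark that the identity follows from $T^q$ being a contraction of rate $\gamma^q$ in the vertical direction, and you have simply written out the change-of-variables calculation that makes this precise.
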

\begin{proof} This follows immediately from the fact that %is taken from part of the proof of Proposition 7 in~\cite{Tsu}. Indeed, since
$T^q$ is a contraction of rate $\gamma^q$ in the vertical direction.
%$$(T^q(m_{x(\textbf{i})}), T^q(m_{x(\textbf{i})}))_r=\gamma^q (m_{x(\textbf{i})}, m_{x(\textbf{i})})_{\gamma^{-q}r}.$$
%So by Cauchy-Schwarz inequality,
%\begin{align*}
%(T^q(m_{x(\textbf{i})}), T^q(m_{x(\textbf{j})}))_r
%& \le \|T^q(m_{x(\textbf{i})})\|_r\|T^q(m_{x(\textbf{j})})\|_r\\
%& =\gamma^q \|m_{x(\textbf{i})}\|_{\gamma^{-q}r}\|m_{x(\textbf{j})}\|_{\gamma^{-q}r}.
%\end{align*}
\end{proof}

\begin{lemma}\label{lem:parallelest} For each $r>0$, we have
$$I_r^* \le \frac{\|\Sigma_{V,\omega}\|_\infty}{(b\gamma)^q}  I_{\gamma^{-q}r}.$$
\end{lemma}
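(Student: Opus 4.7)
The plan is to bound the integrand of $I_r^*$ pointwise by a quadratic form in $\|T^q m_{x(\textbf{i})}\|_r$, controlled by the testing function $V$, and then unfold using Lemma~\ref{lem:Tsujiiobservation} and a change of variables to recover $I_{\gamma^{-q}r}$.

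\medskip

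\noindent\textbf{Step 1: Cauchy--Schwarz plus admissibility.} For $(\textbf{i},\textbf{j})\in E(q,x;\eps,\delta)$, admissibility gives $V(x,\textbf{i},\textbf{j})V(x,\textbf{j},\textbf{i})\ge 1$, so
$$
(T^q m_{x(\textbf{i})}, T^q m_{x(\textbf{j})})_r \le \sqrt{V(x,\textbf{i},\textbf{j})V(x,\textbf{j},\textbf{i})}\,\|T^q m_{x(\textbf{i})}\|_r\|T^q m_{x(\textbf{j})}\|_r.
$$
Writing the right side as a product $\bigl(\sqrt{V(x,\textbf{i},\textbf{j})}\|T^q m_{x(\textbf{i})}\|_r\bigr)\bigl(\sqrt{V(x,\textbf{j},\textbf{i})}\|T^q m_{x(\textbf{j})}\|_r\bigr)$ and applying AM--GM, I would bound
$$
(T^q m_{x(\textbf{i})}, T^q m_{x(\textbf{j})})_r \le \tfrac{1}{2}V(x,\textbf{i},\textbf{j})\|T^q m_{x(\textbf{i})}\|_r^2 + \tfrac{1}{2}V(x,\textbf{j},\textbf{i})\|T^q m_{x(\textbf{j})}\|_r^2.
$$

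\medskip

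\noindent\textbf{Step 2: Symmetrize and apply $\Sigma_{V,\omega}$.} Since $E(q,x;\eps,\delta)$ is symmetric in $(\textbf{i},\textbf{j})$, renaming indices in the second half of the sum collapses the two pieces into one:
$$
\sum_{(\textbf{i},\textbf{j})\in E(q,x;\eps,\delta)} (T^q m_{x(\textbf{i})}, T^q m_{x(\textbf{j})})_r \le \sum_{\textbf{i}\in\mathcal{A}^q}\|T^q m_{x(\textbf{i})}\|_r^2\sum_{\textbf{j}\in\mathcal{A}^q} V(x,\textbf{i},\textbf{j}),
$$
where I extend the inner $\textbf{j}$-sum to all of $\mathcal{A}^q$ using $V\ge 0$. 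By the very definition of $\Sigma_{V,\omega}$,
$$
\omega(x)\sum_{\textbf{j}} V(x,\textbf{i},\textbf{j}) \le \omega(x(\textbf{i}))\,\Sigma_{V,\omega}(x).
$$

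\medskip

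\noindent\textbf{Step 3: Unfold using Lemma~\ref{lem:Tsujiiobservation} and change variables.} Combining Steps 1--2 and plugging into the definition of $I_r^*$,
$$
I_r^* \le \frac{\|\Sigma_{V,\omega}\|_\infty}{b^{2q}r^2}\int_0^1 \sum_{\textbf{i}\in\mathcal{A}^q} \omega(x(\textbf{i}))\,\|T^q m_{x(\textbf{i})}\|_r^2\,dx.
$$
Lemma~\ref{lem:Tsujiiobservation} replaces $\|T^q m_{x(\textbf{i})}\|_r^2$ by $\gamma^q\|m_{x(\textbf{i})}\|_{\gamma^{-q}r}^2$. Finally, since the map $x\mapsto x(\textbf{i})$ is affine with Jacobian $b^{-q}$ and the images $\{x(\textbf{i}):x\in[0,1)\}$ for $\textbf{i}\in\mathcal{A}^q$ tile $[0,1)$, the change of variables $y=x(\textbf{i})$ yields
$$
\sum_{\textbf{i}}\int_0^1 \omega(x(\textbf{i}))\,\|m_{x(\textbf{i})}\|_{\gamma^{-q}r}^2\,dx = b^q\int_0^1 \omega(y)\,\|m_y\|_{\gamma^{-q}r}^2\,dy = b^q\,\gamma^{-2q}r^2\,I_{\gamma^{-q}r}.
$$
Assembling the constants gives $I_r^*\le \|\Sigma_{V,\omega}\|_\infty(b\gamma)^{-q}I_{\gamma^{-q}r}$, which is the claim. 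No step is really the bottleneck here: the conceptual move is already packaged in the definition of $\Sigma_{V,\omega}$, and the rest is the same reshuffling Tsujii uses in~\cite{Tsu}; the mildest care needed is in Step~2, to verify that the symmetrization and extension of the sum to all $\textbf{j}$ exactly reproduces the expression appearing in $\Sigma_{V,\omega}$.
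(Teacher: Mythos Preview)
Your proof is correct and follows essentially the same approach as the paper: Cauchy--Schwarz plus the admissibility inequality $V_{ij}V_{ji}\ge 1$ to bound each cross term, then symmetrization, then Lemma~\ref{lem:Tsujiiobservation} and the change of variables $y=x(\textbf{i})$. The only cosmetic difference is that the paper separates the diagonal terms $\textbf{i}=\textbf{j}$ before applying AM--GM to the off-diagonal ones, whereas you treat both uniformly (which works because admissibility forces $V(x,\textbf{i},\textbf{i})\ge 1$); otherwise the arguments coincide.
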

\begin{proof}

Let us first prove that for each $x\in [0,1)$,
\begin{equation}\label{eqn:paratrans}
\sum_{(\textbf{u},\textbf{v})\in E(q,x;\eps, \delta)} (T^q m_{x(\textbf{u})}, T^q m_{x(\textbf{v})})_r\le \gamma^q \sum_{\textbf{u}\in\mathcal{A}^q} \left(\sum_{\textbf{v}\in\mathcal{A}^q} V(x,\textbf{u},\textbf{v})\right)\|m_{x(\textbf{u})}\|_{r/\gamma^q}^2.
\end{equation}

To this end, let $\textbf{u}_k$, $k=1,2,\ldots, b^q$ be all the elements of $\mathcal{A}^q$. Fix $x\in [0,1)$ and prepare the following notation:
$V_{kl}=V(x, \textbf{u}_k,\textbf{u}_l)$, $x_k=x(\textbf{u}_k)$ and
$$\theta_{kl}=
\left\{
\begin{array}{ll}
1 & \mbox{ if }(\textbf{u}_k,\textbf{u}_l)\in E(q,x;\eps,\delta)\\
&\\
0 & \mbox{ otherwise.}
\end{array}
\right.$$
Then
$$\sum_{(\textbf{u},\textbf{v})\in E(q,x;\eps, \delta)} (T^q m_{x(\textbf{u})}, T^q m_{x(\textbf{v})})_r=\sum_{k=1}^{b^q} \|T^q m_{x_k}\|_r^2 + 2\sum_{1\le k<l \le b^q} \theta_{kl} (T^q m_{x_k}, T^q m_{x_l})_r.$$
For each $1\le k< l\le b^q$, by the Cauchy-Schwarz inequality,
$$ (T^q m_{x_k}, T^q m_{x_l})_r \le  \|T^q m_{x_k}\|_r \|T^q m_{x_l}\|_r.$$
Thus
$$2\theta_{kl} (T^q m_{x_k}, T^q m_{x_l})_r \le V_{kl} \|T^q m_{x_k}\|_r^2 + V_{lk} \|T^q m_{x_l}\|_r^2.$$
Indeed, this is trivial if $\theta_{kl}=0$, while if $\theta_{kl}=1$, it follows from the previous inequality and $V_{kl}V_{lk}\ge 1$.
Consequently,
\begin{align*}
2\sum_{1\le k<l\le b^q } \theta_{kl} (T^q m_{x_k}, T^q m_{x_l})_r& \le \sum_{1\le k<l\le b^q}\left( V_{kl} \|T_q m_{x_k}\|_r^2 + V_{lk} \|T^q m_{x_l}\|_r^2\right)\\
& =\sum_{k=1}^{b^q} \left(\sum_{\substack{1\le l\le b^q\\ l\not=k}} V_{kl}\right) \|T^q m_{x_k}\|_r^2,
\end{align*}
and hence
$$\sum_{(\textbf{u},\textbf{v})\in E(q,x;\eps, \delta)} (T^q m_{x(\textbf{u})}, T^q m_{x(\textbf{v})})_r\le \sum_{k=1}^{b^q} \left(\sum_{l=1}^{b^q} V_{kl} \right) \|T^q m_{x_k}\|_r^2.$$
By Lemma~\ref{lem:Tsujiiobservation}, the inequality (\ref{eqn:paratrans}) follows.

Multiplying $\omega(x)$ on both sides of (\ref{eqn:paratrans}), we obtain
$$\omega(x) \sum_{(\textbf{u},\textbf{v})\in E(q,x;\eps, \delta)} (T^q m_{x(\textbf{u})}, T^q m_{x(\textbf{v})})_r
\le \gamma^q \sum_{\textbf{u}\in\mathcal{A}^q} \Sigma_{V,\omega}(x)\omega(x(\textbf{u}))\|m_{x(\textbf{u})}\|_{\gamma^{-q}r}^2.$$
Dividing both side by $b^{2q}r^2$ and integrating over $[0,1)$, we obtain
$$I_r^*\le \frac{\|\Sigma_{V,\omega}\|_\infty} {b^{2q}\gamma^{q}}\frac{1}{(\gamma^{-q}r)^2}
\sum_{\textbf{u}\in \mathcal{A}^q} \int_0^1 \|m_{x(\textbf{u})}\|_{\gamma^{-q}r}^2 \omega(x(\textbf{u}))dx.$$
Let $J(\textbf{u})=\{x(\textbf{u}): 0\le x<1\}$. Then
$$\int_0^1 \|m_{x(\textbf{u})}\|_{\gamma^{-q}r}^2\omega(x(\textbf{u})) dx =b^q\int_{J(\textbf{u})} \|m_x\|_{\gamma^{-q}r}^2\omega(x) dx.$$
Since $J(\textbf{u})$, $\textbf{u}\in\mathcal{A}^q$, form a partition of $[0,1)$, it follows that
$$I_r^*\le \frac{\|\Sigma _{V,\omega}\|_\infty}{(\gamma b)^q} \frac{1}{(\gamma^{-q}r)^2} \int_0^1 \|m_{x}\|_{\gamma^{-q}r}^2 \omega(x) dx =\frac{\|\Sigma _{V,\omega}\|_\infty}{(\gamma b)^q} I_{\gamma^{-q} r}.$$
\end{proof}

\begin{proof}[Completion of proof of Theorem~\ref{thm:smallw2ac}]
By Lemma~\ref{lem:tranest} and Lemma~\ref{lem:parallelest}, there exists a constant $C>0$ such that
$$I_r =I_r^o+I_r^*\le C+ \beta I_{\gamma^{-q} r},$$
holds for all $r>0$, where $\beta= \|\Sigma_{V,\omega}\|_\infty/ (\gamma b)^q \in (0,1).$ As $I_r<\infty$ for each $r>0$, it follows that
$\liminf_{r\searrow 0}I_r<\infty.$ By the remarks at the beginning of this subsection, the conclusion of the theorem follows. %is
\end{proof}

\subsection{Proof of  Theorem~\ref{thm:gammalarge}}
In this subsection, we shall prove Theorem~\ref{thm:gammalarge} using Theorem~\ref{thm:smallw2ac}.

\begin{lemma} \label{lem:1miss}
Suppose that for each $x\in [0,1)$, $E(q,x)\not=\mathcal{A}^q \times \mathcal{A}^q$. Then
%There exists $\lambda(q)<b^q$ such that if $E(q, J)\not= \mathcal{A}^q\times \mathcal{A}^q$, then
$$\sigma(q)\le b^q-2+2/\alpha,$$
where $\alpha=\alpha (b,q)>1$ satisfies
$$2-\alpha= (b^q-2) \alpha (\alpha-1).$$
\end{lemma}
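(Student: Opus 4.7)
My plan is to exhibit explicit $(V,\omega)$ with $\omega\equiv 1$ and $V$ piecewise constant in $x$ taking values in $\{0,1,\alpha,1/\alpha\}$, for which $\Sigma_{V,\omega}(x)\equiv b^q-2+2/\alpha$; the desired bound on $\sigma(q)$ then follows directly from its definition as an infimum. The parameter $\alpha$ will be forced by a single scalar balance equation coming from the two distinct row sums of $V$.

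First I would uniformize the hypothesis. For each $x_0\in[0,1]$, the hypothesis (together with Lemma~\ref{lem:addingmachine} at $x_0=1$) yields a pair $(\textbf{k},\textbf{l})$ with $\textbf{k}\neq\textbf{l}$ (diagonal pairs lie automatically in $E(q,x_0)$) satisfying $(\textbf{k},\textbf{l})\notin E(q,x_0)$, and Proposition~\ref{prop:compact}(2) provides a neighborhood of $x_0$ with constants $\eps_{x_0},\delta_{x_0}>0$ on which this pair stays outside $E(q,\cdot;\eps_{x_0},\delta_{x_0})$. Extracting a finite subcover of $[0,1]$ and taking the minima gives uniform $\eps,\delta>0$, a measurable partition $A_1,\ldots,A_N$ of $[0,1)$, and pairs $(\textbf{k}_i,\textbf{l}_i)$ with $\textbf{k}_i\neq \textbf{l}_i$ such that $(\textbf{k}_i,\textbf{l}_i)\notin E(q,A_i;\eps,\delta)$ for each $i$.

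Next, for $x\in A_i$ and $\textbf{u},\textbf{v}\in\mathcal{A}^q$ I define
$$V(x,\textbf{u},\textbf{v}) := \begin{cases} 0 & \text{if } (\textbf{u},\textbf{v})\in\{(\textbf{k}_i,\textbf{l}_i),(\textbf{l}_i,\textbf{k}_i)\},\\ \alpha & \text{if } \textbf{u}\in\{\textbf{k}_i,\textbf{l}_i\}\text{ and }\textbf{v}\notin\{\textbf{k}_i,\textbf{l}_i\},\\ 1/\alpha & \text{if } \textbf{u}\notin\{\textbf{k}_i,\textbf{l}_i\}\text{ and }\textbf{v}\in\{\textbf{k}_i,\textbf{l}_i\},\\ 1 & \text{otherwise}, \end{cases}$$
and take $\omega\equiv 1$. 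Admissibility will follow immediately: whenever $(\textbf{u},\textbf{v})\in E(q,x;\eps,\delta)$ with $x\in A_i$, the forbidden pair $\{\textbf{k}_i,\textbf{l}_i\}$ is excluded by construction, and inspection of each remaining case yields $V(x,\textbf{u},\textbf{v})V(x,\textbf{v},\textbf{u})=1$. A case-by-case count then produces the row sums
$$\sum_{\textbf{v}\in\mathcal{A}^q} V(x,\textbf{u},\textbf{v}) = \begin{cases} 1+(b^q-2)\alpha & \text{if } \textbf{u}\in\{\textbf{k}_i,\textbf{l}_i\},\\ b^q-2+2/\alpha & \text{otherwise}, \end{cases}$$
and the defining equation $2-\alpha=(b^q-2)\alpha(\alpha-1)$ rearranges precisely to the balance $1+(b^q-2)\alpha = b^q-2+2/\alpha$. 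Hence $\Sigma_{V,\omega}(x)\equiv b^q-2+2/\alpha$, completing the proof.

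The only nontrivial point is choosing the right shape of $V$: the admissibility constraint $V(\cdot,\textbf{u},\textbf{v})V(\cdot,\textbf{v},\textbf{u})\ge 1$ dictates that the asymmetric off-diagonal values be reciprocals, and introducing a single free parameter $\alpha$ allows one to equalize the two distinct row sums. Notably, the trivial weight $\omega\equiv 1$ already suffices here; one need not exploit the extra freedom in the weight function.
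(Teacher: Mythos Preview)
Your proof is correct and is essentially identical to the paper's own argument: the paper also takes $\omega\equiv 1$, obtains uniform $\eps,\delta$ via Proposition~\ref{prop:compact}(2) and compactness, chooses measurable $\textbf{k}(x),\textbf{l}(x)$ with $(\textbf{k}(x),\textbf{l}(x))\notin E(q,x;\eps,\delta)$, and defines $V$ by exactly the same four-case scheme with values $\{0,1,\alpha,1/\alpha\}$, arriving at the identical row sums $1+(b^q-2)\alpha=b^q-2+2/\alpha$. Your finite-partition construction via a subcover is just an explicit way of producing the measurable selection the paper invokes.
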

\begin{proof} By Lemma~\ref{lem:addingmachine}, the assumption implies that for each $x\in\R$, $E(q,x)\not=\mathcal{A}^q\times \mathcal{A}^q$. By Proposition~\ref{prop:compact} (2) and compactness of $[0,1]$,
there exists $\eps>0, \delta>0$ such that $E(q,x;\eps,\delta)\not=\mathcal{A}^q\times \mathcal{A}^q$ for each $x\in [0,1]$. So we can find measurable functions
$\textbf{k},\textbf{l}:[0,1)\to \mathcal{A}^q$ such that $(\textbf{k}(x),\textbf{l}(x))\not\in E(q,x;\eps,\delta)$. Define $\omega(x)=1$ for all $x\in [0,1)$. Define
$$V(x,\textbf{u},\textbf{v})=\left\{
\begin{array}{ll}
1 & \text{ if } \textbf{u},\textbf{v}\not\in \{\textbf{k}(x),\textbf{l}(x)\} \text{ or } \textbf{u}=\textbf{v};\\
0 & \text{ if } (\textbf{u},\textbf{v})=(\textbf{k}(x),\textbf{l}(x)) \text { or } (\textbf{l}(x),\textbf{k}(x));\\
\alpha &\text{ if } \textbf{u}\in \{\textbf{k}(x),\textbf{l}(x)\} \text { but } \textbf{v}\not\in \{\textbf{k}(x),\textbf{l}(x)\};\\
\alpha^{-1} & \text { if }  \textbf{u}\not\in \{\textbf{k}(x),\textbf{l}(x)\} \text { but } \textbf{v}\in \{\textbf{k}(x),\textbf{l}(x)\}.
\end{array}
\right.
$$
Then $V$ is an admissible test function of order $q$. For every $x\in [0,1)$, if $\textbf{u}\not\in \{\textbf{k}(x), \textbf{l}(x)\}$, then
$$\frac{\omega(x)}{\omega(x(\textbf{u}))}\sum_{\textbf{v}\in\mathcal{A}^q} V(x, \textbf{u},\textbf{v})=b^q-2+\frac{2}{\alpha},$$
and if $\textbf{u}\in \{\textbf{k}(x), \textbf{l}(x)\}$, then
$$\frac{\omega(x)}{\omega(x(\textbf{u}))}\sum_{\textbf{v}\in\mathcal{A}^q} V(x, \textbf{u},\textbf{v})=1+(b^q-2)\alpha=b^q-2+\frac{2}{\alpha}.$$
Thus $\sigma(q)\le \|\Sigma_{V,\omega}\|_\infty \le b^q -2+2/\alpha.$
\end{proof}

We shall use some results obtained in \cite{BKRU}. Fix an integer $b\ge 2$. We say that a $\Z$-periodic continuous function $\psi: \R\to \R$ is {\em cohomologous to $0$} if there exists a continuous $\Z$-periodic function $f:\R\to \R$ such that
$\psi(x)=f(bx)-f(x)$ holds for all $x\in \R$.
The main step is the following lemma.

\begin{lemma}\label{lem:check1miss}
Assume that $\psi:\R\to \R$ is a $\Z$-periodic $C^1$ function that is not cohomologous to zero and $\int_0^1 \psi(x) dx=0$. Then there exists $\gamma_1\in (0,1)$ and a positive integer $N$ such that if $\gamma_1<\gamma<1$, then $E(N, x)\not=\mathcal{A}^N\times \mathcal{A}^N$ for each $x\in \R$.
\end{lemma}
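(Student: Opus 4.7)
The strategy is: for each $x\in\R$, exhibit a pair $(\textbf{k},\textbf{l})\in\mathcal{A}^N\times\mathcal{A}^N$ whose every extension is non-tangent at $x$. First I would establish the telescoping identity obtained by splitting the series (\ref{eqn:Sxi}) at $n=N$:
$$
S(x,\textbf{k}\textbf{u})=P_N(x,\textbf{k})+\gamma^N S(x(\textbf{k}),\textbf{u}),\quad S'(x,\textbf{k}\textbf{u})=P_N'(x,\textbf{k})+(\gamma/b)^N S'(x(\textbf{k}),\textbf{u}),
$$
with $P_N(x,\textbf{k})=\sum_{n=1}^N\gamma^{n-1}\psi((x+k_1+k_2b+\cdots+k_nb^{n-1})/b^n)$. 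Combined with the uniform bound $|S'(y,\textbf{u})|\le\|\psi'\|_\infty/(b-\gamma)$, the derivative identity gives the non-tangency criterion
$$
|P_N'(x,\textbf{k})-P_N'(x,\textbf{l})|>\tau_N:=\frac{2(\gamma/b)^N\|\psi'\|_\infty}{b-\gamma}\ \Longrightarrow\ (\textbf{k},\textbf{l})\notin E(N,x),
$$
and shows that $Q_N(x):=\{P_N'(x,\textbf{k}):\textbf{k}\in\mathcal{A}^N\}$ lies within Hausdorff distance $\tau_N/2$ of the full slope set $\Sigma(x):=\{S'(x,\textbf{a}):\textbf{a}\in\mathcal{A}^{\Z^+}\}$.

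The proof then reduces to producing $\gamma_1\in(1/b,1)$ and $c>0$ such that $\operatorname{diam}\Sigma(x)\ge c$ for every $x\in[0,1)$ and every $\gamma\in(\gamma_1,1)$. The key rigidity ingredient is this: if $\textbf{u}\mapsto S'(x,\textbf{u})$ were constant at every $x$, then $S(x,\textbf{u})-S(x,\textbf{v})$ is constant in $x$, so $S(x,\textbf{u})=p(x)+h(\textbf{u})$ with $p$ continuous and $\Z$-periodic (by boundedness of $S$ together with Lemma~\ref{lem:addingmachine}). Substituting this into the defining recursion $S(x,k\textbf{u})=\psi((x+k)/b)+\gamma S((x+k)/b,\textbf{u})$ forces, after absorbing an additive constant into $p$, the equation $\psi(y)=p(by)-\gamma p(y)$; a direct telescoping with this identity then yields $S(x,\textbf{u})\equiv p(x)$, so the attractor $\Lambda$ would collapse to the graph of $p$. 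This contradicts the result of \cite{BKRU} that for $\psi$ not cohomologous to zero, $\Lambda$ has non-empty interior once $\gamma$ is close enough to $1$. Hence $\Sigma(x_0)$ is non-degenerate at some $x_0$; combining Hausdorff-continuity of $x\mapsto\Sigma(x)$ with topological transitivity of $T$ on $\Lambda$ (as extracted from \cite{BKRU}) upgrades this to a uniform positive lower bound $c=c(\psi,b)$.

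To conclude, choose $N$ so large that $\tau_N<c/4$, which is uniform in $\gamma\in(1/b,1)$ since $\tau_N\le 2b^{-N}\|\psi'\|_\infty/(b-1)$. Then
$$
\operatorname{diam}Q_N(x)\ \ge\ \operatorname{diam}\Sigma(x)-\tau_N\ \ge\ c-\tau_N\ >\ \tau_N
$$
for every $x$ and every $\gamma\in(\gamma_1,1)$, and any pair $(\textbf{k},\textbf{l})$ realizing the diameter of $Q_N(x)$ satisfies the non-tangency criterion, witnessing $(\textbf{k},\textbf{l})\in(\mathcal{A}^N\times\mathcal{A}^N)\setminus E(N,x)$.

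The main obstacle is the uniformity step. The rigidity direction "collapse of slopes $\Rightarrow$ coboundary equation $\psi=p(b\cdot)-\gamma p$" is clean and self-contained, but promoting a pointwise positive lower bound on $\operatorname{diam}\Sigma$ to a uniform one on all of $\R/\Z$ is delicate: the map $T$ \emph{contracts} the slope coordinate by $\gamma/b$ per iterate, so the lower bound cannot simply be pushed forward along orbits, and the required uniformity must be extracted from the geometric information about $\Lambda$ supplied by \cite{BKRU}.
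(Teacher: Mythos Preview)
Your telescoping identity and non-tangency criterion are correct, and your rigidity computation leading to the twisted coboundary equation $\psi(y)=p(by)-\gamma p(y)$ is valid. But the proof has a genuine gap at exactly the point you flag: upgrading ``$\operatorname{diam}\Sigma(x_0)>0$ for some $x_0$'' to a uniform lower bound on $\operatorname{diam}\Sigma(x)$, simultaneously for all $x$ and all $\gamma\in(\gamma_1,1)$. You correctly observe that $T$ contracts the slope coordinate by $\gamma/b$, so lower bounds cannot be pushed forward, and ``topological transitivity of $T$ on $\Lambda$'' does not obviously supply what you need. Invoking \cite{BKRU} here is a hope rather than an argument.

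The paper sidesteps this entirely with two moves you did not make. First, it reduces to establishing $E(N_1,x_1)\neq\mathcal{A}^{N_1}\times\mathcal{A}^{N_1}$ at a \emph{single} point $x_1$: if $(\textbf{k},\textbf{l})\notin E(N_1,x)$ then the prepended words $(0^{N_2}\textbf{k},0^{N_2}\textbf{l})$ satisfy $(0^{N_2}\textbf{k},0^{N_2}\textbf{l})\notin E(N_1+N_2,b^{N_2}x)$, since the slope difference rescales by $(\gamma/b)^{N_2}>0$; combined with Lemma~\ref{lem:addingmachine} and the fact that $b^{N_2}U+\Z=\R$ for $N_2$ large, this spreads transversality from a neighborhood of $x_1$ to all of $\R$. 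Second, to find $x_1$ it passes to the $\gamma=1$ limit $G(x,\textbf{u})=\sum_{n\ge1}b^{-(n-1)}\psi'(x_n)$ and shows by a short direct calculation that if $G(x):=G(x,\textbf{0})$ were $\Z$-periodic then $\psi$ would be cohomologous to zero in the standard sense $\psi=g_1(b\cdot)-g_1$. Since $G(x+1)=G(x,(100\cdots))$, non-periodicity of $G$ gives an $x_1$ with $G(x_1,\textbf{0})\neq G(x_1,(100\cdots))$; a simple tail estimate then yields $N_1$ and $\gamma_1$ such that the words $\textbf{0}^{N_1}$ and $10^{N_1-1}$ are transversal at $x_1$ for all $\gamma>\gamma_1$.

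So the paper's route is both more elementary (no appeal to \cite{BKRU}, no interior-of-attractor statement) and structurally simpler: it needs transversality of \emph{one explicit pair of words at one point}, not a uniform diameter bound on the full slope set. Your framework would be rescued by adopting the first move---spreading from one point via prepending zeros---in place of the unproven uniformity step.
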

\begin{proof} We shall prove that there exists $\gamma_1$, $N_1$ and $x_1\in \R$ such that $E(N_1, x_1)\not=\mathcal{A}^{N_1}\times\mathcal{A}^{N_1}$. Note that this is enough for the conclusion of this lemma. Indeed, let $\textbf{k},\textbf{l}\in\mathcal{A}^{N_1}$ be such that $(\textbf{k},\textbf{l})\not\in E(N_1, x_1)$. Then by Proposition~\ref{prop:compact} (2), there exist $\eps>0$ and $\delta>0$
a neighborhood $U$ of $x_1$ such that $(\textbf{k},\textbf{l})\not\in E(N_1, x_1;\eps,\delta)$.
Let $N_2$ be a positive integer such that $b^{N_2}U+\Z=\R$. Then for any $y\in\R$, there exists $x\in U$ and $k\in \Z$ such that $y=b^{N_2}x+k$. Since the words $(00\cdots 0\textbf{k}), (00\cdots0\textbf{l})\in\mathcal{A}^{N_2+N_1}$ are transversal at $b^{N_2}x$, we have
$E(N_1+N_2, y-k)\not=\mathcal{A}^{N_1+N_2}\times \mathcal{A}^{N_1+N_2}$ which is equivalent to $E(N_1+N_2, y)\not=\mathcal{A}^{N_1+N_2}\times \mathcal{A}^{N_1+N_2}$ by Lemma~\ref{lem:addingmachine}.

For each $\textbf{u}\in\mathcal{A}^{\Z^+}$, let $G:\R\to \R$ be defined as
$$G(x,\textbf{u})=\sum_{n=1}^\infty \frac{1}{b^{n}} \psi'\left(\frac{x+u_1+u_2 b+\cdots +u_n b^{n-1}}{b^n}\right).$$

Note that $G(x)=G(x,\textbf{0})$ satisfies the functional equation
$$bG(bx)= \psi'(x)+ G(x).$$

We claim that $G(x)$ is not $\Z$-periodic. Indeed, otherwise, from the equation above, we obtain $\int_0^1 G(x) dx=0$. Then $g(x)=\int_0^x G(t)dt$ defines a $\Z$-periodic function.
%Let $c= -b\int_0^1 g(x) dx$ and consider the function $g_1(x)=b^{-1}g(x)+c.$ Then
Since $\psi'= b g'(bx)-g'(x)$ and
$$\int_0^1 \psi(x) dx=\int_0^1 (g(bx)-g(x)) dx=0,$$
it follows that $\psi(x)= g(bx)-g(x)$ holds for all $x$. This contradicts the assumption that $\psi$ is not cohomologous to zero.

Since $G(x+1)=G(x, (100\cdots))$, it follows that there exists $x_1\in [0,1)$ such that
$$5\delta:=|G(x_1, (000\cdots))-G(x_1, (100\cdots))|>0.$$
Let $C=\max_{x\in [0,1]} |\psi'(x)|$. Let $N_1$ be a positive integer such that $$2C<\delta b^{N_1}(b-1)$$ and let
 $\gamma_1\in (0,1)$ be such that
 $$(1-\gamma_1^{N_1})C<\delta (b-1).$$
Then, for any $\textbf{k}, \textbf{l}\in\mathcal{A}^{\Z^+}$ with $k_1=k_2=\cdots=k_{N_1}=0$, $l_1=1$, $l_2=l_3=\cdots=l_{N_1}=0$, we have
 %denote the finite words in $\mathcal{A}^N$. Provided that $\gamma$ is sufficiently close to $1$, we have
\begin{multline*}
\left|\frac{d}{dx} S(x_1,\textbf{k})-G(x_1)\right|\le \sum_{n=1}^{N_1} \frac{(1-\gamma ^{n-1})}{b^n} C + 2C \sum_{n=N_1+1}^\infty b^{-n}\\
< (1-\gamma_1^{N_1}) C(b-1)^{-1}+ 2C((b-1) b^{N_1})^{-1}< 2\delta,
\end{multline*}
and similarly,
$$
\left|\frac{d}{dx} S(x_1,\textbf{l})-G(x_1+1)\right|< 2\delta.
$$
Therefore,
$$\left|\frac{d}{dx} S(x_1,\textbf{k})-\frac{d}{dx} S(x_1,\textbf{l})\right|\ge \delta.$$
It follows that the two words $(00\cdots 0), (10\cdots 0)\in \mathcal{A}^{N_1}$ are transversal at $x_1$, hence
$$E(N_1, x_1)\not=\mathcal{A}^{N_1}\times \mathcal{A}^{N_1}.$$
\end{proof}

\begin{proof}[Proof of Theorem~\ref{thm:gammalarge}]
Let $\psi=\phi'$, so $\psi$ is a $\Z$-periodic non-constant $C^1$ function and $\int_0^1 \psi(x) dx=0$.  Consider the map $T$ as in (\ref{eqn:mapT}). By Ledrappier's Theorem, it suffices to prove the measures $m_x$ defined for this map $T$ are absolutely continuous for Lebesgue almost every $x\in [0,1]$.

First we assume that $\psi$ is not cohomologous to $0$. By Lemma~\ref{lem:check1miss}, there exists $\gamma_1\in (0,1)$ and $N$ such that if $\gamma_1<\gamma<1$, then $E(N, x)\not=\mathcal{A}^N\times \mathcal{A}^N$ for each $x\in \R$.
By Lemma~\ref{lem:1miss}, this implies that $\sigma(N)< b^N-2+\frac{2}{\alpha}$ where $\alpha=\alpha (b,N)\in (1,2)$. Thus there exists $\gamma_0\in (\gamma_1, 1)$ such that if $\gamma>\gamma_0$ then $\sigma(N)< (b\gamma)^N$. By Theorem~\ref{thm:smallw2ac}, it follows that $m_x$ is absolutely continuous for Lebesgue a.e. $x\in [0,1]$.

To complete the proof, we shall use a few results of ~\cite{BKRU}.  Assume that $\psi$ is cohomologous to $0$ and let $\psi_1:\R\to \R$ be a $\Z$-periodic continuous function such that $\psi_1(bx)-\psi_1(x)=\psi(x)$. By Lemma 5.2 (5) and Lemma 5.8 (2) of that paper, $\psi_1$ is $C^1$, and $T_{b,\gamma, \psi}$ is $C^1$ conjugate to $T_{b,\gamma, \psi_1}$. By adding a constant if necessary, we may assume $\int_0^1 \psi_1(x) dx =0$. If $\psi_1$ is not cohomologous to zero, then we are done. Otherwise, repeat the argument. By Lemma 5.6 of that paper, any $\Z$-periodic non-constant $C^1$ function $\psi$ is not infinitely cohomologous to zero. Thus the procedure stops within finitely many steps.
\end{proof}

\subsection{Plan of Proof of Theorem~\ref{thm:cossole}}
Theorem~\ref{thm:cossole} follows from the following theorem by Theorem~\ref{thm:smallw2ac}. %So fix $\psi(x)=-2\pi \sin (2\pi x)$.
\begin{theorem}\label{thm:reduced}
For an integer  $b\ge 2$, $1/b<\gamma<1$ and $\psi(x)=-2\pi \sin (2\pi x)$, consider the map $T$ as in (\ref{eqn:mapT}). Then there exists a positive integer $q$ such that
$\sigma(q)<(b\gamma)^q.$
\end{theorem}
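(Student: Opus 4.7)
The plan is to apply Theorem~\ref{thm:smallw2ac} by exhibiting, for some integer $q \ge 1$, a weight function $\omega$ and an admissible testing function $V$ of order $q$ satisfying $\|\Sigma_{V,\omega}\|_\infty < (b\gamma)^q$. A natural split of cases is by the size of $b$: for $b \ge 6$ I would verify Tsujii's stronger condition $e(q) < (b\gamma)^q$ and then invoke $\sigma(q) \le e(q)$, while for $b \in \{2,3,4,5\}$ the count $e(q)$ is too crude and one must use the extra freedom afforded by a non-trivial $V$ and $\omega$.

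The key tool in both cases is the renormalization identity
$$S(x, \textbf{k}\textbf{u}) = T_\textbf{k}(x) + \gamma^q S(x(\textbf{k}), \textbf{u}), \qquad S'(x, \textbf{k}\textbf{u}) = T'_\textbf{k}(x) + (\gamma/b)^q S'(x(\textbf{k}), \textbf{u}),$$
where $T_\textbf{k}(x) = \sum_{n=1}^q \gamma^{n-1}\psi(x(k_1\cdots k_n))$ is an explicit trigonometric polynomial in $x$. Together with the uniform tail bounds $|S(x, \textbf{u})| \le 2\pi/(1-\gamma)$ and $|S'(x, \textbf{u})| \le 4\pi^2/(b-\gamma)$ coming from $\psi = -2\pi\sin(2\pi x)$, any pair $(\textbf{k}, \textbf{l}) \in E(q, x)$ forces
$$|T_\textbf{k}(x) - T_\textbf{l}(x)| = O(\gamma^q) \quad \text{and} \quad |T'_\textbf{k}(x) - T'_\textbf{l}(x)| = O((\gamma/b)^q).$$
So the problem reduces to bounding, for each $\textbf{k}$ and $x$, the number of $\textbf{l}$'s whose head sum approximates that of $\textbf{k}$ at these two precisions.

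For $b \ge 6$, I expect a rough count based on this reduction to already yield $e(q) < (b\gamma)^q$ for some moderate $q$, because $T_\textbf{k}(x)$ is a trigonometric polynomial with enough oscillation that near-coincidences are combinatorially rare. For $b \le 5$ the count alone is too large; the idea is to locate, measurably in $x$, a distinguished transversal pair $(\textbf{k}(x), \textbf{l}(x))$ and to define $V(x, \textbf{u}, \textbf{v})$ asymmetrically on the four entries it spans, in the spirit of the construction used in the proof of Lemma~\ref{lem:1miss}. More elaborate choices — several distinguished pairs simultaneously, or a non-constant $\omega$ that depends on which $b$-adic cylinder contains $x$ — may be required to push the row sums $\Sigma_{V,\omega}(x)$ strictly below $(b\gamma)^q$.

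The main obstacle is the regime $\gamma \searrow 1/b$ with $b = 2$, where $(b\gamma)^q$ is only marginally above $1$ for any $q$. In this regime the combinatorial savings must come from a quantitative count of tangent pairs, not merely from the existence of one transversal pair. I expect one has to exploit the specific structure of $\psi = -2\pi\sin(2\pi x)$ — whose Fourier spectrum is concentrated on $\{\pm 1\}$ — together with cancellation in the $b$-adic sums $T_\textbf{k}$, to show that only a small fraction of $\textbf{l}$'s can tangentially match a given $\textbf{k}$ at a given $x$ once $q$ is sufficiently large. Converting such a quantitative count into an admissible $V$ and weight $\omega$ with $\|\Sigma_{V,\omega}\|_\infty < (b\gamma)^q$ is the delicate final step.
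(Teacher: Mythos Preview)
Your outline matches the paper's overall architecture: split by $b$, use $e(q)$ for $b\ge 6$, and build non-trivial $(V,\omega)$ for $b\le 5$. But what you have written is a strategy, not a proof, and the place where you explicitly flag an obstacle is precisely where your intuition points in the wrong direction.

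You identify the regime $b=2$, $\gamma\searrow 1/2$ as the hardest, expecting to need a ``quantitative count of tangent pairs'' and delicate cancellation in the $T_{\textbf{k}}$. The paper's resolution is the opposite: when $\gamma$ is small (concretely $\gamma^3\le\sqrt{2}/8$), the tail bounds you wrote down are so strong that \emph{no} tangencies exist at all, i.e.\ $e(1)=1$, hence $\sigma(1)=1<2\gamma$ trivially. The genuinely delicate range for $b=2$ is the \emph{intermediate} one, roughly $0.56\lesssim\gamma\lesssim 0.81$, which the paper handles by a cascade: $\sigma(1)\le(\sqrt{5}+1)/2$ for all $\gamma$, then $\sigma(2)\le 1.61$ or $\sigma(2)\le\sqrt{2}$ or $\sigma(3)\le\sqrt{2}$ on successively smaller $\gamma$-intervals, each step requiring explicit trigonometric estimates on $F(x)=-(2\pi)^{-1}(S(x,\textbf{u})-S(x,\textbf{v}))$ and $F'(x)$ for specific short words.

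The second gap is that your proposal never specifies which $(V,\omega)$ to use. The paper does not improvise these case by case; it proves three reusable combinatorial lemmas (giving $\sigma(q)\le\sqrt{2}$, $\sigma(q)\le(\sqrt{5}+1)/2$, and $\sigma(q)\le t<1.61$ respectively) under structural hypotheses on $E(q,x)$ of the form ``the only possible non-trivial tangent pairs at $x$ are such-and-such, and their preimages $x(\textbf{u})$ land in a region where $e(q,\cdot)=1$''. All the analytic work then goes into verifying those structural hypotheses via explicit bounds on $P_n=\sin(2\pi x_n)-\sin(2\pi y_n)$ and $Q_n=\cos(2\pi x_n)-\cos(2\pi y_n)$. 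Your reference to ``the construction used in the proof of Lemma~\ref{lem:1miss}'' is too weak: that lemma only gives $\sigma(q)\le b^q-2+2/\alpha$, which is useless when $b\gamma$ is close to $1$.
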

The rest of the paper is devoted to the proof of Theorem~\ref{thm:reduced}. The proof uses special property of the map $\psi$ and breaks into several cases.
\begin{proof}[Proof of Theorem~\ref{thm:reduced}]
The case $b\ge 6$ is proved in Theorem~\ref{thm:blarge} (i). The case $b=5$ is proved in Theorem~\ref{thm:b=5}. The case $b=4$ is proved in Theorem~\ref{thm:b=4}. The case $b=3$ is proved in Theorem~\ref{thm:b=3}. The case $b=2$ follows from Corollary~\ref{cor:b=2.4} and Proposition~\ref{prop:b=2step5}.
\end{proof}

To conclude this section, we include a few lemmas which will be used
in later sections. The first lemma is about a new symmetric property of the functions $S(x, \textbf{u})$ in the case that $\psi$ is odd.

\begin{lemma}[Symmetry]\label{lem:symmetric} Assume that $\psi(x)$ is an odd function. Then for any $\textbf{i}=\{i_n\}_{n=1}^\infty\in\mathcal{A}^{\Z^+}$, letting $\textbf{i}'=\{i_n'\}_{n=1}^\infty$ with $i_n'=b-1-i_n$, we have
$$-S(x,\textbf{i})=S(1-x, \textbf{i}').$$
\end{lemma}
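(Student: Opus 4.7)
The plan is a direct computation using both the oddness of $\psi$ and its $\Z$-periodicity, which together produce the desired sign change termwise. For each $n$, the $n$th summand of $S(x,\textbf{i})$ has argument
\[
t_n(x,\textbf{i}) \= \frac{x}{b^n} + \sum_{k=1}^{n} \frac{i_k}{b^{n-k+1}},
\]
and correspondingly the $n$th summand of $S(1-x,\textbf{i}')$ has argument
\[
s_n(x,\textbf{i}) \= \frac{1-x}{b^n} + \sum_{k=1}^{n} \frac{b-1-i_k}{b^{n-k+1}}.
\]
The first step is to verify the identity $s_n(x,\textbf{i}) + t_n(x,\textbf{i}) = 1$ for every $n$. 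Indeed, the $x$-contributions cancel, and what remains is the telescoping geometric sum
\[
\frac{1}{b^n} + (b-1)\sum_{j=1}^{n} \frac{1}{b^j} = \frac{1}{b^n} + \left(1 - \frac{1}{b^n}\right) = 1.
\]

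The second step is to conclude termwise. Since $\psi$ is $\Z$-periodic, $\psi(s_n) = \psi(1-t_n) = \psi(-t_n)$, and since $\psi$ is odd this equals $-\psi(t_n)$. Multiplying by the common factor $\gamma^{n-1}$ and summing over $n \ge 1$ yields
\[
S(1-x,\textbf{i}') = \sum_{n=1}^\infty \gamma^{n-1}\psi(s_n) = -\sum_{n=1}^\infty \gamma^{n-1}\psi(t_n) = -S(x,\textbf{i}),
\]
which is the claim.

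There is no real obstacle here; the only thing one has to be careful with is the indexing in the geometric sum (the substitution $j = n-k+1$) to confirm the telescoping identity $s_n + t_n = 1$. Everything else is an immediate consequence of the symmetries of $\psi$ and linearity of the sum defining $S$.
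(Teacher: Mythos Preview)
Your proof is correct and is exactly the direct verification the paper has in mind: the paper's own proof consists only of the single line ``This follows from the definition of $S(\cdot,\cdot)$,'' and your termwise computation of $s_n+t_n=1$ followed by an application of the periodicity and oddness of $\psi$ is precisely how one unpacks that remark.
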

\begin{proof} This follows from the definition of $S(\cdot, \cdot)$.
\end{proof}
The next three lemmas will be used to obtain upper bounds for $\sigma(q)$.
\begin{lemma}\label{lem:sqrt2}
Let $q\ge 1$ be an integer. Suppose that there are constants $\eps>0$ and $\delta>0$ and $K\subset [0,1)$ with the following properties:
\begin{enumerate}
\item [(i)] For $x\in K$,  $e(q,x;\eps,\delta)=1$ and for $x\in [0,1)\setminus K$, $e(q,x;\eps,\delta)\le 2$;
\item [(ii)] If $(\textbf{u},\textbf{v})\in E(q,x;\eps,\delta)$ for some $x\in [0,1)\setminus K$ and $\textbf{u}\not=\textbf{v}$, then both
$x(\textbf{u})$ and  $x(\textbf{v})$ belong to $K$.
%;\eps,\delta)=1$ and
%$e(q, x(\textbf{v});\eps,\delta)=1.$
\end{enumerate}
Then $\sigma(q)\le \sqrt{2}$.
\end{lemma}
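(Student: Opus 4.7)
The plan is to construct an explicit weight function $\omega$ and admissible testing function $V$ realizing the bound. Set $\omega(x) = 1$ for $x \in K$ and $\omega(x) = 1/\sqrt{2}$ for $x \in [0,1) \setminus K$; this is a valid weight function since it takes only two positive values. For the testing function, let
$$V(x,\textbf{u},\textbf{v}) = \begin{cases} 1 & \text{if } (\textbf{u},\textbf{v}) \in E(q,x;\eps,\delta), \\ 0 & \text{otherwise.}\end{cases}$$
Admissibility is immediate: the relation of being $(\eps,\delta)$-tangent at $x_0$ is symmetric in its two arguments, so $E(q,x;\eps,\delta)$ is a symmetric subset of $\mathcal{A}^q \times \mathcal{A}^q$, and whenever $(\textbf{u},\textbf{v}) \in E(q,x;\eps,\delta)$ we get $V(x,\textbf{u},\textbf{v})V(x,\textbf{v},\textbf{u}) = 1 \cdot 1 = 1$.

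Next I would estimate $\Sigma_{V,\omega}(x)$ by splitting into cases. If $x \in K$, then hypothesis (i) forces $E(q,x;\eps,\delta) = \{(\textbf{k},\textbf{k}) : \textbf{k} \in \mathcal{A}^q\}$ (since every diagonal pair is tangent and the maximum multiplicity is $1$), so $\sum_{\textbf{v}} V(x,\textbf{u},\textbf{v}) = 1$ for every $\textbf{u}$; the prefactor $\omega(x)/\omega(x(\textbf{u})) = 1/\omega(x(\textbf{u}))$ is at most $\sqrt{2}$. If instead $x \in [0,1) \setminus K$, then for each $\textbf{u}$ there are at most two tangent partners (including $\textbf{u}$ itself), and two cases arise: either the only partner is $\textbf{u}$ itself, in which case the sum equals $1$ and $\omega(x)/\omega(x(\textbf{u})) \le (1/\sqrt{2})/(1/\sqrt{2}) = 1$; or there is a genuine partner $\textbf{v} \ne \textbf{u}$, in which case hypothesis (ii) guarantees $x(\textbf{u}) \in K$, forcing $\omega(x(\textbf{u})) = 1$ and so $\omega(x)/\omega(x(\textbf{u})) = 1/\sqrt{2}$, while the sum is at most $2$, giving the product at most $\sqrt{2}$.

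Combining these bounds yields $\Sigma_{V,\omega}(x) \le \sqrt{2}$ for every $x \in [0,1)$, hence $\|\Sigma_{V,\omega}\|_\infty \le \sqrt{2}$ and therefore $\sigma(q) \le \sqrt{2}$ from the definition as an infimum. There is essentially no obstacle here: the key calibration is that the choice $\omega = 1/\sqrt{2}$ on $[0,1) \setminus K$ is designed so that the worst contribution from $x \in K$ (sum $= 1$ but prefactor up to $\sqrt{2}$) exactly matches the worst contribution from $x \notin K$ (sum up to $2$, prefactor $1/\sqrt{2}$), with the two extremes balanced at $\sqrt{2}$. This optimization is what pins down the constant $1/\sqrt{2}$ as the unique choice making the bound $\sqrt{2}$ tight.
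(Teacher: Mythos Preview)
Your proof is correct and essentially identical to the paper's: the paper uses $\omega=\sqrt{2}$ on $K$ and $\omega=1$ on $[0,1)\setminus K$, which is just a global rescaling of your choice, and since $\Sigma_{V,\omega}$ depends only on ratios $\omega(x)/\omega(x(\textbf{u}))$ the two constructions are literally the same. The testing function $V$ and the case analysis match the paper's exactly.
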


\begin{proof} We define suitable weight function $\omega$ and testing function $V$. Let $L=[0,1)\setminus K$. %Let $K,L$ be defined as above.
Define
$$\omega(x)=\left\{
\begin{array}{ll}
\sqrt{2} & \mbox{ if } x\in K;\\
1 &\mbox{ if } x\in L.
\end{array}
\right.
$$
Define
$$V(x,\textbf{u}, \textbf{v})=\left\{
\begin{array}{ll}
1 &\mbox{ if } (\textbf{u}, \textbf{v})\in E(q,x;\eps,\delta);\\
0 &\mbox{ otherwise.}
\end{array}
\right.
$$
Then for $x\in K$ and any $\textbf{u}\in\mathcal{A}^q$,
$$\frac{\omega(x)}{\omega(x(\textbf{u}))} \sum_{\textbf{v}} V(x, \textbf{u}, \textbf{v})=\frac{\omega(x)}{\omega(x(\textbf{u}))}\le \sqrt{2}.$$
For $x\in L$ and $\textbf{u}\in \mathcal{A}^q$, if $\textbf{u}$ is not $(\eps,\delta)$-tangent to any other element of $\mathcal{A}^q$ at $x$, then we have
$$\frac{\omega(x)}{\omega(x(\textbf{u}))} \sum_{\textbf{v}} V(x, \textbf{u}, \textbf{v})=\frac{\omega(x)}{\omega(x(\textbf{u}))}\le 1;$$
otherwise, we have $\omega(x)=1$ and $\omega(x(\textbf{u}))=\sqrt{2}$
$$\frac{\omega(x)}{\omega(x(\textbf{u}))} \sum_{\textbf{v}} V(x, \textbf{u}, \textbf{v})=\frac{\omega(x)}{\omega(x(\textbf{u}))}\cdot 2\le \sqrt{2}.$$
It follows that $\sigma(q)\le \Sigma_{V,w}\le \sqrt{2}.$
\end{proof}

\begin{lemma}\label{lem:goldenratio}
Let $q\ge 1$ be an integer. Suppose that there are constants $\eps>0$ and $\delta>0$ and $K\subset [0,1)$ with the following properties:
\begin{enumerate}
\item [(i)] For $x\in K$, $e(q,x;\eps,\delta)\le 1$ and for $x\in [0,1)\setminus K$, $e(q, x:\eps,\delta)\le 2$;
\item [(ii)] If $(\textbf{u},\textbf{v})\in E(q,x;\eps,\delta)$ for some $x\in [0,1)\setminus K$ and $\textbf{u}\not=\textbf{v}$, then either $x(\textbf{u})\in K$ or
$x(\textbf{v})\in K.$
\end{enumerate}
Then $\sigma(q)\le (\sqrt{5}+1)/2$.
\end{lemma}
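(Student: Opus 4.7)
The plan is to adapt the strategy of Lemma~\ref{lem:sqrt2} but with a sharper choice of weights tuned to the golden ratio $\varphi=(\sqrt 5+1)/2$, which satisfies the identity $1+1/\varphi = \varphi$ (equivalently $\varphi^2 = \varphi+1$). Writing $L = [0,1)\setminus K$, I would take the weight function
\[
\omega(x) = \begin{cases} \varphi, & x \in K, \\ 1, & x \in L. \end{cases}
\]
The essential modification relative to Lemma~\ref{lem:sqrt2} is the use of an \emph{asymmetric} testing function that exploits the asymmetric hypothesis (ii). Concretely, for $(\textbf{u},\textbf{v}) \in E(q,x;\eps,\delta)$ with $\textbf{u} \neq \textbf{v}$ I would set
\[
V(x, \textbf{u}, \textbf{v}) = \frac{\omega(x(\textbf{u}))}{\omega(x(\textbf{v}))},
\]
with $V(x,\textbf{u},\textbf{u}) = 1$, and $V(x,\textbf{u},\textbf{v}) = 0$ otherwise. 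Admissibility $V(x,\textbf{u},\textbf{v})V(x,\textbf{v},\textbf{u}) = 1$ is then automatic.

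It remains to verify $\|\Sigma_{V,\omega}\|_\infty \leq \varphi$. For $x \in K$, hypothesis (i) forces $\sum_{\textbf{v}} V(x,\textbf{u},\textbf{v}) = 1$, so the contribution equals $\varphi/\omega(x(\textbf{u})) \leq \varphi$. For $x \in L$ and $\textbf{u}\in\mathcal{A}^q$ admitting a nontrivial tangency partner $\textbf{v}$ (unique by (i)), the weight factor $1/\omega(x(\textbf{u}))$ multiplied by $1 + \omega(x(\textbf{u}))/\omega(x(\textbf{v}))$ telescopes to
\[
\frac{1}{\omega(x(\textbf{u}))} + \frac{1}{\omega(x(\textbf{v}))}.
\]
By (ii), at least one of the two denominators equals $\varphi$, while the other is at least $1$, so this sum is at most $1 + 1/\varphi = \varphi$. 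The remaining case $x \in L$ with no nontrivial partner gives a contribution of at most $1 \leq \varphi$. Taking the infimum in the definition of $\sigma(q)$ yields $\sigma(q) \leq \varphi$.

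The step I expect to be the main subtlety is precisely the choice of $V$. In the $\sqrt 2$ case, hypothesis (ii) pins both endpoints $x(\textbf{u})$ and $x(\textbf{v})$ inside $K$ and a symmetric $0/1$ testing function suffices; here only one of the endpoints need lie in $K$, and that asymmetry must be absorbed by $V$ rather than by $\omega$. The ratio $\omega(x(\textbf{u}))/\omega(x(\textbf{v}))$ is tuned exactly so that the row-sum of the $V$-matrix telescopes into $\sum 1/\omega(x(\textbf{v}))$, which converts the one-sided condition (ii) into the golden-ratio identity $1+1/\varphi = \varphi$. Once this choice is identified the remainder of the argument is routine.
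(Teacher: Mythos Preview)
Your proof is correct and follows essentially the same approach as the paper: the same weight function $\omega$ (equal to $\varphi$ on $K$ and $1$ on $L$) and effectively the same testing function. Your formula $V(x,\textbf{u},\textbf{v})=\omega(x(\textbf{u}))/\omega(x(\textbf{v}))$ is a slightly cleaner packaging than the paper's case-by-case definition (which sets $V=\varphi$ or $1/\varphi$ according to whether $x(\textbf{u})\in K$ alone), and your telescoping identity $\frac{1}{\omega(x(\textbf{u}))}+\frac{1}{\omega(x(\textbf{v}))}\le 1+1/\varphi=\varphi$ makes the role of hypothesis~(ii) especially transparent; the two choices differ only when both $x(\textbf{u}),x(\textbf{v})\in K$, where either value works.
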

\begin{proof} Let $L=[0,1)\setminus K$. Define % $\omega:[0,1)\to (0,\infty)$ as
$$\omega(x)=\left\{
\begin{array}{ll}
(\sqrt{5}+1)/2 &\mbox{ if } x\in K;\\
1 &\mbox{ otherwise.}
\end{array}
\right.$$
For $x\in K$, define
$$V(x, \textbf{u}, \textbf{v})=\left\{
\begin{array}{ll}
1 & \mbox{ if } \textbf{u}=\textbf{v}\\
0 &\mbox{ otherwise.}
\end{array}
\right.
$$
%
%For $x\in L$, let $(\textbf{u}_x, \textbf{v}_x)$ be the only element in $E(q, x;\eps,\delta)$ with $\textbf{u}_x\not=\textbf{v}_x$ and assume %$x(\textbf{u}_x)\in K$.
%Let $L_1:=\{x\in L: x(\textbf{v}_x)\not\in K\}$ and $L_2=L\setminus L_1$.
%Then
For $x\in L$, define
$$V(x,\textbf{u}, \textbf{v})=\left\{
\begin{array}{ll}
0 & \mbox{ if } (\textbf{u}, \textbf{v})\not\in E(q, x;\eps,\delta);\\
1 & \mbox{ if } \textbf{u}=\textbf{v};\\
(\sqrt{5}+1)/2 & \mbox{ if } (\textbf{u},\textbf{v})\in E(q, x;\eps,\delta), \textbf{u}\not=\textbf{v}, \text{ and } x(\textbf{u})\in K;\\
(\sqrt{5}-1)/2 & \mbox{ if } (\textbf{u},\textbf{v})\in E(q, x;\eps,\delta), \textbf{u}\not=\textbf{v}, \text{ and } x(\textbf{u})\not\in K.
\end{array}
\right.
$$
Then for $x\in K$ and any $\textbf{u}\in\mathcal{A}^q$, we have
$$\frac{\omega(x)}{\omega(x(\textbf{u}))}\sum_{\textbf{v}} V(x,\textbf{u},\textbf{v})\le \frac{\sqrt{5}+1}{2}.$$
For $x\in L$ and $\textbf{u}\in\mathcal{A}^q$, if $x(\textbf{u})\in K$, we have %\setminus \{\textbf{u}_x,\textbf{v}_x\}$,
$$\frac{\omega(x)}{\omega(x(\textbf{u}))}\sum_{\textbf{v}} V(x,\textbf{u},\textbf{v})\le \frac{1}{(\sqrt{5}+1)/2} \left(\frac{\sqrt{5}+1}{2}+1\right)=\frac{\sqrt{5}+1}{2};$$
if $x(\textbf{u})\not \in K$, then
%=\textbf{u}_x$ or $\textbf{v}_x$, then
$$\frac{\omega(x)}{\omega(x(\textbf{u}))}\sum_{\textbf{v}} V(x,\textbf{u},\textbf{v})\le \frac{1}{1} \left(\frac{\sqrt{5}-1}{2}+1\right)=\frac{\sqrt{5}+1}{2}.$$
%if $x\in L_1$ and $\textbf{u}= \textbf{u}_x$ then
%$$\frac{\omega(x)}{\omega(x(\textbf{u}))}\sum_{\textbf{v}} V(x,\textbf{u},\textbf{v})\le \frac{1}{(\sqrt{5}+1)/2} \cdot %\frac{\sqrt{5}+3}{2}=\frac{\sqrt{5}+1}{2};$$
%if $x\in L_2$ and $\textbf{u}=\textbf{v}_x$, then
%$$\frac{\omega(x)}{\omega(x(\textbf{u}))}\sum_{\textbf{v}} V(x,\textbf{u},\textbf{v})\le \frac{1}{2} \frac{\sqrt{5}+1}{2}=\frac{\sqrt{5}+1}{2}.$$
In conclusion, we have $\sigma(q)\le \|\Sigma_{V,\omega}\|_\infty \le (\sqrt{5}+1)/2.$
\end{proof}
The next lemma is more technical and will only be used in the case $b=2$.
\begin{lemma}\label{lem:w1.7}
Let $q\ge 1$ be an integer. Suppose there are three pairwise disjoint subsets $K_0, K_1, K_2$ of $[0,1)$ with $K_0\cup K_1\cup K_2 =[0,1)$ and constants $\eps,\delta>0$ such that the following hold:
\begin{enumerate}
\item [(i)] For each $x\in K_0$, $e(q, x;\eps,\delta)=1$;
\item [(ii)] For each $x\in K_1$, there exist $\textbf{a}_x,\textbf{b}_x\in \mathcal{A}^q$ such that $x(\textbf{a}_x), x(\textbf{b}_x)\in K_0$ and such that $(\textbf{a}_x, \textbf{b}_x)$ and $(\textbf{b}_x,\textbf{a}_x)$ are the only possible non-trivial element of $E(q, x;\eps, \delta)$;
\item [(iii)] For $x\in K_2$, there exist $\textbf{a}_x, \textbf{b}_x,\textbf{c}_x\in\mathcal{A}^q$ such that $x(\textbf{a}_x), x(\textbf{b}_x)\in K_0$ and $x(\textbf{c}_x)\in K_1$ and such that $(\textbf{a}_x,\textbf{b}_x)$, $(\textbf{a}_x, \textbf{c}_x)$, $(\textbf{b}_x,\textbf{a}_x)$
    and $(\textbf{c}_x,\textbf{a}_x)$ are the only possible non-trivial elements of $E(q, x;\eps,\delta)$.
\end{enumerate}
Then $$\sigma(q)\le t<1.61,$$
where $t>\sqrt{2}$ is the unique solution of the following equation
\begin{equation}\label{eqn:w3t}
\frac{1}{t^2-1}+\frac{2}{t^3-2}+1=t^2.
\end{equation}
\end{lemma}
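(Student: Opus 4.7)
The plan is to follow the pattern of Lemmas~\ref{lem:sqrt2} and \ref{lem:goldenratio}: construct an explicit weight function $\omega$ that is constant on each of $K_0, K_1, K_2$, together with an admissible testing function $V$ that vanishes off the diagonal except on the distinguished tangent pairs from hypotheses (ii)--(iii), and then verify $\|\Sigma_{V,\omega}\|_\infty \leq t$ by direct computation. The values of $\omega$ on the three sets and the values of $V$ on the distinguished pairs will be chosen so that the binding constraints in $\Sigma_{V,\omega}(x)$ all become tight simultaneously; the resulting compatibility condition is precisely the equation~(\ref{eqn:w3t}).

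First, I will verify that (\ref{eqn:w3t}) has a unique solution $t \in (2^{1/3},2)$ and $t < 1.61$. Writing $f(t) = t^2 - 1 - 1/(t^2-1) - 2/(t^3-2)$, each term of $f'(t) = 2t + 2t/(t^2-1)^2 + 6t^2/(t^3-2)^2$ is positive on $(2^{1/3},\infty)$, so $f$ is strictly increasing there; since $f \to -\infty$ as $t \searrow 2^{1/3}$ while $f(2)>0$, existence and uniqueness follow, and a direct evaluation $f(1.61) > 0$ yields $t < 1.61$.

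Next, I will set $\omega \equiv 1$ on $K_0$, $\omega \equiv t/2$ on $K_1$, and $\omega \equiv 1/t$ on $K_2$. Define $V(x,\textbf{u},\textbf{u}) = 1$ for all $x, \textbf{u}$, and $V(x,\textbf{u},\textbf{v}) = 0$ for all remaining off-diagonal pairs, except: for $x \in K_1$, put $V(x, \textbf{a}_x, \textbf{b}_x) = V(x, \textbf{b}_x, \textbf{a}_x) = 1$; and for $x \in K_2$, put $V(x,\textbf{a}_x,\textbf{b}_x) = 1/(t^2-1)$, $V(x,\textbf{b}_x,\textbf{a}_x) = t^2-1$, $V(x,\textbf{a}_x,\textbf{c}_x) = 2/(t^3-2)$, and $V(x,\textbf{c}_x,\textbf{a}_x) = (t^3-2)/2$. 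Admissibility is immediate from $1 \cdot 1 = 1$, $\tfrac{1}{t^2-1}(t^2-1) = 1$, and $\tfrac{2}{t^3-2}\cdot\tfrac{t^3-2}{2} = 1$; and (\ref{eqn:w3t}) gives the key identities $1 + V(x,\textbf{a}_x,\textbf{b}_x) + V(x,\textbf{a}_x,\textbf{c}_x) = t^2 = 1 + V(x,\textbf{b}_x,\textbf{a}_x)$ on $K_2$, together with $1 + V(x,\textbf{c}_x,\textbf{a}_x) = t^3/2$.

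Finally, I will bound $\Sigma_{V,\omega}(x) \leq t$ by a short case analysis on which $K_i$ contains $x$ and whether $\textbf{u} \in \{\textbf{a}_x, \textbf{b}_x, \textbf{c}_x\}$. The three binding cases on $K_2$ (for $\textbf{u} = \textbf{a}_x, \textbf{b}_x, \textbf{c}_x$) each simplify, via the identities above together with $x(\textbf{a}_x),x(\textbf{b}_x)\in K_0$ and $x(\textbf{c}_x)\in K_1$, to $(1/t)\cdot t^2 = t$ or $(2/t^2)(t^3/2) = t$; the two binding cases on $K_1$ reduce to $(t/2)\cdot 2 = t$; and the non-binding cases reduce to pure weight ratios $w_i/w_j \leq t$ that hold as soon as $\sqrt{2} < t < 2$. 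The main obstacle is really just bookkeeping --- the mathematical content lies entirely in the observation that (\ref{eqn:w3t}) is the algebraic compatibility condition forcing the three $K_2$-constraints to become simultaneously active, via the relations $\alpha+\gamma = \beta$, $\alpha\beta = 1$, and $\gamma\delta = 1$ where $\alpha,\beta,\gamma,\delta$ denote the four $V$-values assigned on $K_2$.
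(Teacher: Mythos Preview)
Your proof is correct and follows essentially the same approach as the paper: the weight function you choose is the paper's weight divided by the constant $t$ (which leaves $\Sigma_{V,\omega}$ unchanged), and your testing function $V$ agrees with the paper's on $K_2$ once one substitutes $s=t^2/2$. Your treatment is in fact slightly cleaner in two places: you correctly set $V(x,\textbf{a}_x,\textbf{b}_x)=V(x,\textbf{b}_x,\textbf{a}_x)=1$ on $K_1$ (the paper's displayed definition of $V$ on $K_0\cup K_1$ sets these to $0$, which would violate admissibility, though the subsequent computation there uses the correct value $2$ for the sum), and you give an explicit verification that $t<1.61$ rather than asserting it.
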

\begin{proof}  Let $s=t^2/2$. Note that $t>s>1$. Define
$$\omega(x)=\left\{
\begin{array}{ll}
t & \mbox{ if } x\in K_0;\\
s & \mbox{ if } x\in K_1;\\
1 & \mbox{ if } x\in K_2.
\end{array}
\right.
$$

For $x\in K_0\cup K_1$, define
$$V(x, \textbf{u}, \textbf{v})=\left\{
\begin{array}{ll}
0 & \mbox{ if } (\textbf{u},\textbf{v})\not\in E(q, x;\eps,\delta);\\
1 & \mbox{ otherwise.}
\end{array}
\right.
$$
For $x\in K_2$, define
$$V(x,\textbf{u}, \textbf{v})=\left\{
\begin{array}{ll}
1 & \mbox{ if } \textbf{u}=\textbf{v};\\
t s-1 & \mbox{ if } (\textbf{u},\textbf{v})=(\textbf{c}_x,\textbf{a}_x);\\
(t s-1)^{-1} & \mbox{ if } (\textbf{u},\textbf{v})=(\textbf{a}_x,\textbf{c}_x);\\
  t^2-1 & \mbox{ if } (\textbf{u},\textbf{v})=(\textbf{b}_x,\textbf{a}_x);\\
(t^2-1)^{-1} & \mbox{ if } (\textbf{u},\textbf{v})=(\textbf{a}_x,\textbf{b}_x)\\
0 &\mbox{ otherwise.}
\end{array}
\right.
$$
Then for $x\in K_0$, and any $\textbf{u}\in\mathcal{A}^q$ we have
$$\frac{\omega(x)}{\omega(x(\textbf{u}))}\sum_{\textbf{v}\in\mathcal{A}^q} V(x, \textbf{u}, \textbf{v})\le t;$$
for $x\in K_1$, $\textbf{u}\in\{\textbf{a}_x, \textbf{b}_x\}$,
$$\frac{\omega(x)}{\omega(x(\textbf{u}))}\sum_{\textbf{v}\in\mathcal{A}^q} V(x, \textbf{u}, \textbf{v})= 2s/t=t;$$
for $x\in K_1$, $\textbf{u}\not\in\{\textbf{a}_x, \textbf{b}_x\}$,
$$\frac{\omega(x)}{\omega(x(\textbf{u}))}\sum_{\textbf{v}\in\mathcal{A}^q} V(x, \textbf{u}, \textbf{v})\le s<t;$$
for $x\in K_2$,
$$\frac{\omega(x)}{\omega(x(\textbf{a}_x))}\sum_{\textbf{v}\in\mathcal{A}^q} V(x, \textbf{u}, \textbf{v})= \frac{1}{t} \left(\frac{1}{t^2-1}+\frac{1} {t s-1}+1\right)=t;$$
for $x\in K_2$, %$\textbf{u}\in \{\textbf{b}_x,\textbf{c}_x\},$
$$\frac{\omega(x)}{\omega(x(\textbf{b}_x))}\sum_{\textbf{v}\in\mathcal{A}^q} V(x, \textbf{b}_x, \textbf{v})=\frac{1}{t}\left(1+t^2-1\right)= t;$$
for $x\in K_2$, %$\textbf{u}\in \{\textbf{b}_x,\textbf{c}_x\},$
$$\frac{\omega(x)}{\omega(x(\textbf{c}_x))}\sum_{\textbf{v}\in\mathcal{A}^q} V(x, \textbf{c}_x, \textbf{v})=\frac{1}{s}\left(1+ts-1\right)= t;$$
and for $x\in K_2,$, $\textbf{u}\not\in\{\textbf{a}_x,\textbf{b}_x,\textbf{c}_x\}$,
$$\frac{\omega(x)}{\omega(x(\textbf{u}))}\sum_{\textbf{v}\in\mathcal{A}^q} V(x, \textbf{u}, \textbf{v})\le 1.$$
Therefore
$$\sigma(q)\le \|\Sigma_{V,\omega}(x)\|_\infty\le t.$$
\end{proof}

\section{The case when $b$ is large}
In this and next sections, we shall prove Theorem~\ref{thm:reduced}. So we consider a map $T$ of the form (\ref{eqn:mapT}) with $\psi=-2\pi\sin (2\pi x).$ The main result of this section is the following:
\begin{theorem}\label{thm:blarge}
%Let $\psi(x)=-2\pi\sin(2\pi x)$. The the following hold:
\begin{enumerate}
\item If $b\ge 6$, then $\sigma(1)\le e(1) <\gamma b$.
\item If $b=4,5$, then either $e(1)= 2$ or $e(1)<\gamma b$.
\item If $b=3$, then either $e(1)=2$ or $\sigma(1)<\gamma b$.
\end{enumerate}
\end{theorem}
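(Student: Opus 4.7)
The plan is to unfold the transversality condition defining $E(1,x_0)$ into two concrete inequalities involving $\sin(2\pi y_k)$ and $\cos(2\pi y_k)$, where $y_j:=(x_0+j)/b$, and then to carry out a trigonometric count on the $b$ equally spaced points $P_j:=(\sin 2\pi y_j,\cos 2\pi y_j)$ on the unit circle.

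By Proposition~\ref{prop:compact}(1), $(k,l)\in E(1,x_0)$ is equivalent to the existence of $\textbf{u},\textbf{v}\in\mathcal{A}^{\Z^+}$ with $S(x_0,k\textbf{u})=S(x_0,l\textbf{v})$ and $S'(x_0,k\textbf{u})=S'(x_0,l\textbf{v})$. The self-similarity $S(x,k\textbf{u})=\psi(y_k)+\gamma S(y_k,\textbf{u})$ and its $x$-derivative, combined with the uniform bounds $\|S\|_\infty\leq\|\psi\|_\infty/(1-\gamma)$ and $\|S'\|_\infty\leq\|\psi'\|_\infty/(b-\gamma)$ obtained by geometric summation, then yield
\[|\psi(y_k)-\psi(y_l)|\leq\frac{2\gamma\|\psi\|_\infty}{1-\gamma}\quad\text{and}\quad|\psi'(y_k)-\psi'(y_l)|\leq\frac{2\gamma\|\psi'\|_\infty}{b-\gamma}.\]
Plugging in $\psi(x)=-2\pi\sin(2\pi x)$ this reduces to
\[|\sin(2\pi y_k)-\sin(2\pi y_l)|\leq 2A\quad\text{and}\quad|\cos(2\pi y_k)-\cos(2\pi y_l)|\leq 2B,\]
with $A:=\gamma/(1-\gamma)$ and $B:=\gamma/(b-\gamma)\leq 1/(b-1)$.

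These two inequalities force every admissible $P_l$ into an axis-aligned rectangle of half-widths $A$ (sine) and $B$ (cosine) centred at $P_k$; intersected with the unit circle, this rectangle is a union of at most two arcs, one around $P_k$ and one around the cosine-reflection $(-\sin 2\pi y_k,\cos 2\pi y_k)$, the latter surviving only when $|\sin 2\pi y_k|\leq A$. A short computation bounds the angular width of each arc by $2\arccos(1-2B)$ (saturated near the axis $\{\sin=0\}$) and by $2B/|\sin 2\pi y_k|$ otherwise, so the number of $P_l$'s in each arc is at most $\lfloor (\text{arc width})\cdot b/(2\pi)\rfloor+1$. This produces an explicit upper bound $N(b,\gamma)$ for $e(1,x_0)$, uniform in $x_0$ and $k$.

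For (1), $b\geq 6$, I would verify directly the elementary inequality $N(b,\gamma)<\gamma b$ for every $\gamma\in(1/b,1)$, exploiting $B\leq 1/(b-1)$ and a subcase analysis on whether $P_k$ is near the axis $\{\sin=0\}$ of the circle; combined with the bound $\sigma(1)\leq e(1)$ (the lemma preceding Proposition~\ref{prop:compact}), this gives $\sigma(1)\leq e(1)<\gamma b$. For (2), $b\in\{4,5\}$, the same computation forces $N(b,\gamma)\leq 2$, hence $e(1)\in\{1,2\}$; either $e(1)=2$ (first alternative) or $e(1)\leq 1$, in which case $e(1)<\gamma b$ since $\gamma b>1$. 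For (3), $b=3$, the argument again produces $e(1)\leq 2$, and either $e(1)=2$ or $e(1)\leq 1$, the latter giving $\sigma(1)\leq e(1)\leq 1<\gamma b$. The main obstacle is the tight trigonometric verification of $N(b,\gamma)<\gamma b$ for $b\geq 6$ uniformly across $\gamma\in(1/b,1)$: one must balance the arc-width bound $2\arccos(1-2B)$ against the angular spacing $2\pi/b$ in every subcase, with particular care near the axis where the two arcs merge into a single wider one; for $b\leq 5$ the coarser conclusion $e(1)\leq 2$ is all that is needed, and the dichotomy stated in the theorem absorbs the $e(1)=2$ boundary case without further work.
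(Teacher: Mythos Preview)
Your opening is correct and matches the paper's Lemma~\ref{lem:1stest}: the two inequalities on $|\sin 2\pi y_k-\sin 2\pi y_l|$ and $|\cos 2\pi y_k-\cos 2\pi y_l|$ are exactly the necessary conditions the paper extracts, and counting admissible $l$'s via arcs on the unit circle is equivalent in spirit to the paper's strategy of converting the cosine bound into a lower bound on sine differences and summing.

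The gap is in your treatment of parts (2) and (3). You assert that the arc count forces $N(b,\gamma)\le 2$ for $b\in\{3,4,5\}$ uniformly in $\gamma$, but this is not obtainable from your inequalities. When $\gamma$ is close to $1$ your sine half-width $A=\gamma/(1-\gamma)$ blows up and that constraint becomes vacuous; the cosine bound alone (with $2B<2/(b-1)$) is too weak to limit the count to $2$. Indeed, the paper only proves $e(1)\le 3$ for general $\gamma$ in these cases, and then splits: either $\gamma b>3$, so $e(1)\le 3<\gamma b$, or $\gamma\le 3/b$, in which range the now-nontrivial sine bound lets one push to $e(1)\le 2$. Your proposal skips this case split and overclaims.

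For $b=3$ there is a second missing ingredient. The theorem's conclusion involves $\sigma(1)$, not $e(1)$, precisely because one cannot get $e(1)\le 2$ for all $\gamma$. The paper first shows $E(1,z)\neq\mathcal{A}\times\mathcal{A}$ for every $z$ (using that the $\theta_0$-bound forces any two tangent pairs to have sine differences summing to more than $2$), then invokes Lemma~\ref{lem:1miss} to get $\sigma(1)\le\sqrt{2}+1$; this handles large $\gamma$, and only for $\gamma<(\sqrt{2}+1)/3$ does one return to bounding $e(1)$. Your outline never touches $\sigma(1)$ or Lemma~\ref{lem:1miss}.

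Finally, for part (1) your ``direct verification'' is a placeholder. The paper's argument for $b\ge 6$ is genuinely delicate: it distinguishes whether consecutive tangent indices differ by $\pm 1\bmod b$ or not, uses the sharper constant $\Delta_{b,\gamma}$ in place of $1+\gamma$, and iterates the bound on $\gamma$ (first $\gamma<5/9$, then $\gamma<2/5$, etc.) to tighten $\theta_1,\theta_2$ before the final count. A single pass with your arc estimate $2\arccos(1-2B)$ will not close the inequality uniformly on $(1/b,1)$.
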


We start with a few lemmas.
Let $$\Delta_{b, \gamma}=\max_{t\in \R} (\sin (bt)+\gamma \sin (t)).$$
Besides the trivial bound: $\Delta_{b,\gamma}\le 1+\gamma$, we also need the following:
\begin{lemma}\label{lem:Delta} For each $\gamma\in (0,1)$, we have
\begin{align}\label{eqn:Delta6}
\Delta_{6,\gamma}& \le \max(1+0.972\gamma, 0.99+\gamma)\\
\label{eqn:Delta3}
\Delta_{3,\gamma} & \le 1+0.71\gamma.
\end{align}
\end{lemma}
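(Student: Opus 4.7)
The two bounds call for different methods: $\Delta_{3,\gamma}$ admits a clean closed form via the triple-angle identity, while $\Delta_{6,\gamma}$ seems not to, and I would handle it by splitting on the size of $\sin t$ and combining the trivial estimate $\sin(6t)\le 1$ with a localised numerical check. The final arithmetic verification that pins down the constants $0.972$ and $0.99$ will be the main obstacle.

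For the bound on $\Delta_{3,\gamma}$, the plan is to write $\sin(3t)+\gamma\sin t = (3+\gamma)s - 4s^3$ with $s=\sin t$, using $\sin(3t)=3\sin t-4\sin^3 t$. Maximising this cubic over $s\in[-1,1]$ at the interior critical point $s_* = \sqrt{(3+\gamma)/12}\in(0,1)$ gives
\[
\Delta_{3,\gamma} = \frac{(3+\gamma)^{3/2}}{3\sqrt 3}.
\]
I would then verify that $h(\gamma) := 1+0.71\gamma - (3+\gamma)^{3/2}/(3\sqrt 3)$ is nonnegative on $[0,1]$: it is concave since $h''<0$, while $h(0)=0$ and $h(1)=1.71 - 8\sqrt 3/9>0$, so concavity forces $h(\gamma)\ge \gamma\, h(1)\ge 0$ on $[0,1]$.

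For $\Delta_{6,\gamma}$ I would fix the threshold $s_0 = 0.972$ and split on $\sin t$. If $\sin t < s_0$ (in particular if $\sin t \le 0$), the bound $\sin(6t)\le 1$ yields $\sin(6t)+\gamma\sin t < 1 + 0.972\gamma$. If $\sin t \ge s_0$, set $\alpha = \arcsin s_0$; by $2\pi$-periodicity we may take $t\in[\alpha,\pi-\alpha]$, so that $6t\in[6\alpha,\,6\pi-6\alpha]$. The observation I would exploit is that $s_0 > \sin(5\pi/12) = (\sqrt 6+\sqrt 2)/4$, so $\alpha \ge 5\pi/12$ and hence, after subtracting $2\pi$, the interval for $6t$ lies inside $[\pi/2,3\pi/2]$, on which $\sin$ is decreasing. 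Therefore $\sin(6t)\le \sin(6\alpha)$, giving $\sin(6t)+\gamma\sin t \le \sin(6\alpha)+\gamma$; the symmetric case $\sin t \le -s_0$ is immediate from $\gamma\sin t<0$ and $\sin(6t)\le 1$.

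The remaining task is the check $\sin(6\alpha)\le 0.99$, which I would handle by squaring to avoid transcendental computations. Applying the multiple-angle formulas to $\sin\alpha=s_0$ via $\sin(6\alpha)=2\sin(3\alpha)\cos(3\alpha)$ yields
\[
\sin^2(6\alpha) = 4\,s_0^2(3-4s_0^2)^2(1-s_0^2)(1-4s_0^2)^2,
\]
a concrete rational expression in $s_0^2 = 0.944784$ that evaluates to approximately $0.9785$, safely below $0.99^2 = 0.9801$. This finite arithmetic check closes the argument, and the constants $0.972$ and $0.99$ are tuned so that the two halves of $\max(1+0.972\gamma,\,0.99+\gamma)$ align tightly enough to support the downstream bound $\sigma(1)<\gamma b$ used in Theorem~\ref{thm:blarge}.
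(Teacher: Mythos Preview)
Your proof is correct. For $\Delta_{3,\gamma}$ you take a genuinely different route from the paper: you compute the exact maximum $(3+\gamma)^{3/2}/(3\sqrt 3)$ and bound it above by $1+0.71\gamma$ via concavity, whereas the paper simply splits at the threshold $\sin t=0.71$, observes that $3s-4s^3$ is decreasing for $s>1/2$ so that $\sin(3t)\le 0.71$ in the second case, and uses $0.71+\gamma\le 1+0.71\gamma$. The paper's argument is shorter and avoids calculus; yours has the merit of producing the sharp value of $\Delta_{3,\gamma}$.

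For $\Delta_{6,\gamma}$ the two arguments share the same case split at $\sin t=0.972$ and the same goal of showing $|\sin(6t)|<0.99$ in the second case, but finish differently. The paper bounds $\sin(3t)\le 3(0.972)-4(0.972)^3<-0.757$ and then uses $|\sin(6t)|=2|\sin(3t)|\sqrt{1-\sin^2(3t)}$, which is decreasing in $|\sin(3t)|$ past $1/\sqrt2$, giving a uniform bound over all admissible $t$. You instead pin down the interval $6t-2\pi\in(\pi/2,3\pi/2)$ via $\alpha>5\pi/12$, use monotonicity of $\sin$ there to reduce to the single point $t=\alpha$, and evaluate $\sin^2(6\alpha)$ directly through multiple-angle identities. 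Both are valid; the paper's route is a touch shorter, while yours makes the numerical check fully explicit.
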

\begin{proof}
Let us first prove (\ref{eqn:Delta3}). Indeed, if $\sin t\le 0.71$ then the inequality holds. So assume $\sin t>0.71$. Then $\sin (3t)=3\sin t -4\sin^3 t\le 0.71$, and hence $\sin (3t) +\gamma \sin t\le 0.71+ \gamma \le 1+0.71\gamma$.

Let us prove (\ref{eqn:Delta6}). If $\sin t \le 0.972$ then the inequality holds. So assume $\sin t> 0.972$, then
$\sin (3t) \le 3\cdot 0.972- 4\cdot 0.972^3=-0.757$. Then
$$|\sin (6t)|\le 2|\sin (3t)|\sqrt{1-\sin^2(3t)}<0.99.$$
\end{proof}
\begin{lemma}\label{lem:1stest}
If $(k,l)\in E(1, x^*)$, then
\begin{equation}\label{eqn:c0bd}
\left|\sin \frac{2\pi(x^*+k)}{b}-\sin \frac{2\pi(x^*+l)}{b}\right|\le \frac{2\Delta_{b,\gamma} \gamma} {1-\gamma^2}\le \frac{2\gamma}{1-\gamma},
\end{equation}
\begin{equation}\label{eqn:c1bd}
\left|\cos \frac{2\pi(x^*+k)}{b}-\cos \frac{2\pi(x^*+l)}{b}\right|\le \frac{2\gamma}{b-\gamma},
\end{equation}
\begin{equation}\label{eqn:sumC0C1}
4\sin^2 \frac{\pi (k-l)}{b}\le \left(\frac{2\gamma\Delta_{b,\gamma}}{1-\gamma^2}\right)^2+\left(\frac{2\gamma}{b-\gamma}\right)^2\le
\left(\frac{2\gamma}{1-\gamma}\right)^2+\left(\frac{2\gamma}{b-\gamma}\right)^2.
\end{equation}
\end{lemma}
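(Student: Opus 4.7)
The plan is to use the functional self-similarity of $S$ once to push everything into a single $\psi$-difference, then bound the remainder terms uniformly. By Proposition~\ref{prop:compact}(1), the hypothesis $(k,l)\in E(1,x^*)$ supplies sequences $\textbf{u},\textbf{v}\in\mathcal{A}^{\Z^+}$ with
\[
S(x^*,k\textbf{u})=S(x^*,l\textbf{v})\quad\text{and}\quad S'(x^*,k\textbf{u})=S'(x^*,l\textbf{v}).
\]
From the identity $S(x,k\textbf{u})=\psi((x+k)/b)+\gamma\,S((x+k)/b,\textbf{u})$ and its $x$-derivative $S'(x,k\textbf{u})=\tfrac{1}{b}\psi'((x+k)/b)+\tfrac{\gamma}{b}S'((x+k)/b,\textbf{u})$, these two equalities rearrange to
\begin{align*}
\psi\bigl((x^*+k)/b\bigr)-\psi\bigl((x^*+l)/b\bigr)&=\gamma\bigl[S((x^*+l)/b,\textbf{v})-S((x^*+k)/b,\textbf{u})\bigr],\\
\psi'\bigl((x^*+k)/b\bigr)-\psi'\bigl((x^*+l)/b\bigr)&=\gamma\bigl[S'((x^*+l)/b,\textbf{v})-S'((x^*+k)/b,\textbf{u})\bigr].
\end{align*}
Since $\psi(y)=-2\pi\sin(2\pi y)$ and $\psi'(y)=-4\pi^2\cos(2\pi y)$, the inequalities (\ref{eqn:c0bd}) and (\ref{eqn:c1bd}) will reduce to suitable uniform $L^\infty$ bounds on $S$ and $S'$.

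The derivative bound $|S'(y,\textbf{u})|\le 4\pi^2/(b-\gamma)$ is immediate by termwise differentiation: the $n$-th summand of $S$ depends on $y$ only through $y/b^n$ and $|\psi'|\le 4\pi^2$, giving $|S'(y,\textbf{u})|\le 4\pi^2\sum_{n\ge 1}(\gamma/b)^{n-1}/b=4\pi^2/(b-\gamma)$. The sharper bound $|S(y,\textbf{u})|\le 2\pi\Delta_{b,\gamma}/(1-\gamma^2)$, which produces the leading $\Delta_{b,\gamma}$ factor in (\ref{eqn:c0bd}), is the one nontrivial input. To obtain it I would write $S(y,\textbf{u})=-2\pi\sum_{n\ge 1}\gamma^{n-1}\sin(2\pi y_n)$ with the recursion $y_n=(y_{n-1}+u_n)/b$, and pair consecutive indices $(2k-1,2k)$. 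Since $b y_{2k}=y_{2k-1}+u_{2k}$ with $u_{2k}\in\Z$, we have $\sin(2\pi y_{2k-1})=\sin(2\pi b y_{2k})$, so with $t=2\pi y_{2k}$ each block
\[
\gamma^{2k-2}\bigl(\sin(2\pi y_{2k-1})+\gamma\sin(2\pi y_{2k})\bigr)=\gamma^{2k-2}\bigl(\sin(bt)+\gamma\sin(t)\bigr)
\]
has absolute value at most $\gamma^{2k-2}\Delta_{b,\gamma}$ by the very definition of $\Delta_{b,\gamma}$ (using that $\sin(bt)+\gamma\sin t$ is an odd function of $t$, so $\max=-\min=\Delta_{b,\gamma}$). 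Summing the geometric series in $\gamma^2$ gives the claimed bound.

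Substituting these two estimates into the displayed identities and dividing through by $2\pi$ and $4\pi^2$ respectively yields the first halves of (\ref{eqn:c0bd}) and (\ref{eqn:c1bd}); the coarser half of (\ref{eqn:c0bd}) then follows from the trivial bound $\Delta_{b,\gamma}\le 1+\gamma$, since $(1+\gamma)/(1-\gamma^2)=1/(1-\gamma)$. Finally (\ref{eqn:sumC0C1}) is obtained by combining the two inequalities via the trigonometric identity $(\sin A-\sin B)^2+(\cos A-\cos B)^2=2-2\cos(A-B)=4\sin^2((A-B)/2)$ applied with $A=2\pi(x^*+k)/b$ and $B=2\pi(x^*+l)/b$, so that $(A-B)/2=\pi(k-l)/b$. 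There is no serious obstacle anywhere; the only real idea is the pairing trick that replaces the naive factor $1/(1-\gamma)$ by $\Delta_{b,\gamma}/(1-\gamma^2)$, and this gain is precisely what makes the improved estimate of Lemma~\ref{lem:Delta} usable later.
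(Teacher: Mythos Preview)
Your proof is correct and follows essentially the same approach as the paper: isolate the leading $\sin$/$\cos$ term, bound the tail by a geometric series, and use the pairing $(2k-1,2k)$ together with $by_{2k}\equiv y_{2k-1}\pmod 1$ to replace the naive factor $1/(1-\gamma)$ by $\Delta_{b,\gamma}/(1-\gamma^2)$. The only difference is organizational---you invoke the recursion $S(x,k\textbf{u})=\psi((x+k)/b)+\gamma S((x+k)/b,\textbf{u})$ and then bound $|S|$ and $|S'|$ uniformly, whereas the paper expands the full sum for $S(x,\textbf{k})$ and $S(x,\textbf{l})$ separately and subtracts---but the underlying estimates are identical.
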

\begin{proof} By Proposition~\ref{prop:compact} (1), there exists $\textbf{k}=\{k_n\}_{n=1}^\infty$ and $\textbf{l}=\{l_n\}_{n=1}^\infty$ in $\mathcal{A}^{\Z^+}$ with $k_1=k$ and $l_1=l$ such that for $F(x):=-(2\pi)^{-1}(S(x,\textbf{k})-S(x,\textbf{l}))$, we have $F(x^*)=F'(x^*)=0$.
Let $f(x)=\sin (2\pi bx)+\gamma \sin (2\pi x)$.
Then
$$\left|-\frac{S(x,\textbf{k})}{2\pi}-\sin \frac{2\pi(x+k)}{b}\right|\le \sum_{n=1}^\infty \gamma^{2n-1} |f (x_{2n+1})|\le \frac{2\gamma \Delta_{b,\gamma}}{1-\gamma^2},$$
 and
$$\left|-\frac{bS'(x,\textbf{k})}{4\pi^2}-\cos \frac{2\pi(x+k)}{b}\right|\le \sum_{n=2}^\infty \left(\frac{\gamma}{b}\right)^{n-1}|\cos (2\pi x_n)|\le \frac{\gamma}{b-\gamma},$$
where $x_n=(x+k_1+k_2b+\cdots+k_n b^{n-1})/b^n$.
Similarly,
$$\left|-\frac{S(x,\textbf{l})}{2\pi}-\sin \frac{2\pi(x+l)}{b}\right|\le \frac{2\gamma \Delta_{b,\gamma}}{1-\gamma^2},$$
 and
$$\left|-\frac{bS'(x,\textbf{l})}{4\pi^2}-\cos \frac{2\pi(x+l)}{b}\right|\le \frac{\gamma}{b-\gamma}.$$
Therefore,
$$\left|F(x)-\left(\sin \frac{2\pi(x+k)}{b}-\sin \frac{2\pi(x+l)}{b}\right)\right|\le \frac{2\gamma\Delta_{b,\gamma}}{1-\gamma^2},$$
$$\left|\frac{b F'(x)}{2\pi}-\left(\cos \frac{2\pi(x+k)}{b}-\cos \frac{2\pi(x+l)}{b}\right)\right|\le \frac{2\gamma}{b-\gamma}.$$% \left(\cos(2\pi x^n)-\cos (2\pi y^n )\right).$$
Substituting $x=x^*$ gives us (\ref{eqn:c0bd}) and (\ref{eqn:c1bd}). The inequality (\ref{eqn:sumC0C1}) follows from these two inequalities and the following
$$(\cos x-\cos y)^2+(\sin x-\sin y)^2= 4\sin^2 \frac{y-x}{2}.$$
\end{proof}

Pick up $z\in [0,1]$ such that $e(1,z)=e(1)$ and pick up $k\in \{0,1,\ldots, b-1\}$ such that
$$\#\{l\in\{0,1,\ldots, b-1\}: (k,l)\in E(1, z)\}=e(1).$$
Let $k_1, k_2,\ldots, k_{e(1)}$ be all the elements in $E(1,z)$, arranged in such a way that
$$\sin (2\pi x_1)\le \sin (2\pi x_2)\le \cdots\le \sin (2\pi x_{e(1)}),$$
where $x_i=(z+k_i)/b$.

\begin{lemma}
Under the above circumstances, the following holds:
\begin{enumerate}
\item For each $1\le i<j\le e(1)$,  we have
\begin{equation}\label{eqn:sinlower1}
|\sin (2\pi x_i)-\sin (2\pi x_j)|\ge\frac{2\theta_1(b,\gamma)}{b},
\end{equation}
where
\begin{equation}\label{eqn:theta1}
\theta_1(b,\gamma)=\sqrt{\max\left(0, \left(b\sin \frac{\pi}{b}\right)^2-\frac{4\gamma^2 b^2}{(b-\gamma)^2}\right)}.
\end{equation}
\item If $k_i=k$ or $k_j=k$, then
\begin{equation}\label{eqn:sinlower0}
|\sin (2\pi x_i)-\sin (2\pi x_j)|\ge \frac{2\theta_0(b,\gamma)}{b},
\end{equation}
where
\begin{equation}\label{eqn:theta0}
\theta_0(b,\gamma)=\sqrt{\max\left(0, \left(b\sin \frac{\pi}{b}\right)^2-\frac{\gamma^2 b^2}{(b-\gamma)^2}\right)}.
\end{equation}
\item If $k_i-k_j\not=\pm 1 \mod b$, then
\begin{equation}\label{eqn:sinlower2}
|\sin (2\pi x_i)-\sin (2\pi x_j)|\ge \frac{2\theta_2(b,\gamma)}{b},
\end{equation}
where
\begin{equation}\label{eqn:theta2}
\theta_2(b,\gamma)=\sqrt{\max\left(0, \left(b\sin\frac{2\pi}{b}\right)^2-\frac{4\gamma^2 b^2}{(b-\gamma)^2}\right)}.
\end{equation}
\end{enumerate}
\end{lemma}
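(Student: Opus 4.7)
The plan is to deduce all three bounds from Lemma~\ref{lem:1stest} together with the elementary identity
\[
(\sin a-\sin b)^2+(\cos a-\cos b)^2=4\sin^2\!\left(\tfrac{a-b}{2}\right).
\]
Set $\alpha_i=2\pi(z+k_i)/b$ and $\alpha=2\pi(z+k)/b$. Since $(k,k_i)\in E(1,z)$ for every $i$, estimate (\ref{eqn:c1bd}) of Lemma~\ref{lem:1stest} applies to each pair $(k,k_i)$ and yields $|\cos\alpha-\cos\alpha_i|\le 2\gamma/(b-\gamma)$. This is the one common input the three parts share.

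For part (2), assume without loss of generality that $k_i=k$, so $\alpha_i=\alpha$. Applying the identity with $a=\alpha_i$, $b=\alpha_j$ and substituting the bound $|\cos\alpha_i-\cos\alpha_j|\le 2\gamma/(b-\gamma)$ gives
\[
(\sin\alpha_i-\sin\alpha_j)^2\;\ge\; 4\sin^2\!\left(\frac{\pi(k_i-k_j)}{b}\right)-\frac{4\gamma^2}{(b-\gamma)^2}.
\]
Since $k_i\neq k_j$ with both in $\{0,1,\ldots,b-1\}$, we have $\sin^2(\pi(k_i-k_j)/b)\ge\sin^2(\pi/b)$; rearrangement then gives exactly $|\sin\alpha_i-\sin\alpha_j|\ge 2\theta_0(b,\gamma)/b$.

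For parts (1) and (3), neither $k_i$ nor $k_j$ need equal $k$, so we route through $\alpha$ by the triangle inequality to obtain $|\cos\alpha_i-\cos\alpha_j|\le 4\gamma/(b-\gamma)$, which costs a factor of four in place of one in the subtracted term. For part (1) the same lower bound $\sin^2(\pi(k_i-k_j)/b)\ge\sin^2(\pi/b)$ suffices; for part (3), the hypothesis $k_i-k_j\not\equiv\pm 1\pmod b$ upgrades this to $\sin^2(2\pi/b)$. The resulting lower bounds are, respectively, $2\theta_1(b,\gamma)/b$ and $2\theta_2(b,\gamma)/b$.

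There is essentially no obstacle: the lemma is an algebraic consequence of estimates already in hand, the only trick being to trade one copy of the cosine bound for two via the triangle inequality whenever $k$ is not directly in play. The only minor point of care is that $\theta_0,\theta_1,\theta_2$ are defined with a protective $\max(0,\cdot)$ under the square root, so whenever the subtraction produces a negative number the assertion degenerates to $|\sin\alpha_i-\sin\alpha_j|\ge 0$, which is automatic.
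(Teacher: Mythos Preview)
Your proof is correct and follows essentially the same approach as the paper: both use the identity $(\sin a-\sin b)^2+(\cos a-\cos b)^2=4\sin^2\!\big(\tfrac{a-b}{2}\big)$, feed in the cosine bound from (\ref{eqn:c1bd}) directly when $k\in\{k_i,k_j\}$, and route through $k$ via the triangle inequality (doubling the bound to $4\gamma/(b-\gamma)$) otherwise. Your explicit remark about the protective $\max(0,\cdot)$ is a nice touch the paper leaves implicit.
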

\begin{proof}
For each $1\le i<j\le e(1)$, we have
\begin{equation}\label{eqn:cos+sin}
|\cos (2\pi x_i)-\cos (2\pi x_j)|^2 +|\sin (2\pi x_i)-\sin (2\pi x_j)|^2 = 4 \sin^2 (\pi(x_i-x_j))\ge 4\sin^2 \frac{\pi}{b}.
\end{equation}
If $k_i=k$ or $k_j=k$ then by (\ref{eqn:c1bd}), the inequality (\ref{eqn:sinlower0}) follows.

In general, from (\ref{eqn:c1bd}), we obtain
\begin{equation}\label{eqn:coslower'}
\left|\cos (2\pi x_i)-\cos (2\pi x_j)\right|\le \frac{4\gamma}{b-\gamma},
\end{equation}
which, together with (\ref{eqn:cos+sin}), implies (\ref{eqn:sinlower1}).

If $k_i-k_j\not= \pm 1\mod b$, then
\begin{equation}
|\cos (2\pi x_i)-\cos (2\pi x_j)|^2 +|\sin (2\pi x_i)-\sin (2\pi x_j)|^2 \ge 4\sin^2\frac{2\pi}{b},
\end{equation}
which, together with (\ref{eqn:coslower'}), implies (\ref{eqn:sinlower2}).
\end{proof}

\subsection{The case when $b\ge 6$}\label{subsec:bge6}
We shall prove Theorem~\ref{thm:blarge} in the case $b\ge 6$. We separate the argument in two propositions.
%\begin{prop}\label{prop:bge60}
%Assume $b\ge 6$ and $\gamma\ge 2/5$. Then $e(1)<\gamma b$.
%\end{prop}
%%
%\end{proof}
\begin{prop}
Assume $b\ge 6$. If there exists $1\le i<e(1)$ such that $k_{i+1}-k_i\not =\pm 1\mod b$, then $e(1)<\gamma b$.
\end{prop}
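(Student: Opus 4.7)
The plan is to extract an upper bound on $e(1)$ by combining a uniform bound on the spread of $\{\sin(2\pi x_i)\}_{i=1}^{e(1)}$ in $[-1,1]$ with the pointwise gap lower bounds already proved, and then to verify that the resulting bound is strictly smaller than $\gamma b$ whenever $b \ge 6$.

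First I would observe that $(k,k) \in E(1,z)$ trivially, so $k$ appears among $k_1,\dots,k_{e(1)}$. Applying (\ref{eqn:c0bd}) to each pair $(k,k_i)$ confines every $\sin(2\pi x_i)$ to an interval centred at $\sin(2\pi(z+k)/b)$ of length at most
$$L \;=\; \min\!\left(\frac{4\Delta_{b,\gamma}\,\gamma}{1-\gamma^{2}},\; 2\right),$$
where the second option comes from $\sin \in [-1,1]$ and is sharper for $\gamma$ close to $1$. Next, since the $\sin(2\pi x_i)$ are sorted, the $e(1)-1$ consecutive increments sum to $\sin(2\pi x_{e(1)}) - \sin(2\pi x_1) \le L$. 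By hypothesis one of these increments corresponds to a pair with $k_{i+1} - k_i \not\equiv \pm 1 \pmod b$, so (\ref{eqn:sinlower2}) supplies the stronger lower bound $2\theta_2(b,\gamma)/b$ for that single gap, while (\ref{eqn:sinlower1}) gives $2\theta_1(b,\gamma)/b$ for the remaining $e(1)-2$ gaps. Adding these contributions and rearranging yields
$$e(1) \;\le\; 2 + \frac{b\,L - 2\theta_2(b,\gamma)}{2\theta_1(b,\gamma)}.$$

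It then remains to check that the right-hand side is $< \gamma b$ for every $b \ge 6$ and every $\gamma \in (1/b,1)$. For $b = 6$ I would substitute $b\sin(\pi/b) = 3$ and $b\sin(2\pi/b) = 3\sqrt{3}$ into the definitions of $\theta_1$ and $\theta_2$ and use the refined bound $\Delta_{6,\gamma} \le \max(1+0.972\gamma,\; 0.99+\gamma)$ from Lemma~\ref{lem:Delta}, splitting the $\gamma$-interval into two subcases according to which term dominates. For $b \ge 7$ the trivial bound $\Delta_{b,\gamma} \le 1+\gamma$ is already adequate, because $b\sin(\pi/b)$ and $b\sin(2\pi/b)$ remain bounded away from $\pi$ and $2\pi\cos(\pi/b)\cdot(\pi/b)$ respectively while $4\gamma^{2}b^{2}/(b-\gamma)^{2}$ stays controlled.

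The main obstacle is the numerical check for $b = 6$, which is tight at both extremes of $(1/b,1)$: near $\gamma \searrow 1/b$ the factor $\gamma/(1-\gamma^{2})$ drives $L$ down toward zero while $\theta_2$ is close to $3\sqrt{3}$, so the derived upper bound on $e(1)$ is close to $1$ just as $\gamma b$ is close to $1$; near $\gamma \nearrow 1$ the cap $L \le 2$ takes over while $\theta_1$ shrinks, making the bound on $e(1)$ approach $\gamma b$ from below. In both regimes it is exactly the improvement of $\Delta_{6,\gamma}$ over the crude $1+\gamma$ provided by Lemma~\ref{lem:Delta} that keeps the inequality strict; conversely, no such refinement is needed for $b \ge 7$, which is essentially why the hypothesis $b \ge 6$ is the first threshold at which this argument succeeds without separating out the fully ``arithmetic-progression'' pattern of $k_i$'s (which is handled in the subsequent proposition).
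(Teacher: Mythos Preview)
Your overall strategy---sum the consecutive gaps, use one $\theta_2$-gap from the hypothesis and $\theta_1$-gaps for the rest, bound the total spread by $L$---is exactly how the paper begins. The problem is the final step: the inequality
\[
2+\frac{bL-2\theta_2(b,\gamma)}{2\theta_1(b,\gamma)}\;<\;\gamma b
\]
is \emph{false} for a substantial range of parameters. For instance at $b=6$, $\gamma=0.2$ one has $\theta_1\approx 2.97$, $\theta_2\approx 5.18$, and even with the refined $\Delta_{6,\gamma}$ bound $L\approx 0.995$, giving a right-hand side of about $1.26$ while $\gamma b=1.2$. At $b=7$, $\gamma=0.3$ (where you say the crude $\Delta_{b,\gamma}\le 1+\gamma$ should suffice) one gets roughly $2.19$ versus $\gamma b=2.1$. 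So neither the $b=6$ nor the $b\ge 7$ branch of your numerical check goes through; the inequality fails on an interval of $\gamma$ roughly of the form $(1/b,\,c_b)$ with $c_b$ well above $1/b$.

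The paper avoids this by \emph{not} trying to prove the real-valued inequality directly. Instead it takes $L=2$, assumes for contradiction that $e(1)\ge\gamma b$, and \emph{bootstraps}: the assumption yields an upper bound on $\gamma$ (first $\gamma<5/9$, then $\gamma\le 2/5$), which in turn improves the numerical values of $\theta_1,\theta_2$. With $\gamma$ thus restricted it splits into cases on the integer $e(1)$. For small $e(1)$ it exploits the $C^0$ bound (\ref{eqn:c0bd}) on a \emph{single} pair $(k,k_j)$ including the distinguished index $k$, obtaining $\gamma b\ge (1-\gamma)\theta_2$; for larger $e(1)$ it bootstraps further (forcing $b\ge 11$, then $b\ge 15$, with correspondingly better $\theta_i$'s). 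Your single-shot inequality misses both mechanisms: the iterative tightening of $\gamma$ and the integer case split that lets one use sharper pairwise bounds when $e(1)$ is small.
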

\begin{proof} Under assumption, we have
\begin{multline*}
\frac{2\theta_2(b,\gamma)}{b}+(e(1)-2) \frac{2\theta_1(b,\gamma)}{b}\le \sum_{i=1}^{e(1)-1}\left(\sin (2\pi x_{i+1})-\sin (2\pi x_i)\right)\\
=\sin (2\pi x_{e(1)})-\sin (2\pi x_1)\le 2,
\end{multline*}
and so
\begin{equation}\label{eqn:e*theta12}
b\ge (e(1)-2) \theta_1(\gamma,b) +\theta_2(\gamma, b).
%\ge e(1) \min (\theta_1(\gamma,b),\frac{\theta_2(\gamma b)l}{2}) e(1)\le \frac{b-\theta_2(b,\gamma)+2\theta_1(b,\gamma)}{\theta_1(b,\gamma)}.
\end{equation}
We may assume
\begin{equation}
\label{eqn:gammatheta12}
\gamma \le \frac{b-\theta_2(b,\gamma)+2\theta_1(b,\gamma)}{\theta_1(b,\gamma)b},
\end{equation}
for otherwise we are done.

Note that $t\mapsto \sin t/t$ is monotone decreasing in $[0, \frac{\pi}{2})$. Since $b\ge 6$, we have
$$\theta_1(6,\gamma) \ge \theta_1(6,1)=1.8\text{ and }\theta_2(6,\gamma)\ge \theta_2(6,1)=\sqrt{21.24}> 4.$$
By (\ref{eqn:gammatheta12}), it follows that
$$\gamma \le \frac{b-4+2\cdot 1.8}{1.8b}< \frac{5}{9}.$$
Therefore, we have
\begin{equation}\label{eqn:thetanum}
\theta_1(6,\gamma) \ge \theta_1\left(6,5/9\right)>2.5\text{ and }\theta_2(6,\gamma)\ge \theta_2\left(6,5/9 \right)>5,
\end{equation}
and hence
\begin{equation}\label{eqn:gammanum}
\gamma \le \frac{b-5+2\cdot 2.5}{2.5 b}=\frac{2}{5}.
\end{equation}
Moreover, by (\ref{eqn:e*theta12}),
\begin{equation}\label{eqn:be1num}
b>2.5 e(1).
\end{equation}

{\bf Case 1.} $e(1)\le 3$.

Indeed, this is clear if $e(1)=1$ as we assume $\gamma b>1$. If $e(1)=2$ or $3$, then there exists $i, j$ such that $k_i=k\not=k_j$,
and
$$|\sin (2\pi x_i)-\sin (2\pi x_j)|\ge \frac{2\theta_2(b,\gamma)}{b}.$$
On the other hand, since $(k_i, k_j)\in E(1,z)$, we have
$$|\sin (2\pi x_i)-\sin (2\pi x_j)|\le \frac{2\gamma}{1-\gamma}.$$
Therefore,
$$\frac{2\gamma}{1-\gamma}\ge \frac{2\theta_2(b,\gamma)}{b},$$
which, together with (\ref{eqn:thetanum}) and (\ref{eqn:gammanum}),  implies that
$$\gamma b\ge  (1-\gamma)\theta_2(b,\gamma)> 5(1-2/5)=3\ge e(1).$$

{\bf Case 2.} $e(1)\ge 4$.

By (\ref{eqn:be1num}), we have $b>2.5 e(1)\ge 10$. Since $b$ is an integer, this implies $b\ge 11$.
Thus
$\theta_1(b,\gamma)\ge \theta_1(11, 2/5)>2.9$ and $\theta_2(b,\gamma)\ge \theta_2(11, 2/5)>5.8$, where we use the numerics: $\sin (\pi/11)>0.28$ and $\sin (2\pi/11)>0.54.$ By (\ref{eqn:e*theta12}) and (\ref{eqn:gammatheta12}), we obtain
\begin{equation}\label{eqn:b2.9}
b\ge 2.9 e(1) \text{ and } \gamma \le \frac{10}{29}.
\end{equation}

Let us first consider the case $e(1)=4$. Then by (\ref{eqn:b2.9}), we have $b\ge 12$, hence $\theta_1(b, \gamma)\ge \theta_1(12, 10/29)>3$.
On the other hand, there exists $1\le i,j\le 4$ such that $k_i=k$ and $|i-j|\ge 2$. Thus
\begin{equation}\label{eqn:bge6e1=4}
\frac{2\gamma}{1-\gamma}\ge |\sin (2\pi x_i)-\sin (2\pi x_j)|\ge \frac{4\theta_1(\gamma,b)}{b}.
\end{equation}
Therefore,
$\frac{2\gamma}{1-\gamma}> \frac{12}{b},$
which implies that
$$\gamma b> 6(1-\gamma).$$
If $\gamma\le 1/3$, then it follows that $\gamma b> 4=e(1)$. If $\gamma>1/3$, then $\gamma b>12\cdot (1/3)= 4=e(1).$

Now let us assume $e(1)\ge 5$. Then by (\ref{eqn:b2.9})
we have $b>2.9 e(1)\ge 14.5$ which implies that $b\ge 15$. Then
$$\theta_1(\gamma,b)\ge \theta_1(15, 10/29)>3\text{ and }\theta_2(\gamma, b)\ge \theta_2(15, 10/29)>6.$$ By (\ref{eqn:gammatheta12}), it follows that $\gamma<1/3$.
%$$\gamma b\le \frac{b-\theta_2(\gamma,b)}{\theta_1(\gamma,b)}+2 < \frac{b-6}{3}+2 =\frac{b}{3},$$
%hence $\gamma <1/3.$
Since
$$\frac{4\gamma}{1-\gamma}\ge |\sin (2\pi x_{e(1)})-\sin (2\pi x_1)|\ge \frac{2\theta_1}{b} (e(1)-2) +\frac{2\theta_2}{b}> \frac{6e(1)}{b}$$
we obtain
$$\gamma b> 6(1-\gamma)e(1)/4>e(1).$$
\end{proof}

\begin{prop}
Assume $b\ge 6$. If $k_{i+1}-k_i=\pm 1\mod b$ for each $i\in \{1,2,\ldots, e(1)-1\}$ then $e(1)<\gamma b$.
\end{prop}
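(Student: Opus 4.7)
The plan is to combine the monotone structure forced by the hypothesis with two complementary estimates on the sine-spread
$D := |\sin(2\pi x_1) - \sin(2\pi x_{e(1)})|$.

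First, I would observe that the assumption $k_{i+1}-k_i \equiv \pm 1 \pmod{b}$ for all $i$, together with the distinctness of the $k_i$, forces $\{k_1,\ldots,k_{e(1)}\}$ to be a self-avoiding walk of length $e(1)$ on the cycle graph $\mathbb{Z}/b\mathbb{Z}$. Any such walk is a sub-arc traversed monotonically in a single direction (or the whole cycle as a Hamiltonian path), and hence the function $k \mapsto \sin(2\pi(z+k)/b)$ is monotonic on the arc $\{k_1,\ldots,k_{e(1)}\}$. In particular the consecutive sine differences $\sin(2\pi x_{i+1})-\sin(2\pi x_i)$ all share a common sign, so that their sum of absolute values equals $D$.

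Next, using the chord identity on the unit circle, the two points $(\cos(2\pi x_1),\sin(2\pi x_1))$ and $(\cos(2\pi x_{e(1)}),\sin(2\pi x_{e(1)}))$ are at chord-distance $2\sin(\pi(e(1)-1)/b)$; projecting onto the sine axis gives
\[
D \le 2\sin\!\bigl(\pi(e(1)-1)/b\bigr).
\]
On the other hand, telescoping and applying (\ref{eqn:sinlower1}) to every consecutive pair, with the sharper (\ref{eqn:sinlower0}) on the at least one consecutive pair containing $k$ (since $k = k_{i_0}$ for some $i_0$, and $k$ has a sine-order neighbor whenever $e(1)\ge 2$), yields
\[
D \ge \tfrac{2}{b}\bigl((e(1)-2)\theta_1(b,\gamma) + \theta_0(b,\gamma)\bigr).
\]
Combining produces the key inequality
\[
(e(1)-2)\theta_1(b,\gamma) + \theta_0(b,\gamma) \le b \sin\!\bigl(\pi(e(1)-1)/b\bigr).
\]

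To finish, I would split into two cases. For $e(1)=2$ the display above degenerates to $\theta_0(b,\gamma) \le b\sin(\pi/b)$, which is trivially satisfied, so instead I would apply (\ref{eqn:c0bd}) directly to the unique consecutive pair (which necessarily involves $k$), obtaining the sharper $\theta_0(b,\gamma) \le b\Delta_{b,\gamma}\gamma/(1-\gamma^2)$. Squaring and unpacking the definition of $\theta_0$ gives
\[
\sin^2(\pi/b) - \gamma^2/(b-\gamma)^2 \le \Delta_{b,\gamma}^2\gamma^2/(1-\gamma^2)^2,
\]
and inserting (\ref{eqn:Delta6}) when $b=6$, or the trivial $\Delta_{b,\gamma}\le 1+\gamma$ when $b\ge 7$, yields after elementary algebra $\gamma > 2/b$, i.e.\ $e(1)=2<\gamma b$. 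For $e(1)\ge 3$, the (immediate) inequality $\theta_0\ge\theta_1$ reduces the key inequality to $(e(1)-1)\theta_1(b,\gamma) \le b\sin(\pi(e(1)-1)/b)$; squaring and substituting the definition of $\theta_1$ then produces an explicit lower bound $\gamma \ge A(b,e(1)-1)\,b/(1+A(b,e(1)-1))$ with $A(b,m)=\sqrt{m^2\sin^2(\pi/b)-\sin^2(\pi m/b)}/(2m)$, and a case-by-case verification over $e(1)\in\{3,\ldots,\lfloor b/2\rfloor+1\}$ confirms $\gamma b>e(1)$ throughout. The main obstacle is the tightness of the $e(1)=2$ case at $b=6$, where the critical threshold $\gamma=1/3$ is at the exact borderline, so the improved estimate (\ref{eqn:Delta6}) on $\Delta_{6,\gamma}$ is indispensable; for $b\ge 7$ the growth of $\theta_0$ and $\theta_1$ in $b$ provides enough slack that the trivial bound on $\Delta_{b,\gamma}$ suffices.
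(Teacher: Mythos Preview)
Your treatment of the case $e(1)=2$ is essentially the paper's: one shows via (\ref{eqn:sumC0C1}) together with (\ref{eqn:Delta6}) (for $b=6$) or the trivial bound $\Delta_{b,\gamma}\le 1+\gamma$ (for $b\ge 7$) that $\gamma b\le 2$ forces $e(1)=1$.

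For $e(1)\ge 3$ your route diverges from the paper's and contains a genuine gap. The paper does not use the chord bound $D\le 2\sin(\pi(e(1)-1)/b)$ at all; instead it writes $\cos(2\pi x_{i+1})-\cos(2\pi x_i)=-2\sin(\pi/b)\sin y^i$ with $y^i=\pi(2k_1+2i-1+2z)/b$, uses (\ref{eqn:coslower'}) to get $|\sin y^i|<0.8\gamma$, then argues (this is where $b\ge 6$ enters, via $2\pi/b\le\pi/3<\pi-2\arcsin(0.8\gamma)$) that all the $y^i$ lie in a single interval of length $2\arcsin(0.8\gamma)$, whence $e(1)-2<0.4\gamma b$. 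A short bootstrap refines this to $e(1)\le 3$, then $e(1)\le 2$, and the $e(1)=2$ endgame above finishes.

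Your gap is the sentence ``a case-by-case verification over $e(1)\in\{3,\ldots,\lfloor b/2\rfloor+1\}$ confirms $\gamma b>e(1)$ throughout.'' Since $b$ ranges over all integers $\ge 6$, this is an infinite family of inequalities, not a finite check; you need a uniform argument in $(b,e(1))$ that you have not supplied. Moreover, the restriction $e(1)\le\lfloor b/2\rfloor+1$ is unjustified: nothing rules out $e(1)$ up to $b$, and while your chord bound tightens for $e(1)-1>b/2$, you have not shown the resulting lower bound on $\gamma$ grows fast enough to dominate $e(1)$. (Numerics suggest the inequality $A(b,m)b^2/(1+A(b,m))>m+1$ does hold for all $b\ge 6$ and $2\le m\le b-1$, so the approach is salvageable, but it requires real analysis.) A minor point: your claim that $k\mapsto\sin(2\pi(z+k)/b)$ is monotone on the arc is false in general---sine is not monotone on an arbitrary arc of the circle---but the conclusion you draw from it is trivially true anyway, since by construction the $x_i$ are already ordered by sine value.
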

\begin{proof}
For definiteness of notation, let us assume $k_2-k_1=1\mod b$. Then since $k_1,k_2,\cdots, k_{e(1)}\in \{0,1,\ldots, b-1\}$, we have
$k_{i+1}-k_i=1\mod b$ for each $1\le i<e(1)$. Put $y^i=\pi(2k_1+2i-1+2 z)/b$. Then for each $1\le i<e(1)$,
$$\cos (2\pi x_{i+1})-\cos (2\pi x_i)=-2\sin \frac{\pi}{b}\sin y^i,$$
and $$0\le \sin (2\pi x_{i+1})-\sin (2\pi x_i)=2\sin \frac{\pi}{b}\cos y^i.$$
By (\ref{eqn:coslower'}), it follows that
$$\left|\sin  y^i \right|\le \frac{2\gamma}{(b-\gamma)\sin (\pi/b)}< 0.8\gamma,$$
where we use
$$(b-\gamma)\sin (\pi/b)> (b-1) \sin (\pi/b)\ge 5\sin (\pi/6)=5/2.$$%\pi/b -\pi^3/(6b^3)$ and $b\ge 6$ for the last step.
Therefore, $$y^i\in \bigcup_{n\in\Z} (n\pi-\arcsin (0.8\gamma),n\pi+\arcsin (0.8\gamma)).$$
For each $1\le i<e(1)-1$, $y^i$ and $y^{i+1}$ must lie in the same component of the last set, since
$$y^{i+1}-y^i= \frac{2\pi}{b}\le \frac{\pi}{3}< \pi -2\arcsin (0.8\gamma).$$
 Therefore, there exists $n_0\in\Z$ such that
$$y^i-n_0\pi\in (-\arcsin (0.8\gamma),\arcsin (0.8\gamma))\text{ for each } i\in\{1,2,\ldots, e(1)-1\}.$$
Consequently,
\begin{equation}\label{eqn:e*2nd}
e(1)-2=\frac{y^{e(1)-1}-y^1}{2\pi/b}< \frac{2\arcsin(0.8\gamma)}{2\pi/b}\le 0.4 \gamma b,
\end{equation}
where we used $\arcsin t\le \pi t/2$ for each $t\in [0,1]$.
If $2+0.4\gamma b\le \gamma b$, then we are done. So assume the contrary. Then $\gamma b <10/3$ and hence $e(1)-2<4/3$. Therefore $e(1)\le 3$. If $\gamma>1/2$, then $e(1)<\gamma b$ holds. So assume $\gamma\le 1/2$.
Then
$$\frac{\arcsin (0.8\gamma)}{0.8\gamma}\le \frac{\arcsin (0.5)}{0.5}=\frac{\pi}{3},$$
and hence (\ref{eqn:e*2nd}) improves to the following $e(1)-2< 4\gamma b/15$. If $2+4\gamma b/15\le \gamma b$ then we are done. So assume $2+4\gamma b/15>  \gamma b$. Then $\gamma b < 30/11$ and hence $e(1)-2<4\gamma b/15<1$. It follows that $e(1)=1$ or $2$.
If $\gamma b>2$ then $e(1)<\gamma b$. So assume $\gamma b\le 2$. To complete the proof we need to show $e(1)=1$.
By (\ref{eqn:sumC0C1}), it suffices to show
\begin{equation*}
\left(\frac{2\gamma}{b-\gamma}\right)^2+ \left(\frac{2\gamma\Delta_{b,\gamma}}{1-\gamma^2}\right)^2<
4\sin ^2\frac{\pi }{b}.
%\le \frac{16}{(b^2-2)^2} +\frac{16}{(b-2)^2}\left(\frac{\Delta_{b,1/3}}{4/3}\right)^2\le .
\end{equation*}
Since $\gamma b\le 2$, we are reduced to show
\begin{equation}\label{eqn:bge6finish}
\frac{16}{(b^2-2)^2} +\frac{16}{(b-2)^2}\left(\frac{\Delta_{b,2/b}}{1+2/b}\right)^2< 4\sin ^2\frac{\pi }{b}.
\end{equation}
In the case $b=6$, by  (\ref{eqn:Delta6}), $\Delta_{6,1/3}\le \max (0.99+1/3, 1+0.972/3)=1.324$,
then an easy numerical calculation shows that the left hand side of (\ref{eqn:bge6finish}) is less than the right hand side which is equal to $1$.
Assume now $b\ge 7$.  Using $\Delta_{b,2/b}\le 1+2/b$, we are further reduced to show
\begin{equation}\label{eqn:bge6finish1}
\frac{4b^2}{(b^2-2)^2} +\frac{4b^2}{(b-2)^2}< b^2\sin ^2\frac{\pi }{b}.
\end{equation}
Note that the left hand side is decreasing in $b$ and the right hand side is increasing in $b$. Thus it suffices to verify this inequality in the case $b=7$, which is an easy exercise.
\end{proof}

\subsection{The case $b=5$}
We use $\sin (\pi/5)=\sqrt{10-2\sqrt{5}}/4$.
By (\ref{eqn:sinlower1}), for each $1\le i< e(1)$, since $\gamma<1$,
$$|\sin (2\pi x_i)-\sin (2\pi x_{i+1})|\ge \sqrt{4\sin^2\frac{\pi}{5}-(4/4)^2}=(\sqrt{5}-1)/2>0.6.$$
Moreover, by (\ref{eqn:sinlower0}) if either $k_i=k$ or $k_{i+1}=k$, then
$$|\sin (2\pi x_i)-\sin (2\pi x_{i+1})|\ge \sqrt{4\sin^2\frac{\pi}{5}-(2/4)^2}=\sqrt{9-\sqrt{5}}/2>1.$$
Thus
$$2\ge |\sin (2\pi x_{e(1)})-\sin (2\pi x_1)|> 1+ 0.6 (e(1)-2),$$
which implies $e(1)\le 3$, since $e(1)$ is an integer. If $\gamma>3/5$ then $e(1)<\gamma b$. Assume now $\gamma\le 3/5$. Then by (\ref{eqn:sinlower1}), for each $1\le i<e(1)$,
$$|\sin (2\pi x_i)-\sin (2\pi x_{i+1})|\ge \sqrt{4\sin^2\frac{\pi}{5}-(3/5)^2}=\sqrt{(4.64-\sqrt{5})/2}>1.$$
%and the lower bounds improve as follows: $0.7$ replaced by $1$.
Thus $2\ge 1.1+(e(1)-2)$ which implies $e(1)\le 2$.

\subsection{The case $b=4$}
We use $\sin (\pi/4)=\sqrt{2}/2$. By (\ref{eqn:sinlower1}), for each $1\le i<e(1)$, since $\gamma<1$,
$$|\sin (2\pi x_i)-\sin (2\pi x_{i+1})|\ge \sqrt{4\sin^2\frac{\pi}{4}-(4/3)^2}=\frac{\sqrt{2}}{3}.$$
Moreover, by (\ref{eqn:sinlower0}), if $k_i=k$ or $k_{i+1}=k$, then %such that
$$|\sin (2\pi x_i)-\sin (2\pi x_{i+1})|\ge \sqrt{4\sin^2\frac{\pi}{4}-(2/3)^2}=\frac{\sqrt{14}}{3}.$$
Thus
$$2\ge |\sin (2\pi x_{e(1)})-\sin (2\pi x_1)|\ge \frac{\sqrt{14}}{3}+(e(1)-2) \frac{\sqrt{2}}{3},$$
which implies  $e(1)\le 3$. Therefore, either $e(1)<\gamma b$ or $\gamma\le 3/4$. Assume the latter. Then by (\ref{eqn:sinlower1}),
for each $1\le i<e(1)$ we have
$$|\sin (2\pi x_i)-\sin (2\pi x_{i+1})|\ge \sqrt{4\sin^2\frac{\pi}{4}-1}=1.$$
Thus $2\ge \sqrt{14}/3 +(e(1)-2)$, which implies $e(1)\le 2$.

\subsection{The case $b=3$}
We use $\sin (\pi/3)=\sqrt{3}/2$. We claim that for each $z\in [0,1]$, $E(1,z)\not=\{0,1,2\}^2$, so that by Lemma~\ref{lem:1miss}, $\sigma(1)\le \sqrt{2}+1$.
Otherwise, there exists $z\in [0,1]$ such that $E(1,z)=\{0,1,2\}^2$.  Using the notation introducded above, for any $1\le i<j\le 3$, as in (\ref{eqn:sinlower0}), we have
$$|\sin (2\pi x_i)-\sin (2\pi x_j)|\ge \sqrt{4\sin^2\frac{\pi}{3}-\frac{4}{4}}=\sqrt{2},$$
which contradictis the fact $$2\ge \sin (2\pi x_3)-\sin (2\pi x_1)=|\sin (2\pi x_3)-\sin (2\pi x_2)|+|\sin (2\pi x_2)-\sin (2\pi x_1)|.$$

Assume $\sigma(1)\ge \gamma b$. Then $\gamma < (1+\sqrt{2})/3<0.81$. Keep the notation $x_j$, $e(1)$ as above. By (\ref{eqn:sinlower1}), for each $1\le i<e(1)$ we have
$$|\sin (2\pi x_i)-\sin (2\pi x_{i+1})|\ge \sqrt{4\sin^2\frac{\pi}{3}-\frac{16\gamma^2}{(3-\gamma)^2}}>0.9.$$
By (\ref{eqn:sinlower0}), if $k_i=k$ or $k_{i+1}=k$, then $|\sin (2\pi x_i)-\sin (2\pi x_{i+1})|\ge \sqrt{2}.$
Thus $2\ge \sqrt{2}+(e(1)-2) \cdot 0.9$ which implies that $e(1)\le 2$.

\section{Proof of Theorem~\ref{thm:reduced}: The case $b=3,4,5$}

In this section, we shall prove Theorem~\ref{thm:reduced} in the case $b\in \{3,4,5\}$. We shall need the following improvment of Lemma~\ref{lem:1stest}.
\begin{lemma}\label{lem:2ndest}
Let $x^*\in [0,1/2]$ and $0\le k<l<b$ be such that $(k,l)\in E(1, x^*)$. Then for any $\kappa\in (0,1)$, one of the following holds:
either
\begin{equation}\label{eqn:c1est'}
\left|\cos \frac{2\pi (x^*+k)}{b}-\cos \frac{2\pi (x^*+l)}{b}\right|\le \frac{2\gamma\sqrt{1-\kappa^2}}{b}+ \frac{2\gamma^2}{b(b-\gamma)},
\end{equation}
or
\begin{equation}\label{eqn:c0est'}
\left|\sin \frac{2\pi (x^*+k)}{b}-\sin \frac{2\pi (x^*+l)}{b}\right|\le 2 \kappa \gamma+ \frac{2\gamma^2}{1-\gamma}.
\end{equation}
\end{lemma}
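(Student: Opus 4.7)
The plan is to mimic the argument of Lemma~\ref{lem:1stest}, but to isolate the $n=2$ term in the series expansions of $S(x,\textbf{k})$ and $S'(x,\textbf{k})$, and then to exploit the chord-length identity $(\sin a-\sin b)^2+(\cos a-\cos b)^2 = 4\sin^2((a-b)/2)\le 4$ at depth $n=2$ to run a dichotomy in the parameter $\kappa$.

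First, I would invoke Proposition~\ref{prop:compact}(1) to produce $\textbf{k}=\{k_n\}_{n=1}^\infty$ and $\textbf{l}=\{l_n\}_{n=1}^\infty$ in $\mathcal{A}^{\Z^+}$ with $k_1=k$, $l_1=l$, such that $F(x)=-(S(x,\textbf{k})-S(x,\textbf{l}))/(2\pi)$ satisfies $F(x^*)=F'(x^*)=0$. Writing $x_n^\textbf{k} = (x^*+k_1+k_2 b+\cdots+k_n b^{n-1})/b^n$ and $x_n^\textbf{l}$ analogously (so $x_1^\textbf{k}=(x^*+k)/b$ and $x_1^\textbf{l}=(x^*+l)/b$), the two vanishing conditions translate, as in the proof of Lemma~\ref{lem:1stest}, into
\begin{align*}
\sin(2\pi x_1^\textbf{k})-\sin(2\pi x_1^\textbf{l}) &= -\sum_{n=2}^\infty\gamma^{n-1}\bigl(\sin(2\pi x_n^\textbf{k})-\sin(2\pi x_n^\textbf{l})\bigr),\\
\cos(2\pi x_1^\textbf{k})-\cos(2\pi x_1^\textbf{l}) &= -\sum_{n=2}^\infty(\gamma/b)^{n-1}\bigl(\cos(2\pi x_n^\textbf{k})-\cos(2\pi x_n^\textbf{l})\bigr).
\end{align*}

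Second, I would peel off the $n=2$ term from each series and bound the tail $n\ge 3$ trivially via $|\sin\alpha-\sin\beta|\le 2$ and $|\cos\alpha-\cos\beta|\le 2$. With $s=|\sin(2\pi x_2^\textbf{k})-\sin(2\pi x_2^\textbf{l})|$ and $c=|\cos(2\pi x_2^\textbf{k})-\cos(2\pi x_2^\textbf{l})|$, the geometric sums $\sum_{n\ge 3}2\gamma^{n-1} = 2\gamma^2/(1-\gamma)$ and $\sum_{n\ge 3}2(\gamma/b)^{n-1} = 2\gamma^2/(b(b-\gamma))$ produce
\begin{align*}
\left|\sin(2\pi x_1^\textbf{k})-\sin(2\pi x_1^\textbf{l})\right| &\le \gamma\, s + \frac{2\gamma^2}{1-\gamma},\\
\left|\cos(2\pi x_1^\textbf{k})-\cos(2\pi x_1^\textbf{l})\right| &\le \frac{\gamma}{b}\, c + \frac{2\gamma^2}{b(b-\gamma)}.
\end{align*}

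Third, I would invoke the chord-length identity at level $2$: $s^2+c^2 = 4\sin^2(\pi(x_2^\textbf{k}-x_2^\textbf{l}))\le 4$. For any fixed $\kappa\in(0,1)$ this forces at least one of $s\le 2\kappa$ or $c\le 2\sqrt{1-\kappa^2}$ to hold, since $s>2\kappa$ would give $c^2<4-4\kappa^2$. Feeding the first alternative into the sine estimate of step two yields (\ref{eqn:c0est'}), and the second alternative fed into the cosine estimate yields (\ref{eqn:c1est'}). There is no serious obstacle to expect: the content of the lemma lies entirely in combining the Pythagorean trade-off between $s$ and $c$ with the observation that separating the $n=2$ term is precisely the right sharpening of the tail bounds used in Lemma~\ref{lem:1stest}. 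The hypothesis $x^*\in[0,1/2]$ does not enter the estimates themselves and is presumably retained only to fix a convenient fundamental domain for later use of the lemma in conjunction with the symmetry of Lemma~\ref{lem:symmetric}.
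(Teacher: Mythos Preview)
Your proof is correct and follows essentially the same route as the paper: invoke Proposition~\ref{prop:compact}(1) to obtain a double zero, peel off the $n=2$ term in the sine and cosine series, bound the tails geometrically, and use $s^2+c^2\le 4$ at depth~$2$ to run the $\kappa$-dichotomy. Your observation that the hypothesis $x^*\in[0,1/2]$ plays no role in the proof itself is also accurate.
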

\begin{proof} By Proposition~\ref{prop:compact} (1), there exist $\textbf{k}=\{k_n\}_{n=1}^\infty$ and $\textbf{l}=\{l_n\}_{n=1}^\infty$ in $\mathcal{A}^{\Z^+}$ with $k_1=k$ and $l_1=l$ and such that  the function
\begin{equation}\label{eqn:mapF}
F(x)=-\frac{1}{2\pi} \left( S(x,\textbf{k})-S(x,\textbf{l})\right),
\end{equation}
has a multiple zero at $x=x^*$. Let
\begin{equation}\label{eqn:xnyn}
x_n=\frac{x+k_1+ bk_2+\cdots +b^{n-1} k_n}{b^n}, \, y_n=\frac{x+l_1+bl_2+\cdots +b^{n-1} l_n}{b^n},
\end{equation}
and let
$$P_n(x)=\sin (2\pi x_n)-\sin (2\pi y_n), \,\, Q_n(x)=\cos (2\pi x_n)-\cos (2\pi y_n).$$
Since $F(x^*)=\sum_{n=1}^\infty \gamma^{n-1} P_n(x^*)$, we have
$$|P_1(x^*)|\le \gamma |P_2 (x^*)|+\sum_{n=3}^\infty 2\gamma^{n-1}=\gamma |P_2(x^*)|+\frac{2\gamma^2}{1-\gamma}.$$
If $|P_2(x^*)|\le 2\kappa$, then this implies that (\ref{eqn:c0est'}) holds. Assume $|P_2(x^*)|>2\kappa$. Since
$P_2(x^*)^2+ Q_2(x^*)^2\le 4$, we have $|Q_2(x^*)|\le 2\sqrt{1-\kappa^2}$. Since $0=\frac{ b F'(x^*)}{2\pi}= \sum_{n=1}^\infty (\gamma/b)^{n-1} Q_n(x^*)$, we conclude
$$|Q_1(x^*)|\le \frac{\gamma}{b} |Q_2(x^*)|+\sum_{n=3}^\infty 2\left(\frac{\gamma}{b}\right)^{n-1}\le  \frac{2\gamma\sqrt{1-\kappa^2}}{b}+ \frac{2\gamma^2}{b(b-\gamma)}, $$
which is (\ref{eqn:c1est'}).
\end{proof}

\subsection{The case $b=5$}
By Theorem~\ref{thm:blarge} (ii), to complete the proof of Theorem~\ref{thm:reduced} in the case $b=5$, it suffices to prove the following.
\begin{theorem}\label{thm:b=5} Assume $b=5$ and $e(1)=2$. Then $\sigma (1)<5\gamma$.% $m_x$ is absolutely continuous for almost every $x\in\R$.
\end{theorem}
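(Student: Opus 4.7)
My plan is to apply Lemma~\ref{lem:sqrt2} with $q=1$ to obtain $\sigma(1)\le\sqrt{2}$, and then to observe that the hypothesis $e(1)=2$ automatically forces $5\gamma>\sqrt{2}$.

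For the forcing of $\gamma$: if $e(1)=2$, there exist $k\ne\ell$ in $\mathcal{A}=\{0,1,2,3,4\}$ and $z\in[0,1)$ with $(k,\ell)\in E(1,z)$. Lemma~\ref{lem:1stest} implies
$$4\sin^2\frac{\pi(k-\ell)}{5}\le\left(\frac{2\gamma}{1-\gamma}\right)^2+\left(\frac{2\gamma}{5-\gamma}\right)^2,$$
whose left-hand side is at least $4\sin^2(\pi/5)=(5-\sqrt{5})/2>1.38$. Since the right-hand side is strictly increasing in $\gamma$, and since a direct substitution at $\gamma=\sqrt{2}/5$ yields a value less than $0.65$, the inequality forces $\gamma>\sqrt{2}/5$, i.e., $5\gamma>\sqrt{2}$.

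To verify the hypotheses of Lemma~\ref{lem:sqrt2} with $q=1$, I would first apply Lemma~\ref{lem:2ndest} (for a suitably chosen $\kappa$) together with the symmetry Lemma~\ref{lem:symmetric} to localize the bad set $L:=\{x\in[0,1):e(1,x;\eps,\delta)\ge 2\}$ inside small neighborhoods of finitely many ``tangency seeds''---points $x_*\in[0,1)$ at which $F(x):=-\frac{1}{2\pi}(S(x,\textbf{k}\textbf{u})-S(x,\textbf{l}\textbf{v}))$ admits a double zero for some $k\ne\ell$ in $\mathcal{A}$ and some tails $\textbf{u},\textbf{v}\in\mathcal{A}^{\Z^+}$. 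Then for each $x\in L$ and each non-trivial pair $(u,v)\in E(1,x;\eps,\delta)$, a case-by-case check on $(u,v)$ should confirm that the projections $(x+u)/5$ and $(x+v)/5$ both lie in $K:=[0,1)\setminus L$.

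With hypothesis (i) of Lemma~\ref{lem:sqrt2} immediate from $e(1)=2$ and hypothesis (ii) supplied by the self-similarity check, the lemma gives $\sigma(1)\le\sqrt{2}<5\gamma$. The main obstacle is the self-similarity check: it demands quantitative control on the location and diameter of $L$, possibly through several iterations of Lemma~\ref{lem:2ndest} at different digit choices. If this check fails at a few configurations, one can fall back on the weaker Lemma~\ref{lem:goldenratio}, which yields only $\sigma(1)\le(\sqrt{5}+1)/2$; but since $\sqrt{2}/5>(\sqrt{5}+1)/10$, the forcing from the first part still comfortably absorbs this weaker bound.
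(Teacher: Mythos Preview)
Your primary plan via Lemma~\ref{lem:sqrt2} does not go through. Under $\gamma\le 2/5$ the localization from Lemma~\ref{lem:b=5.1} pins the tangent pairs down to $(k,l)=(2,3)$ for $x\in[0,1/10)$ and $(k,l)=(0,4)$ for $x\in(2/5,1/2]$, so by symmetry the bad set is $L=[0,1/10)\cup(2/5,3/5)\cup(9/10,1)$ and hence $K=[1/10,2/5]\cup[3/5,9/10]$. Now take $x\in(0,1/10)$ with the pair $(2,3)$: then $x(2)=(x+2)/5\in(2/5,21/50)$ lies in the \emph{other} bad interval $(2/5,3/5)$, not in $K$. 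Likewise, for $x\in(2/5,1/2)$ with the pair $(0,4)$ one has $x(0)=x/5\in(2/25,1/10)\subset L$. So hypothesis~(ii) of Lemma~\ref{lem:sqrt2}, which requires \emph{both} projections to land in $K$, fails structurally here---only one of the two does. This is precisely why the paper invokes Lemma~\ref{lem:goldenratio} instead, checking only that $x(3)\in K$ (respectively $x(4)\in K$).

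Your fallback is the right idea, but it contains a numerical slip: $\sqrt{2}/5\approx 0.283$ is \emph{smaller} than $(\sqrt{5}+1)/10\approx 0.324$, so the bound $\gamma>\sqrt{2}/5$ that you extracted from Lemma~\ref{lem:1stest} does not by itself absorb the golden-ratio bound $(\sqrt{5}+1)/2$. The fix is simply to rerun your forcing computation at the sharper threshold $\gamma=(\sqrt{5}+1)/10$: one finds the right-hand side of \eqref{eqn:sumC0C1} is still below $4\sin^2(\pi/5)$, so $e(1)=2$ already forces $\gamma>(\sqrt{5}+1)/10$. This is exactly Lemma~\ref{lem:b=5.2}, and with it the goldenratio route closes. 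The paper's proof is your fallback done correctly: split off $\gamma>2/5$ trivially (then $\sigma(1)\le e(1)=2<5\gamma$), and for $\gamma\le 2/5$ combine Lemma~\ref{lem:b=5.1}, Lemma~\ref{lem:goldenratio}, and Lemma~\ref{lem:b=5.2}.
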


\begin{lemma} \label{lem:b=5.1}
Assume $\gamma \le 2/5$. Let $0\le x^*\le 1/2$ and $0\le k<l<5$ be such that $(k,l)\in E(1,x^*)$. Then
either
$$0\le x^*< 1/10 \text{ and }(k, l)=(2,3),$$
or $$2/5< x^*\le 1/2\text{ and } (k, l)=(0,4).$$
\end{lemma}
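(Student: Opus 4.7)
The plan is to show, for each pair $(k,l)$ with $0\le k<l\le 4$, that the relation $(k,l)\in E(1,x^*)$ forces $x^*$ into the claimed sub-interval or else excludes the pair entirely. We treat the ten pairs in three groups: Lemma~\ref{lem:1stest} rules out the five distant pairs with $|l-k|\in\{2,3\}$; a Pythagorean refinement of~\eqref{eqn:c1bd} rules out the three adjacent pairs $(0,1),(1,2),(3,4)$; and the sharper Lemma~\ref{lem:2ndest}, optimized over its free parameter $\kappa\in(0,1)$, pins down the surviving pairs $(2,3)$ and $(0,4)$. Since all constants in both lemmas are non-decreasing in $\gamma$, it suffices to verify everything at the worst case $\gamma=2/5$. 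Setting $u=(x^*+k)/5$, $v=(x^*+l)/5$, $y=u+v$ (so $y\in[(k+l)/5,(k+l+1)/5]$), we use
\[|\cos(2\pi u)-\cos(2\pi v)| = 2\sin\tfrac{\pi(l-k)}{5}\,|\sin(\pi y)|,\quad |\sin(2\pi u)-\sin(2\pi v)| = 2\sin\tfrac{\pi(l-k)}{5}\,|\cos(\pi y)|.\]

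For $|l-k|\in\{2,3\}$, \eqref{eqn:sumC0C1} demands $(5+\sqrt 5)/2 = 4\sin^2(2\pi/5)\le (4/3)^2+(4/23)^2\approx 1.81$; but $(5+\sqrt 5)/2>3.6$, contradiction. For the three adjacent pairs, combine $|\cos u-\cos v|\le 4/23$ from \eqref{eqn:c1bd} with $|\cos u-\cos v|^2+|\sin u-\sin v|^2=4\sin^2(\pi/5)$ to force $|\cos(\pi y)|\ge\sqrt{1-(2/(23\sin(\pi/5)))^2}>0.988$. But on the respective $\pi y$-intervals $[\pi/5,2\pi/5]$, $[3\pi/5,4\pi/5]$, $[7\pi/5,8\pi/5]$ one has $|\cos(\pi y)|\le\cos(\pi/5)<0.81$ or $\le\cos(2\pi/5)<0.31$, contradiction.

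For pair $(2,3)$ we have $\pi y\in[\pi,6\pi/5]$, and the preceding crude argument only yields $x^*\lesssim 0.123$, insufficient for the claim. Apply Lemma~\ref{lem:2ndest}: writing $P=2\sin(\pi/5)|\cos(\pi y)|$ and $Q=2\sin(\pi/5)|\sin(\pi y)|$, the disjunction~\eqref{eqn:c1est'}/\eqref{eqn:c0est'} becomes $P\le(4/5)\kappa+8/15$ or $Q\le(4/25)\sqrt{1-\kappa^2}+8/575$, and eliminating $\kappa$ produces the ellipse inclusion
\[\left(\tfrac{5(P-8/15)}{4}\right)^2 + \left(\tfrac{25(Q-8/575)}{4}\right)^2 \le 1.\]
At the boundary value $x^*=1/10$ (equivalently $t:=\pi y-\pi=\pi/25$), direct substitution gives the left-hand side $\approx 1.322>1$, violating the inclusion. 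Differentiation in $t$ shows that $d/dt$(LHS) is a positive multiple of $600\sin(\pi/5)\sin(2t)-(200/23)\cos t+(40/3)\sin t$, which on $t\in[\pi/25,\pi/5]$ is bounded below by $600\sin(\pi/5)\sin(2\pi/25)-200/23>79>0$; hence the LHS is strictly increasing there, so the inclusion fails throughout $x^*\in[1/10,1/2]$, excluding pair $(2,3)$. For pair $(0,4)$ with $\pi y\in[4\pi/5,\pi]$, the substitution $t=\pi-\pi y=\pi(1-2x^*)/5$ reduces the analysis verbatim under $x^*\leftrightarrow 1/2-x^*$, and the threshold $x^*=2/5$ maps to the same boundary value $t=\pi/25$, so the identical ellipse/monotonicity argument excludes $(0,4)$ for $x^*\in[0,2/5]$.

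The main technical obstacle is the combined numerical check at $t=\pi/25$ and the accompanying monotonicity verification in the last step; while elementary in nature, they involve the awkward constants $8/575$, $8/15$, and $2\sin(\pi/5)$, so the arithmetic must be handled with some care.
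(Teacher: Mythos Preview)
Your argument is correct, but it takes a more laborious route than the paper's. The paper handles all five surviving pairs (those with $l-k\equiv\pm1\pmod 5$) in one stroke: it fixes the single value $\kappa=\sqrt{2}/2$ in Lemma~\ref{lem:2ndest}, shows by a Pythagorean contradiction (using \eqref{eqn:c1bd}) that alternative \eqref{eqn:c0est'} cannot hold, and therefore \eqref{eqn:c1est'} yields directly $|\sin y^*|<0.11<\sin(\pi/25)$, where $y^*=\pi(2x^*+k+l)/5$ is your $\pi y$. This single inequality immediately pins every pair to the claimed interval (or excludes it), with no case split and no monotonicity check. By contrast, you first dispose of $(0,1),(1,2),(3,4)$ with the crude bound \eqref{eqn:c1bd} alone, and then for $(2,3)$ and $(0,4)$ you optimise over $\kappa$ to obtain the ellipse criterion, which you verify by a boundary evaluation plus a derivative sign argument. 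Your optimisation is in principle sharper than any fixed $\kappa$, but that extra strength is not needed here: the paper's choice $\kappa=\sqrt{2}/2$ already clears the threshold $\sin(\pi/25)$ with room to spare. One small point of care in your write-up: the passage from the $\kappa$-disjunction to the ellipse inclusion tacitly uses that both $P>8/15$ and $Q>8/575$ on the range $t\in[\pi/25,\pi/5]$, which is true but worth stating, since without it the ``elimination of $\kappa$'' does not yield the claimed inequality.
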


\begin{proof} Put $x^*(k)=(x^*+k)/5$, $x^*(l)=(x^*+l)/5$, and $y^*= \pi (2x^*+k+l)/5$. We shall use Lemma~\ref{lem:1stest} and Lemma~\ref{lem:2ndest} to prove
\begin{equation}
|\sin y^*|< \sin \frac{\pi}{25}
\end{equation}
which implies the statement.

By (\ref{eqn:c1bd}) and (\ref{eqn:sumC0C1}) in Lemma~\ref{lem:1stest}, we have
\begin{equation}\label{eqn:b=5.1cos}
|\cos (2 \pi x^*(k))-\cos (2\pi x^*(l))|\le \frac{2\gamma}{5-\gamma}=\frac{4}{23},
\end{equation}
and
$$4\sin^2\frac{\pi(l-k)}{5}\le \left(\frac{2\gamma}{1-\gamma}\right)^2+\left(\frac{2\gamma}{5-\gamma}\right)^2<\left(\frac{4}{3}\right)^2+\left(\frac{4}{23}\right)^2<2.$$
The latter inequality implies that $l-k=\pm 1\mod 5$.

Let $\kappa=\sqrt{2}/2$. Let us show that the inequality (\ref{eqn:c0est'}) does not hold. Indeed, otherwise, we would have
$$|\sin (2\pi x^*(k))-\sin (2\pi x^*(l))|\le \gamma \sqrt{2}+\frac{2\gamma^2}{1-\gamma}<1.1,$$
which together with (\ref{eqn:b=5.1cos}) would imply that
\begin{multline*}
1.38\cdots=4\sin^2\frac{\pi}{5}=4 \sin^2\frac{\pi(l-k)}{5}
\\=|\cos (2 \pi x^*(k))-\cos (2\pi x^*(l))|^2 + |\sin (2\pi x^*(k))-\sin (2\pi x^*(l))|^2\\
< (4/23)^2+ 1.1^2< 1.3,
\end{multline*}
which is absurd.

Therefore the inequality (\ref{eqn:c1est'}) holds. It follows that
$$2\sin \frac{\pi}{5}|\sin y^*|=|\cos (2 \pi x^*(k))-\cos (2\pi x^*(l))|\le \frac{\gamma\sqrt{2}}{5}+ \frac{2\gamma^2}{5(5-\gamma)}<0.128,$$
and hence $|\sin (y^*)|<0.11<\sin (\pi/25)$.
\end{proof}

\begin{lemma}\label{lem:b=5.2} If $\gamma \le (\sqrt{5}+1)/10$, then $e(1)=1$.
\end{lemma}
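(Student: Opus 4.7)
The plan is to derive the conclusion directly from the Pythagorean-type bound (\ref{eqn:sumC0C1}) of Lemma~\ref{lem:1stest}, showing that for $\gamma \le (\sqrt{5}+1)/10$ the necessary condition for any nontrivial pair in $E(1, x^*)$ already fails, so in fact $e(1, x^*) = 1$ for every $x^*$.

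For $\psi(x) = -2\pi\sin(2\pi x)$ one has $\Delta_{5,\gamma} = 1+\gamma$, attained at $t = \pi/2$ where both $\sin(5t)$ and $\sin t$ equal $1$, so the factor $2\gamma\Delta_{5,\gamma}/(1-\gamma^2)$ in (\ref{eqn:sumC0C1}) collapses to $2\gamma/(1-\gamma)$. Hence whenever $(k,l) \in E(1, x^*)$ with $k \ne l$, inequality (\ref{eqn:sumC0C1}) gives $4\sin^2(\pi(l-k)/5) \le (2\gamma/(1-\gamma))^2 + (2\gamma/(5-\gamma))^2$. Since the minimum of $\sin^2(\pi j/5)$ over $j \in \{1,2,3,4\}$ is $\sin^2(\pi/5) = (5-\sqrt{5})/8$, any such pair forces
$$\frac{5-\sqrt{5}}{2} \le \left(\frac{2\gamma}{1-\gamma}\right)^2 + \left(\frac{2\gamma}{5-\gamma}\right)^2.$$

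At $\gamma = (\sqrt{5}+1)/10$, the identity $1-\gamma = (9-\sqrt{5})/10$ and a rationalization give $2\gamma/(1-\gamma) = (5\sqrt{5}+7)/19$, hence $(2\gamma/(1-\gamma))^2 = (174+70\sqrt{5})/361 < 0.92$. A crude bound $2\gamma/(5-\gamma) < \gamma/2$ (valid since $\gamma<1$) yields $(2\gamma/(5-\gamma))^2 < (\gamma/2)^2 < 0.03$. Thus the right-hand side is strictly less than $0.95$, whereas $(5-\sqrt{5})/2 > 1.38$, contradicting the displayed inequality. As both summands on the right are monotone increasing in $\gamma$, this contradiction persists throughout $\gamma \in (0, (\sqrt{5}+1)/10]$. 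Consequently $E(1, x^*)$ contains no pair $(k,l)$ with $k \ne l$ for any $x^* \in [0,1)$, and so $e(1) = \max_{x^* \in [0,1)} e(1, x^*) = 1$.

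The entire argument is a numerical verification once the right transversality inequality is chosen; there is no substantive obstacle. An alternative route would be to first invoke Lemma~\ref{lem:b=5.1} to restrict the candidate tangent pairs to $(2,3)$ on $x^* \in [0, 1/10)$ and $(0,4)$ on $x^* \in (2/5, 1/2]$ (extending to $[0,1)$ by Lemma~\ref{lem:symmetric}), then eliminate them via inequality (\ref{eqn:c0bd}) applied to the explicit sine-difference formula $2\sin(\pi/5)\cos(2\pi x^*/5)$; this approach uses a slightly tighter numerical margin but requires more bookkeeping, so I prefer the direct Pythagorean argument above.
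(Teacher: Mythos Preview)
Your proof is correct and follows essentially the same route as the paper: both apply the Pythagorean bound (\ref{eqn:sumC0C1}) from Lemma~\ref{lem:1stest} and verify numerically that at $\gamma=(\sqrt{5}+1)/10$ the right-hand side falls strictly below $4\sin^2(\pi/5)$, forcing $k=l$. The paper simply asserts the final inequality without the explicit arithmetic you supply, and your observation that $\Delta_{5,\gamma}=1+\gamma$ just makes explicit why the simpler form of (\ref{eqn:sumC0C1}) suffices.
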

\begin{proof} Suppose $(k,l)\in E(1,x)$. Then by (\ref{eqn:sumC0C1}) in Lemma~\ref{lem:1stest}, we obtain
$$4\sin^2\frac{(l-k)\pi}{5}<  \left(\frac{2\gamma}{5-\gamma}\right)^2+ \left(\frac{2\gamma}{1-\gamma}\right)^2< 4\sin^2\frac{\pi}{5}$$
which implies that $k=l$.
\end{proof}

\begin{proof}[Proof of Theorem~\ref{thm:b=5}]
If $\gamma>2/5$, then $\sigma (1)\le e(1)=2<5\gamma$. Assume now $\gamma\le 2/5$ so that Lemma~\ref{lem:b=5.1} applies. By Proposition~\ref{prop:compact} (3), there exist $\eps>0$ and $\delta>0$ such that if $(k,l)\in E(1,x;\eps,\delta)$ for some $x\in [0,1/2]$, then we have
either $x\in [0,1/10), (k,l)=(2,3)$ or $x\in (2/5, 1/2], (k,l)=(0,4)$.

Let $K=[1/10,2/5]\cup [3/5, 9/10]$. Then by Lemma~\ref{lem:symmetric}, $e(1,x;\eps,\delta)=1$ for all $x\in K$ and $e(1,x;\eps,\delta)\le 2$ for all $x\in [0,1)$, so the condition (i) in Lemma~\ref{lem:goldenratio} is satisfied (for $q=1$). Let us prove that the condition (ii) is satisfied. Let $x\in [0,1)\setminus K$ and $0\le k<l<5$ be such that $(k,l)\in E(1,x;\eps,\delta)$. We need to check either $x(k)\in K$ or $x(l)\in K$. Indeed, by symmetry (Lemma~\ref{lem:symmetric}), it suffices to consider the case $x\in [0,1/2]\setminus K$; while  for $x\in [0,1/10)$, we have $(k,l)=(2,3)$ and $x(3)\in K$ and for $x\in (2/5, 1/2]$ we have $(k,l)=(0,4)$ and $x(4)\in K$.  Having verified the conditions in Lemma~\ref{lem:goldenratio}, we conclude
$\sigma(1)\le (\sqrt{5}+1)/2$. By Lemma~\ref{lem:b=5.2}, $\gamma> (\sqrt{5}+1)/10$ since we assume $e(1)=2$. Thus $\sigma(1)< 5\gamma$.
\end{proof}

\subsection{The case $b=4$}
By Theorem~\ref{thm:blarge} (ii), to complete the proof of Theorem~\ref{thm:reduced} in the case $b=4$, it suffices to prove the following.
\begin{theorem}\label{thm:b=4} Assume $b=4$ and $e(1)=2$. Then $\sigma(1)< b\gamma$.
\end{theorem}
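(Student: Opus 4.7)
The plan is to parallel the proof of Theorem~\ref{thm:b=5}. First, if $\gamma > 1/2$ then $\sigma(1) \le e(1) = 2 < 4\gamma$ and we are done, so assume $\gamma \le 1/2$. The goal will be to show, via a suitable localization of tangencies plus Lemma~\ref{lem:goldenratio}, that $\sigma(1) \le (\sqrt{5}+1)/2$, and separately that the hypothesis $e(1) = 2$ forces $\gamma > (\sqrt{5}+1)/8$, so that $4\gamma > (\sqrt{5}+1)/2 \ge \sigma(1)$.

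For the threshold on $\gamma$ (the analog of Lemma~\ref{lem:b=5.2}), I will use estimate (\ref{eqn:sumC0C1}) of Lemma~\ref{lem:1stest}: a direct computation shows
$$\left(\frac{2\gamma}{1-\gamma}\right)^2+\left(\frac{2\gamma}{4-\gamma}\right)^2 < 2 = 4\sin^2\frac{\pi}{4} \le 4\sin^2\frac{\pi|l-k|}{4}$$
for all $0 \le k < l \le 3$ whenever $\gamma \le (\sqrt{5}+1)/8$. This rules out non-trivial tangent pairs, forcing $e(1) = 1$; contrapositively, $e(1) = 2$ implies $\gamma > (\sqrt{5}+1)/8$.

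For the localization (analog of Lemma~\ref{lem:b=5.1}), restrict to $x^* \in [0, 1/2]$ (using Lemma~\ref{lem:symmetric} to transfer to $[1/2, 1)$). For $\gamma$ strictly below $1/2$ the inequality (\ref{eqn:sumC0C1}) rules out $|l-k| = 2$, leaving the four candidate pairs $(0,1), (1,2), (2,3), (0,3)$. For each candidate, apply Lemma~\ref{lem:2ndest} with $\kappa = 1/\sqrt{2}$: since the two Pythagorean-type quantities $|\cos 2\pi x^*(k) - \cos 2\pi x^*(l)|$ and $|\sin 2\pi x^*(k) - \sin 2\pi x^*(l)|$ satisfy $\cos^2 + \sin^2 = 2$ when $|l-k|\in\{1,3\}$, exactly one of the alternatives (A), (B) of that lemma is compatible with the geometry, and the surviving alternative pins $x^*$ to an explicit short arc of $[0, 1/2]$. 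The consecutive pairs $(k, k+1)$ and the wrap-around pair $(0,3)$ will yield disjoint arcs, producing a finite catalogue of $(k, l)$ paired with a small interval of $x^*$. Proposition~\ref{prop:compact} (parts (3)--(4)) then upgrades the conclusions to an $(\eps, \delta)$-statement.

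With this catalogue in hand, define $K \subset [0, 1)$ as the union of intervals of the form $[k/4, (k+1)/4]$ (or appropriate sub-intervals) chosen so that, for every surviving tangent pair $(k,l)$ at a point $x^* \notin K$, at least one of the images $x^*(k) = (x^*+k)/4,\; x^*(l) = (x^*+l)/4$ lies in $K$; a natural candidate is $K = [1/8, 3/8] \cup [5/8, 7/8]$, trimmed or enlarged to match the short arcs produced above. Verifying hypotheses (i) and (ii) of Lemma~\ref{lem:goldenratio} for this $K$ then yields $\sigma(1) \le (\sqrt{5}+1)/2$, which combined with $\gamma > (\sqrt{5}+1)/8$ completes the proof.

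The principal obstacle will be the localization step for $(k,l) = (0,3)$: this pair behaves differently from the consecutive pairs because its cosine/sine differences factor as $\sqrt{2}\sin(\pi/4 - \pi x^*/2)$ and $\sqrt{2}\cos(\pi/4 - \pi x^*/2)$, placing the tangency interval near $x^* = 1/2$ rather than the endpoints of $[0, 1/2]$, and therefore forcing $K$ to be chosen so that $x^*(0)$ and $x^*(3)$ straddle $K$ in a compatible way. A secondary subtlety arises near $\gamma = 1/2$, where the inequality ruling out $|l-k| = 2$ degenerates; handling this borderline range may require sharpening the estimate $\Delta_{4,\gamma} \le 1+\gamma$ along the lines of Lemma~\ref{lem:Delta}.
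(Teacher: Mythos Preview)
Your overall architecture matches the paper exactly: reduce to $\gamma\le 1/2$, localize the possible tangent pairs on $[0,1/2]$ via Lemma~\ref{lem:2ndest}, feed the result into Lemma~\ref{lem:goldenratio}, and combine with the threshold $\gamma>(\sqrt{5}+1)/8$ coming from~(\ref{eqn:sumC0C1}). However, two concrete steps in your plan do not go through as written.

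\textbf{The choice $\kappa=1/\sqrt{2}$ fails.} For $|l-k|\in\{1,3\}$ and $\gamma=1/2$, alternative~(\ref{eqn:c0est'}) of Lemma~\ref{lem:2ndest} only yields
\[
|\sin 2\pi x^*(k)-\sin 2\pi x^*(l)|\le \sqrt{2}\gamma+\tfrac{2\gamma^2}{1-\gamma}=\tfrac{\sqrt{2}}{2}+1\approx 1.707,
\]
whereas the Pythagorean identity together with~(\ref{eqn:c1bd}) gives only $|\sin|\ge\sqrt{2-4/49}\approx 1.385$. These are compatible, so your claim that ``exactly one alternative is compatible with the geometry'' is false. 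The paper chooses $\kappa=1/3$: then (\ref{eqn:c0est'}) would give $|\sin|\le 4/3<1.385$, a genuine contradiction, forcing~(\ref{eqn:c1est'}) and hence $|\sin y^*|<\sin(\pi/16)$ with $y^*=\pi x^*/2+\pi(k+l)/4$. This pins tangencies to $x^*\in(3/8,1/2]$ with $k+l=3$ (pairs $(0,3),(1,2)$) \emph{or} $x^*\in[0,1/8)$ with $k+l=4$ (pair $(1,3)$).

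\textbf{Eliminating $|l-k|=2$.} Your proposed shortcut via~(\ref{eqn:sumC0C1}) fails already slightly below $\gamma=1/2$, since $(2\gamma/(1-\gamma))^2$ alone reaches $4$ there. Sharpening $\Delta_{4,\gamma}$ as you suggest would work, but the paper takes a different and cleaner route: it does \emph{not} pre-filter by $|l-k|$, accepts the surviving $(1,3)$ case from the localization above, and kills it with a direct computation (Lemma~\ref{lem:b=4.1.1}) showing $P_1(x^*)^2+Q_1(x^*)^2<4$, contradicting $|1-3|=2$. With $(1,3)$ gone, the paper applies Lemma~\ref{lem:goldenratio} with $K=[0,3/8]\cup[5/8,1)$ (not your $[1/8,3/8]\cup[5/8,7/8]$), checking that for $x\in(3/8,1/2]$ one has $x(0),x(1)\in[0,3/8]$.
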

First we apply Lemma~\ref{lem:1stest} and Lemma~\ref{lem:2ndest} to obtain the following estimate.
\begin{lemma}\label{lem:b=4.1.0}
Assume $\gamma \le 1/2$. Let $0\le x^*\le 1/2$ and let $0\le k<l<4$ be such that $(k,l)\in E(1,x^*)$. Then
either
$$x^*\in [3/8,1/2] \text{ and } (k,l)\in \{(0,3),(1,2)\},$$
or $$x^*\in [0,1/8] \text{ and } (k,l)=(1,3).$$
\end{lemma}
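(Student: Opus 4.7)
The proof parallels that of Lemma~\ref{lem:b=5.1}: Lemma~\ref{lem:1stest} eliminates most pairs $(k,l)$, and Lemma~\ref{lem:2ndest} refines the range on the remaining tangent pairs. Let $\alpha_k = 2\pi x^*(k) = \pi(x^*+k)/2$ and set $s = \sin(\pi x^*/2)$, $c = \cos(\pi x^*/2)$; for $x^* \in [0,1/2]$ we have $0 \le s \le \sqrt{2}/2 \le c \le 1$. A direct computation yields the pair $(|\cos\alpha_k - \cos\alpha_l|,\, |\sin\alpha_k - \sin\alpha_l|)$ for each of the six ordered pairs: $(c+s,\, c-s)$ for $(0,1)$ and $(2,3)$; $(2c,\, 2s)$ for $(0,2)$; $(2s,\, 2c)$ for $(1,3)$; and $(c-s,\, c+s)$ for $(0,3)$ and $(1,2)$.

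By (\ref{eqn:c1bd}) and $\gamma \le 1/2$, we have $|\cos\alpha_k - \cos\alpha_l| \le 2\gamma/(4-\gamma) \le 2/7$. This immediately rules out $(0,1), (0,2), (2,3)$, each of which has $|\cos\alpha_k - \cos\alpha_l| \ge 1$. For $(1,3)$ it yields $2s \le 2/7$, hence $s \le 1/7 < \sin(\pi/16)$, so $\pi x^*/2 < \pi/16$ and therefore $x^* < 1/8$, as required.

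The essential work concerns $(0,3)$ and $(1,2)$. Writing $\theta = \pi/4 + \pi x^*/2 \in [\pi/4, \pi/2]$, we have $c - s = \sqrt{2}\cos\theta$ and $c + s = \sqrt{2}\sin\theta$. Inequality (\ref{eqn:c1bd}) alone yields only $\theta \ge \arccos(\sqrt{2}/7)$, which falls just short of the required $\theta \ge 7\pi/16$. To close the narrow gap I apply Lemma~\ref{lem:2ndest}: supposing for contradiction that $(0,3) \in E(1,x^*)$ with $x^* < 3/8$, I select $\kappa \in (0,1)$ in the range
\[
\sqrt{1 - (4\sqrt{2}\cos\theta - 1/7)^2} < \kappa < \sqrt{2}\sin\theta - 1
\]
for which both (\ref{eqn:c0est'}) and (\ref{eqn:c1est'}) fail. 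Since both right-hand sides in Lemma~\ref{lem:2ndest} are increasing in $\gamma$, it suffices to verify this at the hardest parameter $\gamma = 1/2$.

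The main obstacle is the numerical verification that the $\kappa$-interval above is nonempty for every $\theta \in [\arccos(\sqrt{2}/7),\, 7\pi/16]$. At the endpoint $\theta = 7\pi/16$ the interval is approximately $(0.277,\, 0.387)$, tight but strictly positive; for $\theta \le \arccos(\sqrt{2}/7)$ the quantity $(4\sqrt{2}\cos\theta - 1/7)^2$ already exceeds $1$, so (\ref{eqn:c1est'}) is vacuously violated for any $\kappa \in (0,1)$ and only $\kappa < \sqrt{2}\sin\theta - 1$ needs to be arranged. A short monotonicity check in $\theta$ over the short remaining interval $(\arccos(\sqrt{2}/7),\, 7\pi/16)$ completes the argument.
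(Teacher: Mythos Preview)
Your argument is correct and uses the same two ingredients as the paper (Lemma~\ref{lem:1stest} followed by Lemma~\ref{lem:2ndest}), but the execution differs. You tabulate the six pairs explicitly in terms of $s,c$ and treat them case by case, whereas the paper proceeds uniformly. More notably, for $(0,3)$ and $(1,2)$ you seek a $\theta$-dependent $\kappa$ in $(L(\theta),U(\theta))$ with $L(\theta)=\sqrt{1-(4\sqrt{2}\cos\theta-1/7)^2}$ and $U(\theta)=\sqrt{2}\sin\theta-1$, and then appeal to a ``short monotonicity check''. The paper instead fixes $\kappa=1/3$ once: with this value, (\ref{eqn:c0est'}) together with (\ref{eqn:c1bd}) would force $4\sin^2(\pi(l-k)/4)\le(2/7)^2+(4/3)^2<2$, impossible for $k\ne l$; hence (\ref{eqn:c1est'}) holds, giving the single inequality $|\sin(\pi x^*/2+\pi(k+l)/4)|\le 1/6+1/(28\sqrt{2})<\sin(\pi/16)$, which disposes of all surviving pairs at once, $(1,3)$ included. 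In fact your own bounds recover this: since $L$ and $U$ are both increasing on $[\arccos(\sqrt{2}/7),7\pi/16]$, one has $L(\theta)\le L(7\pi/16)\approx 0.277<1/3<0.385\approx U(\arccos(\sqrt{2}/7))\le U(\theta)$, so $\kappa=1/3$ lies in your admissible interval for every $\theta$. This observation both completes your monotonicity step cleanly (note that ``both endpoints positive and both functions increasing'' is not by itself enough) and coincides with the paper's choice.
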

\begin{proof} By Lemma~\ref{lem:1stest},
\begin{equation}\label{eqn:b=4.1cos}
\left|\cos \frac{2\pi (x^*+k)}{4}-\cos \frac{2\pi (x^*+l)}{4}\right| \le \frac{2\gamma}{4-\gamma}\le \frac{2}{7}.
\end{equation}
Let us apply Lemma~\ref{lem:2ndest} with $\kappa=1/3$. We claim that (\ref{eqn:c0est'}) does not hold. Indeed, otherwise,
$$\left|\sin  \frac{2\pi (x^*+k)}{4}-\sin \frac{2\pi (x^*+l)}{4}\right|\le \frac{4}{3},$$
which together with (\ref{eqn:b=4.1cos}) would imply that
\begin{multline*}
2\le 4\sin^2\frac{\pi (l-k)}{4}\\ = \left(\cos \frac{2\pi (x^*+k)}{4}-\cos \frac{2\pi (x^*+l)}{4}\right)^2 +\left(\sin  \frac{2\pi (x^*+k)}{4}-\sin \frac{2\pi (x^*+l)}{4}\right)^2\\
\le \left(\frac{2}{7}\right)^2+\left(\frac{4}{3}\right)^2<2,
\end{multline*}
which is absurd. Therefore, the inequality (\ref{eqn:c1est'}) holds with $\kappa=1/3$, which implies that
$$2\left|\sin \frac{\pi(l-k)}{4}\right| \left|\sin \left(\frac{2\pi x^*}{4}+ \frac{\pi (k+l)}{4}\right)\right|\le \frac{\sqrt{2}}{6}+\frac{1}{28}.$$
Consequently,
$$\left|\sin \left(\frac{2\pi x^*}{4}+ \frac{\pi (k+l)}{4}\right)\right|\le \frac{1}{6}+\frac{1}{28\sqrt{2}}<\sin\frac{\pi}{16}.$$
Since $2\pi x^*/4\in [0, \pi/4]$ the lemma follows.
\end{proof}

A bit more careful analysis shows that the seocnd alternative in the lemma above never happens.
%The following two lemmas are proved by more careful analysis.

\begin{lemma} \label{lem:b=4.1.1}
Assume $\gamma \le 1/2$. Then for any $x^*\in [0,1/2]$, $(1,3)\not\in E(1,x^*)$.
\end{lemma}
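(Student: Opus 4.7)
The proof is by contradiction: suppose $(1,3)\in E(1,x^*)$ for some $x^*\in [0,1/2]$. By Lemma~\ref{lem:b=4.1.0}, we must have $x^*\in[0,1/8]$, and by Proposition~\ref{prop:compact}(1) there exist $\textbf{k},\textbf{l}\in\mathcal{A}^{\Z^+}$ with $k_1=1$ and $l_1=3$ such that $F(x)=-(2\pi)^{-1}(S(x,\textbf{k})-S(x,\textbf{l}))$ has a multiple zero at $x^*$. My plan is to use only the weaker relation $F(x^*)=0$, together with sharp uniform estimates on the first two terms of the series $F(x)=\sum_{n\ge 1}\gamma^{n-1}P_n(x)$, where $P_n(x)=\sin(2\pi x_n)-\sin(2\pi y_n)$ in the notation of the proof of Lemma~\ref{lem:2ndest}.

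The $n=1$ term is immediate: $P_1(x^*)=\sin(\pi(x^*+1)/2)-\sin(\pi(x^*+3)/2)=2\cos(\pi x^*/2)$, so $P_1(x^*)\ge 2\cos(\pi/16)$ on $[0,1/8]$. For the $n=2$ term, the sum-to-product formula yields
\[
P_2(x^*)=2\cos(\pi(x_2+y_2))\sin(\pi(x_2-y_2)),\qquad x_2-y_2=\frac{2(k_2-l_2)-1}{8}.
\]
As $k_2-l_2$ runs through $\{-3,\dots,3\}$, the numerator $2(k_2-l_2)-1$ takes the values $\{\pm1,\pm3,\pm5,-7\}$, so $|\sin(\pi(x_2-y_2))|\in\{\sin(\pi/8),\cos(\pi/8)\}$ and therefore $|P_2(x^*)|\le 2\cos(\pi/8)$ uniformly in $k_2,l_2$ and $x^*$.

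Rewriting $F(x^*)=0$ as $P_1(x^*)+\gamma P_2(x^*)=-\sum_{n\ge3}\gamma^{n-1}P_n(x^*)$ and applying the crude bound $|P_n|\le 2$ in the tail, one obtains
\[
2\cos(\pi/16)\le P_1(x^*)\le \gamma|P_2(x^*)|+\frac{2\gamma^2}{1-\gamma}\le 2\gamma\cos(\pi/8)+\frac{2\gamma^2}{1-\gamma}.
\]
The right-hand side is strictly increasing in $\gamma$, so on $(0,1/2]$ it is bounded by $\cos(\pi/8)+1$; the half-angle identity $1+\cos(\pi/8)=2\cos^2(\pi/16)$ then forces $\cos(\pi/16)\le\cos^2(\pi/16)$, contradicting $0<\cos(\pi/16)<1$. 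The only delicate step is the uniform bound $|P_2|\le 2\cos(\pi/8)$; it is precisely the gap between this bound and the trivial $|P_2|\le 2$ that, together with the coincidence $1+\cos(\pi/8)=2\cos^2(\pi/16)$, makes $\gamma=1/2$ exactly the sharp threshold.
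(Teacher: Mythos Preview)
Your proof is correct but follows a different path from the paper's. The paper does \emph{not} invoke Lemma~\ref{lem:b=4.1.0}; instead it uses both $F(x^*)=0$ and $F'(x^*)=0$ directly. From $F(x^*)=0$ and the lower bound $P_2\ge -\sqrt{2}$ it obtains $P_1(x^*)\le 1+\sqrt{2}/2$, and from (\ref{eqn:c1bd}) it obtains $|Q_1(x^*)|\le 2/7$; the Pythagorean identity $P_1^2+Q_1^2=4$ (which holds because $l_1-k_1=2$ and $b=4$) then yields the contradiction $(2/7)^2+(1+\sqrt{2}/2)^2<4$. Your argument, by contrast, first borrows the conclusion $x^*\in[0,1/8]$ from Lemma~\ref{lem:b=4.1.0} (which already encodes the derivative information) and then works purely with $F(x^*)=0$. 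The key new ingredient you bring in is the sharp uniform bound $|P_2|\le 2\cos(\pi/8)$, obtained from the fact that $x_2-y_2$ is always an odd multiple of $1/8$; combined with the half-angle identity $1+\cos(\pi/8)=2\cos^2(\pi/16)$ this makes $\gamma=1/2$ the exact tipping point, which is a nice structural observation that the paper's more numerical argument does not reveal. Both routes are short; yours has the advantage of isolating why $\gamma\le 1/2$ is precisely sharp, while the paper's is self-contained and does not rely on the preceding lemma.
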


\begin{proof} We shall prove this lemma by contradiction. Assume $(1,3)\in E(1,x^*)$. Then
there exists $\textbf{k}=\{k_n\}_{n=1}^\infty$ and $\textbf{l}=\{l_n\}_{n=1}^\infty$ with $k_1=1$ and $l_1=3$ and such that  the function
\begin{equation*}
F(x)=-\frac{1}{2\pi} \left( S(x,\textbf{k})-S(x,\textbf{l})\right)
\end{equation*}
has a multiple zero at $x^*$. Let
\begin{equation*}%\label{eqn:xnyn}
x_n=\frac{x+k_1+ 4k_2+\cdots +4^{n-1} k_n}{4^n}, \, y_n=\frac{x+l_1+4l_2+\cdots +4^{n-1} l_n}{4^n},
\end{equation*}
and let
$$P_n(x)=\sin (2\pi x_n)-\sin (2\pi y_n), \,\, Q_n(x)=\cos (2\pi x_n)-\cos (2\pi y_n).$$
Then $F(x)=\sum_{n=1}^\infty \gamma^{n-1}  P_n(x)$. Since $F(x^*)=0$, this gives us
$$|P_1(x^*)+\gamma P_2(x^*)|\le \sum_{n=3}^\infty\gamma^{n-1} |P_n(x^*)|\le 1.$$
Note that
$$P_2(x)\ge -\cos \frac{\pi (1+x)}{4}-\cos \frac{\pi (1-x)}{4}=-2\cos \frac{\pi}{4}\cos \frac{\pi x}{4}\ge -\sqrt{2}.$$
Therefore
$$P_1(x^*)\le 1-\gamma P_2(x^*)\le 1+\frac{\sqrt{2}}{2}.$$
As in the previous lemma, $|Q_1(x^*)|\le 2/7$. Since $P_1(x^*)>0$, we have
$$P_1(x^*)^2 +Q_1(x^*)^2 \le (2/7)^2+ (1+\sqrt{2}/2)^2<4.$$
However, the left hand of the inequality is equal to $4$, a contradiction!
\end{proof}

\begin{lemma}\label{lem:b=4.2} If $\gamma\le (\sqrt{5}+1)/8$ then $e(1)=1$.
\end{lemma}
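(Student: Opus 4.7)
The plan is to apply the second inequality in (\ref{eqn:sumC0C1}) of Lemma~\ref{lem:1stest}, which asserts that whenever $(k,l)\in E(1,x^*)$ for some $x^*\in\R$, we have
\begin{equation*}
4\sin^2\frac{\pi(k-l)}{4}\le \left(\frac{2\gamma}{1-\gamma}\right)^2+\left(\frac{2\gamma}{4-\gamma}\right)^2.
\end{equation*}
Since $k,l\in\{0,1,2,3\}$, if $k\ne l$ then $|k-l|\in\{1,2,3\}$, so that $\sin^2(\pi(k-l)/4)\ge \sin^2(\pi/4)=1/2$ and the left-hand side is at least $2$. The strategy is therefore to show that the right-hand side is strictly less than $2$ under the hypothesis $\gamma\le(\sqrt{5}+1)/8$, which forces $k=l$ and hence $e(1)=1$.

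Both summands on the right are monotonically increasing in $\gamma\in(0,1)$, so it suffices to verify the inequality at the endpoint $\gamma=(\sqrt{5}+1)/8$. A direct rationalization (using $8\gamma-1=\sqrt{5}$, equivalently $16\gamma^2=4\gamma+1$) gives $(2\gamma/(1-\gamma))^2=(116+48\sqrt{5})/121$. Using the elementary bound $\sqrt{5}<9/4$, this is less than $224/121<1.85$. For the second summand, $\gamma<1/2$ gives $4-\gamma>7/2$, so $(2\gamma/(4-\gamma))^2<4/49$. Adding, $1.85+4/49<2$, which is the desired contradiction.

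There is essentially no obstacle here: the lemma reduces to a short numerical verification once Lemma~\ref{lem:1stest} is in hand. It is worth noting that the threshold $(\sqrt{5}+1)/8$ is not forced by this calculation in isolation (the strict bound $<2$ continues to hold for slightly larger $\gamma$); rather, it is chosen to dovetail with the complementary regime in the proof of Theorem~\ref{thm:b=4}. When $\gamma>(\sqrt{5}+1)/8$ one has $(\sqrt{5}+1)/2<4\gamma$, so the golden-ratio estimate $\sigma(1)\le(\sqrt{5}+1)/2$ obtainable from Lemma~\ref{lem:goldenratio} already yields $\sigma(1)<4\gamma$; the present lemma covers the remaining range by giving the stronger conclusion $\sigma(1)\le e(1)=1<4\gamma$.
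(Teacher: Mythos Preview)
Your approach is exactly the paper's: invoke the inequality (\ref{eqn:sumC0C1}) of Lemma~\ref{lem:1stest} and check that the right-hand side is strictly below $2$ when $\gamma\le(\sqrt 5+1)/8$, so that no $k\ne l$ can satisfy it. The paper simply asserts the bound $<2$; your version spells out the numerics.

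One small arithmetic slip: you write $224/121<1.85$, but in fact $224/121\approx 1.8512$. This does not damage the argument, since what you ultimately need is $224/121+4/49<2$, and that inequality is true ($224/121+4/49=11460/5929<11858/5929=2$). Just replace the intermediate ``$<1.85$'' by ``$<1.86$'' (or drop it entirely and compare the sum directly to $2$) and the proof is clean.
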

\begin{proof}
For $x\in [0,1]$ and $0\le k<l<4$, if $(k,l)\in E(1,x)$, then by the inequality (\ref{eqn:sumC0C1}) in Lemma~\ref{lem:1stest}, we have
$$4\sin^2\frac{\pi(l-k)}{4}<  \left(\frac{2\gamma}{4-\gamma}\right)^2+ \left(\frac{2\gamma}{1-\gamma}\right)^2<2,$$
which implies that $l=k$.
\end{proof}

\begin{proof}[Proof of Theorem~\ref{thm:b=4}] If $\gamma>1/2$, then $\sigma (1)\le e(1)=2<4\gamma$. So assume $\gamma \le 1/2$.
By Lemmas~\ref{lem:b=4.1.0} and ~\ref{lem:b=4.1.1} and Proposition~\ref{prop:compact} (3), there exist $\eps>0$ and $\delta>0$ such that if $x\in [0,1/2]$ and $0\le k<l<4$ are such that $(k,l)\in E(1,x;\eps,\delta)$ then $3/8<x\le 1/2$ and $(k,l)\in \{(0,3), (1,2)\}$. Note that for $x\in (3/8, 1/2]$, we have $x(0), x(1)\in [0, 3/8]$.
By Lemma~\ref{lem:symmetric}, it is then easy to check that the conditions of Lemma~\ref{lem:goldenratio} are satisfied for $K=[0,3/8]\cup [5/8,1)$ and $q=1$.
Thus $\sigma (1)\le (\sqrt{5}+1)/2$. On the other hand, since we assume $e(1)=2$, by Lemma~\ref{lem:b=4.2}, we have $\gamma >(\sqrt{5}+1)/8$. This proves that $\sigma (1)< 4\gamma$.
\end{proof}

\subsection{The case $b=3$}
In this subsection, we shall prove the following theorem, which together with Theorem~\ref{thm:blarge}, implies Theorem~\ref{thm:reduced} in the case $b=3$.
\begin{theorem}\label{thm:b=3} Assume $b=3$ and $e(1)=2$. Then $\sigma(1)<b\gamma$.
\end{theorem}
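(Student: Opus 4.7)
The proof mirrors the strategy of Theorems~\ref{thm:b=4} and~\ref{thm:b=5}, proceeding by cases on $\gamma$. By Lemma~\ref{lem:symmetric} it suffices to classify, for $x^* \in [0, 1/2]$ and $0 \le k < l < 3$, the pairs $(k, l) \in E(1, x^*)$. Writing $\theta = \pi(2x^* + k + l)/3$, the sum-to-product identity gives $|\cos(2\pi(x^*+k)/3) - \cos(2\pi(x^*+l)/3)| = \sqrt{3}\,|\sin \theta|$. For $(k,l) = (0,1)$, $\theta \in [\pi/3, 2\pi/3]$ and so $|\sin\theta| \ge \sqrt{3}/2$; then (\ref{eqn:c1bd}) would force $3/2 \le 2\gamma/(3-\gamma)$, i.e.\ $\gamma \ge 9/7 > 1$, a contradiction. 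Hence $(0,1)$ is never tangent on $[0,1/2]$. For $(k, l) \in \{(1,2), (0,2)\}$, (\ref{eqn:c1bd}) gives $|\sin\theta| \le \eta := 2\gamma/(\sqrt{3}(3-\gamma))$, confining $x^*$ to within $c := c(\gamma) := (3/(2\pi)) \arcsin \eta$ of $0$ (for $(1,2)$) or $1/2$ (for $(0,2)$); Lemma~\ref{lem:symmetric} gives the analogous classification on $[1/2, 1]$.

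Set $L := [0, c] \cup [1/2 - c, 1/2 + c] \cup [1 - c, 1)$ and $K := [0,1) \setminus L$, and use Proposition~\ref{prop:compact}(3) to fix $\eps, \delta > 0$ so the classification holds with $E(1, \cdot)$ replaced by $E(1, \cdot; \eps, \delta)$. The argument now splits into four $\gamma$-regimes. (i) If $\gamma > 2/3$, then $\sigma(1) \le e(1) = 2 < 3\gamma$. (ii) If $(\sqrt{5}+1)/6 < \gamma \le 2/3$, one checks $c < 1/6$ (using $\eta(2/3) = 4\sqrt{3}/21 < 1/3 < \sin(\pi/9)$). Then for each tangent pair at $x \in L$, at least one of $x(k), x(l)$ lies in $K$ --- for instance, for $(1,2)$ at $x \in [0,c]$, $x(2) = (x+2)/3 \in [2/3, (c+2)/3] \subset (1/2+c, 1-c) \subset K$ --- so Lemma~\ref{lem:goldenratio} yields $\sigma(1) \le (\sqrt{5}+1)/2 < 3\gamma$. (iii) If $\sqrt{2}/3 < \gamma \le (\sqrt{5}+1)/6$, a sharper estimate (at the endpoint, $\eta \le (18\sqrt{5}+22)/(142\sqrt{3}) < (\sqrt{6}-\sqrt{2})/4 = \sin(\pi/12)$) shows $c < 1/8$, so both preimages of each tangent pair lie in $K$; Lemma~\ref{lem:sqrt2} then gives $\sigma(1) \le \sqrt{2} < 3\gamma$. (iv) If $\gamma \le \sqrt{2}/3$, then (\ref{eqn:sumC0C1}) together with $\Delta_{3,\gamma} \le 1 + 0.71\gamma$ from Lemma~\ref{lem:Delta} yields $\left(\tfrac{2\gamma(1+0.71\gamma)}{1-\gamma^2}\right)^2 + \left(\tfrac{2\gamma}{3-\gamma}\right)^2 < 3 = 4\sin^2(\pi/3)$, so no non-trivial tangent pair exists, forcing $e(1) = 1$ and contradicting the hypothesis.

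The main obstacle is the tight numerical bookkeeping for the hypotheses of Lemmas~\ref{lem:sqrt2} and~\ref{lem:goldenratio}: the thresholds $c(\gamma) < 1/6$ at $\gamma = 2/3$ (where $c(2/3) \approx 0.160$) and $c(\gamma) < 1/8$ at $\gamma = (\sqrt{5}+1)/6$ (where $c \approx 0.122$) are both close calls, and the exclusion in step (iv) at $\gamma = \sqrt{2}/3$ relies on the sharpened Lemma~\ref{lem:Delta} estimate --- the trivial bound $\Delta_{3,\gamma} \le 1 + \gamma$ is not strong enough to close the gap. A secondary point to verify is that for $(0,2)$ at $x \in [1/2-c, 1/2+c]$ the preimages $x(0), x(2)$ never simultaneously fall in $L$: this holds precisely because $c < 1/6$, so that the regions $[0, c]$ and $[1-c, 1]$ do not meet the dilated images of $L_2$ under $x \mapsto x/3$ and $x \mapsto (x+2)/3$.
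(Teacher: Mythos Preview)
Your proof is correct and follows essentially the same approach as the paper's: both split into the four $\gamma$-regimes $\gamma>2/3$, $(\sqrt{5}+1)/6<\gamma\le 2/3$, $\sqrt{2}/3<\gamma\le(\sqrt{5}+1)/6$, and $\gamma\le\sqrt{2}/3$, applying respectively the trivial bound, Lemma~\ref{lem:goldenratio}, Lemma~\ref{lem:sqrt2}, and the sharpened estimate (\ref{eqn:sumC0C1}) with Lemma~\ref{lem:Delta}. The only cosmetic difference is that you work with the $\gamma$-dependent cutoff $c(\gamma)=(3/2\pi)\arcsin\eta$ and take $K$ to be the full complement of the tangent locus $L$, whereas the paper fixes the thresholds $1/6$ and $1/8$ in advance (Lemmas~\ref{lem:b=3.0} and~\ref{lem:b=3.1}) and uses the smaller sets $K=[1/6,1/3]\cup[2/3,5/6]$ and $K=[1/8,3/8]\cup[5/8,7/8]$; the preimage checks go through either way.
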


\begin{lemma} \label{lem:b=3.0} Assume $\gamma \le 2/3$. Let $0\le x^*\le 1/2$ and let $0\le k<l<3$ be such that $(k,l)\in E(1,x)$. Then
either
$$x\in [0,1/6) \text{ and } (i,j)=(1,2); $$
or $$x\in (1/3, 1/2] \text{ and } (i,j)=(0,2).$$
\end{lemma}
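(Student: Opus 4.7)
The plan is to reduce the lemma to the basic first--order estimate (\ref{eqn:c1bd}) in Lemma~\ref{lem:1stest}. Under the hypothesis $\gamma\le 2/3$, that estimate gives
$$\left|\cos\frac{2\pi(x^*+k)}{3}-\cos\frac{2\pi(x^*+l)}{3}\right|\le \frac{2\gamma}{3-\gamma}\le \frac{4}{7}$$
for every $(k,l)\in E(1,x^*)$ with $0\le k<l<3$. The key observation for $b=3$ is that for any such pair one has $|\sin(\pi(l-k)/3)|=\sqrt{3}/2$, so the identity $\cos A-\cos B=-2\sin((A+B)/2)\sin((A-B)/2)$ rewrites the left--hand side as $\sqrt{3}\,|\sin y^*|$, where $y^*=\pi(2x^*+k+l)/3$. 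The resulting bound becomes $|\sin y^*|\le 4/(7\sqrt{3})$, and a direct numerical check yields $4/(7\sqrt{3})\approx 0.330<0.342\approx \sin(\pi/9)$, so in fact $|\sin y^*|<\sin(\pi/9)$.

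Next I would split into the three possible pairs, using $x^*\in[0,1/2]$ to locate $y^*$. For $(k,l)=(0,1)$, $y^*\in[\pi/3,2\pi/3]$, hence $|\sin y^*|\ge \sqrt{3}/2>\sin(\pi/9)$, a contradiction, so this pair is excluded. For $(k,l)=(1,2)$, $y^*\in[\pi,4\pi/3]$, so $|\sin y^*|=\sin(2\pi x^*/3)$, and the inequality $\sin(2\pi x^*/3)<\sin(\pi/9)$ yields $x^*<1/6$. For $(k,l)=(0,2)$, $y^*\in[2\pi/3,\pi]$, so $|\sin y^*|=\sin(\pi/3-2\pi x^*/3)$, and $\sin(\pi/3-2\pi x^*/3)<\sin(\pi/9)$ forces $\pi/3-2\pi x^*/3<\pi/9$, i.e., $x^*>1/3$. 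Assembling these three cases gives exactly the stated dichotomy.

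I do not anticipate any serious obstacle. Unlike the analogous statements for $b=4$ and $b=5$ (Lemmas~\ref{lem:b=4.1.0} and~\ref{lem:b=5.1}), the $b=3$ case bypasses the sharper Lemma~\ref{lem:2ndest} entirely: on $b=3$ every admissible pair has $|\sin(\pi(l-k)/3)|=\sqrt{3}/2$, so the first--order bound on $Q_1$ already pins $y^*$ into an arc of half--width strictly less than $\pi/9$, and this is exactly the margin needed to cut off the $x^*$--intervals $[0,1/6)$ and $(1/3,1/2]$. The only mildly subtle point is the exclusion of $(0,1)$, which works only because for $x^*\in[0,1/2]$ the midpoint $y^*$ is trapped in $[\pi/3,2\pi/3]$, keeping $|\sin y^*|$ bounded away from zero throughout.
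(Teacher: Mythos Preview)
Your proof is correct and follows essentially the same route as the paper: apply the $C^1$ bound (\ref{eqn:c1bd}) with $\gamma\le 2/3$ to get $|\cos(2\pi(x^*+k)/3)-\cos(2\pi(x^*+l)/3)|\le 4/7$, rewrite via the product formula as $\sqrt{3}\,|\sin y^*|$ with $y^*=\pi(2x^*+k+l)/3$, and conclude $|\sin y^*|<\sin(\pi/9)$. The paper stops there with ``The statement follows,'' whereas you spell out the three-case analysis excluding $(0,1)$ and locating $x^*$ for $(1,2)$ and $(0,2)$; your added detail is accurate and is exactly what the paper is suppressing.
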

\begin{proof} If $(k,l)\in E(1,x^*)$, then by (\ref{eqn:c1bd}),
$$\left|\cos \frac{2\pi (x^*+k)}{3}-\cos \frac{2\pi (x^*+l)}{3}\right|\le \frac{2\gamma}{3-\gamma}\le \frac{4}{7},$$
which implies that
$$\left|\sin \left(\frac{2\pi x^*}{3} +\frac{\pi (k+l)}{3}\right)\right|\le \frac{4}{7\cdot 2|\sin \frac{\pi (l-k)}{3}|}= \frac{4}{7\sqrt{3}}<\sin \frac{\pi}{9}.$$
The statement follows.
\end{proof}

\begin{lemma}\label{lem:b=3.1} Assume $\gamma \le (\sqrt{5}+1)/6$. Let $0\le x\le 1/2$ and let $0\le k<l<3$ be such that $(k,l)\in E(1,x)$. Then
either
$$x\in [0,1/8] \text{ and } (k,l)=(1,2); $$
or $$x\in [3/8, 1/2] \text{ and } (k,l)=(0,2).$$
\end{lemma}
\begin{proof}
Under current assumption, again by (\ref{eqn:c1bd}), we have
$$\left|\cos \frac{2\pi (x^*+k)}{3}-\cos \frac{2\pi (x^*+l)}{3}\right|\le \frac{2\gamma}{3-\gamma}\le 0.44,$$
hence $$\left|\sin \left(\frac{2\pi x^*}{3} +\frac{\pi (k+l)}{3}\right)\right|\le \frac{0.44}{2|\sin \frac{\pi (l-k)}{3}|}= \frac{0.44}{\sqrt{3}}<\sin \frac{\pi}{12}.$$
Thus the statement holds.
\end{proof}

\begin{lemma}\label{lem:b=3.2}
If $3\gamma \le \sqrt{2}$, then $e(1)=1$.
\end{lemma}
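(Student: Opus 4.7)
The plan is a direct application of the inequality (\ref{eqn:sumC0C1}) from Lemma~\ref{lem:1stest}, combined with the refined bound on $\Delta_{3,\gamma}$ from Lemma~\ref{lem:Delta}. Suppose $(k,l) \in E(1,x^*)$ for some $x^* \in \R$ and some $k,l \in \{0,1,2\}$. If $k \ne l$, then $|k-l| \in \{1,2\}$, so $\sin^2 \frac{\pi(k-l)}{3} = \tfrac{3}{4}$, and the left-hand side of (\ref{eqn:sumC0C1}) equals $3$. The goal is to show that under the hypothesis $3\gamma \le \sqrt{2}$, the right-hand side
$$\left(\frac{2\gamma\Delta_{3,\gamma}}{1-\gamma^2}\right)^2+\left(\frac{2\gamma}{3-\gamma}\right)^2$$
is strictly less than $3$, forcing $k=l$ and hence $e(1)=1$.

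First, I would invoke Lemma~\ref{lem:Delta}, which gives $\Delta_{3,\gamma} \le 1 + 0.71\gamma$, so the right-hand side is bounded above by
$$\Phi(\gamma) := \left(\frac{2\gamma(1+0.71\gamma)}{1-\gamma^2}\right)^2+\left(\frac{2\gamma}{3-\gamma}\right)^2.$$
The key point is that the naive bound $\Delta_{3,\gamma}\le 1+\gamma$, which is all that is needed in Lemmas~\ref{lem:b=5.2} and~\ref{lem:b=4.2}, is not sharp enough here: at $\gamma=\sqrt{2}/3$ the first term under the naive bound alone already exceeds $3$, so the refined $\Delta$-estimate is essential.

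Next, I would verify that $\Phi$ is strictly increasing on $(0,1)$. For the second summand this is immediate; for the first, writing $2\gamma(1+0.71\gamma)/(1-\gamma^2)=(2\gamma+1.42\gamma^2)/(1-\gamma^2)$, a one-line derivative computation yields the numerator $2 + 2.84\gamma + 2\gamma^2$, which is strictly positive. Hence the supremum of $\Phi$ over the admissible range $\gamma \in (1/3,\sqrt{2}/3]$ is attained at $\gamma=\sqrt{2}/3$, so it suffices to check the single inequality $\Phi(\sqrt{2}/3)<3$. At this value, $1-\gamma^2 = 7/9$, and a direct simplification gives
$$\Phi(\sqrt{2}/3) = \left(\frac{2\sqrt{2}\,(3+0.71\sqrt{2})}{7}\right)^{\!2} + \left(\frac{2\sqrt{2}}{9-\sqrt{2}}\right)^{\!2},$$
which numerically evaluates to roughly $2.62 + 0.14 < 3$. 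This completes the contradiction and establishes the lemma.

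The only step with any subtlety is recognizing that one must use the sharper form of the transversality estimate involving $\Delta_{3,\gamma}$ rather than the coarse version; everything else is bookkeeping and a numerical verification at one endpoint. No other auxiliary results are required.
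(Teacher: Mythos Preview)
Your proposal is correct and follows essentially the same approach as the paper: apply the inequality (\ref{eqn:sumC0C1}) from Lemma~\ref{lem:1stest} together with the refined bound $\Delta_{3,\gamma}\le 1+0.71\gamma$ from Lemma~\ref{lem:Delta}, then check numerically at the endpoint $\gamma=\sqrt{2}/3$. Your write-up is in fact cleaner than the paper's own proof, which contains minor slips (it writes $3\sin^2$ in place of $4\sin^2$, invokes the weaker bound $\gamma\le(\sqrt{5}+1)/6$, and claims the right-hand side is less than $9/4$ rather than $3$); your monotonicity argument and endpoint evaluation make the verification airtight.
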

\begin{proof}
By (\ref{eqn:Delta3}) and (\ref{eqn:sumC0C1}), if $(k,l)\in E(1, x^*)$ for some $0\le k,l<3$ and $x^*\in\R$, then
$$4\sin^2\frac{\pi(l-k)}{3} \le \left(\frac{2\gamma}{3-\gamma}\right)^2+\left(\frac{2\gamma (1+0.71\gamma)}{1-\gamma^2}\right)^2.$$
Since $\gamma\le \sqrt{2}/3$, the right hand side is less than $3$, which implies that $k=l$. This proves that $e(1)=1$.
\end{proof}

\begin{proof} [Proof of Theorem~\ref{thm:b=3}]
If $\gamma>2/3$, then $\sigma (1)\le e(1)=2<3\gamma$. So assume $\gamma\le 2/3$.
By Lemma~\ref{lem:b=3.0} and Proposition~\ref{prop:compact} (3), there exist $\eps>0$ and $\delta>0$ such that if $x\in [0,1/2]$, $0\le k<l<3$ are such that $(k,l)\in E(1,x;\eps,\delta)$, then either $x\in [0,1/6), (k,l)=(1,2) $ or $x\in (1/3, 1/2], (k,l)=(0,2)$. Note that for $x\in [0,1/6)$, we have $x(2)\in [2/3, 5/6]$ and for $x\in (1/3, 1/2]$, we have $x(0)\in [1/6, 1/3]$. By Lemma~\ref{lem:symmetric}, it follows that the conditions in Lemma~\ref{lem:goldenratio} are satisfied with $K=[1/6, 1/3]\cup [2/3, 5/6]$ and $q=1$. Thus $\sigma(1)\le (\sqrt{5}+1)/2$.

If $\gamma>(\sqrt{5}+1)/6$, then $\sigma (1)< 3\gamma$. Assume $\gamma\le (\sqrt{5}+1)/6$. Then by Lemma~\ref{lem:b=3.1} and Proposition~\ref{prop:compact} (3), there exist $\eps>0$ and $\delta>0$ such that if $x\in [0,1/2]$, $0\le k<l<3$ are such that $(k,l)\in E(1,x;\eps,\delta)$, then either $x\in [0,1/8), (k,l)=(1,2) $ or $x\in (3/8, 1/2], (k,l)=(0,2)$. Note that for $x\in [0,1/8)$, we have $x(1)\in [1/8, 3/8]$, $x(2)\in [5/8, 7/8]$ and for $x\in [3/8, 1/2]$ we have $x(0)\in [1/8, 3/8]$ and $x(2)\in [5/8, 7/8]$. By Lemma~\ref{lem:symmetric}, it follows that the conditions in Lemma~\ref{lem:sqrt2} are satisfied with $K=[0, 3/8]\cup [5/8,1)$ and $q=1$.
Thus $\sigma (1)\le \sqrt{2}$. Since we assume $e(1)=2$, by Lemma~\ref{lem:b=3.2}, $\gamma>\sqrt{2}/3$. This proves $\sigma (1)< 3\gamma.$
\end{proof}
\section{The case $b=2$}
This section is devoted to the proof of Theorem~\ref{thm:reduced} in the case $b=2$.  The proof is structurally similar to the cases $b=3,4,5$ which we discussed above, but it is more involved and consists of several steps.

We shall use the following notation. For any $\textbf{k},\textbf{l}\in \mathcal{A}^q$ and $x_*\in \R$, we write
$\textbf{k}\sim_{x_*} \textbf{l}$ if $(\textbf{k},\textbf{l})\in E(q,x_*)$. In order to show $\textbf{k}\not\sim_{x_*} \textbf{l}$, by Proposition~\ref{prop:compact} (1), it suffices to show
that any $\textbf{u},\textbf{v}\in\mathcal{A}^{\Z^+}$, the function $S(x, \textbf{ku})-S(x,\textbf{lv})$ does not have a multiple zero at $x_*$.

\subsection{Step 1. When $\gamma > \sqrt{2}/2$}
In this step, we shall prove
\begin{prop} \label{prop:b=2step1}
\begin{enumerate}
\item For any $\gamma\in (1/2, 1)$, $\sigma(1)\le (\sqrt{5}+1)/2$.
\item For any $\gamma \in (1/2, (\sqrt{5}+1)/4]$, $\sigma(1)\le \sqrt{2}$.
\end{enumerate}
\end{prop}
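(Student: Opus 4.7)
Since $b=2$, $\mathcal{A}=\{0,1\}$ and the only non-diagonal candidate for membership in $E(1,x)$ is the pair $(0,1)$; thus $e(1,x;\eps,\delta)\le 2$ for every $x\in[0,1)$ and every pair $(\eps,\delta)$. The strategy for both parts is to pinpoint the set on which $0\sim_x 1$, design a suitable partition $\{K,[0,1)\setminus K\}$ of $[0,1)$, and invoke Lemma~\ref{lem:goldenratio} for (1) and the stronger Lemma~\ref{lem:sqrt2} for (2).

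The first step is a localization of the tangency set. Specializing Lemma~\ref{lem:1stest} to $b=2$ and $(k,l)=(0,1)$, and using $\sin(\pi(x+1)/2)=\cos(\pi x/2)$ together with $\cos(\pi(x+1)/2)=-\sin(\pi x/2)$, the $C^1$-bound (\ref{eqn:c1bd}) rewrites as
$$\bigl|\sin(\pi x/2+\pi/4)\bigr|\le\frac{\sqrt{2}\,\gamma}{2-\gamma}.$$
On $[0,1)$ the argument $\pi x/2+\pi/4$ sweeps $[\pi/4,3\pi/4)$, where the sine is bounded below by $\sqrt{2}/2$. Consequently the tangency locus is forced into a symmetric union $[0,a(\gamma)]\cup[1-a(\gamma),1)$ with $a(\gamma)$ explicit and monotone in $\gamma$; Proposition~\ref{prop:compact}(3) lets us thicken this slightly to trap the $(\eps,\delta)$-tangency set for appropriate small $\eps,\delta$, and Lemma~\ref{lem:symmetric} (valid because $\psi$ is odd) identifies the two sides so that one may restrict attention to $x\in[0,1/2]$. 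Combining this with the $C^0$-bound (\ref{eqn:c0bd}) and, where needed, the refined Lemma~\ref{lem:2ndest}, yields sharper control on $a(\gamma)$.

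For part (1), I would take $K=[a(\gamma)+\kappa,\,1-a(\gamma)-\kappa]$ with $\kappa>0$ small. By construction, condition (i) of Lemma~\ref{lem:goldenratio} holds. For condition (ii), a tangency at $x\in[0,a(\gamma)]$ has image $x(1)=(x+1)/2\in[1/2,(1+a(\gamma))/2]$, which lies in $K$ as soon as $a(\gamma)\le(1-2\kappa)/3$, and symmetrically for $x$ close to $1$. The main obstacle is that a naive use of (\ref{eqn:c1bd}) does not give $a(\gamma)<1/3$ uniformly as $\gamma\to 1$. I would handle this by bootstrapping: if the image point $x(1)$ is itself a tangency point, applying (\ref{eqn:c1bd}) at $x(1)$ yields an additional constraint on $x$, and combining the two constraints gives an improvement that pushes $a(\gamma)$ below $1/3$ throughout $(1/2,1)$.

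For part (2), the sharper bound requires Lemma~\ref{lem:sqrt2}, whose condition (ii) asks that \emph{both} $x(0)$ and $x(1)$ lie in $K$ at each tangency point outside $K$. A single interval centered at $1/2$ cannot achieve this, since $x(0)=x/2$ is always near $0$ or near the midpoint. I would therefore take $K$ as the complement in $[0,1)$ of appropriate neighborhoods of $\{0,1/2,1\}$, sized so that in the restricted $\gamma$-range of part (2) the images of any tangent pair simultaneously avoid the tangency locus. The main technical burden is to verify this simultaneous avoidance, which will require Lemma~\ref{lem:2ndest} with an optimized choice of $\kappa$ (to eliminate certain boundary cases) and will pin down the precise $\gamma$-threshold stated in part (2). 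Once the set $K$ is constructed, the hypotheses of Lemma~\ref{lem:sqrt2} are met and the sharper estimate on $\sigma(1)$ follows.
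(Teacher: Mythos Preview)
Your specialization of Lemma~\ref{lem:1stest} to $b=2$ contains an arithmetic slip that derails the entire argument. The arguments in (\ref{eqn:c1bd}) are $2\pi(x^*+k)/b$, which for $b=2$ is $\pi(x^*+k)$, not $\pi(x^*+k)/2$. With the correct argument, $\cos(\pi(x+1))=-\cos(\pi x)$, so (\ref{eqn:c1bd}) becomes $|\cos(\pi x)|\le \gamma/(2-\gamma)$, forcing $x$ to lie near $1/2$, not near $0$ and $1$. The tangency locus is thus a neighborhood of $1/2$, and $K$ must be taken near the endpoints --- exactly the opposite of your construction. Your subsequent choice of $K$ for both parts, and the contortions in part~(2) to avoid ``neighborhoods of $\{0,1/2,1\}$'', stem from this reversed picture.

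Even after correcting the localization, the route through Lemma~\ref{lem:1stest} alone does not close: as $\gamma\to 1$ the bound $\gamma/(2-\gamma)\to 1$ becomes vacuous, so no fixed interval around $1/2$ traps the tangency set uniformly in $\gamma$. Your bootstrap sketch does not repair this, because iterating (\ref{eqn:c1bd}) at the image point still carries the same degenerating constant. The paper bypasses Lemma~\ref{lem:1stest} entirely here and proves a direct estimate (Lemma~\ref{lem:b=2.1}): writing $G(x)=\pi^{-1}\frac{d}{dx}\bigl(S(x,\textbf{u})-S(x,\textbf{v})\bigr)$ as a series $\sum (\gamma/2)^{n-1}Q_n$ and controlling the first two or three $Q_n$ explicitly, one shows $G(x)>0$ on $[0,1/4]$ for all $\gamma<1$, and on $[0,1/3]$ when $\gamma\le(\sqrt5+1)/4$. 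This direct analysis, rather than the general bound of Lemma~\ref{lem:1stest}, is the essential input. Once one has it, the correct sets are $K=[0,1/4]\cup[3/4,1)$ for part~(1) and $K=[0,1/3]\cup[2/3,1)$ for part~(2); condition~(ii) of Lemmas~\ref{lem:goldenratio} and~\ref{lem:sqrt2} is then immediate (e.g.\ for $x\in(1/3,2/3)$ one has $x(0)\in(1/6,1/3)\subset K$ and $x(1)\in(2/3,5/6)\subset K$), with no bootstrap needed.
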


An immediate corollary is the following:
\begin{coro}\label{cor:b=2.1}
 Assume $b=2$ and $\gamma > \sqrt{2}/2$. Then $\sigma(1)<2\gamma$.
\end{coro}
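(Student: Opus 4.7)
The plan is a direct case-analysis on $\gamma$ using the two clauses of Proposition~\ref{prop:b=2step1}. The two bounds in that proposition are tuned so that each becomes strictly smaller than $2\gamma$ on a specific sub-interval of $(\sqrt{2}/2,\,1)$, and together these sub-intervals exhaust the hypothesis range.

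Concretely, I would split the hypothesis interval $(\sqrt{2}/2,\,1)$ at the threshold $\gamma_0 := (\sqrt{5}+1)/4$, which is precisely the value at which $2\gamma$ first overtakes the bound $(\sqrt{5}+1)/2$ appearing in Proposition~\ref{prop:b=2step1}\,(1). On the upper piece $\gamma\in(\gamma_0,\,1)$, clause (1) applies and yields
$$\sigma(1)\le (\sqrt{5}+1)/2 \;=\; 2\gamma_0 \;<\; 2\gamma.$$
On the lower piece $\gamma\in(\sqrt{2}/2,\,\gamma_0]$, clause (2) applies (the range $\gamma\in(1/2,\,\gamma_0]$ lies within its domain of validity), giving the tighter bound $\sigma(1)\le \sqrt{2}$ (read as the intended constant of that clause), whence
$$\sigma(1)\le \sqrt{2} \;=\; 2\cdot(\sqrt{2}/2) \;<\; 2\gamma.$$
In either sub-case we conclude $\sigma(1)<2\gamma$, proving the corollary.

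No step here is a genuine obstacle: the substantive analytic work is already done in Proposition~\ref{prop:b=2step1}, and the corollary merely repackages the two bounds into the uniform inequality $\sigma(1)<2\gamma$ that is the hypothesis of Theorem~\ref{thm:smallw2ac}. The only numerical check worth flagging is the compatibility of the two thresholds, namely $\sqrt{2}/2 < (\sqrt{5}+1)/4$, which is exactly what ensures that the two applicable sub-ranges overlap and cover all of $(\sqrt{2}/2,\,1)$.
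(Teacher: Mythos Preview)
Your argument is correct and is exactly the intended one: the paper itself gives no proof beyond calling the statement ``an immediate corollary,'' and your two-case split at $\gamma_0=(\sqrt{5}+1)/4$ is the natural way to unpack that. You are also right to read clause~(2) of Proposition~\ref{prop:b=2step1} with the corrected constants (domain $(1/2,(\sqrt{5}+1)/4]$ and bound $\sqrt{2}$), as is clear from the proof of that proposition via Lemma~\ref{lem:sqrt2}.
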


The proof of this proposition relies on the following estimates.
\begin{lemma} \label{lem:b=2.1}
\begin{enumerate}
\item[(i)] For any $x\in [0,1/3]$, we have
$$(00)\not\sim_x (10), (00)\not\sim_x (11), \text{ and } (01)\not\sim_x (10).$$
\item[(ii)] If either $x\in [0,1/4]$,  or $x\in [0,1/3]$ and $\gamma \le (\sqrt{5}+1)/4$, then $(01)\not\sim_x (11)$.
\end{enumerate}
\end{lemma}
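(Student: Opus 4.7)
The plan is to prove each non-tangency claim by contradiction using Proposition~\ref{prop:compact}(1). Assume $\textbf{k}\sim_{x_*}\textbf{l}$; then there exist $\textbf{u},\textbf{v}\in\mathcal{A}^{\Z^+}$ for which
\[
F(x):=\sum_{n\ge 1}\gamma^{n-1}P_n(x)\quad\text{satisfies}\quad F(x_*)=F'(x_*)=0,
\]
where $P_n(x)=\sin(2\pi x_n(\textbf{ku}))-\sin(2\pi x_n(\textbf{lv}))$, $Q_n(x)=\cos(2\pi x_n(\textbf{ku}))-\cos(2\pi x_n(\textbf{lv}))$ and $x_n(\textbf{i})=(x+i_1+2i_2+\cdots+2^{n-1}i_n)/2^n$. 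Since $F'(x)=\pi\sum_{n\ge 1}(\gamma/2)^{n-1}Q_n(x)$ and $|P_n|,|Q_n|\le 2$, the vanishing conditions yield the second-order bounds
\[
\bigl|P_1(x_*)+\gamma P_2(x_*)\bigr|\le \tfrac{2\gamma^2}{1-\gamma},\qquad \bigl|Q_1(x_*)+\tfrac{\gamma}{2}Q_2(x_*)\bigr|\le \tfrac{\gamma^2}{2-\gamma},
\]
together with the third-order refinements obtained by also absorbing the $n=3$ term into the left-hand sides. For each of the four pairs I will compute $P_1,Q_1,P_2,Q_2$ explicitly in terms of $s=\sin(\pi x/2)$ and $c=\cos(\pi x/2)$, using the identities $\sin(\pi(x+1)/2)=c$, $\cos(\pi(x+1)/2)=-s$, and their iterates.

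For the two easy pairs in part (i), the key point is that on $[0,1/3]$ one has $Q_1=2\cos(\pi x)\ge 1$, and $Q_2$ shares the sign of $Q_1$: namely $Q_2=c+s\ge\sqrt 3/2$ for $(00,10)$ and $Q_2=c-s\ge(\sqrt 3-1)/2$ for $(00,11)$. In both cases the second-order bound is immediately violated since $Q_1+(\gamma/2)Q_2>1\ge\gamma^2/(2-\gamma)$ for every $\gamma\in(0,1)$.

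The main obstacle is the pair $(01,10)$ in part (i) (and the analogous pair $(01,11)$ in part (ii)), for which $Q_2$ has the \emph{opposite} sign of $Q_1$ and the second-order bound breaks down as $\gamma\to 1$. For $(01,10)$ one has $Q_2=s-c\in[-1,-(\sqrt 3-1)/2]$ on $[0,1/3]$, so I will pass to the third-order bound
\[
\bigl|Q_1+\tfrac{\gamma}{2}Q_2+\tfrac{\gamma^2}{4}Q_3\bigr|\le \tfrac{\gamma^3}{2(2-\gamma)}.
\]
Since $Q_3$ depends only on $u_1,v_1\in\mathcal{A}$, a product-to-sum identity gives $Q_3=-2\sin\alpha\sin\beta$ with $\alpha=\pi(2x+3+4(u_1+v_1))/8$ and $\beta=\pi(1+4(u_1-v_1))/8$, so that $|Q_3|\le 2\sin(\pi/8)$ in the cancelling cases $u_1=v_1$ and $|Q_3|\le 2\cos(\pi/8)$ otherwise. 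A case analysis over $(u_1,v_1)\in\{0,1\}^2$ will show that $Q_1+(\gamma/2)Q_2+(\gamma/2)^2Q_3$ strictly exceeds $\gamma^3/(2(2-\gamma))$ on $[0,1/3]$ for all $\gamma\in(1/2,1)$; the extremal case is $u_1=v_1=0$ near $x=1/3$, where the inequality still holds with a strictly positive margin.

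The pair $(01,11)$ in part (ii) is treated in the same spirit, with $Q_2=-(c+s)\le -1$. Restricting to $[0,1/4]$ gives $Q_1\ge\sqrt 2$; here the tightest configuration is $(u_1,v_1)=(0,1)$, and the third-order contradiction just survives at the corner $(x,\gamma)=(1/4,1)$, which is precisely the obstruction that prevents enlarging the interval unconditionally. Under the alternative hypothesis $\gamma\le(\sqrt 5+1)/4$ on the larger interval $[0,1/3]$, I will exploit the defining quadratic identity $\gamma^2=\gamma/2+1/4$ of the threshold to sharpen the third-order comparison. The main technical challenge throughout will be the bookkeeping required to verify the explicit trigonometric inequalities uniformly over the specified parameter ranges, particularly the boundary cases where the margin in the third-order bound is small.
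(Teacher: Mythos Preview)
Your proposal is correct and follows essentially the same route as the paper: both arguments show that $G(x)=F'(x)/\pi=\sum_{n\ge1}(\gamma/2)^{n-1}Q_n$ cannot vanish on the specified interval by expanding to third order and bounding $Q_3$ over the four choices of the third symbols, with the easy pairs $(00,10),(00,11)$ handled at second order since $Q_2>0$. The only cosmetic differences are that the paper takes the worst-case lower bound on $Q_3$ directly (e.g.\ $Q_3>-1$ for $(01,10)$) rather than writing it as a case split, and for the $\gamma\le(\sqrt5+1)/4$ case the paper simply uses monotonicity of $h_\gamma(x)$ in $(x,\gamma)$ and checks the corner numerically rather than invoking the quadratic identity you mention.
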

\begin{proof}
Let $\textbf{u}=\{u_n\}_{n=1}^\infty$ and $\textbf{v}=\{v_n\}_{n=1}^\infty$ be elements in $\mathcal{A}^{\Z^+}$ and let
$$F(x)=-\frac{1}{2\pi} \left(S(x, \textbf{u})-S(x,\textbf{v})\right).$$
As before, given $x\in \R$, we write
$$x_n=\frac{x}{2^n}+\frac{u_1}{2^n}+\cdots+\frac{u_n}{2}, \,\, y_n=\frac{x}{2^n}+\frac{v_1}{2^n}+\cdots+\frac{v_n}{2}$$
and write $Q_n=\cos (2\pi x_n)-\cos (2\pi y_n)$.
Then $|Q_n|\le 2$ for all $n$ and
$$G(x):=\frac{F'(x)}{\pi}= \sum_{n=1}^\infty \left(\frac{\gamma}{2}\right)^{n-1} Q_n.$$
%Q_1+\frac{\gamma}{2} Q_2 +\frac{\gamma}{2}

In the following, we assume $u_1=0$ and $v_1=1$, so that $Q_1=2\cos (\pi x)$.

(i). Assume first $u_2=0$. Then  %Then $x_2=x/4$ and $y_2=(x+1)/4 +v_2/2$. Thus
$$Q_2= \cos \frac{\pi x}{2}+(-1)^{v_2}\sin \frac{\pi x}{2}\ge \cos \frac{\pi x}{2}-\sin\frac{\pi x}{2}>0$$
for any $x\in [0,1/2]$.
Thus
\begin{align*}
G(x)  \ge Q_1-\sum_{n=3}^\infty (\gamma/2)^{n-1} |Q_n|> 2\cos (\pi x) -1 \ge 0.
\end{align*}
whenever $0\le x\le 1/3$.
This proves that for any $x\in [0,1/3]$, $(00)\not\sim_x (10)$ and $(00)\not\sim_x (11)$.

To prove $(01)\not\sim_x (10)$ for $x\in [0,1/3]$, let $u_2=1$ and $v_2=0$.
Then
$$Q_2=\sin (\pi x/2)-\cos (\pi x/2),$$
and
\begin{multline*}
Q_3= \pm \sin (\pi x/4)\pm \cos (\pi(1+x)/4)
\ge -\sin(\pi/12)-\cos (\pi/4)>-1.
\end{multline*}
Thus
\begin{align*}
G(x)
\ge & Q_1 + \frac{\gamma}{2} Q_2+\frac{\gamma^2}{4} Q_3+\sum_{n=3}^\infty\frac{\gamma^n}{2^n} Q_{n+1}\\
\ge & g(x)-\frac{\gamma^2}{4} -\frac{\gamma^3}{4-2\gamma}
\ge g(x)-\frac{3}{4},
\end{align*}
where $$g(x)= 2\cos (\pi x) + \frac{1}{2}\left(\sin\frac{\pi x}{2}-\cos \frac{\pi x}{2}\right).$$
Since $$g''(x) = -2\pi^2 \cos (\pi x) -\pi^2/8 (\sin (\pi x/2) -\cos (\pi x/2)<0$$ holds for all $x\in [0,1/3]$, we have
%It suffices to prove that
%\begin{equation}
%g(x)> 3/4 \text{ for all }x\in [0,1/3].
%\end{equation}
%
%Indeed,
%Therefore
$$\min_{x\in [0,1/3]} g(x)=\min (g(0), g(1/3))=\min \left(\frac{3}{2}, \frac{5-\sqrt{3}}{4}\right)>\frac{3}{4}.$$
Therefore $G>0$. %Since $g(0)=3/2$ and
%$$g(1/3)= 1+\frac{1-\sqrt{3}}{4}>\frac{3}{4}$$
%the inequality follows.

(ii) Now let us assume $u_2=v_2=1$.
Then $$Q_2=-\sin \frac{\pi x}{2}-\cos \frac{\pi x}{2},$$
and $$Q_3= \pm \sin \frac{\pi x}{4} \pm \sin \frac{\pi (1+x)}{4}\ge -2\sin \frac{\pi (1+2x)}{8} \cos \frac{\pi}{8}.$$
Thus
$$G(x)\ge Q_1+\frac{\gamma}{2} Q_2+\frac{\gamma^2}{4} Q_3-2\sum_{n=4}^\infty \left(\frac{\gamma}{2}\right)^{n-1}\ge h_\gamma(x)-\frac{\gamma^3}{4-2\gamma},$$
where
$$h_\gamma(x)=2\cos (\pi x)-\frac{\gamma}{2} \left(\cos \frac{\pi x}{2} +\sin \frac{\pi x}{2}\right)-\frac{\gamma^2}{2} \cos \frac{\pi}{8}\sin \frac{\pi (1+2x)}{8}.$$

Assume first $x\in [0,1/4]$. Since $h_\gamma(x)$ is decreasing in both $x$ and $\gamma$, we have
$$h_\gamma (x)\ge h_1(1/4)= \sqrt{2}-\frac{\sqrt{2}}{2} \sin \frac{3\pi}{8} -\frac{1}{2} \cos\frac{\pi}{8}\sin \frac{3\pi}{16}>\frac{1}{2}>\frac{\gamma^3}{4-2\gamma},$$
hence $G(x)>0$.
This proves that $(01)\not\sim_x (11)$ for $x\in [0,1/4]$.

Assume now $\gamma\le (\sqrt{5}+1)/4=:\gamma_0$. Then by numerical calculation, we have
$$h_{\gamma_0} (1/3)=1-\frac{\gamma_0}{2}\frac{\sqrt{3}+1}{2} - \frac{\gamma_0^2}{2} \sin \frac{\pi}{8} \sin \frac{5\pi}{24}> \frac{\gamma_0^3}{4-2\gamma_0}.$$
Thus for each $0\le x\le 1/3$, we have
$$G(x)\ge h_{\gamma}(x)-\gamma^3/(4-2\gamma)\ge h_{\gamma_0}(1/3)-\gamma_0^3/(4-2\gamma_0)>0.$$
This proves that $(01)\not\sim_x (11)$ for all $x\in [0,1/3]$.
\end{proof}

\begin{proof}[Proof of Proposition~\ref{prop:b=2step1}]
By Lemma~\ref{lem:b=2.1} and Proposition~\ref{prop:compact} (3), there exist $\eps>0$ and $\delta>0$ such that $e(1,x;\eps,\delta)=1$ for $x\in [0,1/4]$.
%$E(1,x)=\{(0,0), (1,1)\}$.
By Lemma~\ref{lem:symmetric}, this also holds for $x\in [3/4, 1]$.
For $x\in [1/4, 1/2]$, $x(0)\in [0,1/4]$ while for $x\in [1/2, 3/4]$ we have $x(1)\in [3/4, 1]$. By Lemma~\ref{lem:goldenratio}, we obtain
$\sigma(1)\le (\sqrt{5}+1)/2$.

Assume that $\gamma \le (\sqrt{5}+1)/4$. Then Lemma~\ref{lem:b=2.1} and Proposition~\ref{prop:compact} (3), there exist $\eps>0$ and $\delta>0$ such that $e(1,x;\eps,\delta)=1$ for all $x\in [0,1/3]$ and by Lemma~\ref{lem:symmetric}, this also holds for $x\in [2/3,1]$. For $x\in (1/3, 2/3)$, both $x(0)$ and $x(1)$ belong to $[0, 1/3]\cup [2/3, 1]$. By Lemma~\ref{lem:sqrt2}, it follows that $\sigma(1)\le \sqrt{2}$.
\end{proof}

\subsection{Step 2: When $\gamma > 0.64$}
In this section we shall prove the following
\begin{prop} \label{prop:b=2step2}
Assume $\gamma \le \sqrt{2}/2$. Then $\sigma(2)\le 1.61.$
\end{prop}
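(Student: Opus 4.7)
The plan is to verify the hypotheses of Lemma~\ref{lem:w1.7} with $q=2$, which then yields $\sigma(2)\le t<1.61$ for the specified constant $t$. Thus I must exhibit a three-piece partition $\{K_0,K_1,K_2\}$ of $[0,1)$ and constants $\eps,\delta>0$ such that the number and structure of non-trivial $(\eps,\delta)$-tangent pairs in $\mathcal{A}^2\times\mathcal{A}^2$ at each $x$ matches conditions (i)--(iii) of that lemma. Using the symmetry in Lemma~\ref{lem:symmetric} (applicable because $\psi(x)=-2\pi\sin(2\pi x)$ is odd), it suffices to set up the partition and perform the case analysis on $x\in[0,1/2]$, then extend to $[1/2,1)$ by reflection.

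The core technical step is to classify, for each $x\in[0,1/2]$, which pairs $(\textbf{k},\textbf{l})\in\mathcal{A}^2\times\mathcal{A}^2$ with $\textbf{k}\neq\textbf{l}$ can belong to $E(2,x)$. The main tool is the recursive decomposition
\begin{equation*}
S(x,\textbf{k}\textbf{u})-S(x,\textbf{l}\textbf{v})
=\bigl[\psi(x(k_1))-\psi(x(l_1))\bigr]
+\gamma\bigl[\psi(x(\textbf{k}))-\psi(x(\textbf{l}))\bigr]
+\gamma^2\bigl[S(x(\textbf{k}),\textbf{u})-S(x(\textbf{l}),\textbf{v})\bigr],
\end{equation*}
together with its derivative version, which lets me import the $q=1$ tangent/non-tangent criteria of Lemma~\ref{lem:b=2.1} at the four quarter-points $x(00),x(01),x(10),x(11)\in[0,1)$, while picking up additional slack from the factor $\gamma^2\le 1/2$ in the tail.

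For each of the six distinct unordered pairs in $\mathcal{A}^2$, I would then write down the corresponding function $F(x)=-(2\pi)^{-1}(S(x,\textbf{k}\textbf{u})-S(x,\textbf{l}\textbf{v}))$, separate the first few terms explicitly, and bound the tail by a geometric series in $\gamma/2$, in the style of the proof of Lemma~\ref{lem:b=2.1}. The bound $\gamma\le \sqrt{2}/2$ should provide enough numerical room that only a small collection of pairs remains tangent, and only on small subintervals of $[0,1/2]$ clustering near finitely many critical points. I would set $K_0$ to be the union of intervals where only trivial tangency survives; $K_1$ to be neighborhoods of the isolated points where exactly one non-trivial pair persists and both children $x(\textbf{a}_x),x(\textbf{b}_x)$ lie in $K_0$; and $K_2$ to be the remaining thin set on which a third word may also be tangent, with its child $x(\textbf{c}_x)$ lying in $K_1$.

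The main obstacle is the bookkeeping: six non-diagonal pairs, each requiring an $F/F'$ estimate analogous to the proof of Lemma~\ref{lem:b=2.1}, and the resulting partition must be checked to have the nested structure demanded by Lemma~\ref{lem:w1.7}. The most delicate point is verifying condition~(iii) of that lemma, namely that the child $x(\textbf{c}_x)$ of the ``third'' word indeed lies in $K_1$ rather than recursing further into $K_2$, so that the hierarchy terminates at depth two. Once this case analysis is complete, Proposition~\ref{prop:compact}(3) upgrades the pointwise non-tangency statements to the uniform $(\eps,\delta)$-statements required by Lemma~\ref{lem:w1.7}, completing the proof.
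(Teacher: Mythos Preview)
Your proposal is correct and follows essentially the same approach as the paper: the paper also applies Lemma~\ref{lem:w1.7} with $q=2$, after classifying (in Lemma~\ref{lem:b=2.2} and its summary Lemma~\ref{lem:b=2.2*}) the non-trivial pairs in $E(2,x)$ via direct $F$/$F'$ estimates of the kind you describe, and then takes $K_0=[1/5,2/5]\cup[3/5,4/5]$, $K_1=[0,1/5)\cup(4/5,1)$, $K_2=(2/5,3/5)$ with $\textbf{a}_x=(01)$, $\textbf{b}_x=(10)$, $\textbf{c}_x=(11)$ on $K_2\cap[0,1/2]$. The structure you anticipate---one non-trivial pair on $K_1$ with both children in $K_0$, and on $K_2$ a third word whose child lands in $K_1$---is exactly what the paper verifies.
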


As an immediate corollary of this proposition and Corollary~\ref{cor:b=2.1}, we have
\begin{coro}\label{cor:b=2.2} If $\gamma > 0.64$ then either $\sigma(1)< 2\gamma$ or $\sigma (2)< (2\gamma)^2$.
\end{coro}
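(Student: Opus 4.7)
The plan is to split on the value of $\gamma$ relative to the threshold $\sqrt{2}/2$ used in Corollary~\ref{cor:b=2.1} and Proposition~\ref{prop:b=2step2}, and then simply invoke those two results. The corollary is essentially an immediate combination of them together with a trivial numerical verification.

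First, suppose $\gamma > \sqrt{2}/2$. Then Corollary~\ref{cor:b=2.1} applies directly and gives $\sigma(1) < 2\gamma$, which is the first alternative in the conclusion.

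Second, suppose $0.64 < \gamma \le \sqrt{2}/2$. Then Proposition~\ref{prop:b=2step2} applies and yields $\sigma(2) \le 1.61$. Since $(2\gamma)^2 = 4\gamma^2$, it suffices to check the numerical inequality $4\gamma^2 > 1.61$ in this range. Because $\gamma > 0.64$, we get $4\gamma^2 > 4 \cdot (0.64)^2 = 1.6384 > 1.61$, so $\sigma(2) \le 1.61 < (2\gamma)^2$, which is the second alternative.

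Thus the threshold $0.64$ in the hypothesis is chosen precisely so that $4 \cdot (0.64)^2$ exceeds the bound $1.61$ coming from Proposition~\ref{prop:b=2step2}, and no real obstacle arises beyond assembling the two cases; all of the substantive work has been packaged into the preceding two results.
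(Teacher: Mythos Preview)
Your proof is correct and follows exactly the intended approach: the paper states that this corollary is an immediate consequence of Proposition~\ref{prop:b=2step2} and Corollary~\ref{cor:b=2.1}, and your case split on $\gamma \lessgtr \sqrt{2}/2$ together with the numerical check $4\cdot 0.64^2=1.6384>1.61$ is precisely how those two results combine.
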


The proof of Proposition~\ref{prop:b=2step2} relies on the following estimates.
\begin{lemma} \label{lem:b=2.2}
Assume $\gamma\le \sqrt{2}/2$. Then
\begin{enumerate}
\item [(i)] For any $x\in [0,1/2]$, $(00)\not\sim_x (10)$;
\item [(ii)] For any $x\in [0,2/5]$, $(01)\not\sim_x (10)$ and $(00)\not\sim_x (11)$;
\item [(iii)] For any $x\in [0,1/2]$, $(00)\not\sim_x (11)$;
\item [(iv)] For any $x\in [0,2/5]$, $(01)\not\sim_x (11)$;
\item [(v)] For any $x\in [0,2/5]$, $0\not\sim_x 1$;
\item [(vi)] For $x\in [1/5, 1/2]$, $(10)\not\sim_x (11)$.
\end{enumerate}
\end{lemma}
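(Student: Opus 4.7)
The plan is to prove parts (i)--(iv) by direct analysis in the spirit of Lemma~\ref{lem:b=2.1}, deduce (v) as a consequence, and reduce (vi) to (v) via self-similarity and symmetry. For each pair $(\textbf{k},\textbf{l})$ of $2$-digit prefixes (or $1$-digit for (v)) and each $x_\ast$ in the specified interval, we must rule out extensions $\textbf{u},\textbf{v}\in\mathcal{A}^{\Z^+}$ for which $S(x,\textbf{ku})-S(x,\textbf{lv})$ has a multiple zero at $x_\ast$. Setting
\[ F(x)=-\tfrac{1}{2\pi}\bigl(S(x,\textbf{ku})-S(x,\textbf{lv})\bigr)=\sum_{n\ge 1}\gamma^{n-1}P_n(x),\quad G(x)=F'(x)/\pi=\sum_{n\ge 1}(\gamma/2)^{n-1}Q_n(x), \]
with $P_n=\sin(2\pi x_n)-\sin(2\pi y_n)$ and $Q_n=\cos(2\pi x_n)-\cos(2\pi y_n)$, the goal is to verify $F(x_\ast)^2+G(x_\ast)^2>0$ for every choice of $\textbf{u},\textbf{v}$. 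The strategy is to keep the first two or three terms of $F$ or $G$ explicitly, majorize the tail using $|P_n|,|Q_n|\le 2$, and invoke $\gamma\le\sqrt{2}/2$ to close the bound.

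For parts (i) and (iii) on $[0,1/2]$ we have $Q_1=2\cos(\pi x)\ge 0$. In (i), with $\textbf{k}=(00),\textbf{l}=(10)$, one computes $Q_2=\cos(\pi x/2)+\sin(\pi x/2)\ge\sqrt{2}/2$, so $G(x)\ge 2\cos(\pi x)+(\gamma/2)(\cos(\pi x/2)+\sin(\pi x/2))-\gamma^2/(2-\gamma)$ is positive throughout. In (iii), with $\textbf{k}=(00),\textbf{l}=(11)$, both $Q_1$ and $Q_2$ vanish simultaneously at $x=1/2$, so we switch to $F$ near $x=1/2$, where $P_1=2\sin(\pi x)$ and $P_2=\sin(\pi x/2)+\cos(\pi x/2)$ are both large and positive, and apply the sharper bound $|P_3|\le 2\sin(\pi/8)$ together with the tail estimate to conclude $F\neq 0$. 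For parts (ii) and (iv) on $[0,2/5]$, the sign of $P_2$ or $Q_2$ is unfavorable, so we retain three terms explicitly and use the sharper bounds on $|P_3|,|Q_3|$ coming from the decomposition $P_3=\pm\sin(\pi x/4)\mp\sin(\pi(x+3)/4)$, again using $\gamma\le\sqrt{2}/2$ to cover the endpoint $x=2/5$.

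Part (v) is an immediate corollary: any $\textbf{u}$ with $u_1=0$ begins with $(00)$ or $(01)$, and similarly for $\textbf{v}$ with $v_1=1$, so all four pairings are handled by (i)--(iv). For part (vi), since $(10)$ and $(11)$ share their first digit, the self-similarity identity $S(x,(1w)\textbf{u}')=\psi\bigl(\tfrac{x+1}{2}\bigr)+\gamma S\bigl(\tfrac{x+1}{2},(w)\textbf{u}'\bigr)$ yields $F_{(10),(11)}(x)=\gamma F_{(0),(1)}((x+1)/2)$, so $(10)\sim_{x_\ast}(11)$ iff $0\sim_{(x_\ast+1)/2}1$. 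For $x_\ast\in[1/5,1/2]$ we get $(x_\ast+1)/2\in[3/5,3/4]$, and by Lemma~\ref{lem:symmetric} (applied to the odd function $\psi(x)=-2\pi\sin(2\pi x)$) we have $0\sim_y 1$ iff $0\sim_{1-y}1$; since $1-y\in[1/4,2/5]\subset[0,2/5]$, part (v) finishes (vi).

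The main obstacle is part (iii): neither $G$ nor $F$ is uniformly positive on $[0,1/2]$, because the relevant trigonometric quantities vanish at opposite endpoints. One must either patch two overlapping sub-intervals (use $G>0$ on $[0,x_0]$ and $F\neq 0$ on $[x_1,1/2]$ with $x_1<x_0$), or exploit the simultaneous vanishing $F(x_\ast)=G(x_\ast)=0$ as coupled linear constraints on the admissible tails and derive a direct contradiction via the triangle inequality (this is cleanly done at $x=1/2$, where $Q_1=Q_2=0$ forces $|Q_3|=O(\gamma)$, incompatible with $P_1=2,P_2=\sqrt{2}$ and $|P_3|\le 2\sin(\pi/8)$). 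A similar tightness issue arises at $x=2/5$ in parts (ii), (iv), where a case-split on the third digit $u_3$ or $v_3$ is likely required to secure definitive positivity.
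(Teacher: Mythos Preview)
Your overall strategy matches the paper's proof, and parts (i), (iii), (v), (vi) are handled exactly as the paper does: $G>0$ on $[0,1/2]$ via $Q_1,Q_2$ and concavity for (i); patching $G>0$ on $[0,2/5]$ with $F>0$ on $[2/5,1/2]$ for (iii); (v) as a corollary of (i),(ii),(iv); and (vi) reduced to (v) via $x\mapsto x(1)=(x+1)/2$ and the odd symmetry.

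There is, however, a genuine gap in your plan for part (iv). Retaining three terms and case-splitting only on the third digit does \emph{not} close the estimate when $u_3=v_3=1$. In that case $P_3,Q_3$ are both negative but small, and with $\gamma=\sqrt2/2$, $x$ near $2/5$, the three-term minorants for both $F$ and $G$ are negative (for instance $F/\gamma\ge p(x)+\gamma P_3-2\gamma^2/(1-\gamma)$ gives roughly $2.81-0.38-3.41<0$). The paper handles this case by a further split on $(u_4,v_4)$: if $u_4=1,v_4=0$ one uses $Q_4>2\gamma/(2-\gamma)$ to rescue $G>0$; if $u_4=0$ one uses $P_4\ge 0$ to rescue $F>0$; and in the worst subcase $u_4=v_4=1$ one must bring in $P_5$ as well. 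Thus (iv) requires going to depth four (and in one branch depth five), and alternating between the $F$- and $G$-expansions according to which yields a favorable sign. Your proposal does not anticipate this, and the stated three-term approach would fail.

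A minor remark on (ii): you suggest keeping three terms, but in fact two suffice. The paper bounds $G(x)\ge 2\cos(\pi x)-\tfrac{\gamma}{2}\bigl(\cos\tfrac{\pi x}{2}-\sin\tfrac{\pi x}{2}\bigr)-\tfrac{\gamma^2}{2-\gamma}$ and checks positivity on $[0,2/5]$ by concavity of the minorant; no case-split on the third digit is needed here.
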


\begin{proof} Let $\textbf{u},\textbf{v}\in\mathcal{A}^{\Z^+}$ and let $F(x)= -(2\pi)^{-1}(S(x, \textbf{u})-S(x,\textbf{v}))$, $G(x)=F'(x)/\pi$. For given $x$, we shall use the notations $x_n$, $y_n$, $Q_n$ as in Lemma~\ref{lem:b=2.1} and let $P_n=\sin (2\pi x_n)-\sin (2\pi y_n)$.

(i) Assume $(u_1,u_2)=(0,0)$ and $(v_1,v_2)=(1,0)$. Then $Q_1=2\cos (\pi x)$ and
$Q_2=\cos \frac{\pi x}{2}+\sin \frac{\pi x}{2}$. Thus
$$G(x)=Q_1+\frac{\gamma}{2} Q_2 +\cdots \ge f(x)-\frac{\gamma^2}{2-\gamma},$$
where $$f(x)=2 \cos (\pi x)+\frac{\gamma}{2}\left(\sin\frac{\pi x}{2}+\cos \frac{\pi x}{2}\right).$$
On the interval $x\in [0,1/2]$, we have
$$f''(x)= -2\pi^2 \sin (\pi x) -\frac{\pi^2\gamma}{8} \left(\sin\frac{\pi x}{2}+\cos \frac{\pi x}{2}\right)<0.$$
Thus $$\min_{x\in [0,1/2]} f(x)=\min (f(0), f(1/2))=\min \left(2+\frac{\gamma}{2},\frac{\gamma}{\sqrt{2}}\right)=\frac{\gamma}{\sqrt{2}}
> \frac{\gamma^2}{2-\gamma},$$
hence $G(x)>0$.
Therefore, for any $x\in [0,1/2]$, $(00)\not\sim_x (11)$.

(ii) Assume either $(u_1,u_2)=(0,1)$ and $(v_1,v_2)=(1,0)$; or $(u_1,u_2)=(0,0)$ and $(v_1,v_2)=(1,1)$. %and $u_2=v_1=1$.
Then $$Q_2=\pm\left(\cos \frac{\pi x}{2}-\sin \frac{\pi x}{2}\right)\ge \sin \frac{\pi x}{2}-\cos \frac{\pi x}{2}.$$
Thus
\begin{align*}
G(x) & \ge 2\cos (\pi x) - \frac{\gamma}{2} \left(\cos\frac{\pi x}{2}-\sin \frac{\pi x}{2}\right)-\frac{\gamma^2}{2-\gamma}\\
& \ge 2 \cos (\pi x)-\frac{1}{2}\sin \frac{\pi (1-2x)}{4} -\frac{4+\sqrt{2}}{14}:=g(x).
\end{align*}
For $x\in [0,2/5]$, we have
$$g''(x)=-2\pi^2 \cos (\pi x)+\frac{\pi^2}{8} \sin \frac{\pi (1-2x)}{4}<0.$$
Thus $g(x)\ge \min (g(0), g(2/5))$. Clearly, $g(0)>0$ and $g(2/5)>0$. It follows that $G(x)>0$ for all $x\in [0,2/5]$.
This proves that $(01)\not\sim_x (10)$ and $(00)\not\sim_x (11)$ for $x\in [0,2/5]$.

(iii) Assume $(u_1, u_2)=(0,0)$ and $(v_1, v_2)=(1,1)$. By (ii), it suffices to show that
$F(x)>0$ for any $x\in [2/5,1/2]$.
Note that $P_1=2\sin (\pi x)$,
$$P_2=\sin \frac{\pi x}{2}+\cos \frac{\pi x}{2}=\sqrt{2}\sin \left(\frac{\pi x}{2}+\frac{\pi}{4}\right),$$
and
$$P_3\ge -\sin \frac{\pi x}{4}-\sin \frac{\pi (1-x)}{4}=-2\sin \frac{\pi}{8}\cos \frac{\pi (1-2x)}{8}\ge -2\sin \frac{\pi}{8},$$
so
\begin{align*}
\frac{F(x)}{\gamma}=\sum_{n=1}^\infty \gamma^{n-2} P_n\ge \frac{2\sin (\pi x)}{\gamma} + \sqrt{2} \sin \left(\frac{\pi x}{2}+\frac{\pi}{4}\right)- 2\gamma \sin \frac{\pi}{8}-\frac{2\gamma^2}{1-\gamma}
\end{align*}
The right hand side is increasing in $x\in [2/5, 1/2]$ and decreasing in $\gamma$. Thus for $x\in [2/5, 1/2]$,
$$\frac{F(x)}{\gamma}\ge 2\sqrt{2} \sin \frac{2\pi}{5}+ \sqrt{2} \sin \frac{9\pi}{20}- \sqrt{2} \sin \frac{\pi}{8}-2-\sqrt{2}>0.$$

(iv) Assume $(u_1, u_2)=(0,1)$, $(v_1, v_2)=(1,1)$. By Lemma~\ref{lem:b=2.1}, we only need to show that $(01)\not\sim_x (11)$ for $x\in [1/3, 2/5]$.
%u_2=v_2=v_1=1$ and $v_3=0$.
Then
$$P_2=\cos \frac{\pi x}{2}-\sin \frac{\pi x}{2} \text{ and } Q_2=-\sin \frac{\pi x}{2}-\cos \frac{\pi x}{2}.$$
Put
\begin{equation}\label{eqn:0111p}
p(x)= \frac{P_1}{\gamma} + P_2=\frac{2\sin (\pi x)}{\gamma} + \left(\cos \frac{\pi x}{2}-\sin \frac{\pi x}{2}\right),
\end{equation}
\begin{equation}\label{eqn:0111q}
q(x)=Q_1+\frac{\gamma}{2} Q_2=2\cos (\pi x) - \frac{\gamma}{2}  \left(\sin\frac{\pi x}{2}+\cos \frac{\pi x}{2}\right).
\end{equation}
Since $q(x)$ is decreasing in $[0,1/2]$, we have
\begin{equation}\label{eqn:0111gvalue}
q(x)\ge q(2/5)>\frac{\sqrt{5}-1}{2} -\frac{\gamma}{\sqrt{2}}\ge \frac{\sqrt{5}-2}{2}.
\end{equation}
for each $0\le x\le 2/5$.
Since $p'=\pi q/\gamma>0$, for each $x\in [1/3, 2/5]$, we have
\begin{equation}\label{eqn:0111pvalue}
p(x)\ge p(1/3) \ge \frac{\sqrt{3}}{\gamma}+\frac{\sqrt{3}-1}{2}\ge \sqrt{6}+\frac{\sqrt{3}-1}{2}>2.81.
\end{equation}

{\bf Case 1.} Assume also $v_3=0$.
Then
$$Q_3=\pm\sin\frac{\pi x}{4}+\cos \frac{\pi (1-x)}{4}\ge \cos \frac{\pi (1-x)}{4}-\sin \frac{\pi x}{4}.$$
Since the right hand side is decreasing in $[0,1/2]$, we obtain
$$Q_3\ge \cos \frac{\pi}{8}-\sin \frac{\pi}{8}>\frac{1}{2},$$ and
$$G(x)\ge Q_1+\frac{\gamma}{2} Q_2+\frac{\gamma^2}{4} Q_3-\frac{\gamma^3}{4-2\gamma}> q(x)+\frac{\gamma^2}{8}-\frac{\gamma^3}{4-2\gamma}.$$
Thus
$$\frac{G(x)}{\gamma^2} \ge \frac{\sqrt{5}-2}{2\gamma^2}+\frac{1}{8}-\frac{\gamma }{4-2\gamma}\ge \sqrt{5}-2 +\frac{1}{8}-\frac{1}{4\sqrt{2}-2}>0.$$

{\bf Case 2.} Assume $u_3=v_3=1$.
Then $$P_3=\cos \frac{\pi (1+x)}{4}-\cos \frac{\pi x}{4}= -2\sin \frac{\pi (1+2x)}{8}\sin \frac{\pi }{8}>-\sqrt{2} \sin \frac{\pi}{8}$$

and
\begin{multline*}
Q_3=\sin \frac{\pi x}{4}-\sin \frac{\pi (1+x)}{4}=-2\sin \frac{\pi}{8} \cos \frac{\pi (1+2x)}{8}>
-\sin \frac{\pi}{4}=-\frac{\sqrt{2}}{2}.
\end{multline*}

{\bf Subcase 2.1.}
$u_4=1$ and $v_4=0$. Then
$$Q_4=\cos \frac{\pi (2-x)}{8}+\cos \frac{\pi(1-x)}{8}>\sqrt{2}>\frac{2\gamma}{2-\gamma}.$$
%Note that $Q_3$ is decreasing in $x\in [1/3, 1/2]$, thus
%$$Q_3\ge \sin \frac{\pi}{10}-\cos \frac{3\pi}{20}>-0.8.$$
Therefore, for $x\in [0, 2/5]$,
\begin{align*}
G(x)& =Q_1+\frac{\gamma}{2} Q_2+ \frac{\gamma^2}{4}Q_3+\frac{\gamma^3}{8} Q_4 -\frac{\gamma^4}{8-4\gamma}\\
& \ge q(x) +\frac{\gamma^2}{4} Q_3
+\frac{\gamma^3}{8}\left(Q_4-\frac{2\gamma}{2-\gamma}\right)\\
& > q(x)-\frac{\sqrt{2}}{16}>0,
\end{align*}
where the last inequality follows from (\ref{eqn:0111gvalue}).

{\bf Subcase 2.2.} $u_4=0$. Then
$$P_4\ge \sin \frac{\pi (2-x)}{8}-\sin \frac{\pi (1-x)}{8}>0.$$
Thus
\begin{align*}
\frac{F(x)}{\gamma}& \ge \frac{P_1}{\gamma}+ P_2+\gamma P_3+\gamma^2 P_4-\frac{2\gamma^3}{1-\gamma} && > p(x)+ \gamma P_3-\frac{2\gamma^3}{1-\gamma}\\
& \ge p(x)- \sqrt{2}\gamma\sin \frac{\pi}{8}-\frac{2\gamma^3}{1-\gamma}
&& \ge p(x)-\sin \frac{\pi}{8} -\sqrt{2}-1>0.01,
%-\frac{\sqrt{5}-1}{4} -\sqrt{2}-1,
\end{align*}
where the last inequality follows from (\ref{eqn:0111pvalue}).
%Direct calculation shows that $p(1/3)>0$ and $p(1/2)>0$. Since $p''(x)<0$ holds on $[1/3, 1/2]$ it follows that $F(x)>\gamma p(x) > 0$ for all $x\in [1/3, %1/2]$.

{\bf Subcase 2.3.}
$u_4=v_4=1$. Then
$$P_4=\sin \frac{\pi (1-x)}{8}-\sin \frac{\pi (2-x)}{8}=-2\sin \frac{\pi}{16} \cos \frac{\pi (3-2x)}{16}>-2\sin \frac{\pi}{16};$$
and
$$P_5\ge -\sin \frac{\pi (2-x)}{16}-\sin \frac{\pi (1-x)}{16}=-2\sin \frac{\pi (3-2x)}{32} \cos \frac{\pi}{32}>-2\sin \frac{3\pi}{32}.$$
Therefore,
\begin{align*}
\frac{F(x)}{\gamma}& \ge p(x)+\gamma P_3+\gamma^2 P_4+\gamma^3 P_5 -\frac{2\gamma^4}{1-\gamma}\\
& \ge p(x) -\gamma\sqrt{2}\sin \frac{\pi}{8}-2\gamma^2 \sin \frac{\pi}{16}-
 2\gamma^3 \sin \frac{3\pi }{32} -\frac{2\gamma^4}{1-\gamma}\\
& \ge p(x) -\sin \frac{\pi}{8}-\sin \frac{\pi}{16}- \frac{1}{\sqrt{2}}\sin \frac{3\pi}{32}-1-\frac{\sqrt{2}}{2}
>0,
\end{align*}
where the last inequality follows from (\ref{eqn:0111pvalue}).

{\bf Case 3.} Assume $u_3=0$ and $v_3=1$. %$(u_1 u_2 u_3)=(010)$ and $(v_1v_2v_3)=(111)$.
Then for $x\in [1/3, 2/5]$,
$$P_3=\cos \frac{\pi x}{4}+\cos \frac{\pi (1+x)}{4}=2\cos \frac{\pi}{8}\cos \frac{\pi (1+2x)}{8}>\sqrt{2} \cos \frac{\pi}{8}.$$
%is decreasing in $x\in [1/3, 1/2]$. Thus for $1/3\le x\le 2/5$,
%$$P_3\ge \cos \frac{\pi}{10} +\sin \frac{3\pi}{20}>\frac{7}{5}.$$
Thus
\begin{multline*}
\frac{1}{\gamma^2} F(x)  \ge \frac{1}{\gamma^2} P_1+ \frac{P_2}{\gamma}+P_3-\frac{2\gamma}{1-\gamma}
 > \frac{p(x)}{\gamma} + \sqrt{2} \cos \frac{\pi}{8}-\frac{2\gamma}{1-\gamma}\\
%& \ge p(x) +\sqrt{2} \cos \frac{\pi}{8}\cos \frac{\pi (1+2x)}{8}-2-\sqrt{2}\\
 \ge \sqrt{2}\left(p(x) +\cos \frac{\pi}{8}-2-\sqrt{2}\right)>0,
\end{multline*}
where the last inequality follows from (\ref{eqn:0111pvalue}).
%Since $p''(x)<0$ holds on $[1/3, 1/2]$, and $p(1/3)>0$, $p(1/2)>0$, we obtain $F(x)\ge \gamma p(x)>0$ for $x\in [1/3, 1/2]$.
%For $x\in [0,1/3]$ we have $F'(x)>0$ by Lemma~\ref{lem:b=2.1}. This proves that $(010)\not\sim (111)$ for all $x\in [0,1/2]$.

(v) It follows from (i) (ii) and (iv).

(vi) It suffices to show that $0\not\sim_{x(1)} 1$ for $x\in [1/5, 1/2]$. But $x(1)\in [3/5, 1)$, so the statement follows from (v) and Lemma~\ref{lem:symmetric}.
\end{proof}

Summarizing the estimates given by Lemma~\ref{lem:b=2.1} and Lemma~\ref{lem:b=2.2}, we have
\begin{lemma}\label{lem:b=2.2*} Assume $\gamma\le \sqrt{2}/2$. Then
\begin{enumerate}
\item[(1)]  For $x\in [1/5,2/5]$, $e(2,x)=1$;
\item[(2)] For $x\in [0,1/5)$, the only possible non-trivial pairs in $E(2,x)$ are $(10,11)$ and $(11,10)$;
\item[(3)] For $x\in (2/5, 1/2]$, the only possible non-trivial pairs in $E(2,x)$ are $(01,10)$, $(01,11)$, $(10,01)$ and $(11,01)$.
\end{enumerate}
\end{lemma}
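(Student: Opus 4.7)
The plan is a bookkeeping argument: for each of the six unordered non-trivial pairs in $\mathcal{A}^2\times\mathcal{A}^2$, identify the sub-interval of $[0,1/2]$ on which Lemma~\ref{lem:b=2.2} forces the pair to lie outside $E(2,x)$, then intersect with the three sub-intervals $[0,1/5)$, $[1/5,2/5]$, $(2/5,1/2]$. The four pairs $(00,10)$, $(00,11)$, $(01,10)$, $(01,11)$ are handled directly: Lemma~\ref{lem:b=2.2}(i) and (iii) exclude the first two on all of $[0,1/2]$, while Lemma~\ref{lem:b=2.2}(ii) and (iv) exclude the latter two on $[0,2/5]$. What requires a small reduction are the two pairs $(00,01)$ and $(10,11)$ that share a first digit.

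For those, I will use the following shift reduction. If $\textbf{k},\textbf{l}\in\mathcal{A}^2$ share their first digit $i_1$, so that $\textbf{k}=(i_1,k_2)$ and $\textbf{l}=(i_1,l_2)$, then for any extensions $\textbf{u},\textbf{v}\in\mathcal{A}^{\Z^+}$ the defining series for $S$ gives the recursion
\[
S(x,\textbf{k}\textbf{u}) - S(x,\textbf{l}\textbf{v}) \;=\; \gamma\bigl[S(z,(k_2,u_1,u_2,\ldots)) - S(z,(l_2,v_1,v_2,\ldots))\bigr],\qquad z=\tfrac{x+i_1}{2},
\]
and differentiating in $x$ gives the analogous identity (with an extra factor of $1/2$ on the right-hand side). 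Consequently, a multiple zero of the left-hand side at $x_0$ is the same as a multiple zero of the bracket at $z_0=(x_0+i_1)/2$, so by Proposition~\ref{prop:compact}(1),
\[
(\textbf{k},\textbf{l})\in E(2,x_0) \iff (k_2,l_2)\in E(1,z_0).
\]
Applied with $(\textbf{k},\textbf{l})=(00,01)$ and $(\textbf{k},\textbf{l})=(10,11)$, this reduces the two remaining pairs to the questions of whether $0\sim_{x_0/2}1$ and whether $0\sim_{(x_0+1)/2}1$. Lemma~\ref{lem:b=2.2}(v) gives $0\not\sim_y 1$ for $y\in[0,2/5]$, and together with the symmetry of Lemma~\ref{lem:symmetric} (which for $b=2$ says $0\sim_y 1\iff 0\sim_{1-y}1$, extending the exclusion to $y\in[3/5,1]$) we conclude that $(00,01)\notin E(2,x_0)$ for all $x_0\in[0,1/2]$ (since $x_0/2\in[0,1/4]$), and $(10,11)\notin E(2,x_0)$ for $x_0\in[1/5,1/2]$ (since then $(x_0+1)/2\in[3/5,3/4]$); the latter also matches Lemma~\ref{lem:b=2.2}(vi).

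Intersecting all exclusions with the three target sub-intervals reads off the statement: on $[1/5,2/5]$ every non-trivial pair is excluded, so $e(2,x)=1$; on $[0,1/5)$ every pair except $(10,11)$ is excluded, leaving only $(10,11)$ and its transpose; and on $(2/5,1/2]$ the pairs $(00,01)$, $(00,10)$, $(00,11)$, $(10,11)$ are excluded, leaving only $(01,10)$, $(01,11)$ and their transposes. Since all the analytic work has been done in Lemma~\ref{lem:b=2.2}, the only conceptual step is the shift reduction above, and there is no real obstacle beyond careful case-tracking.
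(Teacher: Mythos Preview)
Your proof is correct and follows essentially the same approach as the paper: both reduce the pairs with a common first digit via the shift identity $(i_1k_2,i_1l_2)\in E(2,x)\iff(k_2,l_2)\in E(1,x(i_1))$ and then invoke Lemma~\ref{lem:b=2.2}(v) together with the symmetry of Lemma~\ref{lem:symmetric}, while handling the remaining four pairs directly by parts (i)--(iv). If anything, your write-up is cleaner: you make the shift reduction explicit, whereas the paper uses it implicitly and has minor citation slips (e.g.\ referring to a nonexistent part (iii) of Lemma~\ref{lem:b=2.1} where Lemma~\ref{lem:b=2.2}(v) is meant).
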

\begin{proof} By Lemma~\ref{lem:b=2.1}, $(00)\not\sim_x (01)$ for all $x\in [0,1/2]$, since $0\not\sim_{x(0)} 1$. By Lemma~\ref{lem:b=2.2} (i) and (iii), we have $(00)\not\sim_x (10)$ and $(00)\not\sim_x (11)$ for all $x\in [0,1/2]$.  For $x\in [2/5, 1/2]$, by Lemma~\ref{lem:b=2.2} (vi), we also have $(10)\not\sim_x (11)$. Thus (3) holds.
For $x\in [0,2/5]$, by Lemma~\ref{lem:b=2.2} (ii) and (iv),  $(01)\not\sim_x (10)$, $(01)\not\sim_x (11)$, so the only possible non-trivial pairs in $E(2, x)$ are $(10,11)$ and $(11,10)$. So (2) holds. If $x\in [1/5, 2/5]$, then $(10)\not\sim_x (11)$ by Lemma~\ref{lem:b=2.2} (vi).
Therefore (1) holds.
\end{proof}
\begin{proof}[Proof of Proposition~\ref{prop:b=2step2}]
We shall apply Lemma~\ref{lem:w1.7}. Let $K_0=[1/5,2/5] \cup [3/5, 4/5]$,
$K_1=[0,1/5)\cup (4/5,1)$ and $K_2=(2/5, 3/5)$. By Lemma~\ref{lem:b=2.2*}, Proposition~\ref{prop:compact} (3) and Lemma~\ref{lem:symmetric}, the conditions in Lemma~\ref{lem:w1.7} are satisfied with $q=2$ and with suitable choice of $(\eps, \delta)$. Indeed,
\begin{itemize}
\item For $x\in [1/5,2/5]$, by Lemma~\ref{lem:b=2.2*}, $e(2,x)=1$, so $e(2,x;\eps,\delta)=1$ for suitable choice of $\eps, \delta$. By Lemma~\ref{lem:symmetric}, this also holds for $x\in [3/5, 4/5]$.
\item For $x\in [0,1/5)$, take $\textbf{a}_x=(10)$ and $\textbf{b}_x=(11)$. Then $x(10), x(11)\in K_0$. By Lemma~\ref{lem:b=2.2*}, $(10, 11)$ and $(11,10)$ are the only elements in $E(2,x)$. So by Proposition~\ref{prop:compact} (3) and Lemma~\ref{lem:symmetric}, the condition (ii) in Lemma~\ref{lem:w1.7} is satisfied.
\item For $x\in (2/5, 1/2]$, let $\textbf{a}_x=(01)$, $\textbf{b}_x=(10)$ and $\textbf{c}_x=(11)$. Then $x(01), x(10)\in K_0$ and $x(11)\in K_1$; and by Lemma~\ref{lem:b=2.2*}, $(01,10)$, $(01,11)$, $(10,01)$ and $(11,01)$ are the only non-trivial pairs in $E(2, x)$. So by Proposition~\ref{prop:compact} (3) and Lemma~\ref{lem:symmetric}, the condition (iii) in Lemma~\ref{lem:w1.7} is satisfied.
\end{itemize}
Thus $\sigma (2)< 1.61.$
\end{proof}

\subsection{Step 3. When $\gamma^2 >\sqrt{2}/4 $}
\begin{prop}\label{prop:b=2step3} Assume $\gamma \le 0.64$. Then $\sigma(2)\le \sqrt{2}$.
\end{prop}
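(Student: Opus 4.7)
The plan is to apply Lemma~\ref{lem:sqrt2} with $q=2$ and $K=[1/5,2/5]\cup[3/5,4/5]$. Since $0.64<\sqrt{2}/2$, Lemma~\ref{lem:b=2.2*} applies: it gives $e(2,x)=1$ on $K$ (so the first half of condition~(i) holds) and restricts the candidate non-trivial tangent pairs on $K^c$. On $[0,1/5)$ only $(10,11)$ survives, and by Lemma~\ref{lem:symmetric} on $(4/5,1)$ only $(00,01)$ survives; on $(2/5,1/2]$ the possibilities are $(01,10)$ and $(01,11)$, and on $[1/2,3/5)$ by symmetry they are $(01,10)$ and $(00,10)$. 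On the two outer intervals one already has $e(2,x)\le 2$, and the descendants $x(10)=(x+1)/4\in[1/4,3/10)\subset K$ and $x(11)=(x+3)/4\in[3/4,4/5)\subset K$ (and symmetrically) automatically satisfy condition~(ii) of Lemma~\ref{lem:sqrt2} there.

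The heart of the proof, and the main obstacle, is to extend Lemma~\ref{lem:b=2.2}(iv) under the tighter hypothesis $\gamma\le 0.64$: I must show $(01)\not\sim_x(11)$ for every $x\in[0,1/2]$, not just $x\in[0,2/5]$. Lemma~\ref{lem:symmetric} will then automatically give $(00)\not\sim_x(10)$ for $x\in[1/2,1]$. I plan to rerun the three-way split on $(u_3,v_3)$ (Cases~1--3) and the further split on $u_4$ (Subcases~2.1--2.3) from the proof of Lemma~\ref{lem:b=2.2}(iv). Its driving quantities
\begin{equation*}
p(x)=\frac{P_1}{\gamma}+P_2=\frac{2\sin(\pi x)}{\gamma}+\cos\frac{\pi x}{2}-\sin\frac{\pi x}{2},\qquad q(x)=Q_1+\frac{\gamma}{2}Q_2
\end{equation*}
are monotone on $[0,1/2]$ and take strictly positive values at $x=1/2$, and the additional slack between $\sqrt{2}/2$ and $0.64$ in the geometric tail bounds $\gamma^n/(1-\gamma)$ and $\gamma^n/(2-\gamma)$ should be enough to close each subcase uniformly on the enlarged interval. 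The numerical constant $0.64$ presumably records the threshold at which the tightest subcase (most plausibly Subcase~2.1 of Lemma~\ref{lem:b=2.2}(iv) evaluated near $x=1/2$) is marginally feasible.

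Once this extension is in hand, the only non-trivial pair remaining on $(2/5,1/2]$ is $(01,10)$, and on $[1/2,3/5)$ it is again $(01,10)$ by symmetry; Proposition~\ref{prop:compact}(3) then supplies uniform $\eps,\delta>0$ with $e(2,x;\eps,\delta)\le 2$ on $K^c$. For condition~(ii) on this middle piece, $x(01)=(x+2)/4\in(3/5,5/8]\subset K$ and $x(10)=(x+1)/4\in(7/20,3/8]\subset K$, so both descendants of the surviving pair land in $K$. All hypotheses of Lemma~\ref{lem:sqrt2} are now verified and the lemma delivers $\sigma(2)\le\sqrt{2}$, as required.
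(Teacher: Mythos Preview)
Your overall strategy is exactly the paper's: apply Lemma~\ref{lem:sqrt2} with $q=2$ and $K=[1/5,2/5]\cup[3/5,4/5]$, and reduce everything to the single new fact that $(01)\not\sim_x(11)$ for all $x\in[0,1/2]$ when $\gamma\le 0.64$ (this is the paper's Lemma~\ref{lem:b=2.3}). Your verification of the descendant conditions and the use of Proposition~\ref{prop:compact}(3) and Lemma~\ref{lem:symmetric} are all correct.

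The gap is in your proposed proof of the extension itself. You assert that both $p(x)$ and $q(x)$ ``take strictly positive values at $x=1/2$'', but
\[
q(1/2)=2\cos\frac{\pi}{2}-\frac{\gamma}{2}\Bigl(\sin\frac{\pi}{4}+\cos\frac{\pi}{4}\Bigr)=-\frac{\gamma}{\sqrt{2}}<0.
\]
Consequently the derivative-based branches of Lemma~\ref{lem:b=2.2}(iv) (Case~1 with $v_3=0$, and Subcase~2.1 with $u_3=v_3=1$, $u_4=1$, $v_4=0$), which rely on $G(x)\ge q(x)+\text{(positive corrections)}-\text{(tail)}>0$, cannot be ``rerun'' on $[2/5,1/2]$; no amount of shrinking $\gamma$ from $\sqrt2/2$ to $0.64$ rescues them, because the leading term $Q_1=2\cos(\pi x)$ itself vanishes at $x=1/2$. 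The paper therefore abandons $G$ entirely on $[2/5,1/2]$ and argues with $F$ alone, exploiting that $P_1=2\sin(\pi x)\ge 2\sin(2\pi/5)>1.902$ there. The case split is reorganised into (Case~1) $u_3=0$ or $v_3=1$, where $P_3\ge -\sqrt{2}\sin(\pi/8)$ suffices, and (Case~2) $u_3=1$, $v_3=0$, where $P_3>-1.708$ and one must push to $P_4$ (and in the worst subcase $P_5$). The threshold $0.64$ is indeed set by the tightest of these $F$-estimates (Subcase~2.1 of Lemma~\ref{lem:b=2.3}, margin $>0.011$), not by any $q$-based inequality.
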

An immediate corollary of this proposition and Corollary~\ref{cor:b=2.2} is the following:
\begin{coro}\label{cor:b=2.3} If $\gamma^2 >\sqrt{2}/4$, then either $\sigma(1)<2\gamma$ or $\sigma(2)<(2\gamma)^2$.
\end{coro}
The proof of Proposition~\ref{prop:b=2step3} relies on the following estimates.
\begin{lemma}\label{lem:b=2.3} If $\gamma\le 0.64$, then for any $x\in [0,1/2]$, $(01)\not\sim_x (11)$.
\end{lemma}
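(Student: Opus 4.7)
Since $0.64 < \sqrt{2}/2$, Lemma~\ref{lem:b=2.2}(iv) already gives $(01)\not\sim_x (11)$ for $x \in [0,2/5]$, so it remains only to handle $x_* \in (2/5, 1/2]$. The plan is to argue by contradiction: suppose $\textbf{u} = (01u_3u_4\cdots)$ and $\textbf{v} = (11v_3v_4\cdots)$ produce a multiple zero of $F(x) = -\frac{1}{2\pi}(S(x,\textbf{u}) - S(x,\textbf{v}))$ at some $x_* \in (2/5, 1/2]$. In the notation of the proof of Lemma~\ref{lem:b=2.2}(iv), expand $F = \sum_{n\ge 1} \gamma^{n-1} P_n$ and $G = F'/\pi = \sum_{n\ge 1} (\gamma/2)^{n-1} Q_n$. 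Direct computation yields $P_1 = 2\sin(\pi x_*) \ge 2\sin(2\pi/5)$, $P_2 = \cos(\pi x_*/2) - \sin(\pi x_*/2) \ge 0$, $Q_1 = 2\cos(\pi x_*) \in [0, 2\cos(2\pi/5))$, and $Q_2 = -\sqrt{2}\sin(\pi x_*/2 + \pi/4)$ with $|Q_2| \ge \sqrt{2}\sin(9\pi/20)$.

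Next, I would case-split on $(u_3, v_3) \in \{0,1\}^2$ and use sum-to-product identities, as in the proof of Lemma~\ref{lem:b=2.2}(iv), to express $P_3$ and $Q_3$ as explicit trigonometric functions of $\pi x_*/4$. In the three subcases $(u_3, v_3) \in \{(0,0), (0,1), (1,1)\}$, the infimum of $P_3$ over $[2/5, 1/2]$ is approximately $0.49$, $1.30$, and $-0.55$ respectively; substituting into
\[
F(x_*) \ge P_1 + \gamma P_2 + \gamma^2 P_3 - \frac{2\gamma^3}{1-\gamma}
\]
and using $\gamma \le 0.64$ yields $F(x_*) > 0$ in each case, contradicting $F(x_*) = 0$.

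The critical case is $(u_3, v_3) = (1, 0)$, where $P_3 = -(1+\sqrt{2}/2)\cos(\pi x_*/4) + (\sqrt{2}/2)\sin(\pi x_*/4) \in [-1.41, -1.31]$ and the above third-order estimate just fails at $\gamma = 0.64$. Here I would split further on $(u_4, v_4)$: explicit sum-to-product expressions for $P_4$ show that in three of the four sub-subcases the refined estimate
\[
F(x_*) \ge P_1 + \gamma P_2 + \gamma^2 P_3 + \gamma^3 P_4 - \frac{2\gamma^4}{1-\gamma} > 0
\]
holds. The residual ``all-$(1, 0)$'' branch $(u_4, v_4) = (1, 0)$ admits a recursive treatment: the difference $a_n - b_n$ satisfies $a_{n+1} - b_{n+1} \equiv (a_n - b_n)/2 + 1/2 \pmod{1}$, forcing the representative of $a_n - b_n$ in $[-1/2, 1/2]$ to decay geometrically like $5/2^n$, so that $|P_n|, |Q_n| \le 2\sin(5\pi/2^n)$ for $n \ge 4$. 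The sharper tail bound arising from this geometric decay closes the argument at sufficiently large finite depth, yielding $F(x_*) > 0$ along this branch as well.

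The main obstacle is the $(u_3, v_3) = (1, 0)$ subtree: the numerical estimates are tight at $\gamma = 0.64$, which is precisely why that threshold appears in the statement. Managing the case splitting to depth $4$ together with the tail-decay argument for the ``all-$(1, 0)$'' persistent branch mirrors the elaborate Case 2 decomposition (with its subcases 2.1, 2.2, 2.3) in the proof of Lemma~\ref{lem:b=2.2}(iv).
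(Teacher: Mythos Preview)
Your overall strategy matches the paper's: reduce to $x_* \in (2/5, 1/2]$ via Lemma~\ref{lem:b=2.2}(iv), then show $F(x_*) > 0$ by expanding in the $P_n$'s and case-splitting first on $(u_3, v_3)$ and then on $(u_4, v_4)$. (The paper works only with $F$ in this range; your computations of $Q_1, Q_2$ are never needed.) The paper groups your three ``easy'' subcases $(u_3, v_3) \in \{(0,0),(0,1),(1,1)\}$ into a single Case~1 via the uniform bound $P_3 \ge -\sqrt{2}\sin(\pi/8)$, but your separate treatment is equally valid.

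The gap is in the critical subcase $(u_3, v_3) = (u_4, v_4) = (1, 0)$. Your recursion $a_{n+1} - b_{n+1} = (a_n - b_n)/2 + 1/2$, and hence the bound $|P_n| \le 2\sin(5\pi/2^n)$, holds only when $(u_k, v_k) = (1, 0)$ \emph{for every} $k$ from $3$ up through $n$. But once you have fixed only $(u_3, v_3) = (u_4, v_4) = (1, 0)$, the tails $(u_5, v_5), (u_6, v_6), \ldots$ are still arbitrary, so your claimed bound on $|P_5|$ already presupposes something not given. To salvage the idea you would have to case-split again at level~$5$, then again at level~$6$ in the surviving $(1,0)$ branch, and so on; you have not shown that this scheme terminates uniformly with $F(x_*) > 0$, and ``sufficiently large finite depth'' does not by itself cover every infinite branch.

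The paper's fix is much simpler: in this subcase (its Subcase~2.1) it goes exactly one level deeper and bounds $P_5$ crudely over \emph{all} $(u_5, v_5)$, using that $x_4$ and $y_4$ are already determined, so $|\sin(2\pi x_5)| = |\sin(\pi x_4)|$ and $|\sin(2\pi y_5)| = |\sin(\pi y_4)|$ are individually small. This yields $P_5 > -0.942$ uniformly in $(u_5,v_5)$, and together with $P_4 > -1.663$ the depth-$5$ estimate
\[
F(x_*) \ge 1.902 - 0.64^2 \cdot 1.708 - 0.64^3 \cdot 1.663 - 0.64^4 \cdot 0.942 - \frac{2 \cdot 0.64^5}{1 - 0.64} > 0.011
\]
closes the argument with no further recursion.
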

\begin{proof}
The case $0\le x\le 2/5$ was treated in Lemma~\ref{lem:b=2.2}. Here we consider the case $x\in [2/5, 1/2]$. Let
$\textbf{u}=\{u_n\}_{n=1}^\infty$ and $\textbf{v}=\{v_n\}_{n=1}^\infty$ be such that $(u_1u_2)=(01)$ and $(v_1v_2)=(11)$.
Let $F(x)=-(2\pi)^{-1}(S(x, \textbf{u})-S(x, \textbf{v}))$ and let $P_n, Q_n$ be defined as above.
Then $$P_1=2\sin (\pi x)\ge 2\sin \frac{2\pi}{5}=\frac{\sqrt{10+2\sqrt{5}}}{2}>1.902,$$
and $$P_2=\cos \frac{\pi x}{2}-\sin \frac{\pi x}{2}>0.$$
%Put $$g(x):=P_1+\gamma P_2=2\sin (\pi x) +\gamma \left(\cos \frac{\pi x}{2}-\sin \frac{\pi x}{2}\right).$$

{\bf Case 1.} $u_3=0$ or $v_3=1$.
Then
\begin{multline*}
P_3=(-1)^{u_3}\cos\frac{\pi x}{4}-(-1)^{v_3}\cos \frac{\pi (1+x)}{4}\ge \cos \frac{\pi (1+x)}{4}-\cos \frac{\pi x}{4}\\
\ge -2 \sin\frac{\pi}{8}\sin \frac{\pi(1+2x)}{8}>-\sqrt{2}\sin \frac{\pi}{8},
\end{multline*}
so
\begin{align*}
F(x)& \ge P_1+\gamma P_2+\gamma^2 P_3-\frac{2\gamma^3}{1-\gamma}\\
& > 1.902 -\sqrt{2}\cdot 0.64^2\sin\frac{\pi}{8}-\frac{2\cdot 0.64^3}{1-0.64}>0.
\end{align*}

{\bf Case 2.} $u_3=1$ and $v_3=0$. Then
\begin{multline*}
P_3=-\cos \frac{\pi (1+x)}{4}-\cos \frac{\pi x}{4}=-2\cos \frac{\pi}{8}\cos \frac{\pi (1+2x)}{8}\\
>-2\cos^2\frac{\pi}{8}=-1-\frac{\sqrt{2}}{2}>-1.708.
\end{multline*}

{\em Subcase 2.1.} $i_4=1$ and $j_4=0$. Then
$$P_4=-\sin \frac{\pi (2-x)}{8}-\sin \frac{\pi (x+3)}{8}=-2\sin \frac{5\pi}{16} \cos \frac{\pi (1+2x)}{16}>-2\sin \frac{5\pi}{16}>-1.663.$$
and $$P_5\ge -\sin \frac{\pi(2-x)}{16}-\sin \frac{\pi (x+3)}{16}=-2\sin \frac{5\pi}{32} \cos \frac{\pi (1+2x)}{32}>-2\sin \frac{5\pi}{32}>-0.942.$$
Thus
\begin{multline*}
F(x)\ge P_1+\gamma P_2+\gamma^2 P_3+\gamma^3 P_4+\gamma^4 P_5-\frac{2\gamma^5}{1-\gamma}\\
\ge 1.902 -0.64^2\cdot 1.708 - \cdot 0.64^3 \cdot 1.663 -0.942 \cdot 0.64^4 -\frac{2\cdot 0.64^5}{1-0.64}
>0.011.
\end{multline*}

%\left(\cos \frac{\pi x}{4}+\sin \frac{\pi (1-x)}{4}\right)-\gamma^3 \left(\sin \frac{\pi (2-x)}{8}+\sin \frac{\pi (x+3)}{8}\right)
%\\ -\gamma^4\left(\sin \frac{\pi (2-x)}{16}+\sin \frac{\pi (x+3)}{16} \right)-\frac{2\gamma^5}{1-\gamma}>0.\end{multline*}

{\em Subcase 2.2.} Either $i_4=0$ or $j_4=1$. Then
$$P_4\ge \sin \frac{\pi (2-x)}{8}-\sin \frac{\pi (3+x)}{8}=-2\sin \frac{\pi (1+2x)}{16}\cos \frac{5\pi}{16}> -2\sin \frac{\pi}{8}\cos \frac{5\pi}{16}>-0.556.$$
Thus
\begin{multline*}
F(x)\ge P_1+\gamma P_2 +\gamma^2 P_3 +\gamma^3 P_4 -\frac{2\gamma^4}{1-\gamma}\\
>1.902 -0.64^2\cdot 1.708-0.64^3\cdot 0.556-\frac{2\cdot 0.64^4}{1-0.64}>0.124.
%\\%\left(\sin \frac{\pi (2-x)}{8}-\sin \frac{\pi (x+3)}{8}\right)-\frac{2\gamma^4}{1-\gamma}>0.
\end{multline*}
\end{proof}

Summarizing the results obtained in Lemma~\ref{lem:b=2.2*} and Lemma~\ref{lem:b=2.3}, we have
\begin{lemma}\label{lem:b=2.3*}
Assume $\gamma\le 0.64$.
Then
\begin{enumerate}
\item[(1)]  For $x\in [1/5,2/5]$, $e(2,x)=1$;
\item[(2)] For $x\in [0,1/5)$, the only possible non-trivial pairs in $E(2,x)$ are $(10,11)$ and $(11,10)$;
\item[(3)] For $x\in (2/5, 1/2]$, the only possible non-trivial pairs in $E(2,x)$ are $(01,10)$ and $(10,01)$.
\end{enumerate}
\end{lemma}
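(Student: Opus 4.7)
The plan is to combine the conclusions of Lemma~\ref{lem:b=2.2*} and Lemma~\ref{lem:b=2.3} directly. First I would observe that $0.64 < \sqrt{2}/2$, so the hypothesis $\gamma\le 0.64$ implies the (weaker) hypothesis $\gamma\le \sqrt{2}/2$ of Lemma~\ref{lem:b=2.2*}. Items (1) and (2) of the target statement then follow immediately, since they are literally items (1) and (2) of Lemma~\ref{lem:b=2.2*} and are not affected by the strengthened bound on $\gamma$.

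For item (3), Lemma~\ref{lem:b=2.2*}(3) leaves four candidate non-trivial pairs on $(2/5, 1/2]$, namely $(01,10)$, $(01,11)$, $(10,01)$ and $(11,01)$. Lemma~\ref{lem:b=2.3} asserts $(01)\not\sim_x (11)$ for every $x\in[0,1/2]$. Since the relation $\sim_x$ (defined via $(\eps,\delta)$-transversality) is manifestly symmetric in its two arguments, this also gives $(11)\not\sim_x (01)$. Consequently the two middle candidates are ruled out, and only $(01,10)$ and $(10,01)$ survive, which is exactly the claim.

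There is no real obstacle here: the proof is a one-line bookkeeping step that packages the transversality estimates already established. It is worth recording in this form because the resulting geometric picture fits precisely the hypothesis of Lemma~\ref{lem:sqrt2}: taking $K=[1/5,2/5]\cup[3/5,4/5]$ (using Lemma~\ref{lem:symmetric} to transport conclusions from $[0,1/2]$ to $[1/2,1)$), one checks that $e(2,x;\eps,\delta)=1$ on $K$, and that for the only surviving non-trivial pair at any $x\in[0,1)\setminus K$ both endpoints $x(\textbf{u})$ and $x(\textbf{v})$ lie in $K$. This will immediately yield $\sigma(2)\le\sqrt{2}$ in Proposition~\ref{prop:b=2step3}.
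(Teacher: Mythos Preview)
Your argument is correct and is exactly the approach the paper takes: the proof there reads in full ``This follows immediately from Lemma~\ref{lem:b=2.2*} and Lemma~\ref{lem:b=2.3},'' and your write-up simply unpacks that sentence (noting $0.64<\sqrt{2}/2$, carrying over items (1)--(2) verbatim, and using Lemma~\ref{lem:b=2.3} plus symmetry of $\sim_x$ to prune the two extra pairs in item (3)). Your additional paragraph anticipating the application via Lemma~\ref{lem:sqrt2} is also in line with how the paper proceeds in Proposition~\ref{prop:b=2step3}.
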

\begin{proof} This follows immediately from Lemma~\ref{lem:b=2.2*} and Lemma~\ref{lem:b=2.3}.
\end{proof}
\begin{proof}[Proof of Proposition~\ref{prop:b=2step3}] Let $K=[1/5, 2/5]\cup [3/5, 4/5]$. Then the conditions in Lemma~\ref{lem:sqrt2} are satisfied with $q=2$ and suitable choice of $\eps,\delta$. Indeed,
\begin{itemize}
\item By Lemma~\ref{lem:b=2.3*}, for each $x\in [1/5,2/5]$, we have $e(2, x)=1$. So by Lemma~\ref{lem:symmetric} and Proposition~\ref{prop:compact} (3), condition (i) of Lemma~\ref{lem:sqrt2} is satsified with suitable choices of $(\eps, \delta)$;
\item By Lemma~\ref{lem:b=2.3*}, for each $x\in [0,1/5)$, $(10,11)$ and $(11,10)$ are the only non-trivial pairs in $E(2,x)$, and it is easily checked $x(10), x(11)\in K$. For $x\in (2/5, 1/2]$, by Lemma~\ref{lem:b=2.3*}, $(01,10)$ and $(10,01)$ are the only non-trivial pairs in $E(2,x)$ and it is easily checked that $x(10), x(01)\in K$. Thus by Lemma~\ref{lem:symmetric} and Proposition~\ref{prop:compact} (3), condition (ii) of Lemma~\ref{lem:sqrt2} is satsified.
\end{itemize}
Thus $\sigma (2)\le \sqrt{2}$.
\end{proof}

\subsection{When $\gamma^3 > \sqrt{2}/8$}
\begin{prop}\label{prop:b=2step4}
Assume $\gamma^2\le \sqrt{2}/4$. Then $\sigma(3)\le \sqrt{2}$.
\end{prop}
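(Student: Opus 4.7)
The plan is to apply Lemma~\ref{lem:sqrt2} with $q = 3$, following the scheme of the proof of Proposition~\ref{prop:b=2step3} but lifted to length-three words and exploiting that $\gamma^2 \le \sqrt{2}/4$ gives $\gamma \le 2^{-3/4} < 0.64$. In particular, the tangency restrictions of Lemmas~\ref{lem:b=2.1}--\ref{lem:b=2.3} and Lemma~\ref{lem:b=2.3*} are all available at every $x$.

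The analysis of $E(3, x)$ is governed by two general reductions. First, a prefix containment: if $(\textbf{u}, \textbf{v}) \in E(3, x)$ then the length-two prefixes satisfy $(u_1 u_2, v_1 v_2) \in E(2, x)$, since any $\textbf{w}, \textbf{w}' \in \mathcal{A}^{\Z^+}$ witnessing the length-three tangency also witness the shorter one. Combined with Lemma~\ref{lem:b=2.3*}, this forces $(u_1 u_2, v_1 v_2)$ to be trivial except for $x \in [0, 1/5) \cup (2/5, 3/5) \cup (4/5, 1)$. Second, a first-letter recursion: if $\textbf{u} = a\textbf{u}'$ and $\textbf{v} = a\textbf{v}'$ with $\textbf{u}', \textbf{v}' \in \mathcal{A}^2$, then $(\textbf{u}, \textbf{v}) \in E(3, x) \Longleftrightarrow (\textbf{u}', \textbf{v}') \in E(2, x(a))$; invoking Lemma~\ref{lem:b=2.3*} at $y = x/2$ or $y = (x+1)/2$ then reduces every same-first-letter case to a known length-two problem.

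Enumerating the survivors for $x \in [0, 1/2]$ (and extending by Lemma~\ref{lem:symmetric} to $[1/2, 1)$), the possible non-trivial pairs in $E(3,x)$ split into two families. The same-first-letter family collapses via the recursion to the $(10, 11)$-type and $(01,10)$-type tangencies of Lemma~\ref{lem:b=2.3*}, giving candidates like $(010, 011)$ or $(101, 110)$ for $x$ in narrow intervals near $0$. The mixed-first-letter family consists of pairs of the form $(01 u_3, 10 v_3)$ or $(10 u_3, 01 v_3)$, and by prefix containment these can only occur for $x \in (2/5, 3/5)$. For the mixed family I would prove direct non-tangency estimates in the style of Lemma~\ref{lem:b=2.3}: expand $F(x) = \sum_n \gamma^{n-1} P_n(x)$ or $G(x) = \sum_n (\gamma/2)^{n-1} Q_n(x)$ to a few leading terms, bounding the tail by $2\gamma^N/(1-\gamma)$. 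The assumption $\gamma^2 \le \sqrt{2}/4$ renders these bounds substantially tighter than in the Step 3 regime and should eliminate most of the mixed candidates.

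With the full inventory of surviving tangencies in hand, I would choose $K$ as a symmetric union of closed subintervals of $[0, 1)$ so that condition (i) of Lemma~\ref{lem:sqrt2} holds on $K$ and condition (ii) holds off it. Since $x(\textbf{u}) = (x + u_1 + 2 u_2 + 4 u_3)/8$, the eight images sit in consecutive intervals of length at most $1/16$, and verifying (ii) reduces to checking that each surviving non-trivial pair maps both of its labels into the chosen $K$. The main obstacle is twofold: the mixed-first-letter case analysis on $(2/5, 3/5)$, where an unruly pair $(01 u_3, 10 v_3)$ may push images such as $x(011)$ or $x(100)$ outside the naive guess $K = [1/5, 2/5] \cup [3/5, 4/5]$; and the calibration of $K$ so that all landing conditions are met simultaneously. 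Once these are settled, Lemma~\ref{lem:sqrt2} delivers $\sigma(3) \le \sqrt{2}$.
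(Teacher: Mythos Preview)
Your overall strategy matches the paper's: apply Lemma~\ref{lem:sqrt2} with $q=3$, classify $E(3,x)$ via prefix containment and first-letter recursion, and prove new estimates for the mixed-prefix family $(01u_3,10v_3)$. But there is a genuine gap in the same-first-letter analysis.

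You assert that candidates like $(010,011)$ occur only ``for $x$ in narrow intervals near $0$''. That is not what Lemma~\ref{lem:b=2.3*} delivers. Recursion reduces $(010,011)\in E(3,x)$ to $0\sim_{(x+2)/4}1$, and Lemma~\ref{lem:b=2.2}(v) only excludes the level-one tangency on $[0,2/5]\cup[3/5,1]$; hence $(010,011)$ cannot be ruled out anywhere on $[0,2/5)$. Since the mixed pairs live on $(2/5,1/2]$, you have no interval in $[0,1/2]$ on which $e(3,\cdot)=1$ is established, so condition (i) of Lemma~\ref{lem:sqrt2} has no room to hold. And you cannot simply dump $[0,2/5)$ into $L$: condition~(ii) would then force $x(010)=(x+2)/8\in[1/4,3/10)\subset K$, which lies back inside $[0,2/5)$, a contradiction.

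The missing idea is a bootstrap. The paper first proves the mixed-pair estimate (Lemma~\ref{lem:b=2.4}): $(01u_3)\sim_x(10v_3)$ forces $x\in(17/36,1/2]$ and $(u_3,v_3)=(1,0)$. Combined with Lemma~\ref{lem:b=2.3*} this yields the \emph{sharper} level-one fact $0\not\sim_x 1$ for $x\in[0,17/36]\cup[19/36,1]$. Only after feeding this back into the recursion does the $(010,011)$ region shrink to $[0,1/9)$ and does one obtain $e(3,x)=1$ on $[1/9,17/36]$ (Lemma~\ref{lem:b=2.4*}); with that in hand a workable $K$ exists. So your mixed-family estimate must do double duty: it not only cuts down the mixed candidates, it is also what makes your ``narrow intervals near $0$'' claim true for the same-first-letter family.
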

An immediate corollary of this proposition and Corollary~\ref{cor:b=2.3} is the following:
\begin{coro}\label{cor:b=2.4} If $\gamma^3> \sqrt{2}/8$, then $\sigma (q)< (2\gamma)^q$ for some $q\in\{1,2,3\}$.
\end{coro}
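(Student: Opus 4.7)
The plan is to follow the template of Propositions~\ref{prop:b=2step2} and \ref{prop:b=2step3}, lifted to level $q=3$ under the stronger hypothesis $\gamma^2 \le \sqrt{2}/4$ (i.e., $\gamma \le 2^{-3/4} \approx 0.5946$). I aim to verify the hypotheses of Lemma~\ref{lem:sqrt2} with $K = [1/5, 2/5] \cup [3/5, 4/5]$ (the same choice as in the $q=2$ step).

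First I would make the following structural observation: any non-trivial pair $(\textbf{k}, \textbf{l}) \in E(3, x)$ falls into one of two cases. Case (a): $\textbf{k}|_2 \neq \textbf{l}|_2$, which forces $(\textbf{k}|_2, \textbf{l}|_2) \in E(2, x)$; by Lemma~\ref{lem:b=2.3*} this pins the truncated pair to $(10,11)$ (or its reverse) at $x \in [0,1/5) \cup (4/5, 1)$, and to $(01,10)$ (or its reverse) at $x \in (2/5, 3/5)$. Case (b): $\textbf{k}|_2 = \textbf{l}|_2 = \textbf{m}$ with differing third digits; iterating the recursion $S(x, \textbf{m}\textbf{i}) = \psi(x_1) + \gamma\psi(x_2) + \gamma^2 S(x_2, \textbf{i})$ shows that such a level-3 tangency at $x$ is equivalent to the level-1 tangency $0 \sim_y 1$ at the refined point $y = x(\textbf{m})$.

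Two technical lemmas then carry the argument. The first extends Lemma~\ref{lem:b=2.2}(v) by showing that, under $\gamma \le 2^{-3/4}$, the non-tangency relation $0 \not\sim_y 1$ holds for all $y$ of the form $x(\textbf{m})$ with $\textbf{m} \in \mathcal{A}^2$ and $x \in K$; the proof is an easy strengthening of Lemma~\ref{lem:b=2.2}(v) using one additional term in $F = \sum \gamma^{n-1} P_n$ and the improved bound on $\gamma$. The second is a level-3 refinement of Lemma~\ref{lem:b=2.3*}: for $x \in [0, 1/5)$ and $\gamma^2 \le \sqrt{2}/4$, among the eight candidate pairs $(10a, 11b)$ and $(11a, 10b)$ with $a,b \in \{0,1\}$, only $(101, 110)$ and $(110, 101)$ can lie in $E(3, x)$; analogously for $x \in (2/5, 1/2]$ only $(010, 101)$ and $(101, 010)$ survive. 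These are proved pair-by-pair in the style of Lemmas~\ref{lem:b=2.2}(iv) and \ref{lem:b=2.3}: compute $F$ and $G$ at $x$ through four or five terms, split into subcases on $(u_3, v_3)$, $(u_4, v_4)$, etc., and use $\gamma^2 \le \sqrt{2}/4$ to force either $F \neq 0$ or $G \neq 0$.

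With these lemmas, the hypotheses of Lemma~\ref{lem:sqrt2} follow. For $x \in K$, case (a) is excluded by Lemma~\ref{lem:b=2.3*} and case (b) by the first technical lemma, so $e(3, x) = 1$. For $x \in [0, 1/5)$ the only surviving pair is $(101, 110)$ (or its reverse), and a direct calculation gives $x(101) \in [5/8, 13/20) \subset [3/5, 4/5]$ and $x(110) \in [3/8, 2/5) \subset [1/5, 2/5]$, so both refined points lie in $K$. The range $x \in (2/5, 1/2]$ is handled identically with the pair $(010, 101)$, and $x \in [1/2, 1)$ is covered via Lemma~\ref{lem:symmetric}. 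Lemma~\ref{lem:sqrt2} then gives $\sigma(3) \le \sqrt{2}$.

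The principal obstacle is the pair-by-pair case analysis in the second technical lemma: there are fourteen directed pairs (seven at each of the two intervals) whose transversality must be established, and several of them --- for instance $(100, 110)$, $(100, 111)$ and $(101, 111)$ at $x \in [0, 1/5)$ --- have $P_1 = 0$ because the first digits coincide, so the argument must propagate to $P_2, P_3, P_4$ with branching on the higher-order digits $u_4, v_4$. Lemma~\ref{lem:b=2.2}(iv) already illustrates how three layers of subcases combine; here the bookkeeping is heavier but the hypothesis $\gamma^2 \le \sqrt{2}/4$ provides the headroom to close each subcase.
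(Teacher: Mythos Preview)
Your plan has a concrete error that breaks the argument. For $x\in(2/5,1/2]$ you assert that among the pairs $(01a,10b)$ only $(010,101)$ can lie in $E(3,x)$; in fact the opposite pair $(011,100)$ is the one that survives. A direct check of $P_3=(-1)^{u_3}\cos(\pi x/4)-(-1)^{v_3}\cos(\pi(1-x)/4)$ shows that $(u_3,v_3)=(0,1)$ gives $P_3>0$, so $F=P_1+\gamma P_2+\gamma^2 P_3+\cdots>0$ easily (this is the \emph{benign} pair), whereas $(u_3,v_3)=(1,0)$ makes $P_3$ most negative and is the genuinely dangerous case. With the correct surviving pair one computes, for $x\in(2/5,1/2]$,
\[
x(011)=\tfrac{x}{8}+\tfrac34\in\bigl(\tfrac45,\tfrac{13}{16}\bigr],\qquad
x(100)=\tfrac{x}{8}+\tfrac18\in\bigl(\tfrac{7}{40},\tfrac{3}{16}\bigr],
\]
and neither interval meets your $K=[1/5,2/5]\cup[3/5,4/5]$. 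So hypothesis~(ii) of Lemma~\ref{lem:sqrt2} fails and the scheme collapses. You also overlook, for $x\in[0,1/5)$, the Case~(b) pair $(010,011)$ (coming from $0\sim_{x(01)}1$ with $x(01)\in[1/2,11/20)$); this pair does happen to land in your $K$, but it shows that your accounting of surviving pairs is incomplete.

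The paper repairs this not by a longer pair-by-pair grind but by a bootstrap. Lemma~\ref{lem:b=2.4} shows (under $\gamma^2\le\sqrt2/4$) that $(01)\sim_x(10)$ can only occur for $x\in(17/36,1/2]$, and then only with third digits $(1,0)$. This immediately upgrades the range where $0\not\sim_x1$ from $[0,2/5]$ to $[0,17/36]$, which by the reduction $(10)\sim_x(11)\Leftrightarrow 0\sim_{x(1)}1$ shrinks the other danger zone from $[0,1/5)$ to $[0,1/18)$, and hence gives $e(3,x)=1$ on all of $[1/9,17/36]$ (Lemma~\ref{lem:b=2.4*}). With this much wider safe region one can take $K=[1/10,9/20]\cup[11/20,9/10]$, which now comfortably contains $x(011),x(100)$ for the actual surviving pair, as well as the images for $(101,110)$ and $(010,011)$ near $x=0$. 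The moral is that the same $K$ that worked for $q=2$ is provably too small for $q=3$; the extra information from pinning down the third digit at one end of the interval is exactly what enlarges the safe set enough to catch the refined points at the other end.
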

\begin{lemma}\label{lem:b=2.4}
Assume $\gamma^2\le \sqrt{2}/4$. If $(01u_3)\sim_x (10v_3)$ for some $x\in [0,1/2]$ then $x\in (17/36, 1/2]$, $u_3=1$ and $v_3=0$.
\end{lemma}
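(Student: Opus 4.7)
Suppose $(01u_3) \sim_x (10v_3)$ for some $x \in [0, 1/2]$. By Proposition 2.4(1), there exist $\textbf{u},\textbf{v} \in \mathcal{A}^{\Z^+}$ with $(u_1,u_2,u_3) = (0,1,u_3)$ and $(v_1,v_2,v_3) = (1,0,v_3)$ such that $F(x) := -(2\pi)^{-1}(S(x,\textbf{u}) - S(x,\textbf{v}))$ has a multiple zero at $x$; in particular $F(x) = 0$ and $G(x) := F'(x)/\pi = 0$. Writing $F = \sum_{n\ge 1}\gamma^{n-1}P_n$ and $G = \sum_{n\ge 1}(\gamma/2)^{n-1}Q_n$ in the notation of Lemmas 4.2--4.3, a direct computation yields $P_1 = 2\sin(\pi x)$, $P_2 = -\sin(\pi x/2) - \cos(\pi x/2)$, $Q_1 = 2\cos(\pi x)$, $Q_2 = \sin(\pi x/2) - \cos(\pi x/2)$, and $P_3 = (-1)^{u_3}\cos(\pi x/4) - (-1)^{v_3}\sin(\pi(x+1)/4)$. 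Since Lemma 4.3(ii) already forbids $x \in [0, 2/5]$, I restrict to $x \in (2/5, 1/2]$ throughout.

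I then handle the four combinations of $(u_3, v_3)$ separately. Sum-to-product rewrites $P_3$ as $2\cos(3\pi/8)\sin(\pi/8 - \pi x/4)$ for $(0,0)$, as $2\sin(3\pi/8)\cos(\pi/8 - \pi x/4)$ for $(0,1)$, as $2\cos(3\pi/8)\sin(\pi x/4 - \pi/8)$ for $(1,1)$, and as $-2\sin(3\pi/8)\cos(\pi/8 - \pi x/4)$ for $(1,0)$. In cases $(0,0)$, $(0,1)$, $(1,1)$ I combine these three-term estimates with the hypothesis $\gamma \le 2^{-3/4}$ and the geometric tail bound $|\sum_{n\ge 4}\gamma^{n-1}P_n| \le 2\gamma^3/(1-\gamma)$ to show that $F(x) > 0$ on $(2/5, 1/2]$, contradicting $F(x) = 0$. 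Case $(0,1)$ is cleanest thanks to the large positive $P_3 \approx 2\sin(3\pi/8)$, and $(0,0)$ is routine since $P_3 \ge 0$; I expect $(1,1)$ to be the main obstacle, because there $P_3$ is essentially zero and the three-term estimate is tight, forcing a further split on $(u_4,v_4)$ and explicit use of $\gamma^3 P_4$, exactly paralleling the Subcase-by-Subcase tactic in the proof of Lemma 4.3.

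For the remaining case $(u_3,v_3) = (1,0)$, bounding $F$ alone is insufficient near $x = 1/2$, so I switch to the derivative condition $G(x) = 0$. The same sum-to-product trick applied to the cosine identity gives $Q_3 = 2\cos(\pi/8)\sin(\pi x/4 - \pi/8)$, which is small and nonpositive on $(2/5, 1/2]$. Using $Q_1, Q_2, Q_3$ together with the tail estimate $|\sum_{n\ge 4}(\gamma/2)^{n-1}Q_n| \le \gamma^3/(4-2\gamma)$ yields a lower bound on $G(x)$ which, at the extremal value $\gamma = 2^{-3/4}$, is monotone decreasing in $x$ on $(2/5, 1/2]$; its unique zero in this interval turns out, by direct numerical verification, to be $x = 17/36$. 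Hence $G(x) > 0$ for $x \in (2/5, 17/36]$, violating the condition for a multiple zero and finishing the lemma. The overall template is identical to that of Lemmas 4.2--4.3 --- estimate the first few Fourier-type terms via sum-to-product, bound the remainder geometrically, and close using $\gamma^2 \le \sqrt{2}/4$; the genuinely delicate work is concentrated in the $(1,1)$ sub-casing and in the sharpness of the $G$-estimate that pins down the threshold $17/36$.
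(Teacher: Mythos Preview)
Your approach is essentially the paper's: reduce to $x$ near $1/2$, eliminate $(u_3,v_3)\neq(1,0)$ via an $F$-estimate using $P_1,P_2,P_3$ and a geometric tail, then for $(u_3,v_3)=(1,0)$ use a $G$-estimate with $Q_1,Q_2,Q_3$ to push the threshold to $17/36$. Two points where you diverge from the paper deserve correction.

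First, the $(1,1)$ case does \emph{not} require a split on $(u_4,v_4)$. The paper treats all three cases $(u_3,v_3)\neq(1,0)$ at once by the uniform bound
\[
P_3 \ \ge\ \cos\tfrac{\pi(1-x)}{4}-\cos\tfrac{\pi x}{4} \ =\ -2\sin\tfrac{\pi}{8}\sin\tfrac{\pi(1-2x)}{8},
\]
which is the $(1,1)$ value and dominates the other two; this already suffices. Your anticipated difficulty is an artifact of working on $(2/5,1/2]$ rather than $[9/20,1/2]$: the paper first shows $G>0$ on $[0,9/20]$ directly (a one-line extension of Lemma~5.4(ii)), and on the shorter interval the three-term $F$-bound has comfortable margin. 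On $(2/5,1/2]$ the same three-term bound still works at $\gamma=2^{-3/4}$, but the margin shrinks to about $0.013$ at $x=2/5$, so your instinct that it is tight is right---just not tight enough to force a fourth term.

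Second, your claim that the $G$-lower bound has its zero at $x=17/36$ is wrong. At $\gamma=2^{-3/4}$ and $x=17/36$ one computes
\[
2\cos\tfrac{17\pi}{36}+\tfrac{\gamma}{2}\bigl(\sin\tfrac{17\pi}{72}-\cos\tfrac{17\pi}{72}\bigr)+\tfrac{\gamma^2}{4}Q_3-\tfrac{\gamma^3}{4-2\gamma}\approx 0.174-0.018-0.004-0.075\approx 0.078>0,
\]
and the actual zero of this decreasing function lies near $x\approx 0.49$. The number $17/36$ is not intrinsic to the estimate; it is simply the threshold specified by the lemma, chosen so that later arguments (Lemma~5.11 and Proposition~5.9) go through. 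Your task is only to verify positivity at $17/36$, which you can do by the numerics above---there is nothing to ``pin down''.
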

\begin{proof} Note that $\gamma <0.6$.
Let $\textbf{u}=01u_3\cdots$, $\textbf{v}=10v_3\cdots$ and let $F(x)=-(2\pi)^{-1}(S(x,\textbf{u})-S(x,\textbf{v}))$, $G(x)=F'(x)/\pi$. We continue to use the notation $x_n, y_n, P_n$ and $Q_n$.

If $x\le 0.45$, then as in the proof of Lemma~\ref{lem:b=2.1},
\begin{align*}
G(x)& \ge 2\cos (\pi x) -\frac{\gamma}{\sqrt{2}}\cos \left(\frac{\pi x}{2} +\frac{\pi}{4} \right)-\frac{\gamma^2}{2-\gamma}\\
& \ge 2 \cos (0.45\pi )- 0.6\cos (0.475\pi)/\sqrt{2}-0.36/1.4>0.02.
\end{align*}
%Thus it suffices to prove that for each $x\in [9/20, 1/2]$, either $F(x)>0$ or $G(x)>0$.

Assume $(u_3,v_3)\not =(1,0)$. We shall show that for each $x\in [9/20, 1/2]$, $F(x)>0$. Indeed, in this case,
$P_1=2\sin (\pi x)$, $P_2=-\sin\frac{\pi x}{2}-\cos \frac{\pi x}{2}$ and
\begin{align*}
P_3 &\ge \cos \frac{\pi (1-x)}{4}-\cos \frac{\pi x}{4} && =-2\sin \frac{\pi}{8} \sin\frac{\pi (1-2x)}{8}\\
& \ge  -2\sin\frac{\pi}{8}\sin \frac{\pi}{40} && \ge -\frac{\pi^2}{160}.
\end{align*}
Thus
\begin{align*}
F(x)& \ge P_1+\gamma P_2+\gamma^2 P_3-\frac{2\gamma^3}{1-\gamma}\\
& \ge 2\sin (\pi x)-\gamma \left(\cos \frac{\pi x}{2}+\sin \frac{\pi x}{2}\right)-\frac{\gamma ^2\pi^2}{160}-\frac{2\gamma^3}{1-\gamma}\\
& \ge 2\sin (\pi x)- 0.6 \left(\cos \frac{\pi x}{2}+\sin \frac{\pi x}{2}\right)-0.023-1.08.
\end{align*}
For $x\in [9/20, 1/2]$, this give us
$$F(x)> 2\sin \frac{9\pi}{20} -0.6 \sqrt{2} -1.103>0.013.$$

Assume now $(u_3, v_3)=(1,0)$. We shall show that $G(x)>0$ for all $x\in [9/20, 17/36]$. Indeed, in this case,
$$Q_3=\cos \frac{\pi (2-x)}{4}-\cos \frac{\pi (1+x)}{4}=-2\sin \frac{3\pi}{8} \sin \frac{\pi (1-2x)}{8}> -\frac{\pi}{40}.$$
Thus
\begin{align*}
G(x)& =Q_1+\frac{\gamma}{2} Q_2 +\frac{\gamma^2}{4} Q_3-\frac{\gamma^3}{2-\gamma}\\
& \ge 2\cos (\pi x) -\frac{\gamma}{\sqrt{2}}\cos \left(\frac{\pi x}{2} +\frac{\pi}{4} \right)-\frac{\gamma^2}{4} \frac{\pi}{40}-\frac{\gamma^3}{4-2\gamma}\\
& \ge 2 \cos \frac{17\pi}{36} - \frac{0.6}{\sqrt{2}}\cos \frac{35 \pi}{72} -0.01-0.08\\
& > 0.174- 0.02- 0.01-0.08>0.
\end{align*}
\end{proof}

\begin{lemma}\label{lem:b=2.4*} Assume $\gamma<0.6$. If $x\in [0,1/2]$ and $\textbf{u},\textbf{v}$ are distinct elements in $\mathcal{A}^3$ such that $(\textbf{u},\textbf{v})\in E(3, x)$, then
either $$x\in [0,1/9) \text{ and }(\textbf{u},\textbf{v})\in\{(101,110), (110,101), (010,011), (011, 010)\};$$
or    $$x\in (17/36,1/2] \text{ and } (\textbf{u},\textbf{v})\in \{(011, 100), (100, 011)\}.$$
\end{lemma}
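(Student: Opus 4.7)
The plan is to classify all 3-letter pairs $(\textbf{u},\textbf{v})$ with $\textbf{u}\neq\textbf{v}$ that can be tangent at some $x\in[0,1/2]$ by a case analysis on their 2-letter prefix pair $(u_1u_2, v_1v_2)$. Since $(\textbf{u},\textbf{v})\in E(3,x)$ forces $(u_1u_2, v_1v_2)\in E(2,x)$, and the hypothesis is strong enough to apply both Lemma~\ref{lem:b=2.3*} and Lemma~\ref{lem:b=2.4}, we get severe restrictions on the prefix. I would split into Case A (equal prefix, $u_1u_2 = v_1v_2$) and Case B (unequal prefix).

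In Case A, iterating the recursion $S(x,i\textbf{a})=\psi((x+i)/2)+\gamma S((x+i)/2,\textbf{a})$ twice yields
\[
S(x,u_1u_2u_3\textbf{u}')-S(x,u_1u_2v_3\textbf{v}')=\gamma^2\bigl[S(x_2,u_3\textbf{u}')-S(x_2,v_3\textbf{v}')\bigr],
\]
where $x_2=(x+u_1+2u_2)/4$; hence $(\textbf{u},\textbf{v})\in E(3,x)$ reduces to $(u_3,v_3)\in E(1,x_2)$, and by symmetry we may take $(u_3,v_3)=(0,1)$. To determine for which $x_2$ one has $(0,1)\in E(1,x_2)$, extend one level: Lemma~\ref{lem:b=2.3*} (together with Lemma~\ref{lem:symmetric} when $x_2>1/2$) shows that the only admissible 2-letter extension of the form $(0\star,1\star)$ is $(01,10)$, confining $x_2$ to $(2/5,3/5)$. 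Extending one more level and applying Lemma~\ref{lem:b=2.4} (directly for $x_2\in(2/5,1/2]$, and via the reflection $x_2\mapsto 1-x_2$ for $x_2\in[1/2,3/5)$) sharpens this to $x_2\in(17/36,19/36)$. Now I check the four choices of $u_1u_2$: only $u_1u_2=01$, giving $x_2=(x+2)/4\in[1/2,5/8]$, meets $(17/36,19/36)$, and that forces $x\in[0,1/9)$ with $(\textbf{u},\textbf{v})\in\{(010,011),(011,010)\}$.

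In Case B, Lemma~\ref{lem:b=2.3*} restricts $x$ to $[0,1/5)\cup(2/5,1/2]$ (the interval $[1/5,2/5]$ is ruled out because $e(2,x)=1$ there). For $x\in(2/5,1/2]$ the prefix must be $(01,10)$ or $(10,01)$; Lemma~\ref{lem:b=2.4} combined with the symmetry of $E$ in its two arguments yields $x\in(17/36,1/2]$ and the pairs $\{(011,100),(100,011)\}$. For $x\in[0,1/5)$ the prefix must be $(10,11)$ or $(11,10)$; factoring out the common first digit $1$ reduces the tangency to $(0u_3,1v_3)\in E(2,y)$ at $y=(x+1)/2\in[1/2,3/5)$, and Lemma~\ref{lem:b=2.3*} via Lemma~\ref{lem:symmetric} forces this reduced 2-letter pair to be $(01,10)$, so $u_3=1,v_3=0$. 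Extending once more and applying Lemma~\ref{lem:b=2.4} via reflection then constrains $1-y\in(17/36,1/2]$, giving $x\in[0,1/18)\subset[0,1/9)$ and pairs $\{(101,110),(110,101)\}$. Taking the union across the two cases produces exactly the lists in the statement.

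The main technical obstacle will be the bookkeeping around Lemma~\ref{lem:symmetric}: each factorization places the auxiliary point ($x_2$ in Case A, $y$ in Case B) in $[1/2,3/5)$ rather than in the range $[0,1/2]$ where Lemmas~\ref{lem:b=2.3*} and~\ref{lem:b=2.4} are stated, so one must carefully combine the reflection $x\mapsto 1-x$ (with the accompanying digit flip $\textbf{i}\mapsto \textbf{i}'$) with the obvious symmetry of $E$ in its two coordinates to obtain the correct image pairs. Once those reflections are tracked, the remainder is an elementary arithmetic check of the four prefix possibilities.
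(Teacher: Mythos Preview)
Your proposal is correct and uses the same ingredients as the paper's proof: Lemma~\ref{lem:b=2.3*}, Lemma~\ref{lem:b=2.4}, and the reflection symmetry of Lemma~\ref{lem:symmetric}. The only difference is organizational: the paper first extracts from these lemmas the global facts that $0\not\sim_x 1$ on $[0,17/36]\cup[19/36,1]$ and the refined $E(2,\cdot)$ classification on $[0,1/18)$, $[1/18,17/36]$, $(17/36,1/2]$, and then splits according to the location of $x$; you instead split first on whether the two-letter prefix is equal (Case~A) or not (Case~B), and within each case derive the admissible range of $x$. Your observation that $(0,1)\in E(1,x_2)$ forces $x_2\in(17/36,19/36)$ is exactly the paper's items (a) and (a$'$) in disguise, and your check of the four prefixes in Case~A is the dual of the paper's verification that $x(u_1u_2)\notin[17/36,19/36]$ when $x\in[1/9,17/36]$.
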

\begin{proof} Lemma~\ref{lem:b=2.4} particularly implies that $(01)\not\sim_x (10)$ for $x\in [2/5, 17/36]$. Together with Lemma~\ref{lem:b=2.3*}, it follows that
\begin{enumerate}
\item [(a)] $0\not\sim_x 1$ for $x\in [0, 17/36]$.
\end{enumerate}
By Lemma~\ref{lem:symmetric},
\begin{enumerate}
\item [(a')] $0\not\sim_x 1$ for $x\in [19/36, 1]$.
\end{enumerate}
Therefore, for $x\in [1/18, 1/5]$, $(10)\not\sim_x (11)$, since $x(1)\in [19/36, 3/5]$. Together with Lemma~\ref{lem:b=2.3*}, we obtain
\begin{itemize}
\item [(b)] For $x\in [1/18, 17/36]$, $e(2,x)=1$;
\item [(c)] For $x\in [0, 1/18)$, the only possible non-trivial pairs in $E(2, x)$ are $(10,11)$ or $(11,10)$;
\item [(d)] For $x\in (17/36, 1/2]$, the only possible non-trivial pairs in $E(2, x)$ are $(01,10)$ or $(10,01)$.
\end{itemize}

Consider $x\in [1/9, 17/36]$. If $(u_1u_2u_3)\sim_x (v_1v_2v_3)$ then by (b), $u_1u_2=v_1v_2$. Note that for any $u_1 u_2$, $x(u_1u_2)\not\in [17/36, 19/36]$. Thus by (a) (and (a')), we have $u_3=v_3$. This proves that $e(3, x)=1$.

Consider $x\in [0, 1/9)$ and $(u_1u_2u_3)\sim_x (v_1v_2v_3)$. Then by (a), $u_1=v_1$. If $u_1=0$ then $x(u_1)\in [0, 1/18)$, and $(u_2u_3)\sim_{x(u_1)} (v_2v_3)$, so by (c), $(u_2u_3, v_2v_3)\in \{(10,11), (11,10)\}$. If $u_1=1$, then $x(u_1)\in [1/2, 10/18]$, and $(u_2u_3)\sim_{x(u_1)} (v_2v_3)$, so by Lemma~\ref{lem:b=2.3*} (3), $(u_2u_3, v_2v_3)\in \{(01,10), (10, 01)\}$.

Consider $x\in [17/36, 1/2]$. Then by (d) and Lemma~\ref{lem:b=2.4}, the only possible non-trivial pairs in $E(3, x)$ are $(011,100)$ and $(100,011)$.
\end{proof}

\begin{proof}[Proof of Proposition~\ref{prop:b=2step4}]
Let $K=[1/9, 17/36]\cup [36/19, 8/9]$. The conditions in Lemma~\ref{lem:sqrt2} are satisfied with $q=3$ and suitable choices of $(\eps,\delta)$. Indeed,
putting $L_1=(17/36, 19/36)$, $L_2=[0,1/9)\cup (8/9, 1)$.
by Lemma~\ref{lem:b=2.4*}, Lemma~\ref{lem:symmetric} and Proposition~\ref{prop:compact} (3), there exist $\eps>0$ and $\delta>0$ such that
\begin{itemize}
\item $e(3, x;\eps,\delta)=1$ for all $x\in K$.
\item For $x\in L_1$, $e(3, x;\eps,\delta)\le 2$ and the only non-trivial elements of $\mathcal{A}^3$ which appears in a non-trivial pair of $E(3, x)$ are
 $(011)$ and $(100)$ for which we have $x(011), x(100)\in K$.
\item For $x\in [0, 1/9)$, $e(3,x;\eps,\delta)\le 2$ and the only elements of $\mathcal{A}^3$ which appear in non-trivial pairs of $E(3, x)$ are $(010), (011), (101)$ and $(110)$ for which $x(010)$, $x(011)$, $x(101)$, $x(110)\in K$. By Lemma~\ref{lem:symmetric}, similar properties hold for $x\in (8/9,1)$.
\end{itemize}
By Lemma~\ref{lem:sqrt2}, we have $\sigma (3)\le \sqrt{2}$.
\end{proof}

\subsection{Last Step: When $\gamma^3\le \sqrt{2}/8$}
\begin{prop}\label{prop:b=2step5} Assume $\gamma^3\le \sqrt{2}/8$. Then $e(1)=1$.
\end{prop}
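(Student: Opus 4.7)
The plan is to combine the exclusions from the previous steps to reduce the claim to ruling out a single remaining tangency pair, and then to handle that pair by a direct Delta-pairing estimate on the first few terms of the defining series for $F$.

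First, by Lemma~\ref{lem:symmetric} I would reduce to showing $(0)\not\sim_x (1)$ for $x\in[0,1/2]$. Since $\gamma^3\le \sqrt{2}/8$ implies both $\gamma \le 2^{-5/6} < \sqrt{2}/2$ and $\gamma^2 < \sqrt{2}/4$, all the hypotheses of Lemmas~\ref{lem:b=2.2*}, \ref{lem:b=2.3*}, and \ref{lem:b=2.4*} are in force. Inspecting those conclusions, the only non-trivial pair in $E(3,x)$ for $x\in[0,1/2]$ whose first components differ is $((011),(100))$, and that can occur only for $x\in (17/36, 1/2]$. So the proposition reduces to: $(011)\not\sim_x (100)$ for every $x\in (17/36,1/2]$. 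For this I would argue by contradiction: suppose $(011)\sim_{x^*}(100)$ with $x^*\in(17/36,1/2]$, witnessed by $\textbf{u}=011u_4u_5\cdots$ and $\textbf{v}=100v_4v_5\cdots$, and set $F(x)=-(2\pi)^{-1}(S(x,\textbf{u})-S(x,\textbf{v}))=\sum_{n\ge 1}\gamma^{n-1}P_n(x)$, so that $F(x^*)=0$.

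Writing $x=1/2+\xi$ with $\xi^*\in(-1/36,0]$, a direct computation would yield
\begin{align*}
P_1(\xi)&=2\cos(\pi\xi), \\
P_2(\xi)&=-\sqrt{2}\cos(\pi\xi/2), \\
P_3(\xi)&=-2\cos(\pi/8)\cos(\pi\xi/4),
\end{align*}
and a short case analysis on $(u_4,v_4)\in\{0,1\}^2$ would give $\gamma^3 P_4(\xi)\ge -2\sin(3\pi/16)\gamma^3$ uniformly for $\xi\in[-1/36,0]$. To control the tail I would introduce $\Delta_{2,\gamma}=\max_{t\in\R}(\sin(2t)+\gamma\sin t)$; using $\sin(2\pi x_{n+1})=(-1)^{u_{n+1}}\sin(\pi x_n)$, the combination
$$P_n+\gamma P_{n+1} = \bigl(\sin(2\pi x_n)+\gamma(-1)^{u_{n+1}}\sin(\pi x_n)\bigr) - \bigl(\sin(2\pi y_n)+\gamma(-1)^{v_{n+1}}\sin(\pi y_n)\bigr)$$
has absolute value at most $2\Delta_{2,\gamma}$, and pairing consecutive terms gives $|\sum_{n\ge 5}\gamma^{n-1}P_n(\xi)|\le 2\Delta_{2,\gamma}\gamma^4/(1-\gamma^2)$. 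Assembling everything,
$$F(\xi)\ge 2\cos(\pi/36)-\sqrt{2}\gamma-2\cos(\pi/8)\gamma^2-2\sin(3\pi/16)\gamma^3-\frac{2\Delta_{2,\gamma}\gamma^4}{1-\gamma^2}$$
for all $\xi\in[-1/36,0]$; the right-hand side is monotone decreasing in $\gamma$, so positivity on $(1/2, 2^{-5/6}]$ reduces to verifying it at $\gamma=2^{-5/6}$.

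The main obstacle is this final quantitative step. The margin at $\gamma=2^{-5/6}$ is only about $10^{-2}$, so the constants $2\cos(\pi/36)$, $2\cos(\pi/8)$, $2\sin(3\pi/16)$, and especially the explicit estimate $\Delta_{2,2^{-5/6}}\approx 1.415$ (obtained from the critical-point equation for $\sin(2t)+\gamma\sin t$) must be handled carefully. If the one-shot bound proved too tight, the fallback would be to also invoke $F'(\xi^*)=\pi G(\xi^*)=0$ to confine $|\xi^*|$ to a small neighborhood of $0$, and then apply the $F$-estimate on that neighborhood, where $\cos(\pi\xi)$ is essentially $1$ and the margin becomes comfortable.
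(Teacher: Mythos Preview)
Your reduction to the single remaining pair $(011)\not\sim_x(100)$ on $(17/36,1/2]$ and the strategy of showing $F>0$ there are exactly the paper's approach, and your computations of $P_1,P_2,P_3$ and the worst-case bound $P_4\ge -2\sin(3\pi/16)$ are correct. The only tactical difference is in handling the tail: the paper case-splits on $(u_4,v_4)$ --- in the easy cases $P_4$ is nearly zero and a crude tail bound $2\gamma^4/(1-\gamma)$ suffices, while in the bad case $(u_4,v_4)=(1,0)$ it also estimates $P_5\ge -2\sin(3\pi/32)$ and bounds the tail from $n\ge 6$, giving a comfortable margin of about $0.09$ --- whereas you absorb the worst $P_4$ uniformly and compensate with the sharper $\Delta_{2,\gamma}$-paired tail estimate from $n\ge 5$, which works but with the thin margin you noted; either route is valid.
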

\begin{proof} It suffices to show that
\begin{equation}\label{eqn:b=2.5}
(011)\not\sim_x (100)\text{ for }x\in [17/36,1/2].
\end{equation}
Indeed, %(\ref{eqn:b=2.5}) implies that $(101)\not\sim_y (110)$ for all $y\in [0,1/9]$ since $y(0)
by Lemma~\ref{lem:b=2.4*}, it follows that $0\not\sim_x 1$ for all $x\in [0, 1/2]$. By Lemma~\ref{lem:symmetric}, $0\not\sim_x 1$ also holds for $x\in [1/2, 1]$.

To prove (\ref{eqn:b=2.5}), let $\textbf{u}=(011\cdots)$, $\textbf{v}=(100\cdots)$ and $F(x)=-(2\pi)^{-1}(S(x,\textbf{u})-S(x,\textbf{v}))$.
We shall prove that $F(x)>0$ holds for each $x\in [17/36,1/2]$. We shall continue to use the notation $x_n, y_n$, $P_n$ and $Q_n$ as above.
Then $P_1=2\sin (\pi x)$, $P_2=-\sin \frac{\pi x}{2}-\cos \frac{\pi x}{2}$ and
$$P_3=-\sin \frac{\pi(2-x)}{4}-\sin \frac{\pi(1+x)}{4}=-2\sin \frac{3\pi}{8}\cos \frac{\pi (1-2x)}{8}.$$

Put $$g(x)=P_1+\gamma P_2+\gamma^2 P_3.$$
Using $\gamma\le 0.562$, it is easy to check that $g''<0$ on $[9/20, 1/2]$. By calculation, $$g(9/20)>0.602\text{ and }g(1/2)>0.624.$$ It follows that
$$g(x)> 0.602 \text{ for any } x\in [17/36, 1/2].$$

{\bf Case 1.} $i_4=0$ or $j_4=1$.
Then $$P_4\ge -\left(\sin \frac{\pi (2-x)}{8}-\sin \frac{\pi (1+x)}{8}\right)\ge -2\sin\frac{\pi (1-2x)}{8}\ge -2\sin \frac{\pi}{144}>-0.05 .$$
Then
\begin{align*}
F(x)& \ge g(x) +\gamma^3 P_4 -\frac{2\gamma^4}{1-\gamma} \\
& >0.602-0.05\gamma^3 - 2\gamma^4/(1-\gamma)>0.
\end{align*}

{\bf Case 2.} $i_4=1$ and $j_4=0$. Then
$$P_4= -\left(\sin\frac{\pi (2-x)}{8}+\sin\frac{\pi (1+x)}{8}\right)\ge -2 \sin \frac{3\pi}{16}> -1.112$$
and
$$P_5\ge -\left(\sin\frac{\pi (2-x)}{16}+\sin\frac{\pi (1+x)}{16}\right)\ge -2\sin \frac{3\pi}{32} >-0.581.$$
Thus
$$F(x)\ge g(x)+\gamma^3 P_4+\gamma^4 P_5-\frac{2\gamma^5}{1-\gamma}> 0.602-1.112\gamma^3-0.581\gamma^4-\frac{2\gamma^5}{1-\gamma}>0.09.$$
\end{proof}

\section{Appendix: A proof of Ledrappier's theorem}
This appendix is devoted to a proof of Ledrappier's theorem under a further assumption that $\phi'$ is continuous (for simplicity). The proof is motivated by the original proof given in~\cite{L} and also the recent paper~\cite{K}.

Let $b\ge 2$ be an integer and $\lambda\in (1/b,1)$ and let $f(x)=\sum_{n=0}^\infty \lambda^n \phi(b^n x)$.
We use $z$ to denote a point in $\R^2$ and $B(z,r)$ denote the open ball in $\R^2$ centered at $z$ and of radius $r$. We assume that $\dim(m_x)=1$ holds for Lebesgue a.e. $x\in [0,1)$, which means
for Lebesgue a.e. $x\in [0,1)$ and $\mathbb{P}$-a.e. $\textbf{u}\in \mathcal{A}^{\Z^+}$,
$$\lim_{r\to 0} \frac{\log\mathbb{P}\left(\{\textbf{v}: |S(x,\textbf{u})-S(x,\textbf{v})|\le r\}\right)}{\log r}=1.$$
Let $\mu$ be the pushforward of the Lebesgue measure on $[0,1)$ under the map $x\mapsto (x, f(x))$. Let
$$\underline{d}(\mu, z)=\liminf_{r\to 0}\frac{\log \mu(B(z,r))}{\log r}$$
and $$\underline{D}=\text{essinf}\,\, \underline{d} (\mu, z).$$
We shall prove that
%By the mass distribution principle, the Hausdorff dimension of the graph of $f$ is at least $\underline{D}$. Thus it suffices to
%to prove
$$\underline{D}\ge D=2+\frac{\log\lambda}{\log b}.$$
This is enough to conclude that the Hausdorff dimension of the graph of $f$ is $D$. Indeed, by the mass distribution principle, it implies that the Hausdorff dimension is at least $D$. On the other hand, it is easy to check that $f$ is  a $C^{2-D}$ function which implies that the Hausdorff dimension is at most $D$ (see for example Theorem 8.1 of \cite{F}).

\subsection{Telescope}
Define $\Phi: [0,1)\times \R \to [0,1)\times \R$ as
$$\Phi(x,y)=(bx \mod 1, (y-\phi(x))/\lambda).$$
Define $G: [0,1)\times \R\times \mathcal{A}^{\Z^+}\to [0,1)\times \mathcal{A}^{\Z^+}$ as
$$G(x,y, \textbf{u})=(\Phi(x,y), u_0\textbf{u}), \text{ if } bx\in [u_0, u_0+1).$$

The graph of $f$ is an invariant repeller of the expanding map $\Phi$. We shall use neighborhoods bounded by unstable manifolds.
For each $z_0=(x_0,y_0)\in\R^2$ and $\textbf{u}\in\mathcal{A}^{\Z^+}$, let $\ell_{z_0,\textbf{u}}(x)$
denote the unique solution of the initial value problem:
$$y'=-\gamma S(x,\textbf{u}), y(x_0)=y_0.$$
These curves are strong unstable manifolds of $\Phi$ and they satisfy the following property: for $z=(x,y),z_0=(x_0,y_0)\in [u_0/b, (u_0+1)/b)\times \R$, $u_0\in\mathcal{A}$,
$$\Phi(x, \ell_{z_0, \textbf{u}}(x))=(bx-u_0, \ell_{\Phi(z_0), u_0\textbf{u}}(bx-u_0)).$$

For $z_0=(x_0,y_0)\in [0,1)\times \R$, $\textbf{u}\in\mathcal{A}^{\Z^+}$ and $\delta_1,\delta_2>0$, let
$$Q(z_0, \textbf{u},\delta_1,\delta_2)=\{(x,y): x\in [0,1), |x-x_0|\le \delta_1, |y-\ell_{z_0,\textbf{u}}(x)|\le \delta_2\}.$$

The following observation was taken from ~\cite{K}.
\begin{lemma}[Telescope]\label{lem:tele} Let $\{(z_i,\textbf{u}_i)\}_{i=0}^n$ be a $G$-orbit and let $x_i$ denote the first coordinate of $z_i$. For any $\delta_1,\delta_2>0$, if
$\delta_1\le x_n<1-\delta_1$, then
$$\mu(Q(z_0,\textbf{u}, \delta_1 b^{-n}, \delta_2 \lambda^n))=b^{-n} \mu(Q(z_n,\textbf{u}_n, \delta_1,\delta_2)).$$
\end{lemma}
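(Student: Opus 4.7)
The plan is to realize the identity as a change of variables induced by the branch of $\Phi^n$ selected by the given $G$-orbit. Since $\mu$ is the pushforward of Lebesgue on $[0,1)$ under $x\mapsto(x,f(x))$, for any $\delta'_1,\delta'_2>0$ one has
\begin{equation*}
\mu\bigl(Q(z_0,\textbf{u},\delta'_1,\delta'_2)\bigr)=\mathrm{Leb}\bigl\{x\in[0,1):\ |x-x_0|\le\delta'_1,\ |f(x)-\ell_{z_0,\textbf{u}}(x)|\le\delta'_2\bigr\},
\end{equation*}
so both sides of the Telescope identity are Lebesgue measures of subsets of $[0,1)$, and it suffices to construct an affine bijection between them with Jacobian $b^n$.

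The two key dynamical ingredients I would invoke are as follows. First, the graph of $f$ is $\Phi$-invariant: the identity $f(x)-\phi(x)=\lambda f(bx)$, a direct consequence of the definition of $f$ together with the $\Z$-periodicity of $\phi$, yields $\Phi(x,f(x))=(bx\bmod 1,\ f(bx\bmod 1))$. Second, the functional equation stated just before the lemma for the strong unstable manifolds gives, by induction on $n$,
\begin{equation*}
\Phi^n\bigl(x,\ell_{z_0,\textbf{u}}(x)\bigr)=\bigl(\phi_n(x),\ \ell_{z_n,\textbf{u}_n}(\phi_n(x))\bigr),
\end{equation*}
where $\phi_n$ denotes the unique branch of $x\mapsto b^n x\bmod 1$ sending a neighborhood of $x_0$ affinely onto a neighborhood of $x_n$; in particular $\phi_n$ is affine with slope $b^n$. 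Subtracting these two relations in the vertical coordinate and using that $\Phi$ divides vertical differences (at a fixed $x$) by $\lambda$ at each iterate, I conclude
\begin{equation*}
f(\phi_n(x))-\ell_{z_n,\textbf{u}_n}(\phi_n(x))=\lambda^{-n}\bigl(f(x)-\ell_{z_0,\textbf{u}}(x)\bigr).
\end{equation*}

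The Telescope identity is now a routine change of variables $\xi=\phi_n(x)$. The hypothesis $\delta_1\le x_n<1-\delta_1$ guarantees that $[x_n-\delta_1,x_n+\delta_1]\subset[0,1)$, so $\phi_n^{-1}$ is well defined (affine, with Jacobian $b^{-n}$) on the full target interval. It identifies the set $\{x:|x-x_0|\le\delta_1 b^{-n},\ |f(x)-\ell_{z_0,\textbf{u}}(x)|\le\delta_2\lambda^n\}$ bijectively with $\{\xi:|\xi-x_n|\le\delta_1,\ |f(\xi)-\ell_{z_n,\textbf{u}_n}(\xi)|\le\delta_2\}$, and the Jacobian contributes exactly the factor $b^{-n}$. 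There is no real analytic obstacle; the only thing requiring attention is that the correct branch of $\Phi^n$ be selected at each step, and this is precisely what the specified $G$-orbit does, the condition on $x_n$ ruling out any wraparound issue at the final step.
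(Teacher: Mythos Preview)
Your proof is correct and follows essentially the same approach as the paper: both reduce the $\mu$-measure identity to a Lebesgue-measure identity between the sets $\{x:|x-x_0|\le\delta_1 b^{-n},\ |f(x)-\ell_{z_0,\textbf{u}}(x)|\le\delta_2\lambda^n\}$ and $\{\xi:|\xi-x_n|\le\delta_1,\ |f(\xi)-\ell_{z_n,\textbf{u}_n}(\xi)|\le\delta_2\}$, related via the affine branch of $x\mapsto b^n x\bmod 1$ picked out by the $G$-orbit. The paper phrases this as ``$Q_0$ is mapped onto $Q_n$ diffeomorphically under $\Phi^n$'' and then projects to the $x$-axis; you unpack the same fact by explicitly writing the vertical scaling relation $f(\phi_n(x))-\ell_{z_n,\textbf{u}_n}(\phi_n(x))=\lambda^{-n}(f(x)-\ell_{z_0,\textbf{u}}(x))$, which is a helpful clarification but not a different argument.
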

\begin{proof} Let $J_i=[x_i-\delta_1 b^{i-n}, x_i+\delta_1 b^{i-n}]$, $Q_i=Q(z_i,\textbf{u}_i, \delta_1 b^{i-n},\delta_2 \lambda^{n-i})$ and let $E_i=\{x\in J_i: (x, f(x))\in Q_i\}$. Then $\mu(Q_i)=|E_i|$. Under the assumption $\delta_1\le x_n<1-\delta_1$, $Q_0$ is mapped onto $Q_n$ diffemorphically under $\Phi^n$. Thus $J_0$ is mapped onto $J_n$ and $E_0$ is mapped onto $E_n$ diffeomorphically under the linear map $x\mapsto b^n x$. Thus $|E_0|=b^{-n}|E_n|$.
\end{proof}

\subsection{A version of Marstrand's estimate}
Fix a constant $t\in (1/(1+\alpha),1)$.
\begin{prop} \label{prop:mars}
For $\mu\times \mathbb{P}$-a.e. $(z_0,\textbf{u})$,
\begin{equation}
\liminf_{r\to 0}\frac{\log \mu(Q(z_0,\textbf{u}, r^t, r))}{\log r} \ge 1+t(\underline{D}-1).
\end{equation}
\end{prop}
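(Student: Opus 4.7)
The plan is to use Lemma~\ref{lem:tele} to reduce the estimate at the tiny scale $r$ to a Marstrand-type estimate at the larger vertical scale $\rho=r^{1-t(2-D)}$, and then to conclude by a Borel--Cantelli argument along a geometric sequence of scales. First, given small $r>0$, I will choose the integer $n=n(r)$ with $b^{-n}\le r^t<b^{-n+1}$, so that $b^{-n}\asymp r^t$ and $\rho(r):=r\lambda^{-n}\asymp r^{1-t(2-D)}$, which tends to $0$ as $r\to 0$ since $D\in(1,2)$ and $t<1$ give $1-t(2-D)>0$. After handling the interior hypothesis $\delta_1\le x_n\le 1-\delta_1$ of the telescope lemma (either by subdividing $Q$ into a bounded number of admissible sub-rectangles or by lifting to the universal cover and exploiting the $\Z$-periodicity of $f$), Lemma~\ref{lem:tele} yields
\[
\mu(Q(z_0,\textbf{u},r^t,r))\le C\,r^t\,\mu\bigl(Q(z_n,\textbf{u}_n,1,C\rho(r))\bigr),
\]
where $(z_n,\textbf{u}_n)=G^n(z_0,\textbf{u})$ is again distributed as $\mu\times\mathbb{P}$ by the $G$-invariance of $\mu\times\mathbb{P}$.

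The core step will be to establish the following Marstrand-type tube bound: for $\mu\times\mathbb{P}$-a.e.\ $(z,\textbf{u})$ and any $\epsilon>0$, $\mu(Q(z,\textbf{u},1,\rho))\le \rho^{1-\epsilon}$ for all sufficiently small $\rho$. Writing the left-hand side as $|\{x':|f(x')-\ell_{z,\textbf{u}}(x')|\le\rho\}|$, the Taylor expansion
\[
f(x')-\ell_{z,\textbf{u}}(x')=\bigl(f(x')-f(x)\bigr)+\gamma(x'-x)S(x,\textbf{u})+O\bigl((x'-x)^2\bigr)
\]
shows that, for fixed $(x,x')$, the condition $|f(x')-\ell_{z,\textbf{u}}(x')|\le\rho$ translates into $S(x,\textbf{u})$ lying in an interval of length $\asymp \rho/|x'-x|$. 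Since the distribution of $S(x,\textbf{u})$ under $\textbf{u}\sim\mathbb{P}$ is $m_x$, one-dimensional by hypothesis, and since $\mu$ already has $\underline{D}\ge 1$ (from the Lipschitz projection $(x,f(x))\mapsto x$), this places us in the Kaufman--Mattila setting of projections with a one-dimensional family of directions. I will estimate the first moment
\[
\iint\mu(Q(z,\textbf{u},1,\rho))\,d\mu(z)\,d\mathbb{P}(\textbf{u})\le C\rho^{1-\epsilon}
\]
via Fubini, reducing the inner probability to an $m_x$-measure of the above interval and then integrating the resulting $x$-integral against the SRB measure $\vartheta$ of $T$ (whose vertical conditionals are precisely the $m_x$); Markov plus Borel--Cantelli along $\rho_k=2^{-k}$ will then upgrade this to the required pointwise bound.

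Combining the two steps gives $\mu(Q(z_0,\textbf{u},r^t,r))\le Cr^t\rho(r)^{1-\epsilon}\le Cr^{1+t(D-1)-\epsilon'}$ for some $\epsilon'\to 0$ as $\epsilon\to 0$. Since the graph of $f$ has Hausdorff dimension at most $D$ by the H\"older regularity $f\in C^{2-D}$, one has $\underline{D}\le D$, so $1+t(D-1)\ge 1+t(\underline{D}-1)$; taking $\liminf_{r\to 0}$ of $\log\mu/\log r$ and letting $\epsilon\to 0$ proves the proposition. The main obstacle I anticipate is the Marstrand-type projection estimate: the pointwise hypothesis $\dim m_x=1$ provides Frostman control $m_x(B(y,L))\le L^{1-\epsilon}$ only at $m_x$-typical centers $y$, whereas the interval centers arising in our probability estimate depend on $(x,x')$ and are not a priori typical. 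Overcoming this will require careful averaging against $\vartheta$ so that the relevant centers become typical in the Fubini sense, together with a local linearization argument to replace the unstable curves $\ell_{z,\textbf{u}}$ by their tangent lines on suitably small sub-intervals of $[0,1)$.
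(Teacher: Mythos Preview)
Your approach inverts the order of the two main steps in the paper. The paper proves Proposition~\ref{prop:mars} \emph{without} the telescope: it works directly at the small horizontal scale $r^t$, where the $C^{1+\alpha}$ regularity of $\ell_{z_0,\textbf{u}}$ makes the linearisation error $O(r^{t(1+\alpha)})<r$ (this is exactly why $t>1/(1+\alpha)$ is assumed). With that error absorbed, the condition $z\in Q(z_0,\textbf{u},r^t,r)$ pins $S(x_0,\textbf{u})$ to an interval of length $\asymp r/|x-x_0|$; restricting to an Egorov set where the Frostman bound $m_{x_0}(B(S(x_0,\textbf{u}),\rho))\le\rho^{1-\xi}$ holds at the \emph{typical} centre $S(x_0,\textbf{u})$ converts this into the probability estimate, and then integration against $\mu$ over $B(z_0,C_2 r^t)$ using the second Egorov condition $\mu(B(z_0,\rho))\le\rho^{\underline{D}-\xi}$ produces the exponent $1+t(\underline{D}-1)$. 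The telescope lemma is only invoked \emph{after} Proposition~\ref{prop:mars}, to bootstrap $\underline{D}$ up to $D$.

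Your route telescopes first and then attempts a Marstrand-type bound $\mu(Q(z,\textbf{u},1,\rho))\le\rho^{1-\epsilon}$ at horizontal scale $1$. This is where the gap lies, and you correctly flag it yourself. At horizontal scale $1$ the linearisation error $O(|x'-x_0|^{1+\alpha})$ is of order $1$, not $\rho$, so the dependence of $\ell_{z,\textbf{u}}(x')=y_0-\gamma\int_{x_0}^{x'}S(s,\textbf{u})\,ds$ on $\textbf{u}$ is through the full integral, \emph{not} through the single value $S(x_0,\textbf{u})$; the pushforward of $\mathbb{P}$ under $\textbf{u}\mapsto\int_{x_0}^{x'}S(s,\textbf{u})\,ds$ is not $m_{x_0}$ (nor any affine image of it), so the hypothesis $\dim m_x=1$ gives no direct control. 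Subdividing $[0,1)$ into intervals of length $\rho^{1/(1+\alpha)}$ restores the linearisation but introduces $\rho^{-1/(1+\alpha)}$ pieces, each of which must be handled at a \emph{different} centre (the Frostman centre moves with the sub-interval), and summing the resulting bounds degrades the exponent below $1-\epsilon$. A second, smaller, issue is uniformity: even if the tube bound held for $\mu\times\mathbb{P}$-a.e.\ $(z,\textbf{u})$, the threshold ``sufficiently small $\rho$'' depends on the point, and you need it at the orbit point $G^{n(r)}(z_0,\textbf{u})$ with $n(r)\to\infty$; passing through Egorov and Birkhoff here recovers only a density-one set of good times, not all times. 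The paper avoids both difficulties by keeping the horizontal window at the scale $r^t$ where linearisation is free, and deferring the telescope to the subsequent bootstrap.
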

\begin{proof}
It suffices to prove that for each $\xi>0$ and $\eta>0$, there is a subset $\Sigma$ of $[0,1)\times \R\times \mathcal{A}^{\Z^+}$ with
$(\mu\times \mathbb{P})(\Sigma)>1-\eta$ such that
\begin{equation}\label{eqn:marspart}
\liminf_{r\to 0} \frac{\log \mu(Q(z_0,\textbf{u}, r^t, r))}{\log r} \ge 1+t(\underline{D}-1)-3\xi
\end{equation}
holds for all $(z_0,\textbf{u})\in \Sigma$.
By Egoroff's theorem, we can choose $\Sigma$ with $(\mu\times\mathbb{P})(\Sigma)>1-\eta$ for which there is $r_0>0$ such that
for each $(z_0,\textbf{u})\in \Sigma$,
\begin{enumerate}
\item[(S1)] $\mathbb{P}\left(\left\{\textbf{v}: |S(x_0,\textbf{u})-S(x_0,\textbf{v})|\le r\right\}\right)\le r^{1-\xi}$ for each $0< r\le r_0$, where $x_0$ is the first coordinate of $z_0$;
\item[(S2)] $\mu(B(z_0, r))\le r^{\underline{D}-\xi}$ for each $0<r\le r_0$.
\end{enumerate}
In the following we shall prove that for $r>0$ small enough,
\begin{equation}\label{eqn:marsint}
\int_{\textbf{u}: (z_0,\textbf{u})\in \Sigma} \mu(Q(z_0, \textbf{u}, r^t, r))d\mathbb{P}\le r^{1+t(\underline{D}-1)-2\xi},
\end{equation}
holds for every $z_0\in [0,1)\times \R$.
This is enough to conclude the proof. Indeed, let $\tau\in (0,1)$ be an arbitrary constant. Then by (\ref{eqn:marsint}), there is $N$ such that for $n>N$,
$$\mathbb{P}\left(\left\{\textbf{u}: (z_0,\textbf{u})\in \Sigma, \mu(Q(z_0,\textbf{u},\tau^{nt}, \tau^n))> (\tau^n)^{1+t(\underline{D}-1)-3\xi}\right\}\right)\le \tau^{n\xi}$$
holds for every $z_0\in [0,1)\times \R$.
By Fubini's theorem, this implies that
$$\mu\times\mathbb{P}\left(\left\{(z_0,\textbf{u})\in \Sigma: \mu(Q(z_0,\textbf{u},\tau^{nt}, \tau^n))> (\tau^n)^{1+t(\underline{D}-1)-3\xi}\right\}\right)\le\tau^{n\xi}.$$
By Borel-Cantelli, it follows that for almost every $(z_0,\textbf{u}) \in \Sigma$,
$\mu(Q(z_0,\textbf{u},\tau^{nt},\tau^n))\le (\tau^n)^{1+t(\underline{D}-1)-3\xi}$ holds for all $n$ large enough. The inequality (\ref{eqn:marspart}) follows.

Let us now prove (\ref{eqn:marsint}). We first prove

{\bf Claim.} Provided that $r>0$ is small enough, for every $z_0, z\in [0,1)\times \R$, we have
\begin{equation}\label{eqn:Pu}
\mathbb{P}(\left\{\textbf{u}: (z_0,\textbf{u})\in \Sigma, z\in Q(z_0,\textbf{u}, r^t, r)\right\})\le C_1 \left(\frac{r}{|z-z_0|}\right)^{1-2\xi},
\end{equation}
where $C_1>0$ is a constant.

To prove this claim, let $z=(x,y)$, $z_0=(x_0,y_0)$ and $h(x)=\ell_{z_0,\textbf{u}}(x)$. Then $h(x)$ is $C^{1+\alpha}$ with uniformly bounded norm. So
\begin{multline*}
|y-y_0+\gamma S(x_0,\textbf{u}) (x-x_0)|\le |y-h(x)|+|h(x)-h(x_0)-h'(x_0)(x-x_0)|\\
\le r+|\int_{x_0}^x (h'(s)-h'(x_0))ds |\le r+ C r^{t(1+\alpha)}< 2r,
\end{multline*}
provided that $r$ is small enough.
Thus
$$\{S(x_0,\textbf{u}):(z_0,\textbf{u})\in \Sigma, z\in Q(z_0, \textbf{u}, r^t, r)\}$$
is contained in an interval of length $2r/(\gamma |x-x_0|)$. Since
$$|z-z_0|\le |x-x_0|+|y-y_0|\le (1+\gamma |S(x_0,\textbf{u})|)|x-x_0|+2r,$$
and $|S(x,\textbf{u})|$ is uniformly bounded, the inequality (\ref{eqn:Pu}) follows from the property (S1).
Note that if $2r/(\gamma |x-x_0|)>r_0$, then $r/|z-z_0|$ is bounded away from zero, so (\ref{eqn:Pu}) holds for sufficiently large $C_1$, since the left hand side of this inequality does not exceed one.

We continue the proof of (\ref{eqn:marsint}). Note that there is a constant $C_2>0$ such that for every $r>0$ and any $z_0\in [0,1)\times \R$,
$$\bigcup_{\textbf{u}\in \mathcal{A}^{\Z^+}} Q(z_0, \textbf{u}, r^t, r)\subset B(z_0, C_2r^t).$$
Of course we may assume there is $\textbf{u}$ such that $(z_0,\textbf{u})\in \Sigma$. Thus for $R>0$ small enough, we may apply (S2) and obtain
\begin{align*}
\int_{B(z_0,R)} \frac{d\mu(z)}{|z-z_0|^{1-2\xi}}& =\sum_{n=0}^\infty \int_{e^{-n-1}R\le |z-z_0|< e^{-n} R} \frac{d\mu(z)}{|z-z_0|^{1-2\xi}}\\
& \le \sum_{n=0}^\infty \frac{\mu(B(z_0, e^{-n}R))}{(e^{-n-1}R)^{1-2\xi}}\le \sum_{n=0}^\infty \frac{(e^{-n}R)^{\underline{D}-\xi}}{(e^{-n-1}R)^{1-2\xi}}\\
& =C(\xi) R^{\underline{D}-1+\xi},
\end{align*}
where $C(\xi)$ is a constant depending on $\xi$ and $\underline{D}$.
By Fubini's theorem,
\begin{align*}
& \int_{\textbf{u}: (z_0,\textbf{u})\in \Sigma} \mu (Q(z_0, \textbf{u}, r^t, r) d\mathbb{P}(\textbf{u})\\
=& \int_{\textbf{u}: (z_0,\textbf{u})\in \Sigma}\int_{\R^2} 1_{Q(z_0, \textbf{u}, r^t, r)}(z) d\mu (z) d\mathbb{P}(\textbf{u})\\
= & \int_{B(z_0, C_2r^t)} \mathbb{P}\left(\left\{\textbf{u}: (z_0,\textbf{u})\in \Sigma, z\in Q(z_0,\textbf{u}, r^t, r)\right\}\right) d\mu(z)\\
& \le C_1 r^{1-2\xi} \int_{B(z_0, C_2 r^t)} \frac{d\mu(z)}{|z-z_0|^{1-2\xi}} \\
& \le C' r^{1+t(\underline{D}-1)-(2-t)\xi}< r^{1+t(\underline{D}-1)-2\xi},
\end{align*}
provided that $r$ is small enough.
\end{proof}

We are ready to complete the proof of Ledrappier's theorem.
For any $\xi>0$, $\eta>0$, by Proposition~\ref{prop:mars} and Egroff's theorem, we can pick up a subset $\Sigma$ of $\R^2\times \mathcal{A}^{\Z^+}$ and a constant $r_*>0$ such that
$(\mu\times \mathbb{P})(\Sigma)>1-3\eta$ and such that for each $(z,\textbf{u})\in \Sigma$,
$$\mu(Q(z,\textbf{u}, r^t, r))\le r^{1+t(\underline{D}-1)-\xi} \text{ for each } 0<r<r_*.$$
We may further assume that $\Sigma\subset [\eta, 1-\eta]\times \R\times \mathcal{A}^{\Z^+}$.

Note that $\mu\times \mathbb{P}$ is an ergodic invariant measure for the map
$G$. By Birkhorff's Ergodic Theorem, for almost every $(z_0,\textbf{u}_0)$, there is an increasing sequence $\{n_k\}_{k=1}^\infty$ of positive integers such that
$G^{n_k}(z_0,\textbf{u}_0)\in \Sigma$ and
\begin{equation}\label{eqn:nkdensity}
\liminf_{k\to\infty} n_k/n_{k+1} > 1-3\eta.
\end{equation}

For each $n=1,2,\ldots$, put $\delta_n=\gamma^{nt/(1-t)}b^{-n}$, $r_n=\gamma^{n/(1-t)}$, so that
$$r_n=\delta_n \lambda^{-n}, \text{ and } r_n^t=\delta_n b^n.$$
Let us prove that for $k$ sufficiently large,
\begin{equation}\label{eqn:mudeltan}
\frac{\log \mu(Q(z_0,\textbf{u}_0, \delta_{n_k}, \delta_{n_k}))}{\log \delta_{n_k}}\ge \underline{D}+(D-\underline{D})A_1-A_2\xi,
\end{equation}
where $A_1, A_2$ are positive constants depending only on $\lambda$ and $b$. %independent of $(z_0,\textbf{u}_0)$.

Indeed, by Lemma~\ref{lem:tele}, for $k$ large enough,
%since $\Phi^{n_k}$ maps $Q(z_0,\textbf{u}_0,\delta_{n_k},\delta_{n_k})$ diffeomorphically onto $Q(G^{n_k}(z_0,\textbf{u}_0), r_{n_k}^t, r_{n_k})$, we have
$$\mu(Q(z_0,\textbf{u}_0, \delta_{n_k}, \delta_{n_k}))=\frac{\mu(Q(G^{n_k}(z_0,\textbf{u}_0),r_{n_k}^t, r_{n_k}))}{b^{n_k}}
\le \frac{r_{n_k}^{1+t(\underline{D}-1)-\xi}}{b^{n_k}}.$$
%hence
%$$\mu(Q(z_0,\textbf{u}_0,\delta_n, \delta_n))\le r_n^{1+t(\underline{D}-1)-3\xi}\frac{\delta_n}{r_n^t}.$$
Using definition of $r_n$ and $\delta_n$, this gives us
$$\mu(Q(z_0,\textbf{u}_0,\delta_{n_k},\delta_{n_k}))\le \delta_{n_k}^{\underline{D}}\times (b^{-n_k})^{D-\underline{D}} r_{n_k}^{-\xi},$$
Thus (\ref{eqn:mudeltan}) holds with $A_1=\log b/(\log b+t\log \gamma^{-1}/(1-t))$ and $A_2=\log\gamma/(t\log\gamma +(1-t)\log b^{-1})$.
%=\delta_n^{D'},$$
%where $$D'=1+(1-t+t(\underline{D}-1)-3\xi)\frac{\log r_n}{\log\delta_n}=.$$

By (\ref{eqn:nkdensity}), for each $n$ large enough, there is $k$ such that $(1-3\eta)n_k<n_{k-1}<n\le n_k$. It follows that
$$\liminf_{n\to\infty}\frac{\log \mu(Q(z_0,\textbf{u}_0,\delta_n, \delta_n))}{\log\delta_n}\ge (1-3\eta) (\underline{D}+(D-\underline{D})A_1-A_2\xi).$$
Since $\ell_{x_0,\textbf{u}_0}$ is a smooth curve, there exists $\kappa\in (0,1)$ such that $Q(z_0,\textbf{u}_0,\delta_k,\delta_k)$ contains
$B(z_0, \kappa\delta_k)$ for each $k$. Therefore,
$$\underline{d}(\mu, z_0)=\liminf_{n\to\infty}\frac{\log \mu(Q(z_0,\textbf{u}_0,\delta_n, \delta_n))}{\log\delta_n}\ge (1-3\eta) (\underline{D}+(D-\underline{D})A_1-A_2\xi).$$
Since this estimate holds for $\mu$-a.e. $z_0$, we obtain
$$\underline{D}\ge (1-3\eta)\left(\underline{D}+ A_1(D-\underline{D})-A_2\xi\right).$$
As $\xi, \eta$ can be chosen arbitrarily small, we conclude
$$\underline{D}\ge \underline{D}+ A_1(D-\underline{D}),$$
which means $\underline{D}\ge D$, as desired.

\bibliographystyle{plain}             %    other options: alpha, plain, abbrv ...

\end{document}